\DeclareMathOperator{\Ran}{Ran}
\DeclareMathOperator{\Ker}{Ker}
\DeclareMathOperator{\diag}{diag}
\newcommand{\BMOA}{BMOA}
\renewcommand\Im{\hbox{{\rm Im}}\,}
\renewcommand\Re{\hbox{{\rm Re}}\,}
\newcommand{\abs}[1]{\lvert#1\rvert}
\newcommand{\Abs}[1]{\left\lvert#1\right\rvert}
\newcommand{\norm}[1]{\lVert#1\rVert}
\newcommand{\Norm}[1]{\left\lVert#1\right\rVert}
\newcommand{\jap}[1]{\langle#1\rangle}
\newcommand{\bbT}{{\mathbb T}}
\newcommand{\bbP}{{\mathbb P}}
\newcommand{\bbR}{{\mathbb R}}
\newcommand{\bbC}{{\mathbb C}}
\newcommand{\bbD}{{\mathbb D}}
\newcommand{\bbN}{{\mathbb N}}
\newcommand{\calH}{{\mathcal H}}
\newcommand{\calA}{\mathcal{A}}
\newcommand{\e}{\varepsilon}
\newcommand{\bg}{{\mathbf{g}}}
\newcommand{\bp}{{\mathbf{p}}}
\newcommand{\bbf}{{\mathbf{f}}}
\numberwithin{equation}{section}
\theoremstyle{plain}
\newtheorem{theorem}{\bf Theorem}[section]
\newtheorem*{theorem*}{Theorem 1.1$'$}
\newtheorem{lemma}[theorem]{\bf Lemma}
\newtheorem{proposition}[theorem]{\bf Proposition}
\newtheorem{corollary}[theorem]{\bf Corollary}
\theoremstyle{definition}
\newtheorem*{definition*}{\bf Definition}
\theoremstyle{remark}
\newtheorem*{remark*}{\bf Remark}
\newtheorem{remark}[theorem]{\bf Remark}
\newcommand{\wt}{\widetilde}
\newcommand{\eps}{\varepsilon}
\newcommand{\1}{\mathbbm{1}}
\begin{document}

\title[Inverse problem for Hankel operators]{An inverse problem for  Hankel operators  and  turbulent solutions of the cubic Szeg\H{o} equation on the line}

\author{Patrick G\'erard}
\address{Universit\'e Paris-Saclay, Laboratoire de Math\'ematiques d'Orsay, CNRS, UMR 8628}
\email{patrick.gerard@universite-paris-saclay.fr}

\author{Alexander Pushnitski}
\address{Department of Mathematics, King's College London, Strand, London, WC2R~2LS, U.K.}
\email{alexander.pushnitski@kcl.ac.uk}


\keywords{Hankel operators, inverse spectral problem, Hardy space, model spaces}

\date{8 February 2022}

\begin{abstract}
We construct inverse spectral theory for finite rank Hankel operators acting on the Hardy space of the upper half-plane. A particular feature of our theory is that we completely characterise the set of spectral data. As an application of this theory, we prove the genericity of turbulent solutions of the cubic Szeg\H{o} equation on the real line.
\end{abstract}

\maketitle

\setcounter{tocdepth}{1}
\tableofcontents

\section{Introduction}\label{sec.z}

\subsection{The cubic Szeg\H{o} equation and Hankel operators}

Let $H^2(\bbC_+)$  be the standard Hardy class in the upper half-plane, with the inner product (linear in the first factor and anti-linear in the second factor) denoted by 
$\jap{\cdot,\cdot}$. 
We will routinely identify functions $f\in H^2(\bbC_+)$ with their 
boundary values on the real line, and with this identification 
one has $H^2(\bbC_+)\subset L^2(\bbR)$ as a closed subspace, $\jap{\cdot,\cdot}$
being the restriction of the $L^2$ inner product. 
Let $\bbP_+$ be the orthogonal projection onto $H^2(\bbC_+)$ in $L^2(\bbR)$.

In \cite{OP1,OP2} Pocovnicu,
by analogy with the unit circle case \cite{GG1,GG2} (which will be discussed later in Section~\ref{sec.uc}), 
introduced and studied the \emph{cubic Szeg\H{o} equation} 
\begin{equation}
i\frac{\partial u}{\partial t}
=
\bbP_+(\abs{u}^2u), \quad u=u(x,t), \quad x,t\in \bbR.
\label{z1}
\end{equation}
Here,  for every $t\in \bbR$,  $u(\cdot,t)$
belongs to a suitable Sobolev class of functions in $H^2(\bbC_+)$. 
Following the strategy of the unit circle case \cite{GG1,GG2}, it was proven
in \cite{OP1,OP2} that this equation is completely integrable and possesses a Lax pair. 
The Lax pair involves the anti-linear Hankel operator $H_u$
on $H^2(\bbC_+)$,  defined by 
\begin{equation}
H_u f=\bbP_+(u\overline{f}), \quad f\in H^2(\bbC_+);
\label{z2}
\end{equation}
notice that the conjugate of $f$ is always taken on the real line.
It is well known that the boundedness of $H_u$ is equivalent to the inclusion $u\in\BMOA(\bbR)$. 
By a version of Kronecker's theorem, the rationality of $u$ is equivalent to $H_u$ being finite rank. 
We refer to  \cite{Peller} for the background on Hankel operators. 
Observe that while $H_u$ is anti-linear, the square $H_u^2$ is linear (and positive semi-definite). 
We will say that $\lambda>0$ is a \emph{singular value} of $H_u$, if the corresponding
\emph{Schmidt subspace} 
$$
E_{H_u}(\lambda)=\Ker(H_u^2-\lambda^2 I)
$$
is non-trivial: $E_{H_u}(\lambda)\not=\{0\}$. 
The Lax pair formulation for \eqref{z1} ensures, in particular, that all singular values 
of $H_u$ are integrals of motion of the Szeg\H{o} equation. 
In order to solve the Cauchy problem for the Szeg\H{o} equation, one must
develop a suitable version of direct and inverse spectral theory for the Hankel
operator $H_u$. 
In \cite{OP2}, this programme was completely achieved
when the symbol is rational and all singular values are simple 
(i.e. all Schmidt subspaces $E_{H_u}(\lambda)$ are one-dimensional).

The purpose of this paper is to extend the spectral analysis of \cite{OP2}
to the case of multiplicities, i.e. to the case when the symbol is rational but the dimensions
of the subspaces  $E_{H_u}(\lambda)$ may be $>1$. 
This requires a more detailed analysis of the structure
of these subspaces and of the action of $H_u$ on them. 
Such analysis was performed in \cite{GP2} and is recalled later on in this introduction. 

As an application of our spectral analysis, we  prove the genericity of turbulent solutions of the cubic Szeg\H{o} equation on the line.  In \cite{OP1,OP2} it was proved that, for every $s\geq 1/2$,  the initial value problem for \eqref{z1} is globally wellposed on the intersection $W^{s,2}(\bbC_+)$ of $H^2(\bbC_+)$ with the Sobolev space $W^{s,2}(\bbR )$ on the line. Though the trajectories are bounded in $W^{1/2,2}(\bbC_+)$ because of some conservation laws, they may not be bounded in $W^{s,2}(\bbC_+)$ if $s>1/2$. 
We shall call {\it turbulent}  a solution of 
\eqref{z1}  with an unbounded trajectory in $W^{s,2}(\bbC_+)$ for some $s>1/2$.  An example of a turbulent solution is provided in \cite{OP2} as a rational solution with two poles such that the associated Hankel operator has a singular value of multiplicity $2$. Using our spectral analysis, we are able to find many more such turbulent rational solutions, leading to the following result.
\begin{theorem}\label{genericgrowth}
There exists a dense $G_\delta $ subset $G$ of $W^{1,2}(\bbC_+)$ such any solution $u$ of the cubic Szeg\H{o} equation with initial datum in $G$ satisfies
$$\int_1^{+\infty}\frac{\norm{\partial_xu(t)}_{L^2}}{t^2}\, dt =+\infty \ .$$
\end{theorem}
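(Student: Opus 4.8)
The plan is to exploit the Baire category theorem: I need to exhibit, for each $n\geq 1$, an open dense subset $G_n$ of $W^{1,2}(\bbC_+)$ such that every $u\in G_n$ produces a solution whose $W^{1,2}$-norm exceeds $n$ at some time $t_u>1$, and moreover such that the violation of the integral bound is robust. Actually, to get the divergence of $\int_1^\infty \norm{\partial_x u(t)}_{L^2}t^{-2}\,dt$ rather than mere unboundedness, the cleanest route is to combine the contrapositive: if that integral were finite, then a Tauberian/averaging argument would force $\norm{\partial_x u(t)}_{L^2}$ to be ``small on average'', which should contradict the existence of turbulent behaviour on a set of times of positive logarithmic density. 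So first I would fix the target: for each $n$, let $G_n$ be the set of $u\in W^{1,2}(\bbC_+)$ for which there exists $t>1$ with $\int_1^t \norm{\partial_x u(s)}_{L^2}s^{-2}\,ds > n$. By continuity of the flow in $W^{1,2}$ and continuous dependence of the trajectory on the initial datum (from the wellposedness theory of \cite{OP1,OP2}), each $G_n$ is open. Then $G=\bigcap_n G_n$ is the desired $G_\delta$, and any $u\in G$ violates the bound. It remains to prove density of each $G_n$.

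For density I would use the inverse spectral theory developed in this paper together with the example from \cite{OP2}. The key input is that the set of \emph{rational} symbols is dense in $W^{1,2}(\bbC_+)$ (finite rank Hankel operators, equivalently rational symbols, are dense — this is standard and is implicit in the setup via Kronecker's theorem), and that among rational symbols, those whose associated Hankel operator $H_u$ has a multiple singular value form a set that is dense in a suitable sense after an arbitrarily small perturbation. Concretely: given an arbitrary $v\in W^{1,2}(\bbC_+)$ and $\eps>0$, first approximate $v$ within $\eps/2$ by a rational symbol $u_0$; then, using the explicit parametrisation of spectral data furnished by the inverse theory (which, crucially, \emph{completely characterises} the admissible spectral data), perturb the spectral data of $u_0$ by an amount small enough to move $u_0$ by less than $\eps/2$ in $W^{1,2}$, while arranging that the new symbol $u_1$ has a singular value of multiplicity $\geq 2$ with the same turbulent mechanism as the two-pole example of \cite{OP2}. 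Because the two-pole turbulent solution of \cite{OP2} has $\norm{\partial_x u(t)}_{L^2}$ growing (polynomially, in fact like a positive power of $t$) along a sequence of times, and growth of any positive power $t^\alpha$ along a set of times of positive logarithmic density makes $\int_1^\infty \norm{\partial_x u(t)}_{L^2}t^{-2}\,dt$ diverge, one can then choose $t$ large enough that $\int_1^t \norm{\partial_x u_1(s)}_{L^2}s^{-2}\,ds>n$; a final application of continuous dependence lets this conclusion persist on a $W^{1,2}$-neighbourhood of $u_1$, hence $u_1$ (and a whole neighbourhood) lies in $G_n$. This shows $G_n$ is dense.

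The main obstacle, and the place where the new spectral analysis of this paper is genuinely needed rather than that of \cite{OP2}, is the density step: one must perturb a \emph{simple-spectrum} rational symbol into one with a \emph{multiple} singular value while controlling the $W^{1,2}$ distance, and one needs to know explicitly how the growth rate of $\norm{\partial_x u(t)}_{L^2}$ depends on the spectral data so as to guarantee the divergence of the weighted integral. This requires: (i) the surjectivity half of the inverse theorem, so that one can freely prescribe spectral data with a coincidence among the singular values; (ii) continuity of the direct-and-inverse correspondence in an appropriate topology, so that a small change of spectral data is a small change of symbol in $W^{1,2}$; and (iii) an explicit lower bound, along a sequence of times, for $\norm{\partial_x u(t)}_{L^2}$ in terms of the spectral data of a rational solution carrying a multiple singular value — this is the quantitative turbulence estimate, generalising the two-pole computation of \cite{OP2}, and establishing it for the perturbed symbols is the technical heart of the argument.

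I expect the Baire-category skeleton and the openness of the $G_n$ to be routine given continuous dependence on data; the real work is item (iii) above — showing that the multiple-singular-value structure forces genuine norm growth of a definite order, robustly under small perturbations — and this is exactly what the detailed description of the Schmidt subspaces $E_{H_u}(\lambda)$ and the action of $H_u$ on them (recalled from \cite{GP2}) is there to supply.
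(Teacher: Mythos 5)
Your Baire--category skeleton matches the paper's exactly: the $G_n$ you define are the paper's $\Omega_n$, their openness follows from the $W^{1,2}$ wellposedness of \cite{OP2} precisely as you say, and the heavy lifting is indeed in density, which the paper proves via rational approximation and the inverse spectral theory. You have also correctly located the technical heart in a quantitative lower bound for $\norm{\partial_x u(t)}_{L^2}$ for rational solutions with a multiple singular value. However, there are two concrete gaps.

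First, your quantitative heuristic is wrong: you assert that growth like $t^\alpha$ for any $\alpha>0$ along a set of positive logarithmic density makes $\int_1^\infty \norm{\partial_x u(t)}_{L^2}\,t^{-2}\,dt$ diverge. That is false for $\alpha<1$; the integral $\int_1^\infty t^{\alpha-2}\,dt$ converges whenever $\alpha<1$, regardless of the density of the set of times. The divergence of the weighted integral requires growth at least linear in $t$. The paper establishes exactly this rate: Proposition~\ref{lowerbound} gives $\liminf_{t\to\infty}\norm{\partial_x u(t)}_{L^2}/t>0$ for rational solutions carrying one multiple singular value (and Proposition~\ref{upperbound} shows this is sharp). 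Linear growth is the borderline case, and any weaker bound would not suffice, so your argument as stated could not close even granting the turbulent estimate.

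Second, the density mechanism is under-specified and, read literally, points in the wrong direction. You propose to ``perturb the spectral data of $u_0$ by an amount small enough'' so as to create a multiple singular value. If this means collapsing two existing simple singular values $\lambda_j\ne\lambda_k$ onto one another, that is not a small perturbation of the symbol. What the paper actually does (Lemma~\ref{approximation}, Steps~3--5) is \emph{add} a new, small singular value $\lambda_{N+1}=\eps$ with a two-dimensional Schmidt subspace (a degree-two Blaschke product $\psi_{N+1}$), leaving all existing data untouched. This is a degenerate limit in which the rank of the Hankel operator increases; the nontrivial content is the verification, via the explicit formula \eqref{b25} and uniform resolvent bounds on the matrices $Q_\eps(x)$, that the resulting symbol $u_0^\eps$ converges to $u_0$ in $W^{1,2}(\bbC_+)$ as $\eps\to 0$. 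Your item~(ii), ``continuity of the direct-and-inverse correspondence,'' glosses over this: there is no fixed-dimensional spectral parameter space in which a small perturbation is being made, and the convergence $u_0^\eps\to u_0$ has to be established directly from the formula, which is the bulk of the work in Lemma~\ref{approximation}.
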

In the above statement,  the regularity exponent $1$ in $W^{1,2}$ is probably not essential but it is  technically easier to handle.

We close this paragraph by some comments about the phenomenon of turbulent solutions for  Hamiltonian equations, which  has been actively studied by mathematicians in the last two decades.  Bourgain \cite{Bou00} asked whether there is a solution of the cubic defocusing nonlinear Schr\"odinger equation on the two-dimensional torus $\bbT^2$ with initial data $u_0\in W^{s,2}(\bbT^2)$, $s>1$, such that
$$\limsup_{t\to\infty}\|u(t)\|_{W^{s,2}}=\infty.$$
There is still no complete answer to this question, despite a first partial result in this direction  by Colliander--Keel--Staffilani--Takaoka--Tao  \cite{CKSTT10}.  Using this approach, Hani \cite{Hani} proved the existence of  turbulent solutions  for a totally resonant version of cubic NLS on $\bbT^2$, and Hani--Pausader--Tzvetkov--Visciglia \cite{HPTV} established the first --- and, still at this time, the unique --- example of  turbulent NLS solution,  in the case of the cubic Schr\"odinger equation on the cylinder $\bbT^2\times \bbR$. In the direction of the growth of Sobolev norms for the nonlinear Schr\"odinger equation, let us also mention the works of Guardia \cite{Guardia2014},  Guardia--Kaloshin \cite{GK2015},  Haus--Procesi \cite{HP},  Guardia--Haus--Procesi \cite{GHP},  and more recently the work of Guardia--Hani--Haus--Maspero--Procesi \cite{GHHMP2018}, which uses the integrable structure of the defocusing cubic nonlinear Schr\"odinger equation on the one dimensional torus.
\\
The phenomenon of growth of Sobolev norms also occurs for two-dimensional incompressible Euler equations: the sharp double exponentially growing vorticity gradient on the disk was constructed by Kiselev--\v{S}verak \cite{KS14} and the existence of exponentially growing vorticity gradient solutions on the torus was shown by Zlato\v{s} \cite{Zla15}. It also has been recently observed by Schwinte and Thomann \cite{Thom} for a system of two lowest Landau level equations. \\
At this stage, observe that the above results provide examples of turbulent solutions {\it without establishing their genericity}. In fact, as far as we know, the only equation where genericity of turbulent solutions has been proved before Theorem~\ref{genericgrowth} is  the cubic Szeg\H{o} equation on the circle \cite{GGAst,GTor}.

\subsection{Model spaces and isometric multipliers}\label{sec.z2}
Before recalling relevant results from \cite{GP2}, we need to 
talk about model spaces in $H^2(\bbC_+)$ and isometric multipliers on them. Let $\theta$ be an inner function in the upper half-plane (i.e. $\theta\in H^\infty(\bbC_+)$ 
and $\abs{\theta(x)}=1$ for a.e. $x\in \bbR$). 
We will only be concerned with the case when $\theta$ is a finite Blaschke product, i.e. 
\begin{equation}
\theta(x)=e^{i\alpha}\prod_{j=1}^m \frac{x-a_j}{x-\overline{a_j}}, 
\quad x\in\bbC_+,
\label{z10a}
\end{equation}
where $e^{i\alpha}$ is a unimodular complex number and the parameters $a_j$ satisfy $\Im a_j>0$.  

The model space $K_\theta\subset H^2(\bbC_+)$ is defined by 
$$
K_\theta=H^2(\bbC_+)\cap(\theta H^2(\bbC_+))^\perp;
$$
here 
$$
\theta H^2(\bbC_+)=\{\theta f: f\in H^2(\bbC_+)\}
$$
and  the orthogonal complement is taken in $L^2(\bbR)$. It is clear that $f\in H^2(\bbC_+)$ belongs to $K_\theta$ if and only if $\theta\overline{f}\in H^2(\bbC_+)$.
When $\theta$ is a finite Blaschke product \eqref{z10a}, the model space $K_\theta$ is finite-dimensional and can be explicitly described by 
$$
K_\theta=\left\{\frac{p(z)}{q(z)}, \quad \deg p\leq m-1\right\}, \quad
q(z)=\prod_{j=1}^m (x-\overline{a_j}), 
$$
where $p$ represents any polynomial of degree $\leq m-1$ (see e.g. \cite[Corollary 5.18]{GMR}). In particular, all elements of $K_\theta$ are rational functions.

Let $S(\tau)$  be the semigroup on $H^2(\bbC_+)$ defined by
\begin{equation}
(S(\tau)f)(x)=e^{i\tau x}f(x), \quad x\in\bbC_+, \quad \tau>0.
\label{z11}
\end{equation}
Further, let $P_\theta$ be the orthogonal projection onto $K_\theta$, and let 
$S_\theta(\tau)$ be Beurling-Lax semigroup on $K_\theta$, defined by 
\begin{equation}
S_\theta(\tau)f=P_\theta(S(\tau)f), \quad f\in K_\theta, \quad \tau>0
\label{z12}
\end{equation}
(see Section~\ref{sec.a} for a more detailed discussion). 
This is a strongly continuous contractive semigroup and therefore, 
by the theory of \cite{HillePh}, it can be written as 
\begin{equation}
S_\theta(\tau)=e^{i\tau A_\theta}, \quad \tau>0,
\label{z2a}
\end{equation}
where $A_\theta$ is the infinitesimal generator of $S_\theta(\tau)$. 
For a general inner function $\theta$, the operator $A_\theta$ may be unbounded, but under our assumption of rationality of $\theta$  the operator $A_\theta$ is easily seen to be bounded and of finite rank. Furthermore, it is dissipative, i.e. $\Im \langle A_\theta f, f\rangle \geq0$ for every $f$ in $K_\theta$. 

Let $p$ be a holomorphic function in $\bbC_+$. 
It is called an \emph{isometric multiplier} on $K_\theta$, 
if for every $f\in K_\theta$ we have $pf\in H^2(\bbC_+)$ 
and 
$$
\norm{pf}=\norm{f}, \quad \forall f\in K_\theta.
$$
In this case we can consider the subspace
$$
pK_\theta=\{pf: f\in K_\theta\}.
$$
We note that the choice of the parameters $p$, $\theta$ in the representation $pK_\theta$ for this subspace is not 
unique. In fact, if $p$ (resp. $\wt p$) is an isometric multiplier on 
$K_{\theta}$ (resp. on $K_{\wt\theta}$), then (see e.g. \cite[Theorem 10]{Crofoot})
$$
pK_{\theta}=\wt pK_{\wt\theta}
$$
if and only if for some $\abs{w}<1$ and some unimodular constants $e^{i\alpha}$, $e^{i\beta}$
we have 
\begin{equation}
\wt\theta=e^{i\alpha}\frac{w-\theta}{1-\overline{w}\theta}, 
\quad
\wt p=e^{i\beta}\frac{1-\overline{w}\theta}{\sqrt{1-\abs{w}^2}}p.
\label{z4}
\end{equation}
The transformation $\theta\mapsto \wt \theta$, $p\mapsto \wt p$ is called 
a \emph{Frostman shift}. 

\subsection{The Schmidt subspaces of Hankel operators}
The relevance of these objects to Hankel operators transpires from 
the following result:

\begin{theorem}\cite{GP2}\label{thm.a1}
Let $u\in \BMOA(\bbR)$ and let  $\lambda>0$ be a singular value of $H_u$. 
Then there exists 
an inner function $\psi$ and an isometric multiplier $p$ on $K_\psi$ such that 
the Schmidt subspace $E_{H_u}(\lambda)$ is represented as
$$
E_{H_u}(\lambda)=pK_\psi.
$$
Moreover,  there exists a unimodular constant $e^{i\alpha}$ such that
the action of $H_u$ on $E_{H_u}(\lambda)$ is given by 
\begin{equation}
H_u(ph)=\lambda e^{i\alpha}pH_\psi h, \quad h\in K_\psi.
\label{a16}
\end{equation}
In particular, by normalising $p$ and $\psi$ suitably (see \eqref{z4}), one can always achieve 
$e^{i\alpha}=1$.
\end{theorem}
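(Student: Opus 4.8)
The plan is to separate the statement into the \emph{geometric} part --- the representation $E:=E_{H_u}(\lambda)=pK_\psi$ --- and the \emph{dynamical} part, the action formula \eqref{a16}, and to extract first the algebraic structure that drives both. Since $H_u$ is anti-linear and $\lambda^2\in\bbR$, we have $H_u(H_u^2-\lambda^2I)=(H_u^2-\lambda^2I)H_u$, so $E$ is $H_u$-invariant; more, $H_u$ commutes with the spectral projection $P_E$ of the self-adjoint operator $H_u^2$, hence $H_u$ is block-diagonal for $H^2(\bbC_+)=E\oplus E^\perp$. From the anti-linear symmetry $\langle H_uf,g\rangle=\overline{\langle H_ug,f\rangle}$ one gets $\norm{H_uf}^2=\langle H_u^2f,f\rangle=\lambda^2\norm f^2$ on $E$, so $C:=\lambda^{-1}H_u|_E$ is an anti-linear isometric involution, i.e. a conjugation on $E$. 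The second ingredient is the Hankel commutation relation $H_uS(\tau)=S(\tau)^*H_u$ for every $\tau>0$; it follows from \eqref{z2} once one notes that multiplication by $e^{-i\tau x}$ (whose conjugate is $S(\tau)$) preserves $(H^2(\bbC_+))^\perp$, so $\bbP_+(e^{-i\tau x}\bbP_+g)=\bbP_+(e^{-i\tau x}g)$. Projecting the Hankel relation onto $E$ and using $H_uP_E=P_EH_u$ yields the key identity $CZ(\tau)C=Z(\tau)^*$ for the compressed shift $Z(\tau):=P_ES(\tau)|_E$ on $E$.

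The geometric part is the heart, and I expect it to be the main obstacle. One wants to recognise $E$, equipped with $(Z(\tau),C)$, as a de Branges--Rovnyak--type subspace of $H^2(\bbC_+)$: to produce an inner $\psi$ and an isometric multiplier $p$ on $K_\psi$ so that $p$ realises $K_\psi$ isometrically as $E$, carrying the Beurling--Lax data $(S_\psi(\tau),H_\psi|_{K_\psi})$ to $(Z(\tau),C)$. I would use the functional model: $Z(\tau)$ is a strongly continuous family of contractions with $Z(0)=I$ and $Z(\tau)\to0$ strongly as $\tau\to\infty$ (at least when $u$ is rational, where $E$ is finite-dimensional and $P_E$ compact; in general one first peels off the completely non-unitary part), so the position of $E$ inside the shift $(H^2(\bbC_+),S(\tau))$ is governed by a characteristic inner function $\psi$; equivalently one extracts $(\psi,p)$ from the behaviour of $E$ under division by Blaschke factors --- the continuous analogue of the theorems of Hitt and Sarason, reached via the Cayley transform. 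Concretely, for $c\in\bbC_+$ and $f\in E$ with $f(c)=0$, one computes, with $H_u(b_cf)=\bbP_+(\overline{b_c}\,H_uf)$ (the single-Blaschke-factor analogue of the commutation relation), where $b_c(x)=\frac{x-c}{x-\overline{c}}$, together with the block structure, that $H_u^2(f/b_c)-\lambda^2(f/b_c)$ lies in the one-dimensional space $\bbC\,H_uk_c$ ($k_c$ the reproducing kernel at $c$); organising these corrections as $c$ varies and Blaschke factors are peeled off produces $\psi$ and the extremal function $p$, and verifying that $p$ is a genuine \emph{isometric} multiplier (not merely a bounded one) on $K_\psi$ is the delicate point. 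An alternative, more computational route --- natural when $u$ is rational and parallel to \cite{OP2} --- is to evaluate the reproducing kernel $K^E_w=P_Ek_w=\frac1{2\pi i}\oint_{\abs{z-\lambda^2}=\eps}(z-H_u^2)^{-1}k_w\,dz$ using $H_uk_w=\frac{-i}{2\pi}\frac{u(x)-u(w)}{x-w}$, and to recognise the residue as $p(x)\overline{p(w)}$ times the reproducing kernel of a model space $K_\psi$.

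Granting $E=pK_\psi$, the action formula is proved by analysing the eigenfunction equation $\bbP_+(u\overline e)=\lambda e$ for $e$ in the $+1$-eigenspace of $C$ --- these $e$ are precisely the $\lambda$-eigenvectors of $H_u$ inside $E$, and their real span is all of $E$. Writing $e=ph$ with $h\in K_\psi$ and using that $h\in K_\psi$ forces $\overline h=\overline\psi(\psi\overline h)$ with $\psi\overline h\in K_\psi$, and comparing the relation for two such eigenfunctions $e_1=ph_1$, $e_2=ph_2$, one is led to the conclusion that each transported vector $p^{-1}e$ is an $H_\psi$-eigenvector in $K_\psi$ and that these eigenvalues are a common unimodular constant $e^{-i\alpha}$; since the $p^{-1}e$ span $K_\psi$ over $\bbR$, this gives $p^{-1}(\lambda^{-1}H_u|_E)p=e^{i\alpha}H_\psi|_{K_\psi}$, i.e. \eqref{a16}. (Here $h\mapsto\psi\overline h=H_\psi h$ is the canonical conjugation of $K_\psi$: if $h\in K_\psi$ then $\psi\overline h\in H^2(\bbC_+)$, $\psi\overline{\psi\overline h}=h$, and $\norm{\psi\overline h}=\norm h$.) The step forcing $p^{-1}e$ to be an $H_\psi$-eigenvector --- the point where Hankelness is genuinely used, as opposed to ``$E$ is merely a subspace carrying a conjugation'', and where the identity $CZ(\tau)C=Z(\tau)^*$ transported through $p$ (together with the standard fact that $H_\psi|_{K_\psi}$ likewise intertwines $S_\psi(\tau)$ with $S_\psi(\tau)^*$) enters --- is the subtler half of this paragraph; pinning down a single phase $e^{i\alpha}$ rather than an element of the commutant of $A_\psi$ is what requires care. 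Finally, replacing $p$ by a suitable unimodular multiple --- a Frostman shift \eqref{z4} with $w=0$, which changes neither $pK_\psi$ nor the isometric-multiplier property --- normalises $e^{i\alpha}=1$.
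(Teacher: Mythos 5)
The theorem in the statement is cited from \cite{GP2} and is not proved in this paper, so there is no internal proof to compare against. That said, your sketch is aligned in spirit with the actual argument of \cite{GP2}: exploiting the conjugation $C=\lambda^{-1}H_u|_E$, the projected Hankel commutation relation, and the theory of nearly $S^*$-invariant subspaces in the tradition of Hitt and Sarason. Your local computation is correct: for $f\in E$ with $f(c)=0$ and $g=f/b_c$, one does find $H_u g=b_cH_uf-2\pi i(c-\overline c)H_ug(c)v_c$ and hence $H_u^2g-\lambda^2g\in\bbC\cdot H_uv_c$; and you correctly flag the isometricity of $p$ and the extraction of a single phase $e^{i\alpha}$ as the delicate points.

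But what you present is a roadmap, not a proof, and the gaps are exactly where the theorem lives. First, converting the one-dimensional defect relation above into the structure $E=pK_\psi$ with an \emph{isometric} multiplier is the whole content: the relation you derive concerns $H_u^2$ acting on $H^2(\bbC_+)$ rather than the compressed shift $Z(\tau)=P_ES(\tau)|_E$ on $E$, so the hypotheses of a Hitt-type theorem are not verified as stated, and Hitt/Sarason give a priori only a contractive multiplier unless one supplies the extra argument forcing isometry; you acknowledge this is ``the delicate point'' without resolving it, and the restriction ``at least when $u$ is rational'' does not cover the stated $\BMOA$ hypothesis. Second, for the intertwining formula \eqref{a16}, the sentence ``comparing the relation for two such eigenfunctions $\dots$ one is led to the conclusion'' is a placeholder: the genuine content is that the conjugation $p^{-1}Cp$ on $K_\psi$ intertwines $S_\psi(\tau)$ with $S_\psi(\tau)^*$, and any two conjugations of $K_\psi$ with this property differ by a unimodular constant; that uniqueness statement is precisely what must be proved, and you have not proved it. A minor slip that does not propagate: the Hankel symmetry is $\langle H_uf,g\rangle=\langle H_ug,f\rangle$, without the complex conjugate you wrote; your deduction $\norm{H_uf}^2=\langle H_u^2f,f\rangle$ is nonetheless correct.
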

We note that $H_\psi$ acts on $K_\psi$ in a simple explicit way:
$$
H_\psi h=\psi\overline{h}, \quad h\in K_\psi,
$$
because $\psi\overline{h}\in H^2(\bbC_+)$  and so $\bbP_+(\psi\overline{h})=\psi\overline{h}$.

\subsection{Direct and inverse spectral problems for Hankel operators with rational symbols}
Let $u$ be a rational symbol, analytic in the upper half-plane. We normalise $u$ so that $u(\infty)=0$ (subtracting a constant from $u$ does not change the Hankel operator $H_u$). It follows that $u\in H^2(\bbC_+)$.

First note that we have the commutation relation 
\begin{equation}
S(\tau)^*H_u=H_u S(\tau), \quad \tau>0.
\label{z3}
\end{equation}
In fact, $H_u$ is a Hankel operator if and only if it satisfies this relation.
It follows from \eqref{z3} that the kernel of $H_u$ is an invariant subspace for $S(\tau)$
and therefore, by the Beurling-Lax theorem \cite{Lax},
$$
\text{either }\quad
\overline{\Ran H_u}=H^2(\bbC_+)
\quad \text{ or }\quad
\overline{\Ran H_u}=K_\theta
$$
for some inner function $\theta$. 
For rational symbols $u$ the range of $H_u$ is finite-dimensional, and so we have the second possibility here with some rational inner function $\theta$. Further, the range of $H_u$ is closed, so finally we have 
$$
\Ran H_u=K_\theta
$$
with some finite Blaschke product $\theta$. It will be convenient to normalise $\theta$ so that $\theta(\infty)=1$. Let $A_\theta$ be the corresponding infinitesimal generator as in \eqref{z2a}.

Let us denote the singular values of $H_u$ by $\lambda_1>\lambda_2>\dots>\lambda_N>0$; 
to each of these singular values there corresponds a Schmidt subspace $E_{H_u}(\lambda_n)$, 
which may have arbitrary finite dimension. According to Theorem~\ref{thm.a1}, 
we have
\begin{equation}
E_{H_u}(\lambda_n)=p_n K_{\psi_n}, \quad n=1,\dots,N,
\label{z10}
\end{equation}
where $\psi_1,\dots,\psi_N$ are finite Blaschke products in $\bbC_+$ 
and each $p_n$ is an isometric multiplier on $K_{\psi_n}$ (in fact, each $p_n$ is a rational function, see Section~\ref{sec.c2}). 
We will normalise each $\psi_n$ so that $\psi_n(\infty)=1$.

The ``direct spectral problem'' for $H_u$ is given by the following theorem.
\begin{theorem}\label{thm.z2}
Let $u$ be a rational function analytic in the closed upper half-plane
and going to zero at infinity. Then
\begin{itemize}
\item
For all $n$, the vector $u\in H^2(\bbC_+)$ is not orthogonal to $E_{H_u}(\lambda_n)$. 
\item
Denoting by $u_n$ the orthogonal projection of $u$ onto $E_{H_u}(\lambda_n)$, 
we have
$$
H_u u_n=\lambda_n e^{i\varphi_n} u_n
$$
for some unimodular constants $e^{i\varphi_n}$. 
\item
The numbers 
$$
\omega_n:=\jap{A_\theta u_n,u_n}, \quad n=1,\dots,N
$$
have strictly positive imaginary parts. 
\end{itemize}
\end{theorem}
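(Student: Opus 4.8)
The plan is to derive all three statements from a single rank‑one operator identity on $K_\theta=\Ran H_u$, the line analogue of the Gérard–Grellier identity, obtained by compressing the Hankel commutation relation \eqref{z3}. First I would check $u\in\Ran H_u$: if $f\in\Ker H_u$ then $\bbP_+(u\overline f)=0$, so the Fourier transform of the $L^1$ function $u\overline f$ is supported in $(-\infty,0)$ and, being continuous, vanishes at $0$; its value at $0$ is $\jap{u,f}$, so $u\perp\Ker H_u=(\Ran H_u)^\perp$. Since $u$ is rational, $\Ran H_u=K_\theta=\bigoplus_{n=1}^NE_n$ with $E_n:=E_{H_u}(\lambda_n)$, and $\Ker H_u=\theta H^2(\bbC_+)$; hence $u=\sum_nu_n$ with $u_n=P_{E_n}u$, and $\Gamma:=H_u|_{K_\theta}$ is an antilinear bijection of $K_\theta$ commuting with every $P_{E_n}$, with $\Gamma^2=H_u^2|_{K_\theta}\ge0$ and $\jap{\Gamma f,g}=\jap{\Gamma g,f}$.

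For the key identity, note that for $f\in K_\theta$ one has $S(\tau)f=S_\theta(\tau)f+w$ with $w\in\theta H^2(\bbC_+)=\Ker H_u$; since $S(\tau)^*$ preserves $K_\theta$, relation \eqref{z3} compresses to $S_\theta(\tau)^*\Gamma=\Gamma S_\theta(\tau)$ on $K_\theta$. Differentiating at $\tau=0^+$ and using antilinearity of $\Gamma$ gives $A_\theta^*\Gamma=\Gamma A_\theta$; composing with $\Gamma$ on each side, subtracting, and using antilinearity once more yields
\begin{equation}
H_u^2A_\theta-A_\theta^*H_u^2=2i\,\Gamma(\Im A_\theta)\Gamma\qquad\text{on }K_\theta .
\label{eqkey}
\end{equation}
Two facts turn \eqref{eqkey} into a rank‑one identity built from $u$. (a) $\Im A_\theta$ is rank one: writing $\jap{S_\theta(\tau)f,g}=\int_\bbR e^{i\tau x}f\overline g\,dx$ and isolating the $1/x$ tails of $f,g$ (the only source of a non‑smooth term at $\tau=0^+$) gives $\Im\jap{A_\theta f,f}=\pi\,\abs{\ell(f)}^2$ with $\ell(f):=\lim_{x\to+\infty}xf(x)$, so $\Im A_\theta=\pi\jap{\cdot,e}e$ where $e\in K_\theta$ represents the functional $\ell$, nonzero as $\theta$ is non‑constant. (b) $H_ue$ is a nonzero multiple of $u$: for rational $g\in H^2(\bbC_+)$ an integration by parts on the Fourier side gives $\ell(H_ug)=\lim_{x\to+\infty}x\,\bbP_+(u\overline g)(x)=\tfrac i{2\pi}\widehat{u\overline g}(0)=\tfrac i{2\pi}\jap{u,g}$, whence $\jap{H_ue,g}=\jap{H_ug,e}=\ell(H_ug)=\tfrac i{2\pi}\jap{u,g}$, i.e.\ $H_ue=\tfrac i{2\pi}u$. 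Substituting (a) and (b) into \eqref{eqkey} (using $\Gamma(\Im A_\theta)\Gamma=\pi\jap{\cdot,H_ue}H_ue$) produces
$$
H_u^2A_\theta-A_\theta^*H_u^2=\tfrac{i}{2\pi}\,\jap{\cdot,u}u\qquad\text{on }K_\theta .
$$

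Now the three conclusions. \emph{First bullet.} Suppose $u_n=0$, i.e.\ $u\perp E_n$, so $u\in F:=\bigoplus_{m\ne n}E_m$. Since $\Gamma$ maps $F$ onto $F$ and $H_ue=\tfrac i{2\pi}u\in F$, also $e\in F$; hence $\Im A_\theta$ annihilates $E_n$, so $A_\theta v=A_\theta^*v$ for $v\in E_n$. Restricting the identity to $E_n$, where its right‑hand side vanishes, gives $H_u^2A_\theta v=\lambda_n^2A_\theta^*v=\lambda_n^2A_\theta v$, so $A_\theta E_n\subseteq E_n$ and $\jap{A_\theta v,w}=\jap{v,A_\theta w}$ on $E_n$; thus $A_\theta|_{E_n}$ is self‑adjoint and $S_\theta(\tau)|_{E_n}$ is unitary on $E_n\ne\{0\}$, contradicting the strict contractivity $\norm{S_\theta(\tau)f}=\norm{P_\theta(e^{i\tau x}f)}<\norm f$ for $f\in K_\theta\setminus\{0\}$, $\tau>0$ (as $e^{i\tau x}f\notin K_\theta$). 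So $u_n\ne0$ for all $n$. \emph{Second bullet.} Pairing the identity with $v,w\in E_n$ and comparing the two resulting rank‑one forms shows $u_n$ is a nonzero multiple of $P_{E_n}e$; since $H_u$ commutes with $P_{E_n}$, $H_u(P_{E_n}e)=P_{E_n}H_ue=\tfrac i{2\pi}u_n$ is again a multiple of $P_{E_n}e$, and being an eigenvector of $H_u^2$ with eigenvalue $\lambda_n^2$ it satisfies $H_u(P_{E_n}e)=\lambda_ne^{i\delta_n}P_{E_n}e$; hence $H_uu_n=\lambda_ne^{i\varphi_n}u_n$. \emph{Third bullet.} $\Im\omega_n=\jap{\Im A_\theta u_n,u_n}=\pi\abs{\jap{u_n,e}}^2=\pi\abs{\jap{u_n,P_{E_n}e}}^2>0$, since $u_n$ is a nonzero multiple of $P_{E_n}e\ne0$.

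\emph{Main obstacle.} The perturbation arguments in the last step are short; the substance is the key identity, and within it fact (b), the proportionality $H_ue\propto u$. This is the precise point where the half‑line differs from the circle — there the analogue is the trivial $H_u\1=u$ with $\1\in H^2$, whereas here it lives in the asymptotics of $K_\theta$‑functions at infinity and needs the Fourier‑side integration by parts done carefully (and likewise the tail analysis behind fact (a)). An alternative route to the second bullet is to invoke Theorem~\ref{thm.a1}, write $u_n=p_nh_n$, and show $h_n$ is a fixed direction of the conjugation $h\mapsto\psi_n\overline h$ on $K_{\psi_n}$.
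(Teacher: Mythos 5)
Your proposal is correct and follows essentially the same route as the paper: the same commutation identity for $A_\theta$ and $H_u^2$, the same rank-one structure of $\Im A_\theta$, the same ``$H_u$ applied to the reproducing functional at infinity gives $u$'' fact, and the same use of complete non-selfadjointness of $A_\theta$ for the non-orthogonality. The differences are cosmetic rather than structural. You normalise the key vector as $e=-\tfrac{i}{2\pi}(1-\theta)$ rather than using $g=1-\theta$ directly (so your $H_ue=\tfrac{i}{2\pi}u$ is the paper's $H_u(1-\theta)=u$ after the antilinear constant is pulled out), and your identity
\[
H_u^2A_\theta-A_\theta^*H_u^2=\tfrac{i}{2\pi}\jap{\cdot,u}u
\]
is obtained from the paper's two-term commutator
\[
A_\theta H_u^2-H_u^2A_\theta=\tfrac{i}{2\pi}\jap{\cdot,H_uu}(1-\theta)-\tfrac{i}{2\pi}\jap{\cdot,u}u
\]
simply by substituting $A_\theta^*=A_\theta-2i\,\Im A_\theta$ and using $H_u^2(1-\theta)=H_uu$; the one-term version is arguably cleaner to use in the three bullets. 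Your derivations of the ingredients ($\Im\jap{A_\theta f,f}=\pi\abs{\ell(f)}^2$ via the $1/x$-tail of $\jap{S_\theta(\tau)f,f}$, and $H_ue\propto u$ via Fourier asymptotics of $\bbP_+(u\overline g)$) reprove from scratch what the paper isolates as Lemma~\ref{lma.a2} and Lemma~\ref{lma.a5}, and the step ``$u\in\Ran H_u$'' is handled by a Fourier-support argument rather than the paper's reproducing-kernel limit $\slim_{\zeta}2\pi i\zeta H_uv_\zeta=-u$; these are valid alternatives but not a different method. Your second-bullet argument (matching two rank-one quadratic forms on $E_n$) and the paper's (comparing codimension-one subspaces $E_j\cap u_j^\perp$ and $E_j\cap (H_uu_j)^\perp$) are two phrasings of the same rank-one computation. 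The only place I would ask you to be more careful is the claimed strict contractivity $\norm{S_\theta(\tau)f}<\norm f$ for $f\in K_\theta\setminus\{0\}$: this does hold here because $K_\theta$ consists of rational functions and $e^{i\tau x}f$ is not rational, so $e^{i\tau x}f\notin K_\theta$; but it is cleaner, and matches the paper, to deduce the contradiction from the fact that $S_\theta(\tau)^*f=S(\tau)^*f\to0$, which rules out any nontrivial invariant subspace on which $A_\theta$ is self-adjoint (Lemma~\ref{lma.nsa}).
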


Next, we introduce the spectral data corresponding to a rational symbol $u$: 
\begin{equation}
\biggl(\{\lambda_n\}_{n=1}^N, \{\psi_n\}_{n=1}^N, \{e^{i\varphi_n}\}_{n=1}^N, 
\{\omega_n\}_{n=1}^N\biggr). 
\label{z8}
\end{equation}
Here $\lambda_n$, $e^{i\varphi_n}$ and $\omega_n$ are defined in the previous theorem and $\psi_n$ are defined in \eqref{z10}. According to the discussion of Section~\ref{z2}, the inner functions $\psi_n$ are defined only up to Frostman shifts. So it would be more precise to say that the spectral data contains the orbits of $\psi_n$ under all Frostman shifts, but for notational convenience we will be talking about representatives $\psi_n$ of these orbits. Moreover, it will turn out that our solution to the inverse problem is independent of the choice of a representative.

Now we can state, somewhat informally, one of our main results; the precise statements are Theorems~\ref{thm.uniq} and \ref{thm.c2} below. 
\begin{theorem}\label{thm.z3}
\begin{enumerate}[\rm (i)]
\item
Uniqueness: 
a rational symbol $u$ is uniquely determined by the spectral data \eqref{z8}.
In fact, $u$ is given by an explicit formula \eqref{b25} below.
\item
Surjectivity:
let $N\in \bbN$ and let 
\begin{itemize}
\item
$\lambda_1>\dots>\lambda_N>0$ be any positive real numbers, 
\item
$\psi_1,\dots\psi_N$ be any finite Blaschke products,
\item
$e^{i\varphi_1},\dots,e^{i\varphi_N}$ be any unimodular complex numbers,
\item
$\omega_1,\dots,\omega_N$ be any complex numbers with positive imaginary parts. 
\end{itemize}
Then there exists a rational symbol $u$ such that $H_u$ corresponds to the 
spectral data \eqref{z8}. 
\end{enumerate}
\end{theorem}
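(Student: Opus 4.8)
The plan is to prove (i) and (ii) together, both revolving around an explicit Cauchy-type formula (this is~\eqref{b25}) that expresses $u$ through the spectral data. All the analysis takes place in the model space $\mathcal K=\bigoplus_{n=1}^N K_{\psi_n}$, equipped with the antilinear operator $\Gamma=\bigoplus_n\lambda_n e^{i\varphi_n}H_{\psi_n}$, for which $\Gamma^2=\bigoplus_n\lambda_n^2 I_{K_{\psi_n}}$. If $u$ is a rational symbol with spectral data~\eqref{z8}, then Theorem~\ref{thm.a1}, after the normalisation $e^{i\alpha}=1$ in~\eqref{a16}, supplies isometries $p_n\colon K_{\psi_n}\to E_{H_u}(\lambda_n)$ that combine into a unitary $U\colon\mathcal K\to K_\theta=\overline{\Ran H_u}$ intertwining $\Gamma$ with $H_u|_{K_\theta}$; pulling $A_\theta$ back through $U$ gives a bounded dissipative operator $\mathcal A:=U^*A_\theta U$ on $\mathcal K$, while $\mathbf u:=U^*u$ has components $\mathbf u_n\in K_{\psi_n}$ with $H_{\psi_n}\mathbf u_n=e^{i\varphi_n}\mathbf u_n$ and $\langle\mathcal A\mathbf u_n,\mathbf u_n\rangle=\omega_n$.

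For uniqueness I would use two structural inputs. First, $u\in K_\theta$ and $u=\sum_nu_n$ (because $\Ker H_u=\theta H^2$ and the $E_{H_u}(\lambda_n)$ span $K_\theta$ by the spectral theorem for $H_u^2$), so $\mathbf u$ is determined by $u$; and $u$ is a cyclic vector for $\mathcal A$, an amplification of the non-orthogonality statement in Theorem~\ref{thm.z2}. Second, differentiating the Hankel relation~\eqref{z3} on $K_\theta$ yields $A_\theta^*H_u=H_uA_\theta$, i.e. $\mathcal A^*\Gamma=\Gamma\mathcal A$, and, since $\theta$ is a finite Blaschke product with $\theta(\infty)=1$, the imaginary part $\Im A_\theta$ is rank one, so $\mathcal A$ is selfadjoint plus a rank-one dissipative term. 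The point is then that these constraints together with the data determine $\mathcal A$ and $\mathbf u$: the blocks $\psi_n,\lambda_n,e^{i\varphi_n}$ fix $\Gamma$, the real subspaces in which the $\mathbf u_n$ live, and the ``model part'' $A_{\psi_n}$ of the diagonal blocks of $\mathcal A$; the intertwining $\mathcal A^*\Gamma=\Gamma\mathcal A$ ties each off-diagonal block $\mathcal A_{mn}$ to $\mathcal A_{nm}$; and the rank-one constraint on $\Im\mathcal A$ together with the cyclicity of $\mathbf u$ reduce the leftover freedom to finitely many scalars, pinned down by the $\omega_n$ and the $e^{i\varphi_n}$. Once $(\mathcal A,\mathbf u)$ has been recovered, $u(z)$ is given by~\eqref{b25}, a resolvent formula of the form $u(z)=\langle(zI-\mathcal A^*)^{-1}\cdot,\cdot\rangle$ with vectors assembled from $\mathbf u$ and the rank-one direction of $\Im\mathcal A$; since its right-hand side depends only on the data, so does $u$. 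One must also check that replacing each $\psi_n$ by a Frostman shift~\eqref{z4} leaves the reconstructed $u$ unchanged, tracking via Crofoot's theorem how $\mathbf u_n$, $\mathcal A$ and the rank-one direction transform.

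For surjectivity I would reverse the construction. Given arbitrary $\lambda_n,\psi_n,e^{i\varphi_n},\omega_n$, form $\mathcal K$ and $\Gamma$; since $e^{-i\varphi_n}H_{\psi_n}$ is an antilinear involution on $K_{\psi_n}$, its fixed-point set (the $e^{i\varphi_n}$-eigenspace of $H_{\psi_n}$) is a real subspace of dimension $\deg\psi_n$, and I pick $\mathbf u_n$ in it, its one remaining scale being fixed later by the moment condition. Then I build $\mathcal A$: choose the diagonal blocks as rank-one dissipative perturbations of the $A_{\psi_n}$ realising $\langle\mathcal A\mathbf u_n,\mathbf u_n\rangle=\omega_n$ (admissibility here uses $\Im\omega_n>0$), combine the imaginary parts into a single rank-one term, and fill in the off-diagonal blocks so as to enforce $\mathcal A^*\Gamma=\Gamma\mathcal A$. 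Finally, \emph{define} $u$ by~\eqref{b25} and verify: (a) $u$ is rational, analytic in $\overline{\bbC_+}$, $u(\infty)=0$, which follows because $\mathcal A$ is a finite matrix whose spectrum lies in $\overline{\bbC_+}$ with no real eigenvalue (a consequence of cyclicity of $\mathbf u$ and the rank-one dissipative structure); (b) the natural unitary $\mathcal K\to\overline{\Ran H_u}$ conjugates $H_u$ into $\Gamma$, proven by checking~\eqref{z3} at the level of generators, which is exactly $\mathcal A^*\Gamma=\Gamma\mathcal A$; (c) the resulting spectral data is precisely~\eqref{z8} --- in particular $H_u^2$ has eigenvalues exactly $\lambda_n^2$ with $E_{H_u}(\lambda_n)=p_nK_{\psi_n}$, so no spurious singular values and the correct multiplicities, the phases and moments come out right, and $\overline{\Ran H_u}=K_\theta$.

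The main obstacle in both directions is the off-diagonal structure of $\mathcal A$: showing in (i) that the Hankel intertwining plus the rank-one imaginary part rigidly determine it from the data, and in (ii) that the $\mathcal A$ one assembles really produces a symbol whose Schmidt subspaces are the prescribed $p_nK_{\psi_n}$ and nothing else. The cyclicity of $\mathbf u$ --- equivalently, Theorem~\ref{thm.z2} together with $u$ generating $K_\theta$ under $A_\theta$ --- is the linchpin that makes the rank-one structure usable and excludes extra spectral data, and the Frostman-shift ambiguity~\eqref{z4} is where most of the bookkeeping goes.
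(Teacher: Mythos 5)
Your high-level scaffolding matches the paper's: pull back to $\mathcal K=\bigoplus K_{\psi_n}$, exploit the intertwining $A_\theta^*H_u=H_uA_\theta$, the rank-one dissipativity of $\Im A_\theta$, and cyclicity of the vector assembled from the projections of $u$. Formula \eqref{b25} is indeed the engine of both halves. But the proposal asserts, without proof, precisely the technical content that the paper spends Sections~5--7 establishing, and the two hardest steps are not merely omitted details but genuinely nonobvious.

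First, in the uniqueness half, the sentence ``the rank-one constraint on $\Im\mathcal A$ together with the cyclicity of $\mathbf u$ reduce the leftover freedom to finitely many scalars, pinned down by the $\omega_n$ and the $e^{i\varphi_n}$'' is exactly the theorem to be proved, not a consequence of prior facts. The paper does not try to recover the whole operator $\mathcal A$ on $\mathcal K$ (dimension $\sum\deg\psi_n$); instead it compresses to an $N\times N$ matrix $Q(x)$ indexed by the \emph{singular values}. This compression is legitimate because (a) the off-diagonal blocks $P_kA_\theta P_j$, $j\ne k$, are rank one with range spanned by $g_k=P_k(1-\theta)$ and co-range by $g_j$, with the explicit formula \eqref{b6} obtained from the \emph{second} commutator identity \eqref{a13} (not the one you invoke); and (b) the diagonal information needed is only the scalar resolvent $R_k(x)=\langle(P_kA_\theta^*P_k-x)^{-1}g_k,g_k\rangle$, which Theorem~\ref{lma.a4} reduces to $\psi_k$ and one constant. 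Since $u$ and $1-\theta$ both lie in $\Span\{g_j\}$, the $N\times N$ matrix $Q(x)$ captures the resolvent entries that enter \eqref{b25}. None of this structure appears in the proposal, and without it the claim that the data pin down the reconstruction is unsupported.

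Second, in the surjectivity half, ``fill in the off-diagonal blocks so as to enforce $\mathcal A^*\Gamma=\Gamma\mathcal A$'' and then ``define $u$ by \eqref{b25} and verify'' hides three separate hard verifications, each with its own argument in the paper: (i) that the resolvent entering \eqref{b25} has no poles in $\overline{\bbC_+}$ --- proved via Lemma~\ref{lma.b5}, a careful compactness argument over submatrices, not merely ``cyclicity plus rank one''; (ii) that the functions $p_n$ assembled from $Q(x)^{-1}$ are \emph{isometric} multipliers on $K_{\psi_n}$ --- the Sarason-style Lemma~\ref{lma.c11} involving a completely non-unitary contraction, which is the crux of the multiplicity-$>1$ case and distinguishes this paper from \cite{OP2}; and (iii) that $H_u$ has no singular values other than $\lambda_1,\dots,\lambda_N$ and that $\Ran H_u=\bigoplus p_nK_{\psi_n}$, which requires identifying $\theta$ and tracking the cyclic span (Lemmas~\ref{lma.c13}, \ref{lma.c14}). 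Declaring these as ``check (a), (b), (c)'' without indicating how each would go leaves the proof incomplete where it matters most.

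In short: the skeleton is right and consistent with the paper, but the $N\times N$ reduction to $Q(x)$, the explicit off-diagonal formula from \eqref{a13}, and the Sarason-type isometricity argument are the actual proof, and all three are missing.
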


If all singular values $\lambda_n$ are simple, we recover the result of Pocovnicu \cite{OP2}. 
In this case the spectral data do not contain inner functions $\psi_n$. 
Let us explain this. 
If $\lambda_n$ is simple, then $\psi_n$ is a single Blaschke factor
$$
\psi_n(x)=\frac{x-a_n}{x-\overline{a_n}}, \quad a_n\in \bbC_+.
$$
By using a Frostman shift, $a_n$ can be changed to any given number in $\bbC_+$. 
In other words, in this case the orbit of $\psi_n$ by the action of Frostman shifts 
consists of all single Blaschke factors
and so this orbit does not contain any information apart from the fact that the dimension of the corresponding Schmidt subspace is one. 
In Section~\ref{sec.5.9} we give a more detailed comparison with the spectral data of \cite{OP2}.

Observe that the isometric multipliers $p_n$ are not part of our spectral data; in fact, they can be explicitly determined from the spectral data, see Section~\ref{sec.b4}. 

\subsection{The unit circle case}\label{sec.uc}
The Szeg\H{o} equation \eqref{z1}
was originally introduced in  \cite{GG1,GG2}, 
where it was considered for functions $u$ defined on the unit circle; 
$u$ was assumed to be in a suitable Sobolev subspace of the Hardy space $H^2(\bbD)$, where $\bbD$ is the unit disk.
In this context, $\bbP_+$ becomes the orthogonal projection in $L^2(\bbT)$ onto 
$H^2(\bbD)$, often called the Szeg\H{o} projection (hence the name for the equation). 
In fact, \cite{GG1,GG2} provided the blueprint for the study of the Lax pair structure of \cite{OP1,OP2}. 
More precisely, in the unit circle case the Szeg\H{o} equation is completely integrable and possesses a Lax pair, 
which involves a Hankel operator  $H_u$ in $H^2(\bbD)$. 
Solving the equation reduces to a solution of a direct and inverse spectral problem for $H_u$. 
Despite many similarities, we would like to stress
some important differences between the unit circle and the real line cases:
\begin{itemize}
\item
The choice of the spectral data in the unit circle case is very different. 
It involves introducing an auxiliary Hankel operator (denoted by $K_u$ in \cite{GG2})
and looking at its singular values and Schmidt subspaces. 
\item
The property $u\not\perp E_{H_u}(\lambda_n)$ of Theorem~\ref{thm.z2} 
is \emph{false} in the unit circle case! 
Roughly speaking, in the unit circle case for about half of the singular values (the ones termed \emph{$K$-dominant} in \cite{GG2})
we have $u\perp E_{H_u}(\lambda_n)$. 
\end{itemize}

Our proof of the second part of Theorem~\ref{thm.z3} is strongly inspired 
by \cite{GP} where a new ``algebraic'' approach to the inverse spectral problem
on the unit circle was developed. 

\subsection{The structure of the paper}
In Section~\ref{sec.a}, we recall various well-known facts from the theory of 
model spaces, focussing on $K_\theta$ with rational $\theta$. 
In Section~\ref{sec.a00}, we consider the set-up
$$
pK_\psi\subset K_\theta,
$$
where $p$ is an isometric multiplier on $K_\psi$ 
and derive identities relating the infinitesimal generators $A_\theta$
and $A_\psi$. 
Although model spaces is a well studied subject, some of our results here appear to be new. 

In Section~\ref{sec.aa}, we introduce Hankel operators and prove Theorem~\ref{thm.z2}. 
The key ingredient here is the commutation relation (recall that $\theta(\infty)=1$)
$$
A_\theta H_u^2-H_u^2 A_\theta
=\frac{i}{2\pi} \langle \cdot, H_u u\rangle (1-\theta)-\frac{i}{2\pi}\langle \cdot,u\rangle u\, .
$$
In Section~\ref{sec.b}, we prove the uniqueness part of Theorem~\ref{thm.z3}
by giving an explicit expression for $u(x)$ in terms of the spectral data. 
Our starting point is the formula
$$
u(x)=\jap{(A_\theta^*-x)^{-1}u,1-\theta}, \quad \Im x>0,
$$ 
which follows directly from the fact that $1-\theta$ is the ``reproducing
kernel of $K_\theta$ at infinity'' (see Lemma~\ref{lma.a1b} below). 
We then consider the action of $A_\theta$ on the model space 
$K_\theta=\Ran H_u$, represented as the orthogonal sum
\begin{equation}
K_\theta=\bigoplus_{n=1}^N p_n K_{\psi_n}.
\label{z9}
\end{equation}
We use the results of Section~\ref{sec.a00} and the above commutation relation 
for $A_\theta$ to show that 
$A_\theta$ has a rather special block-matrix structure in this
representation. Using this block-matrix structure, we express the resolvent $(A_\theta^*-x)^{-1}$ in terms of our spectral data. 

In Sections~\ref{sec.c} and ~\ref{proof6.2}, we prove the last part of Theorem~\ref{thm.z3}. 
Here we use the ``algebraic'' approach of \cite{GP}. Namely, we take $u$ given 
by the explicit expression established at the previous step of the proof and check 
directly that the corresponding operator $H_u$ has the ``correct'' spectral data. 
An important step in the proof is checking that the functions $p_n$, given by 
certain explicit matrix formulas, are isometric multipliers on $K_{\psi_n}$. 
Here we are guided by intuition coming from Sarason's work \cite{Sarason}, 
which gives a general representation formula for all isometric multipliers on 
a given model space. Lemma~\ref{lma.c11} provides a partial extension of this 
formula to the matrix case.

Section~\ref{evol}  is devoted to describing the evolution of the spectral data \eqref{z8} of a solution of the cubic Szeg\H{o} equation \eqref{z1}. Finally, in Section~\ref{growth}, we combine the previous results to prove Theorem~\ref{genericgrowth}.

\section{Model spaces}\label{sec.a}

Almost all of this section is either well known or folklore; see e.g. the monograph \cite{GMR}.  Some of the facts that we need are easy to find in the literature but for the case of the Hardy spaces on the unit disk rather than the upper half-plane. In any case, for completeness we present all necessary statements with simple proofs. 

\subsection{Model spaces}\label{sec.a0}
In what follows we denote by $H^2$ the standard Hardy class in the upper half-plane $\bbC_+$.
Let $v_\zeta$, $\zeta\in \bbC_+$, be the reproducing kernel in $H^2$, 
$$
v_\zeta(x)=\frac1{2\pi i}\frac{1}{\overline{\zeta}-x},
$$
so that $f(\zeta )=\jap{f,v_\zeta}$ for every $f\in H^2$.
Let $\theta$ be a finite Blaschke product in $\bbC_+$ and let $K_\theta$ be the corresponding model space 
$$
K_\theta=H^2\cap (\theta H^2)^\perp.
$$
Further, let $P_\theta$ the orthogonal projection in $H^2$ onto $K_\theta$; it is straightforward to see (see e.g. \cite[Proposition 5.14]{GMR}) that  $P_{\theta}$  is given by 
\begin{equation}
P_\theta: f\mapsto f-\theta\bbP_+(\overline{\theta}f), \quad f\in H^2.
\label{a0b}
\end{equation}
Let $S(\tau)$ and $S_\theta(\tau)$ be the semigroups \eqref{z11}, \eqref{z12}, and let $A_\theta$ be the infinitesimal generator of $S_\theta(\tau)$ as in \eqref{z2a}. 
For $\zeta\in \bbC_+$ the following resolvents
are well-defined and bounded on $K_\theta$: 
\begin{align*}
(A_\theta-\overline{\zeta})^{-1}&=-i\int_0^\infty 
S_\theta(\lambda)e^{-i\lambda\overline{\zeta}}d\lambda,
\\
(A_\theta^*-\zeta)^{-1}&=i\int_0^\infty 
S_\theta(\lambda)^*e^{i\lambda\zeta}d\lambda, 
\end{align*}
see e.g. \cite{HillePh}.
Using the definition of $S_\theta$ and computing the integrals, we see that these resolvents can be expressed as
\begin{align}
(A_\theta-\overline{\zeta})^{-1}f&
=
-2\pi i P_\theta(fv_\zeta),
\label{a0c}
\\
(A_\theta^*-\zeta)^{-1}f&
=
2\pi i\bbP_+(f\overline{v_\zeta})
=
\frac{f(x)-f(\zeta)}{x-\zeta}.
\label{a0d}
\end{align}

\begin{lemma}\label{lma.nsa}
The operator $A_\theta$ is completely non-selfadjoint, i.e. there is no 
non-trivial subspace $N\subset K_\theta$, invariant for $A_\theta$ where $A_\theta$ is 
self-adjoint. 
\end{lemma}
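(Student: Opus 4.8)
The plan is to argue by contradiction: suppose $N\subset K_\theta$ is a nonzero subspace, invariant under $A_\theta$, on which $A_\theta$ is selfadjoint. Since $K_\theta$ is finite-dimensional (as $\theta$ is a finite Blaschke product) there is no loss in assuming $N\neq\{0\}$ contains an eigenvector: because $A_\theta|_N$ is selfadjoint on the finite-dimensional space $N$, it has a real eigenvalue $\mu\in\bbR$ with eigenvector $0\neq f\in N\subset K_\theta$, i.e. $A_\theta f=\mu f$. The strategy is then to show that a real eigenvalue of $A_\theta$ is impossible, which immediately yields the contradiction.

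First I would exploit the semigroup relation $S_\theta(\tau)=e^{i\tau A_\theta}$ of \eqref{z2a}. If $A_\theta f=\mu f$ with $\mu$ real, then $S_\theta(\tau)f=e^{i\tau\mu}f$ for all $\tau>0$, so in particular $\norm{S_\theta(\tau)f}=\norm{f}$. Using the explicit description \eqref{z12}, namely $S_\theta(\tau)f=P_\theta(e^{i\tau x}f(x))$, and the fact that multiplication by $e^{i\tau x}$ is an isometry on $H^2$, the Pythagorean identity gives $\norm{e^{i\tau x}f}^2=\norm{P_\theta(e^{i\tau x}f)}^2+\norm{(I-P_\theta)(e^{i\tau x}f)}^2$, hence $\norm{(I-P_\theta)(e^{i\tau x}f)}=0$ for every $\tau>0$. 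In other words $e^{i\tau x}f\in K_\theta$ for all $\tau>0$, i.e. the subspace spanned by $f$ is invariant under every $S(\tau)$. But $\{S(\tau)\}_{\tau>0}$ has no nonzero finite-dimensional invariant subspace in $H^2$: if $e^{i\tau x}f=c(\tau)f$ for scalars $c(\tau)$, then comparing behaviour as $x\to\infty$ along the imaginary axis (where $e^{i\tau x}\to0$ for $\tau>0$) forces $f\equiv0$, a contradiction. Equivalently, one may note $S(\tau)$ has no eigenvalues, or invoke the Beurling--Lax description: the only $S(\tau)$-invariant subspaces are the $\{0\}$, the whole space, and shift-invariant subspaces $\theta' H^2$, none of which is a nonzero finite-dimensional subspace of $H^2$.

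Alternatively — and perhaps more cleanly — I would use dissipativity, which is already recorded in the excerpt: $A_\theta$ is dissipative, $\Im\jap{A_\theta g,g}\geq0$ for all $g\in K_\theta$. If $N$ is invariant and $A_\theta|_N$ is selfadjoint, then for $g\in N$ both $\Im\jap{A_\theta g,g}\geq0$ and $\Im\jap{A_\theta g,g}=\Im(\text{real})=0$. So the real part of the generator vanishes on $N$; combined with the formula for $A_\theta$ (or with \eqref{a0d}, writing the resolvent explicitly), the imaginary part of $A_\theta$ is a rank-controlled nonnegative operator whose kernel would have to contain $N$, and one checks this kernel is trivial because the associated quadratic form $\Im\jap{A_\theta g,g}$ vanishes only at $g=0$ — this last point reduces again to the fact that $e^{i\tau x}g\in K_\theta$ for all $\tau$ forces $g=0$. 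The main obstacle is precisely this final nondegeneracy step: showing $\Im\jap{A_\theta g,g}=0\Rightarrow g=0$, which is where one must actually use the structure of model spaces (finite Blaschke product, no nonzero $S(\tau)$-invariant finite-dimensional subspaces) rather than soft semigroup theory. Everything else is bookkeeping around \eqref{z2a} and \eqref{z12}.
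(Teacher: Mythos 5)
Your first approach is correct and self-contained, but it differs from the paper's. The paper works purely at the semigroup level: for any $f\in H^2$ one has $\norm{S(\tau)^*f}\to0$ as $\tau\to\infty$, and since $K_\theta$ is $S(\tau)^*$-invariant, $S_\theta(\tau)^*=S(\tau)^*|_{K_\theta}$, so the cogenerator semigroup decays strongly to zero; this makes $S_\theta(\tau)^*$ completely non-unitary, and complete non-selfadjointness of $A_\theta$ follows by the standard semigroup-to-generator correspondence. That argument is dimension-free and works for an arbitrary inner $\theta$. Your route exploits finite-dimensionality to extract a real eigenvalue $\mu$ with eigenvector $f$, uses the isometry of $S(\tau)$ on $H^2$ together with Pythagoras to force $e^{i\tau x}f\in K_\theta$, and then uses $S_\theta(\tau)f=e^{i\tau\mu}f$ to conclude $S(\tau)f=e^{i\tau\mu}f$, impossible since the translation semigroup has no eigenvectors. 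This is more hands-on and perfectly adequate in the finite-Blaschke setting. One phrasing point: $e^{i\tau x}f\in K_\theta$ does not by itself mean $\mathrm{span}\{f\}$ is $S(\tau)$-invariant; you should say explicitly that $e^{i\tau x}f\in K_\theta$ makes $P_\theta$ act as the identity, whence $e^{i\tau x}f=P_\theta(e^{i\tau x}f)=e^{i\tau\mu}f$.

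Your ``alternative, perhaps cleaner'' dissipativity route has a genuine gap. You hope that $\Im\jap{A_\theta g,g}=0$ forces $g=0$, but this is false: by \eqref{a5}, $\Im A_\theta=\tfrac{1}{4\pi}\jap{\cdot,1-\theta}(1-\theta)$ is rank one, so $\Im\jap{A_\theta g,g}=\tfrac{1}{4\pi}\abs{\jap{g,1-\theta}}^2$ vanishes on the entire $(\dim K_\theta-1)$-dimensional subspace $(1-\theta)^\perp\cap K_\theta$. The dissipativity route can be repaired, but not by nondegeneracy of $\Im A_\theta$: from $N\subset\Ker(\Im A_\theta)$ one gets $A_\theta^*=A_\theta$ on $N$, hence $N$ reduces $A_\theta$, and a real eigenvector satisfies $A_\theta^*f=\mu f$, which via \eqref{a4} gives $f(x)=\Lambda_1(f)/(x-\mu)$, a function with a real pole and therefore not in $H^2$ unless $f\equiv0$. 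The alleged nondegeneracy of the quadratic form is exactly where this second plan, as you wrote it, breaks.
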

\begin{proof}
From the definition of $S(\tau)$ it follows that 
$$
\lim_{\tau\to\infty}\norm{S (\tau)^*f}=0
$$
for any $f\in H^2$. Since $\theta H^2$ is invariant under $S(\tau)$, the model space $K_\theta$ is invariant under $S(\tau)^*$ and therefore $S_\theta(\tau)^*f=S(\tau)^*f$ for any $f\in K_\theta$. Thus we also have 
$$
\lim_{\tau\to\infty}\norm{S_\theta(\tau)^*f}=0
$$
for any $f\in K_\theta$.
It follows that $S_\theta(\tau)^*$ is completely non-unitary on $K_\theta$, i.e. there 
is no non-trivial subspace $N\subset K_\theta$, invariant for $S_\theta(\tau)^*$ for all $\tau>0$
and such that  $S_\theta(\tau)^*$ is unitary on $N$ for all $\tau>0$.
From here we get the claim.  
\end{proof}

We write $\theta(\infty)=\lim\limits_{\abs{x}\to\infty}\theta(x)$; clearly we have $\abs{\theta(\infty)}=1$. In what follows, we normalise $\theta$ so that $\theta(\infty)=1$.

\begin{lemma}\label{lma.a1a}
Let $\theta$ be a finite Blaschke product and $\theta(\infty)=1$. 
Then $1-\theta\in K_\theta$. 
\end{lemma}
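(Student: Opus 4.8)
The plan is to show $1-\theta\in K_\theta$ by verifying the two defining conditions for membership in $K_\theta$: namely that $1-\theta\in H^2(\bbC_+)$, and that $\theta\overline{(1-\theta)}\in H^2(\bbC_+)$ (recall from the discussion after the definition of $K_\theta$ that $f\in H^2$ belongs to $K_\theta$ iff $\theta\overline{f}\in H^2$).

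First I would address the $H^2$-membership of $1-\theta$. The constant function $1$ is not in $H^2(\bbC_+)$, and neither is $\theta$, but their difference should be, precisely because of the normalisation $\theta(\infty)=1$. Concretely, writing $\theta$ as the finite Blaschke product \eqref{z10a} with $\theta(\infty)=1$, the function $1-\theta(x)$ is rational, has no poles in $\overline{\bbC_+}$ (its only poles are at the $\overline{a_j}$, which lie in the lower half-plane), and decays like $O(1/x)$ as $\abs{x}\to\infty$ since $\theta(x)\to 1$. Hence $1-\theta\in L^2(\bbR)$ and extends holomorphically and boundedly to $\bbC_+$ with the right decay, so $1-\theta\in H^2(\bbC_+)$. (Equivalently, one can expand $1-\theta$ in partial fractions and observe each term is a multiple of a reproducing kernel $v_{\overline{a_j}}$-type function.)

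Next I would check $\theta\overline{(1-\theta)}\in H^2$. On the real line $\abs{\theta}=1$, so $\theta\overline{\theta}=1$ a.e. on $\bbR$, and therefore $\theta\overline{(1-\theta)}=\theta(\overline{1}-\overline{\theta})=\theta-\theta\overline{\theta}=\theta-1=-(1-\theta)$ as boundary functions on $\bbR$. Since we have just shown $1-\theta\in H^2$, it follows that $\theta\overline{(1-\theta)}=-(1-\theta)\in H^2$ as well. This establishes both conditions, hence $1-\theta\in K_\theta$.

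I do not expect a serious obstacle here; the only subtlety is being careful that the identity $\theta\overline{f}=-(1-\theta)$ is an identity of boundary values on $\bbR$ (which is all that is needed, since membership in $H^2$ is tested via boundary values), rather than an identity of functions on $\bbC_+$. An alternative, slightly more conceptual route is to note that $v_\zeta-\theta(\zeta)\,\overline{\theta}\,v_\zeta$-style reproducing kernels $k^\theta_\zeta = v_\zeta - \overline{\theta(\zeta)}\theta v_\zeta$ lie in $K_\theta$, and that $1-\theta$ is the limit (suitably normalised) of such kernels as $\zeta\to\infty$; this is the "reproducing kernel of $K_\theta$ at infinity" interpretation alluded to in the introduction and in Lemma~\ref{lma.a1b}. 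But the direct two-condition verification above is the shortest and I would present that.
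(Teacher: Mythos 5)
Your proof is correct. It differs from the paper's only in which characterisation of $K_\theta$ it verifies: the paper shows directly that $\jap{\theta h,1-\theta}=\jap{h,\overline{\theta}(1-\theta)}=\jap{h,\overline{\theta}-1}=0$ for all $h\in H^2$, i.e. orthogonality to $\theta H^2$, while you use the equivalent criterion ``$f\in K_\theta$ iff $\theta\overline{f}\in H^2$'' and compute $\theta\overline{(1-\theta)}=-(1-\theta)$ on $\bbR$. Both computations hinge on the same identity $\theta\overline{\theta}=1$ a.e.\ on $\bbR$, so the underlying idea is identical; your version is marginally more self-contained, since you also verify explicitly that $1-\theta\in H^2$ (from rationality, absence of poles in $\overline{\bbC_+}$, and the $O(1/x)$ decay forced by $\theta(\infty)=1$), a point the paper's proof leaves implicit. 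Your caveat about working with boundary values on $\bbR$ rather than with the holomorphic extension of $\overline{(1-\theta)}$ to $\bbC_+$ is exactly the right thing to flag.
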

\begin{proof}
For $h\in H^2$, we have
$$
\jap{\theta h,1-\theta}
=
\jap{h,\overline{\theta}(1-\theta)}
=
\jap{h,\overline{\theta}-1}
=0.
$$
It follows that $1-\theta$ is orthogonal to $\theta H^2$ 
and so it belongs to $K_\theta$. 
\end{proof}

\begin{lemma}\label{lma.a7}
Let $\theta$ be a finite Blaschke product and $\theta(\infty)=1$. 
Then for any $f\in K_\theta$ and for any $\zeta\in \bbC_+$ we have
\begin{align}
f(\zeta)&=\frac1{2\pi i}\jap{f,(A_\theta -\overline{\zeta})^{-1}(1-\theta)},
\label{a17}
\\
(H_\theta f)(\zeta)&=-\frac1{2\pi i}\jap{(A_\theta^*-\zeta)^{-1}(1-\theta),f}.
\notag
\end{align}
\end{lemma}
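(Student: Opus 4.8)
The plan is to derive both identities from the resolvent formulas \eqref{a0c}, \eqref{a0d} together with the fact that $P_\theta v_\zeta$ is the reproducing kernel of $K_\theta$ at $\zeta$. For \eqref{a17}, the point is that the resolvent transports the ``reproducing kernel of $K_\theta$ at infinity'', namely $1-\theta$, to a multiple of the reproducing kernel at $\zeta$. Concretely, applying \eqref{a0c} to $1-\theta$ gives $(A_\theta-\overline\zeta)^{-1}(1-\theta)=-2\pi i\,P_\theta\big((1-\theta)v_\zeta\big)$, and since $v_\zeta\in H^2$ we have $\theta v_\zeta\in\theta H^2$, which is orthogonal to $K_\theta$, so $P_\theta(\theta v_\zeta)=0$ and hence $(A_\theta-\overline\zeta)^{-1}(1-\theta)=-2\pi i\,P_\theta v_\zeta$. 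For $f\in K_\theta$, self-adjointness of $P_\theta$ then gives $\jap{f,P_\theta v_\zeta}=\jap{P_\theta f,v_\zeta}=\jap{f,v_\zeta}=f(\zeta)$; feeding this into $\tfrac1{2\pi i}\jap{f,(A_\theta-\overline\zeta)^{-1}(1-\theta)}$ --- and remembering that $\jap{\cdot,\cdot}$ is conjugate-linear in the second slot, so that the scalar $-2\pi i$ comes out as $\overline{-2\pi i}=2\pi i$ --- produces exactly $f(\zeta)$.

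For the second identity I would instead compute both sides by hand. By Lemma~\ref{lma.a1a} we have $1-\theta\in K_\theta$, so formula \eqref{a0d} yields $(A_\theta^*-\zeta)^{-1}(1-\theta)(x)=\frac{(1-\theta(x))-(1-\theta(\zeta))}{x-\zeta}=\frac{\theta(\zeta)-\theta(x)}{x-\zeta}$. Pairing this with $f\in K_\theta$ and splitting the numerator, the term proportional to $\theta(\zeta)$ is a multiple of $\int_\bbR\frac{\overline{f(x)}}{x-\zeta}\,dx=2\pi i\jap{\overline{v_\zeta},f}$ (using $\overline{v_\zeta(x)}=\frac1{2\pi i(x-\zeta)}$), which vanishes because $\overline{v_\zeta}$, being the complex conjugate of an $H^2$ function, is orthogonal to $H^2\ni f$. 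The remaining term is $-2\pi i\jap{\theta\overline f,v_\zeta}=-2\pi i\jap{\bbP_+(\theta\overline f),v_\zeta}=-2\pi i\,(H_\theta f)(\zeta)$, where I used that $v_\zeta$ reproduces point values in $H^2$ and that $H_\theta f=\bbP_+(\theta\overline f)$. Putting the two pieces together gives $\jap{(A_\theta^*-\zeta)^{-1}(1-\theta),f}=-2\pi i\,(H_\theta f)(\zeta)$, which is the asserted formula.

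There is no genuine obstacle here: the argument is pure bookkeeping on top of \eqref{a0c}, \eqref{a0d}, the reproducing-kernel identity, and the orthogonal decomposition $L^2(\bbR)=H^2\oplus\overline{H^2}$. The only points needing attention are keeping the conjugations consistent (the convention that $\jap{\cdot,\cdot}$ is anti-linear in the second argument, together with the explicit boundary value of $\overline{v_\zeta}$) and the observation $P_\theta(\theta v_\zeta)=0$. As an alternative route to the second identity one can apply \eqref{a17} to $H_\theta f\in K_\theta$, use the Hankel symmetry $\jap{H_\theta a,b}=\jap{H_\theta b,a}$, and verify that $H_\theta\big((A_\theta-\overline\zeta)^{-1}(1-\theta)\big)=-(A_\theta^*-\zeta)^{-1}(1-\theta)$, which is immediate from $(A_\theta-\overline\zeta)^{-1}(1-\theta)=-2\pi i\,P_\theta v_\zeta$ and $\abs{\theta(x)}=1$ on $\bbR$.
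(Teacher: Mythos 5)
Your proof is correct and, modulo reading the chain of equalities in the opposite direction, uses exactly the same ingredients as the paper: formula \eqref{a0c} together with $P_\theta(\theta v_\zeta)=0$ for \eqref{a17}, and formula \eqref{a0d} together with $\bbP_+(\overline{v_\zeta})=0$ and the reproducing property of $v_\zeta$ for the Hankel identity. The alternative route you sketch via $H_\theta\big((A_\theta-\overline\zeta)^{-1}(1-\theta)\big)=-(A_\theta^*-\zeta)^{-1}(1-\theta)$ and the symmetry $\jap{H_\theta a,b}=\jap{H_\theta b,a}$ is also sound, though it is not the one taken in the paper.
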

\begin{proof}
We have 
$$
f(\zeta)=\jap{f,v_\zeta}=\jap{f,P_\theta v_\zeta}
=\jap{f,P_\theta(v_\zeta(1-\theta))}, 
$$
because $P_\theta(\theta v_\zeta)=0$. 
By \eqref{a0c}, 
$$
\jap{f,P_\theta (v_\zeta(1-\theta))}
=
\frac1{2\pi i}
\jap{f,(A_\theta -\overline{\zeta})^{-1}(1-\theta)},
$$
which yields \eqref{a17}.
Similarly, 
\begin{multline*}
H_\theta f(\zeta)
=
\jap{\theta \overline{f},v_\zeta}
=
\jap{\theta \overline{v_\zeta},f}
\\
=
\jap{\bbP_+(\theta \overline{v_\zeta}),f}
=
-\jap{\bbP_+((1-\theta)\overline{v_\zeta}),f}
=
-\frac1{2\pi i}\jap{(A_\theta^*-\zeta)^{-1}(1-\theta),f},
\end{multline*}
where we have used \eqref{a0d} at the last step. 
\end{proof}

\begin{corollary}\label{cr.a7}
Let $\theta$ be a finite Blaschke product and $\theta(\infty)=1$. 
Then the linear span of each of the two sets 
$$
\{(A_\theta-\overline{\zeta})^{-1}(1-\theta)\}_{\zeta\in \bbC_+}, 
\quad
\{(A_\theta^*-\zeta)^{-1}(1-\theta)\}_{\zeta\in \bbC_+}
$$
is dense in $K_\theta$. 
\end{corollary}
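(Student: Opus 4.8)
The plan is to obtain both density statements from Lemma~\ref{lma.a7} by the standard duality argument: a subset of a (finite-dimensional) Hilbert space spans a dense subspace iff its orthogonal complement is trivial. So I would fix $f\in K_\theta$ lying in the orthogonal complement of the span in question and show $f=0$.

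First I would treat $\{(A_\theta-\overline{\zeta})^{-1}(1-\theta)\}_{\zeta\in\bbC_+}$. If $f\in K_\theta$ is orthogonal to every $(A_\theta-\overline{\zeta})^{-1}(1-\theta)$, then formula \eqref{a17} of Lemma~\ref{lma.a7} gives $f(\zeta)=\frac1{2\pi i}\jap{f,(A_\theta-\overline{\zeta})^{-1}(1-\theta)}=0$ for every $\zeta\in\bbC_+$, hence $f\equiv0$ as an element of $H^2$. Therefore the orthogonal complement of the span inside $K_\theta$ is $\{0\}$, i.e. the span is dense (and, $K_\theta$ being finite-dimensional, equal to $K_\theta$).

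Next I would treat $\{(A_\theta^*-\zeta)^{-1}(1-\theta)\}_{\zeta\in\bbC_+}$. If $f\in K_\theta$ is orthogonal to every $(A_\theta^*-\zeta)^{-1}(1-\theta)$, then the second identity of Lemma~\ref{lma.a7} yields $(H_\theta f)(\zeta)=-\frac1{2\pi i}\jap{(A_\theta^*-\zeta)^{-1}(1-\theta),f}=0$ for every $\zeta\in\bbC_+$, so $H_\theta f=0$. Since $f\in K_\theta$ we have $\theta\overline f\in H^2$ and hence $H_\theta f=\bbP_+(\theta\overline f)=\theta\overline f$; because $\abs{\theta}=1$ a.e.\ on $\bbR$ this forces $f=0$. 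Thus the second span is dense as well.

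I do not expect a substantive obstacle here: the corollary is essentially a direct consequence of Lemma~\ref{lma.a7}. The only points requiring a little care are the conjugation bookkeeping — the functionals $f\mapsto f(\zeta)$ and $f\mapsto(H_\theta f)(\zeta)$ are respectively linear and anti-linear in $f$, which is exactly what the two sides of Lemma~\ref{lma.a7} reflect, so in each case "$f$ annihilates the whole family" is equivalent to the vanishing of the corresponding functional — together with the elementary observation that $H_\theta$ restricted to $K_\theta$ acts as the anti-linear isometry $f\mapsto\theta\overline f$ and is therefore injective.
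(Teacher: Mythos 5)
Your argument is correct and is precisely the (unstated) proof the paper has in mind: both density statements follow from Lemma~\ref{lma.a7} by showing that any $f\in K_\theta$ orthogonal to the relevant family satisfies $f(\zeta)\equiv 0$, respectively $(H_\theta f)(\zeta)\equiv0$, and then using the injectivity of the evaluation or of $H_\theta|_{K_\theta}=\theta\overline{(\cdot)}$. The conjugation bookkeeping you flag is handled correctly.
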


\subsection{Behaviour at infinity} 
For a rational function $f$ with the  Laurent expansion 
$$
f(x)=a_0+\frac{a_{1}}{x}+\frac{a_{2}}{x^2}+\dots, \quad \abs{x}\to\infty
$$
at infinity, we will denote 
$$
\Lambda_1(f)=a_{1}, \quad \Lambda_2(f)=a_2.
$$

\begin{lemma}\label{lma.a1b}
Let $\theta$ be a finite Blaschke product and $\theta(\infty)=1$. 
For any $f\in K_\theta$, we have
$$
\Lambda_1(f)=-\frac1{2\pi i}\jap{f,1-\theta}, \quad 
\Lambda_2(f)=-\frac1{2\pi i}\jap{A_\theta^*f,1-\theta}, 
$$
and in particular, 
\begin{equation}
\Lambda_1(\theta)=\frac1{2\pi i}\norm{1-\theta}^2, \quad
\Lambda_2(\theta)=\frac1{2\pi i}\jap{A_\theta^*(1-\theta),1-\theta}.
\label{a6a}
\end{equation}
\end{lemma}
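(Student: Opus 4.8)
The plan is to read off $\Lambda_1(f)$ and $\Lambda_2(f)$ from the large-$x$ asymptotics of $f(\zeta)$ and use the reproducing-kernel-type formula \eqref{a17} from Lemma~\ref{lma.a7}. First I would recall that $v_\zeta(x) = \frac{1}{2\pi i}\frac{1}{\overline\zeta - x}$, so that for $f \in H^2$ one has $f(\zeta) = \jap{f,v_\zeta}$; expanding $\frac{1}{\overline\zeta - x} = -\frac1x \cdot \frac{1}{1 - \overline\zeta/x} = -\frac1x - \frac{\overline\zeta}{x^2} - \dots$ as $\abs x \to \infty$ is the wrong variable, so instead the cleaner route is to expand $f(\zeta)$ itself in powers of $1/\zeta$ as $\abs\zeta \to \infty$: by definition of $\Lambda_1,\Lambda_2$ we have $f(\zeta) = \frac{\Lambda_1(f)}{\zeta} + \frac{\Lambda_2(f)}{\zeta^2} + O(\zeta^{-3})$, since $f \in K_\theta$ vanishes at infinity (as $K_\theta \subset H^2$ and all its elements are rational functions of the form $p/q$ with $\deg p < \deg q$).

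The key step is to insert \eqref{a17}, namely $f(\zeta) = \frac{1}{2\pi i}\jap{f,(A_\theta - \overline\zeta)^{-1}(1-\theta)}$, and expand the resolvent: $(A_\theta - \overline\zeta)^{-1} = -\overline\zeta^{-1}(I - A_\theta/\overline\zeta)^{-1} = -\overline\zeta^{-1} - \overline\zeta^{-2}A_\theta - \dots$, valid for $\abs\zeta$ large since $A_\theta$ is bounded. Taking the inner product with $f$ (anti-linear in the second slot, so $\overline\zeta^{-1}$ contributes $\zeta^{-1}$ after conjugation) gives
$$
f(\zeta) = \frac1{2\pi i}\left( -\frac1\zeta \jap{f,1-\theta} - \frac1{\zeta^2}\jap{f,A_\theta(1-\theta)} + O(\zeta^{-3})\right).
$$
Matching powers of $1/\zeta$ yields $\Lambda_1(f) = -\frac1{2\pi i}\jap{f,1-\theta}$ and $\Lambda_2(f) = -\frac1{2\pi i}\jap{f,A_\theta(1-\theta)} = -\frac1{2\pi i}\jap{A_\theta^* f, 1-\theta}$, which are exactly the claimed formulas.

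For the special case \eqref{a6a}, I would apply the general formulas to $f = 1 - \theta$, which lies in $K_\theta$ by Lemma~\ref{lma.a1a}. Since $\Lambda_1(1-\theta) = -\Lambda_1(\theta)$ (the constant term $1$ does not contribute to $\Lambda_1$) and $\Lambda_2(1-\theta) = -\Lambda_2(\theta)$, we get $\Lambda_1(\theta) = \frac1{2\pi i}\jap{1-\theta,1-\theta} = \frac1{2\pi i}\norm{1-\theta}^2$ and $\Lambda_2(\theta) = \frac1{2\pi i}\jap{A_\theta^*(1-\theta),1-\theta}$. The only mild subtlety to be careful about is the bookkeeping with the anti-linearity of $\jap{\cdot,\cdot}$ in the second argument and the conjugate $\overline\zeta$ in the resolvent: one should check that the $\overline\zeta$-expansion, after pairing against $f$ in the second slot, indeed produces honest powers of $\zeta$ (not $\overline\zeta$) — this works because the resolvent identity $(A_\theta-\overline\zeta)^{-1}$ sits in the first slot of $\jap{f,\cdot}$. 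That is really the main place where a sign or conjugation error could creep in; everything else is a routine geometric-series expansion. I expect no genuine obstacle here — the lemma is essentially a reformulation of Lemma~\ref{lma.a7} read asymptotically at infinity.
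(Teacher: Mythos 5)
Your proof is correct and follows exactly the same route as the paper: expand the resolvent in \eqref{a17} in powers of $1/\overline{\zeta}$ (which becomes $1/\zeta$ after the anti-linearity of the second slot), identify the first two coefficients, and then specialize to $f=1-\theta$ using $\Lambda_j(1-\theta)=-\Lambda_j(\theta)$. The paper merely compresses this into a single display without spelling out the adjoint step $\jap{f,A_\theta(1-\theta)}=\jap{A_\theta^*f,1-\theta}$ or the $f=1-\theta$ specialization, both of which you handle correctly.
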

\begin{proof}
Since $A_\theta$ is bounded, we can expand the resolvent in \eqref{a17}, 
which yields
$$
f(x)=-\frac1{2\pi i}\frac1x \jap{f,1-\theta}-\frac1{2\pi i}\frac1{x^2}\jap{A_\theta^* f,1-\theta}+O(1/x^3)
$$
as $\abs{x}\to\infty$. 
This yields the required identities. 
\end{proof}

\subsection{Formulas for  $A_\theta$ and $A_\theta^*$}

\begin{lemma}\label{lma.a2}
Let $\theta$ be a finite Blaschke product and $\theta(\infty)=1$. 
Then the operator $A_\theta$ on $K_\theta$ satisfies the identities
\begin{align}
A_\theta f(x)&=xf(x)-\Lambda_1(f)\theta(x)=xf(x)+\frac1{2\pi i}\jap{f,1-\theta}\theta(x),
\label{a3}
\\
A_\theta^* f(x)&=xf(x)-\Lambda_1(f)=xf(x)+\frac1{2\pi i}\jap{f,1-\theta},
\label{a4}
\\
\Im A_\theta&=\frac1{4\pi}\jap{\cdot, 1-\theta}(1-\theta). 
\label{a5}
\end{align}
\end{lemma}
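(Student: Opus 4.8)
The plan is to derive all three identities from the resolvent formulas \eqref{a0c}, \eqref{a0d} together with the behaviour-at-infinity formulas of Lemma~\ref{lma.a1b}. The cleanest route is to compute $A_\theta^*$ first, since its resolvent has the explicit difference-quotient form \eqref{a0d}, then obtain $A_\theta$ by duality, and finally read off $\Im A_\theta$ by subtraction.

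For \eqref{a4}: take $f\in K_\theta$ and $\zeta\in\bbC_+$. Applying \eqref{a0d} to the vector $A_\theta^* f\in K_\theta$ (note $K_\theta$ is $A_\theta^*$-invariant, as used in Lemma~\ref{lma.nsa}), one has
\[
(A_\theta^*-\zeta)^{-1}(A_\theta^*-\zeta)f = f,
\]
so writing $g=(A_\theta^*-\zeta)f$ and using \eqref{a0d} in the form $(A_\theta^*-\zeta)^{-1}g(x)=\frac{g(x)-g(\zeta)}{x-\zeta}$, one gets $f(x)(x-\zeta)=g(x)-g(\zeta)$, i.e. $g(x)=xf(x)-\zeta f(x)+g(\zeta)$. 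Since $g=A_\theta^* f-\zeta f$, this simplifies to $A_\theta^* f(x)=xf(x)+c$ where $c=g(\zeta)-\zeta f(\zeta)$ is a constant independent of $x$. To identify $c$, compare the $1/x$ coefficients: $A_\theta^* f\in K_\theta$ so by Lemma~\ref{lma.a1b} its expansion has no constant term beyond\ldots{} actually $A_\theta^* f(x)=xf(x)+c$ forces $c=-\Lambda_1(f)$ by looking at the order-$x^0$ term (since $xf(x)=a_0 x+a_1+O(1/x)$ and $A_\theta^* f$ must have vanishing $x^1$-coefficient, which gives $a_0=0$, wait — $f\in K_\theta$ need not vanish at infinity in general, but $xf(x)+c$ lies in $K_\theta\subset H^2$ forces the $a_0x$ term to cancel, hence $a_0=0$; then the constant term of $xf(x)+c$ is $a_1+c$, and since elements of $K_\theta$ are $O(1/x)$ — indeed $K_\theta=\{p/q:\deg p\le m-1\}$ with $\deg q=m$ — we need $a_1+c=0$, i.e. $c=-a_1=-\Lambda_1(f)$). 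Then Lemma~\ref{lma.a1b} rewrites $-\Lambda_1(f)=\frac1{2\pi i}\jap{f,1-\theta}$, giving \eqref{a4}.

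For \eqref{a3}: use $A_\theta=P_\theta(\,\cdot\,x\,)$ on $K_\theta$ in the appropriate sense, or more safely take adjoints of \eqref{a4}. For $f,g\in K_\theta$,
\[
\jap{A_\theta f,g}=\jap{f,A_\theta^* g}=\jap{f,xg}-\overline{\Lambda_1(g)}\jap{f,1}=\jap{xf,g}-\overline{\Lambda_1(g)}\jap{f,1}.
\]
Here $xf$ need not lie in $K_\theta$, but $\jap{xf,g}=\jap{P_\theta(xf),g}$, and using \eqref{a0b} one has $P_\theta(xf)=xf-\theta\bbP_+(\overline\theta xf)$; the correction term is a multiple of $\theta$ and its inner product against the general $g\in K_\theta$ contributes, so one has to track it. Alternatively, and more directly: verify that $xf(x)+\frac1{2\pi i}\jap{f,1-\theta}\theta(x)\in K_\theta$ and that it differs from $A_\theta^* f$ by a multiple of $\theta$, then check this element has the correct action by testing against the dense family from Corollary~\ref{cr.a7}. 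I expect \eqref{a3} to be the main obstacle, precisely because multiplication by $x$ leaves $H^2$ but not $K_\theta$, so one must carefully project; the term $\Lambda_1(f)\theta$ is exactly what restores membership in $K_\theta$ (using $\theta(\infty)=1$, so $\theta$ itself has the right leading behaviour to cancel the $a_0 x$ growth only after the $\Lambda_1$ bookkeeping — note $\Lambda_1(\theta)\ne0$ by \eqref{a6a}). Concretely, one computes that $A_\theta^* f - A_\theta f$ is orthogonal to $K_\theta$-considerations via $\jap{(A_\theta^*-A_\theta)f,g}=\jap{f,(A_\theta-A_\theta^*)g}$, i.e. one needs \eqref{a5} and \eqref{a3}, \eqref{a4} together — so the honest order is: prove \eqref{a4}, then prove \eqref{a3} by the projection computation $A_\theta f=P_\theta(xf)$ combined with \eqref{a0b} and Lemma~\ref{lma.a1b} applied to evaluate $\bbP_+(\overline\theta\, xf)$ at infinity, and finally obtain \eqref{a5} by subtracting:
\[
(A_\theta-A_\theta^*)f(x)=\frac1{2\pi i}\jap{f,1-\theta}\bigl(\theta(x)-1\bigr)=-\frac1{2\pi i}\jap{f,1-\theta}(1-\theta),
\]
so $\Im A_\theta=\frac1{2i}(A_\theta-A_\theta^*)=\frac1{4\pi}\jap{\,\cdot\,,1-\theta}(1-\theta)$, which is \eqref{a5}, visibly a rank-one positive semidefinite operator — consistent with the dissipativity of $A_\theta$ noted in the introduction.
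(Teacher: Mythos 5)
Your derivation of \eqref{a4} is correct and takes a genuinely different route from the paper's. The paper proves the lemma head-on from the definition of the generator: it writes down the candidate formula and verifies the limit $\frac1{i\tau}(S_\theta(\tau)-I)f\to A_\theta f$ by a dominated-convergence argument, with the key observation \eqref{a6} that $\int\frac{e^{i\tau x}-1}{i\tau x}\theta\overline h\,dx=0$ doing the work of taming the multiplication by $x$. You instead extract the formula from the resolvent identity \eqref{a0d}: set $g=(A_\theta^*-\zeta)f$, unwind $(A_\theta^*-\zeta)^{-1}g=\frac{g(x)-g(\zeta)}{x-\zeta}$, conclude $A_\theta^*f=xf+c$, and read off the constant $c=-\Lambda_1(f)$ by requiring $A_\theta^*f\in K_\theta$. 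There is no circularity: Lemma~\ref{lma.a1b} rests only on \eqref{a17}, which is proved from \eqref{a0c}, independently of the present lemma. This is a slick shortcut — the convergence work the paper must do is already baked into the proof of the resolvent formulas.

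The proof of \eqref{a3}, however, is not completed; this is the genuine gap. You gesture at several possible arguments (take adjoints of \eqref{a4}; compute $A_\theta f=P_\theta(xf)$ via \eqref{a0b}; test against a dense family) but leave all of them unfinished, and each as written has a real obstacle. Splitting $\jap{f,A_\theta^*g}$ into $\jap{f,xg}-\overline{\Lambda_1(g)}\jap{f,1}$ breaks a bona fide $L^2$ pairing into two conditionally convergent improper integrals (note $1\notin L^2$; $\int_{\bbR}f\,dx$ and $\int_{\bbR}\theta\overline g\,dx$ are only conditionally convergent, equal to $-i\pi\Lambda_1(f)$ and $-i\pi\overline{\Lambda_1(g)}$ respectively by contour deformation), and the cancellation producing the $\theta$-term would have to be tracked explicitly — you do not do this. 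The identity $A_\theta f=P_\theta(xf)$ with \eqref{a0b} does not make literal sense because $xf\notin L^2$ whenever $\Lambda_1(f)\neq 0$. The cleanest fix is to run the exact analogue of your \eqref{a4} argument on the other resolvent: from \eqref{a0c}, the projection formula \eqref{a0b}, and the reproducing-kernel identity $\bbP_+(\overline w\,v_\zeta)=\overline{w(\zeta)}\,v_\zeta$ for $w\in H^2$, one gets
\[
(A_\theta-\overline\zeta)^{-1}g(x)=\frac{g(x)-\overline{w_g(\zeta)}\,\theta(x)}{x-\overline\zeta},\qquad w_g:=\theta\overline g,
\]
and then setting $g=(A_\theta-\overline\zeta)f$ and comparing behaviour at infinity yields $A_\theta f(x)=xf(x)+c\,\theta(x)$ with $c=-\Lambda_1(f)$, which is \eqref{a3}. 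Once \eqref{a3} and \eqref{a4} are both in hand, your derivation of \eqref{a5} by subtraction is fine and matches the paper's.
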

\begin{proof}
Let $A_\theta$ be the operator given by the right hand side of \eqref{a3}. 
We need to check that for all $f\in K_\theta$, 
\begin{equation}
\Norm{\tfrac{1}{i\tau}(S_\theta(\tau)-I)f-A_\theta f}\to0,
\quad 
\tau\to0_+\ .
\label{a7}
\end{equation}
First we observe that for all $\tau>0$ and any $h\in K_\theta$, 
\begin{equation}
\int_{-\infty}^\infty \frac{e^{i\tau x}-1}{i\tau x}\theta(x)\overline{h(x)}dx=0,
\label{a6}
\end{equation}
since $\tfrac{e^{i\tau x}-1}{i\tau x}\in H^2$ and $\theta\overline{h}\in H^2$. 
Using this, we compute
\begin{multline*}
\tfrac1{i\tau}\jap{(S_\theta(\tau)-I)f,h}
=
\tfrac1{i\tau}\jap{(S(\tau)-I)f,h}
=
\int_{-\infty}^\infty \frac{e^{i\tau x}-1}{i\tau x}xf(x)\overline{h(x)}dx
\\
=
\int_{-\infty}^\infty \frac{e^{i\tau x}-1}{i\tau x}(xf(x)-\Lambda_1(f)\theta(x))\overline{h(x)}dx
=
\int_{-\infty}^\infty \frac{e^{i\tau x}-1}{i\tau x}A_\theta f(x)\overline{h(x)}dx, 
\end{multline*}
and therefore
$$
\jap {\tfrac1{i\tau}(S_\theta(\tau)-I)f-A_\theta f,h}
=
\int_{-\infty}^\infty \biggl(\frac{e^{i\tau x}-1}{i\tau x}-1\biggr) A_\theta f(x) \overline{h(x)}dx.
$$
By Cauchy-Schwarz, 
$$
\Abs{\jap{\tfrac1{i\tau}(S_\theta(\tau)-I)f-A_\theta f,h}}^2
\leq
\norm{g}^2\int_{-\infty}^\infty \Abs{\frac{e^{i\tau x}-1}{i\tau x}-1}^2\abs{A_\theta f(x)}^2 dx,
$$
and the integral in the r.h.s. tends to zero as $\lambda\to0_+$ by dominated convergence. 
This yields \eqref{a7}. 

Similarly, let $A_\theta^*$ be the operator given by the r.h.s. of \eqref{a4}; we need to check that
$$
\Norm{\tfrac{1}{i\tau}(S(\tau)^*-I)f-A_\theta^* f}\to0,
\quad 
\tau\to0_+
$$
for all $f\in K_\theta$. 
This is achieved by following the same line of reasoning, with the only difference that instead of
\eqref{a6} we use the identity
$$
\int_{-\infty}^\infty \frac{e^{-i\tau x}-1}{i\tau x}\overline{h(x)}dx=0, \quad \tau>0, \quad g\in K_\theta,
$$
because both factors in the integral are complex conjugates of elements of $H^2$. 

Finally, \eqref{a5} follows by subtracting \eqref{a4} from \eqref{a3}. 
\end{proof}

\subsection{The action of Frostman shifts}\label{sec.c4}
Let $\psi$ be a finite Blaschke product. 
Here we compute the action of Frostman shifts \eqref{z4} 
on various quantities relevant to the subsequent analysis of inverse problems. 
First note that if we require that $\psi(\infty)=\wt\psi(\infty)=1$, this fixes the 
unimodular constant $e^{i\alpha}$ in \eqref{z4}, so we obtain 
\begin{equation}
 \wt\psi
=
\frac{1-\overline{w}}{w-1}\frac{w-\psi}{1-\overline{w}\psi}, \quad \abs{w}<1.
\label{b17a}
\end{equation}
Next, observe that the function $i\frac{1+\psi}{1-\psi}$ is a Herglotz function, i.e. it maps the upper half-plane into itself. As a rational Herglotz function, it admits the representation 
$$
i\frac{1+\psi(x)}{1-\psi(x)}=Ax+B+\sum_j \frac{c_j}{\alpha_j-x}, \quad \Im x>0,
$$
where $A\geq0$, $B\in \bbR$, $c_j\geq0$, $\alpha_j\in\bbR$ and the sum is finite; the points $\alpha_j$ are the solutions to the equation $\psi(x)=1$. Recalling that 
\begin{equation}
\norm{1-\psi}^2=2\pi i\Lambda_1(\psi)
\label{b20}
\end{equation}
and renormalising, we obtain the representation 
\begin{equation}
i\frac{\norm{1-\psi}^2}{4\pi}\frac{1+\psi(x)}{1-\psi(x)}=x+B+\sum_j \frac{c_j}{\alpha_j-x}, \quad \Im x>0,
\label{b20aa}
\end{equation}
with the same conditions on the parameters $B$, $c_j$, $\alpha_j$.

\begin{lemma}\label{lma.c7a}
Let $\psi$ be a finite Blaschke product with $\psi(\infty)=1$, and let $\wt \psi$ be as in \eqref{b17a}. Then 
\begin{align}
\norm{1-\wt \psi}^2
&=
\frac{1-\abs{w}^2}{\abs{1-w}^2}\norm{1-\psi}^2, 
\label{b18}
\\
i\frac{\norm{1-\wt \psi}^2}{4\pi}\frac{1+\wt \psi}{1-\wt\psi}
&=
i\frac{\norm{1-\psi}^2}{4\pi}\frac{1+\psi}{1-\psi}
+
\frac1{2\pi}\frac{\Im w}{\abs{1-w}^2}\norm{1-\psi}^2.
\label{b19}
\end{align}
\end{lemma}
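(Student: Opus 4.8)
The plan is to reduce both identities to a single algebraic relation between the rational Herglotz functions attached to $\wt\psi$ and to $\psi$. First I would establish
\[
i\frac{1+\wt\psi}{1-\wt\psi}
=
\frac{\abs{1-w}^2}{1-\abs{w}^2}\; i\frac{1+\psi}{1-\psi}
+
\frac{2\Im w}{1-\abs{w}^2}.
\]
To prove this, substitute the normalised Frostman shift \eqref{b17a} and compute $1-\wt\psi$ and $1+\wt\psi$. Both have the common denominator $(w-1)(1-\overline w\,\psi)$, and after expanding one finds that the numerator of $1-\wt\psi$ equals $(1-\abs{w}^2)(\psi-1)$, while the numerator of $1+\wt\psi$ equals $-\abs{1-w}^2(1+\psi)+2i\,\Im w\,(1-\psi)$. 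Here I would use only the elementary identities $(1-\overline w)(w-1)=-\abs{1-w}^2$, $2w-1-\abs{w}^2=-\abs{1-w}^2+2i\,\Im w$ and its conjugate $2\overline w-1-\abs{w}^2=-\abs{1-w}^2-2i\,\Im w$. Taking the ratio, multiplying by $i$ and using $-i\cdot 2i=2$ gives the displayed relation.

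Next I would deduce \eqref{b18} from this relation by comparing leading behaviour at infinity. By \eqref{b20aa} the function $i\frac{\norm{1-\psi}^2}{4\pi}\frac{1+\psi}{1-\psi}$ is asymptotic to $x$ as $\abs{x}\to\infty$, hence $i\frac{1+\psi}{1-\psi}\sim\frac{4\pi}{\norm{1-\psi}^2}\,x$, and the same with $\psi$ replaced by $\wt\psi$. Matching the coefficients of $x$ on the two sides of the displayed relation yields $\frac{1}{\norm{1-\wt\psi}^2}=\frac{\abs{1-w}^2}{(1-\abs{w}^2)\,\norm{1-\psi}^2}$, which is precisely \eqref{b18}. (Alternatively, \eqref{b18} follows by expanding \eqref{b17a} to first order at infinity via $\psi=1+\Lambda_1(\psi)/x+O(x^{-2})$ and invoking $\norm{1-\psi}^2=2\pi i\,\Lambda_1(\psi)$, using $(1-\overline w)(w-1)=-\abs{1-w}^2$ again.)

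Finally, \eqref{b19} is obtained by multiplying the displayed relation through by $i\frac{\norm{1-\wt\psi}^2}{4\pi}$ and substituting \eqref{b18}: the first term on the right becomes $i\frac{\norm{1-\psi}^2}{4\pi}\frac{1+\psi}{1-\psi}$, and the second becomes $\frac{\norm{1-\wt\psi}^2}{4\pi}\cdot\frac{2\Im w}{1-\abs{w}^2}=\frac{\norm{1-\psi}^2}{2\pi}\cdot\frac{\Im w}{\abs{1-w}^2}$, which is exactly the right-hand side of \eqref{b19}. I expect the only real work to be the simplification in the first display: it is routine but requires careful bookkeeping of the real and imaginary combinations of $w$; once that relation is in hand, both identities of the lemma are immediate.
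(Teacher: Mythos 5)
Your proof is correct and follows essentially the same route as the paper's, namely direct algebra on the normalised Frostman shift \eqref{b17a} together with matching asymptotics at infinity via \eqref{b20}/\eqref{b20aa}, just organised a little more cleanly around a single intermediate linear relation between the Herglotz transforms of $\psi$ and $\wt\psi$. One small slip: in the final step you should multiply the displayed relation by $\frac{\norm{1-\wt\psi}^2}{4\pi}$ rather than $i\frac{\norm{1-\wt\psi}^2}{4\pi}$, since the factor of $i$ is already built into the left-hand side of that relation; this is immaterial to the argument.
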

\begin{proof}
Computing the asymptotics of $\wt\psi$ at infinity, comparing with the asymptotics of $\psi$ and using \eqref{b20},
we obtain \eqref{b18}. 
After this, the proof of \eqref{b19} is direct algebra.
\end{proof}

This lemma shows  that under the Frostman shift, only the constant $B$ in the representation \eqref{b20aa} changes. 
The next lemma gives more precise information about this constant. 

\begin{lemma}\label{lma.b17}
Let $\psi$ be a finite Blaschke product with $\psi(\infty)=1$. 
Then 
\begin{equation}
i
\frac{\norm{1-\psi}^2}{4\pi}\frac{1+\psi(x)}{1-\psi(x)}
=
x-\frac{\Re \jap{A_\psi(1-\psi),1-\psi}}{\norm{1-\psi}^2}
+O(1/x)
\label{b20a}
\end{equation}
as $\abs{x}\to\infty$. As a consequence, the function 
\begin{equation}
i\frac{\norm{1-\psi}^2}{4\pi}\frac{1+\psi(x)}{1-\psi(x)}+\frac{\Re \jap{A_\psi(1-\psi),1-\psi}}{\norm{1-\psi}^2}
\label{b20b}
\end{equation}
is invariant under the Frostman shifts  $\psi\mapsto \wt\psi$ as in \eqref{b17a}. 
\end{lemma}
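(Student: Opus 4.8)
The plan is to prove the asymptotic expansion \eqref{b20a} directly and then deduce the Frostman-shift invariance of \eqref{b20b} by combining it with Lemma~\ref{lma.c7a}. First I would analyse the left-hand side of \eqref{b20a} at infinity. Since $\psi$ is a finite Blaschke product with $\psi(\infty)=1$, we may write $\psi(x)=1-\Lambda_1(\psi)/x + \Lambda_2(\psi)/x^2 + O(1/x^3)$, so that $1-\psi(x) = \Lambda_1(\psi)/x - \Lambda_2(\psi)/x^2 + O(1/x^3)$ and $1+\psi(x) = 2 - \Lambda_1(\psi)/x + O(1/x^2)$. Dividing, the leading behaviour of $\tfrac{1+\psi}{1-\psi}$ is $\tfrac{2x}{\Lambda_1(\psi)}\bigl(1 + \tfrac{\Lambda_2(\psi)}{\Lambda_1(\psi)}\tfrac1x + O(1/x^2)\bigr)$, so
\[
i\frac{1+\psi(x)}{1-\psi(x)} = \frac{2ix}{\Lambda_1(\psi)} + \frac{2i\Lambda_2(\psi)}{\Lambda_1(\psi)^2} + O(1/x).
\]
Multiplying by $\tfrac{\norm{1-\psi}^2}{4\pi}$ and invoking \eqref{b20} in the form $\Lambda_1(\psi)=\tfrac{1}{2\pi i}\norm{1-\psi}^2$, the coefficient of $x$ becomes $\tfrac{\norm{1-\psi}^2}{4\pi}\cdot\tfrac{2i}{\Lambda_1(\psi)} = 1$, confirming the linear term. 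For the constant term I would use \eqref{a6a}: $\Lambda_2(\psi) = \tfrac1{2\pi i}\jap{A_\psi^*(1-\psi),1-\psi}$ while $\Lambda_1(\psi)=\tfrac1{2\pi i}\norm{1-\psi}^2$, so after substitution the constant term equals $-\tfrac{\jap{A_\psi^*(1-\psi),1-\psi}}{\norm{1-\psi}^2}$; since this quantity must be real (the left-hand side of \eqref{b20a} is real on $\bbR$, being a renormalised Herglotz function minus its linear and decaying parts, or more directly because the constant $-B$ in \eqref{b20aa} is real), and $\jap{A_\psi^*(1-\psi),1-\psi}=\overline{\jap{A_\psi(1-\psi),1-\psi}}$, we may replace it by $-\tfrac{\Re\jap{A_\psi(1-\psi),1-\psi}}{\norm{1-\psi}^2}$, giving \eqref{b20a}.

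For the invariance claim, I would apply \eqref{b20a} to both $\psi$ and $\wt\psi$ and subtract. The left-hand sides differ, by \eqref{b19}, exactly by the constant $\tfrac1{2\pi}\tfrac{\Im w}{\abs{1-w}^2}\norm{1-\psi}^2$, which has no $x$-dependence; hence comparing constant terms,
\[
-\frac{\Re\jap{A_{\wt\psi}(1-\wt\psi),1-\wt\psi}}{\norm{1-\wt\psi}^2}
=
-\frac{\Re\jap{A_\psi(1-\psi),1-\psi}}{\norm{1-\psi}^2}
+\frac1{2\pi}\frac{\Im w}{\abs{1-w}^2}\norm{1-\psi}^2.
\]
Adding this identity to \eqref{b19} (rewritten with the roles of the two sides as the $\psi$- and $\wt\psi$-versions of the function in \eqref{b20b}), the correction term $\tfrac1{2\pi}\tfrac{\Im w}{\abs{1-w}^2}\norm{1-\psi}^2$ cancels, showing that the function in \eqref{b20b} is the same for $\psi$ and $\wt\psi$. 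This is precisely the asserted Frostman-shift invariance.

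The main obstacle I anticipate is bookkeeping in the asymptotic expansion: one must carry the expansions of $1\pm\psi$ to one order beyond the naive leading term to extract the constant correctly, and must be careful that the renormalisation by $\norm{1-\psi}^2/(4\pi)$ interacts with $\Lambda_1(\psi)$ exactly so as to produce the clean linear coefficient $1$. A secondary point needing a clean justification is why the constant term in \eqref{b20a} is real — the cleanest route is to observe that in \eqref{b20aa} the decomposition has $B\in\bbR$ and the poles contribute $O(1/x)$, so the constant term of the left-hand side is manifestly $B\in\bbR$; alternatively one invokes that $A_\psi$ is dissipative so $\Im\jap{A_\psi(1-\psi),1-\psi}\ge0$ is real-part-extracted without loss. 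Everything else is direct algebra and appeals to Lemmas~\ref{lma.a1b}, \ref{lma.a2} and \ref{lma.c7a}, which are already available.
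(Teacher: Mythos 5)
Your overall strategy coincides with the paper's: expand the left side of \eqref{b20a} at infinity, identify the constant term using \eqref{a6a}, and then combine \eqref{b20a} with Lemma~\ref{lma.c7a} for the invariance claim. Your treatment of the Frostman-shift invariance is correct (the paper phrases it slightly more economically — "by Lemma~\ref{lma.c7a} the function \eqref{b20b} changes by a constant, and by \eqref{b20a} that constant is zero" — but the substance is the same).

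However, the computation of the constant term in \eqref{b20a} contains a genuine gap, which is precisely the bookkeeping pitfall you yourself flagged at the end. Two issues, the second one fatal. First, your Laurent expansion $\psi(x)=1-\Lambda_1(\psi)/x+\Lambda_2(\psi)/x^2+\dots$ contradicts the paper's definition $f(x)=a_0+a_1/x+\dots$, $\Lambda_1(f)=a_1$; the sign should be $+\Lambda_1(\psi)/x$. Second, and decisively: in passing from $\tfrac{1+\psi}{1-\psi}$ to $\tfrac{2x}{\Lambda_1}\bigl(1+\tfrac{\Lambda_2}{\Lambda_1 x}+O(1/x^2)\bigr)$ you retained only the $O(1/x)$ correction coming from inverting the denominator and dropped the one coming from the numerator $1+\psi=2\bigl(1+\tfrac{\Lambda_1}{2x}+O(1/x^2)\bigr)$. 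The correct expansion is
$$
\frac{1+\psi(x)}{1-\psi(x)}=\frac{2x}{\Lambda_1(\psi)}\biggl(1+\Bigl(\frac{\Lambda_2(\psi)}{\Lambda_1(\psi)}-\frac{\Lambda_1(\psi)}{2}\Bigr)\frac1x+O(1/x^2)\biggr),
$$
and this extra $-\Lambda_1(\psi)/2$ is not cosmetic: after multiplying by $i\norm{1-\psi}^2/(4\pi)$ it is exactly what cancels the imaginary part $-\norm{1-\psi}^2/(4\pi)$ of $\jap{A_\psi^*(1-\psi),1-\psi}/\norm{1-\psi}^2$. The quantity you arrive at, $-\jap{A_\psi^*(1-\psi),1-\psi}/\norm{1-\psi}^2$, is therefore not real — its imaginary part equals $\norm{1-\psi}^2/(4\pi)>0$ by \eqref{a5} — so the step "since this quantity must be real \dots\ we may replace it by its real part" is not a legitimate move but a sign that the computation is off by exactly one term. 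If the computed constant were equal to the true (real) constant $-B$, it would already be real; since it is not, one cannot simply pass to the real part. The paper's proof carries the numerator correction explicitly, writes the constant as $\Lambda_2/\Lambda_1-\Lambda_1/2$, and then shows by \eqref{a6a}, \eqref{b20} and \eqref{a5} that this expression equals $\Re\jap{A_\psi(1-\psi),1-\psi}/\norm{1-\psi}^2$ identically, without any appeal to "discard the imaginary part".
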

\begin{proof}
Using \eqref{b20} and expanding the function at infinity, we get
\begin{align*}
i\frac{\norm{1-\psi}^2}{4\pi}\frac{1+\psi(x)}{1-\psi(x)}
&=
-\frac12 \Lambda_1(\psi)\frac{2+\frac{\Lambda_1(\psi)}{x}+O(1/x^2)}{-\frac{\Lambda_1(\psi)}{x}-\frac{\Lambda_2(\psi)}{x^2}+O(1/x^3)}
\\
&=
x\frac{1+\frac{\Lambda_1(\psi)}{2x}+O(1/x^2)}{1+\frac{\Lambda_2(\psi)}{x\Lambda_1(\psi)}+O(1/x^2)}
\\
&=x-\biggl(\frac{\Lambda_2(\psi)}{\Lambda_1(\psi)}-\frac{\Lambda_1(\psi)}{2}\biggr)
+O(1/x)
\end{align*}
as $\abs{x}\to\infty$. 
By \eqref{a6a},  \eqref{b20} and \eqref{a5}, the constant term in the above formula transforms as
$$
\frac{\Lambda_2(\psi)}{\Lambda_1(\psi)}-\frac{\Lambda_1(\psi)}{2}
=
\frac{\jap{A_\psi^*(1-\psi),1-\psi}}{\norm{1-\psi}^2}
-\frac1{4\pi i}\norm{1-\psi}^2
=
\frac{\Re \jap{A_\psi(1-\psi),1-\psi}}{\norm{1-\psi}^2},
$$
which yields \eqref{b20a}. By Lemma~\ref{lma.c7a}, the function \eqref{b20b} changes by a constant under the Frostman shift. 
By \eqref{b20a}, this constant is zero. 
\end{proof}


\section{Isometric multipliers on model spaces}\label{sec.a00}

In this section we consider the following scenario: $\theta$ is a finite Blaschke product,
$\psi$ is an inner function in $\bbC_+$ and $p$ is an isometric multiplier on $K_\psi$. We assume that $pK_\psi\subset K_\theta$ and derive some identities relating the infinitesimal generators $A_\theta$ and $A_\psi$. Although the results of this section are relatively straightforward, the set-up is rather special and hard to locate in the literature.

\subsection{Formula for the projection onto $pK_\psi$}

Let $\psi$ be a non-constant inner function in $\bbC_+$, and 
let $p$ be an isometric multiplier on $K_\psi$. 
Denote $M=pK_\psi$ and let $P_M$ be the orthogonal 
projection onto $M$. First we need a formula for $P_M$:

\begin{lemma}\label{lma.a2b}
Let $f\in H^2$ be such that $\overline{p}f\in L^2(\bbR)$. Then 
\begin{equation}
P_M f=p\bbP_+(\overline{p}f)-p\psi\bbP_+(\overline{p}\overline{\psi}f). 
\label{a9}
\end{equation}
\end{lemma}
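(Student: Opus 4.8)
The plan is to verify the formula \eqref{a9} by checking three things: that the right-hand side, call it $Qf$, indeed lies in $M=pK_\psi$; that $Q$ acts as the identity on $M$; and that $Q$ annihilates $M^\perp$ within the relevant domain of definition. Since $p$ is an isometric multiplier, multiplication by $p$ is an isometry from $K_\psi$ onto $M$, so checking these reduces to manipulations with the classical projection $P_\psi$ onto $K_\psi$ given by \eqref{a0b}.

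First I would observe that the bracketed expression $\bbP_+(\overline{p}f)-\psi\bbP_+(\overline{p}\,\overline{\psi}f)$ is exactly $P_\psi$ applied to $\bbP_+(\overline{p}f)$: indeed, whenever $g\in H^2$ we have $P_\psi g = g-\psi\bbP_+(\overline{\psi}g)$ by \eqref{a0b}, and applying this with $g=\bbP_+(\overline{p}f)\in H^2$ gives the bracket, using that $\bbP_+(\overline{\psi}\bbP_+(\overline{p}f))=\bbP_+(\overline{\psi}\,\overline{p}f)$ because $\overline\psi$ times the anti-holomorphic part of $\overline p f$ is again anti-holomorphic (here one uses $\overline{p}f\in L^2(\bbR)$ to make sense of these projections). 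Hence $Qf=p\,P_\psi\bbP_+(\overline{p}f)$, which manifestly lies in $pK_\psi=M$. This establishes the first point and rewrites $Q$ in a convenient form.

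Next I would check $Q|_M=I$. Take $f=ph$ with $h\in K_\psi$; then $\overline{p}f=\abs{p}^2 h$. The key fact is that an isometric multiplier satisfies $\abs{p(x)}^2=1$ for a.e.\ $x\in\bbR$ on the support where it matters—more precisely, for any $h_1,h_2\in K_\psi$ one has $\jap{ph_1,ph_2}=\jap{h_1,h_2}$, which by polarization forces $\bbP_+(\abs{p}^2 h)$ to agree with $h$ modulo $(\theta H^2)$-type corrections; the clean way is to note $\jap{\bbP_+(\overline p f),g}=\jap{f,pg}=\jap{ph,pg}=\jap{h,g}$ for all $g\in K_\psi$, so $P_\psi\bbP_+(\overline p f)=h$, giving $Qf=ph=f$. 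For the third point, orthogonality: if $f\perp M$ then for every $h\in K_\psi$, $0=\jap{f,ph}=\jap{\bbP_+(\overline p f),h}=\jap{P_\psi\bbP_+(\overline p f),h}$, so $P_\psi\bbP_+(\overline p f)=0$ and $Qf=0$. Combining the three points, $Q=P_M$ on the subspace $\{f\in H^2:\overline p f\in L^2(\bbR)\}$, which is what is claimed.

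The main obstacle I anticipate is the bookkeeping around the projections when $\overline p f$ is merely in $L^2(\bbR)$ rather than $H^2$: one must be careful that $\bbP_+(\overline p f)$, $\bbP_+(\overline p\,\overline\psi f)$, and the manipulations $\bbP_+(\overline\psi\bbP_+(g))=\bbP_+(\overline\psi g)$ are all legitimate, using that $\overline\psi$ and $\overline p$ are bounded on $\bbR$ (as $\psi$ is inner and $p$ is a boundedly-valued multiplier on the finite-dimensional $K_\psi$) and that multiplication by a bounded anti-holomorphic symbol preserves $(H^2)^\perp=\overline{H^2_0}$. Once these routine but slightly delicate measurability/boundedness points are dispatched, the identity reduces to the displayed algebra with $P_\psi$ and the isometric-multiplier duality $\jap{\bbP_+(\overline p f),g}=\jap{f,pg}$.
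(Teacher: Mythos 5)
Your proof uses exactly the same algebraic ingredients as the paper's: rewriting the bracket as $P_\psi\bbP_+(\overline{p}f)$ via \eqref{a0b} and $\bbP_+(\overline{\psi}\bbP_+(\overline{p}f))=\bbP_+(\overline{\psi}\,\overline{p}f)$, then exploiting the isometry of $p$ via the duality $\jap{\bbP_+(\overline{p}f),h}=\jap{f,ph}$, $h\in K_\psi$. The difference is in how the pieces are assembled, and this is where a genuine (though easily repaired) gap appears.

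You organize the argument as ``$Qf\in M$, $Q|_M=I$, $Q|_{M^\perp}=0$, hence $Q=P_M$.'' The trouble is that the last deduction requires decomposing a general $f$ in the domain $\{f\in H^2:\overline{p}f\in L^2\}$ as $f=f_M+f_{M^\perp}$ and applying $Q$ to each piece; but it is not given that $\overline{p}f_M\in L^2$ (equivalently $\overline{p}f_{M^\perp}\in L^2$). You would need $\abs{p}^2h\in L^2$ for every $h\in K_\psi$, i.e. some boundedness of $p$. You implicitly assume this (``$K_\psi$ finite-dimensional'', ``$p$ boundedly-valued''), but the lemma as stated in the paper is for an arbitrary non-constant inner function $\psi$, where $p$ need not be bounded. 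The paper circumvents this entirely by showing directly, for an arbitrary $f$ in the domain, that $Qf\in M$ and that $f-Qf\perp M$: for $h\in K_\psi$,
$$
\jap{f-Qf,ph}=\jap{f,ph}-\jap{P_\psi\bbP_+(\overline{p}f),h}=\jap{f,ph}-\jap{\overline{p}f,h}=0\,,
$$
which never requires decomposing $f$. Notice that this is precisely the computation you carried out under the extra hypothesis ``$f\perp M$'' in your step 3; dropping that hypothesis and interpreting the identity as $f-Qf\perp M$ turns your calculation into a complete proof valid in full generality.
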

\begin{proof}
Using formula \eqref{a0b} for the orthogonal projection $P_\psi$ onto $K_\psi$, we see that 
the r.h.s. of \eqref{a9} can be written as
$$
p\bbP_+(\overline{p}f)-p\psi\bbP_+(\overline{p}\overline{\psi}f)=p P_{\psi}\bbP_+(\overline{p}f). 
$$
It is clear that the r.h.s. here is in $pK_\psi$. 
It remains to check that its difference with $f$ is orthogonal to $ pK_\psi$. 
For $h\in K_\psi$, we have
$$
\jap{f-pP_{\psi}\bbP_+(\overline{p}f),ph}
=
\jap{f,ph}-\jap{P_{\psi}\bbP_+(\overline{p}f),h}
=
\jap{f,ph}-\jap{\overline{p}f,h}=0,
$$
as required. 
\end{proof}

\subsection{Projecting onto $M=pK_\psi$ in $K_\theta$}\label{sec.c2}

Here we work out formulas for projections of various functions onto $M=pK_\psi$ in $K_\theta$. 

\begin{lemma}\label{lma.a3a}
Let $\psi$ and $\theta$ be non-constant inner functions in $\bbC_+$ and 
let $p$ be an isometric multiplier on $K_\psi$. Assume that
$pK_\psi\subset K_\theta$ and that $\theta$ is a finite Blaschke product. 
Then both $\psi$ and $p$ are rational (in particular, $\psi$ is also a finite Blaschke product).  
Furthermore, 
\begin{equation}
p-p(\infty)\in K_\theta.
\label{b11a}
\end{equation} 
\end{lemma}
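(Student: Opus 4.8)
The plan is to first establish rationality of $\psi$ and $p$, and then to deduce the membership \eqref{b11a} from a reproducing-kernel computation. For rationality, observe that $pK_\psi \subset K_\theta$ forces $\dim K_\psi = \dim pK_\psi \le \dim K_\theta < \infty$, since $p$ is an isometric multiplier and hence injective on $K_\psi$. A finite-dimensional model space $K_\psi$ can only occur when $\psi$ is a finite Blaschke product (an infinite Blaschke product, or any inner function with a singular part, gives an infinite-dimensional model space); so $\psi$ is rational and $K_\psi$ consists of rational functions $p'/q'$ with $\deg q' = \deg \psi$ and $\deg p' \le \deg\psi - 1$, as recalled after \eqref{z10a}. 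Pick any $f_0 \in K_\psi$ with $f_0$ not identically zero; then $pf_0 \in K_\theta$ is rational, so $p = (pf_0)/f_0$ is a ratio of rational functions, hence rational. Since $p$ is holomorphic and bounded on $\bbC_+$ (being an isometric multiplier into $H^2$, it is in fact inner times a constant on $K_\psi$ — but we only need rationality here), it is a rational function with no poles in the closed upper half-plane, and in particular $p(\infty) := \lim_{\abs x\to\infty} p(x)$ exists and is finite.

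For \eqref{b11a}, the cleanest route is to test $p - p(\infty)$ against $\theta H^2$ and show orthogonality, which by definition of $K_\theta$ is exactly the claim (together with the obvious fact that $p - p(\infty) \in H^2$, since $p$ is rational, bounded on $\bbC_+$, and vanishes at infinity). Recall from the description $K_\theta = \{ r(z)/q(z) : \deg r \le m-1\}$ with $q(z) = \prod_{j=1}^m (x - \overline{a_j})$, so it suffices to check that $p - p(\infty)$ is of this form. The function $p$ maps $K_\psi$ isometrically into $K_\theta$; since $1 - \psi \in K_\psi$ by Lemma~\ref{lma.a1a} (assuming the normalisation $\psi(\infty) = 1$, which is harmless after a Frostman shift, cf.\ \eqref{z4}), we get $p(1-\psi) \in K_\theta$, i.e.\ $p - p\psi \in K_\theta$. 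It remains to relate this to $p - p(\infty)$. One argues with degrees: writing $p = P/Q$ in lowest terms with $Q$ having all zeros in $\bbC_-$ (so $Q$ divides a power of $q$ after enlarging $\theta$ — but in fact the isometric multiplier property pins down $Q$ exactly), and using that $\psi = u/\overline u$ for the Blaschke numerator $u$, a direct comparison of numerator and denominator degrees in $p(1-\psi)$ versus $p - p(\infty)$ shows both lie in the same model space $K_\theta$.

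Alternatively — and this is the approach I would actually write up, as it avoids degree bookkeeping — use Lemma~\ref{lma.a1b} and the resolvent formulas. Since $p(1-\psi) \in K_\theta$, and $p - p(\infty) = p(1-\psi) + (p(\infty)\cdot 1 - p(\infty)) + \big(p\psi - p(\infty)\big)$, it is enough to show $p\psi - p(\infty) \in K_\theta$, i.e.\ (subtracting the constant, which drops out of the Hankel/model-space structure) that the appropriate combination is orthogonal to $\theta H^2$. Concretely, for $h \in H^2$ compute $\jap{p\psi, \theta h} = \jap{\psi, \overline p \,\theta h}$; since $p$ is an isometric multiplier on $K_\psi$, Sarason's description gives $\overline p \,\psi' \in H^2$ relations that force this inner product to reduce to $\jap{1, \overline{\theta h}}$-type boundary terms, all of which vanish except the one producing the constant $p(\infty)$.

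\emph{Main obstacle.} The delicate point is pinning down precisely which multiple of the constant function appears — equivalently, verifying that the ``tail at infinity'' of $p\psi$ is exactly $p(\infty)$ and nothing more. This is where the isometric-multiplier hypothesis is genuinely used: a general $p$ with $pK_\psi \subset K_\theta$ but $\norm{pf} \ne \norm f$ need not satisfy \eqref{b11a}. I expect the hypothesis enters through the identity $\overline p p = 1 + \psi \overline g + \overline\psi g$ for some $g \in H^2$ (Sarason), which controls exactly the failure of $\overline p p$ to be analytic and thereby fixes the constant term. Translating that into the statement $p - p(\infty) \in K_\theta$ is the crux; everything else is routine.
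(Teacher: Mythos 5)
Your rationality argument is a valid alternative to the paper's. The paper multiplies the reproducing kernel $F_\zeta=P_\psi v_\zeta$ by $p$ and extracts rationality of $(1-\overline{\psi(\zeta)}\psi)p$ for all $\zeta$; you count dimensions ($\dim K_\psi=\dim pK_\psi\leq\dim K_\theta<\infty$ forces $\psi$ to be a finite Blaschke product) and then write $p=(pf_0)/f_0$. Both routes work. (One small parenthetical slip: an isometric multiplier on $K_\psi$ is generally \emph{not} of the form ``inner times constant''; Sarason's description is more involved. You do not use this claim, so it is harmless.)

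The second part, \eqref{b11a}, is where there is a genuine gap, and you say so yourself: after proposing two routes — a degree count and a Sarason-identity argument — you stop at ``translating that into the statement $p-p(\infty)\in K_\theta$ is the crux.'' Neither route is carried out. The decomposition $p-p(\infty)=p(1-\psi)+(p\psi-p(\infty))$ is circular: showing $p\psi-p(\infty)\in K_\theta$ is no easier than the original claim. The degree-counting sketch leaves the isometric-multiplier hypothesis unused, and you correctly sense that some hypothesis must enter (a non-isometric multiplier with $pK_\psi\subset K_\theta$ need not satisfy \eqref{b11a}) but never locate where. The paper's argument is short and exploits the explicit formula \eqref{a4}, $A_\theta^*f=xf-\Lambda_1(f)$, valid on any model space. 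Picking $h\in K_\psi$ with $\Lambda_1(h)\neq0$ (e.g.\ $h=1-\psi$ after normalising $\psi(\infty)=1$), one computes
\begin{equation*}
A_\theta^*(ph)-pA_\psi^*h
=\bigl(xp h-\Lambda_1(ph)\bigr)-p\bigl(xh-\Lambda_1(h)\bigr)
=\bigl(p-p(\infty)\bigr)\Lambda_1(h).
\end{equation*}
Both $A_\theta^*(ph)$ and $pA_\psi^*h$ lie in $K_\theta$ (the first because $ph\in K_\theta$, the second because $A_\psi^*h\in K_\psi$ so $pA_\psi^*h\in pK_\psi\subset K_\theta$), so the right-hand side is in $K_\theta$ and, dividing by $\Lambda_1(h)\neq0$, the claim follows. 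This is exactly the step missing from your proposal; the needed ingredient is not Sarason's representation but the compatibility of the $A^*$ actions on $K_\theta$ and $K_\psi$ via the constant $\Lambda_1(f)$.
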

\begin{proof}
It is easy to compute the reproducing kernel of $K_\psi$ (cf. e.g. \cite[Section 5.5]{GMR}):
$$
F_\zeta(x):=P_{\psi}v_\zeta(x)
=
\frac{1}{2\pi i}
\frac{1-\overline{\psi(\zeta)}\psi(x)}{\overline{\zeta}-x}.
$$
Since $F_\zeta\in K_\psi$ and $pK_\psi\subset K_\theta$, we see that $pF_\zeta$ is rational, with $(pF_\zeta)(\infty )=0$. 
Multiplying by $(\overline{\zeta}-x)$, we find that the function 
$$
(1-\overline{\psi(\zeta)}\psi(x))p(x)
$$
is rational.
Since this is true for any $\zeta\in \bbC_+$ and $\psi$ is non-constant, 
we conclude that both $p$ and $\psi p$ are rational. 
Since $p$ is not identically zero, we finally conclude that $\psi$ is also rational. 

Let us prove \eqref{b11a}. 
Take some $h\in K_\psi$ with $\Lambda_1(h)\not=0$, for example, $h=1-\psi$ (if $\psi$ is normalised by $\psi(\infty)=1$), see \eqref{a6a}. 
By \eqref{a4}, we have
\begin{multline*}
A_\theta^*(ph)-pA_\psi^*h
=
xp(x)h(x)-\Lambda_1(ph)-p(x)(xh(x)-\Lambda_1(h))
\\
=
p(x)\Lambda_1(h)-\Lambda_1(ph)
=
(p(x)-p(\infty))\Lambda_1(h).
\end{multline*}
Here the left side is in $K_\theta$, and so the right side is also in $K_\theta$. 
\end{proof}

\begin{lemma}\label{lma.a3}
Let $\psi$ and $\theta$ be finite Blaschke products with $\psi(\infty)=\theta(\infty)=1$, and let $p$ be an isometric multiplier on $K_\psi$. Assume that $M=pK_\psi\subset K_\theta$. 
Then 
$$
P_M(1-\theta)=\overline{p(\infty)}p(1-\psi)
$$
and 
\begin{equation}
\norm{P_M(1-\theta)}=\abs{p(\infty)}\norm{1-\psi}.
\label{a11aaa}
\end{equation}
\end{lemma}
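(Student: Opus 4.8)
The plan is to compute $P_M(1-\theta)$ directly using the projection formula \eqref{a9} from Lemma~\ref{lma.a2b}, since $M = pK_\psi$ and $p$ is an isometric multiplier. First I would check that the hypothesis $\overline{p}(1-\theta) \in L^2(\bbR)$ is satisfied: by Lemma~\ref{lma.a3a} both $p$ and $\psi$ are rational (in particular $p\in H^\infty(\bbC_+)$, being an isometric multiplier with $pK_\psi \subset K_\theta \subset H^2$, so $p$ is bounded on the line), and $1-\theta$ is bounded, so $\overline{p}(1-\theta)\in L^\infty(\bbR) \subset L^2_{\mathrm{loc}}$; a short decay estimate at infinity (both $p$ and $1-\theta$ are rational and bounded) gives the required $L^2$ membership. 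Then \eqref{a9} gives
$$
P_M(1-\theta) = p\,\bbP_+\bigl(\overline{p}(1-\theta)\bigr) - p\psi\,\bbP_+\bigl(\overline{p}\,\overline{\psi}(1-\theta)\bigr).
$$

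The key computation is to evaluate the two Szeg\H{o} projections. I would write $\overline{p}(1-\theta) = \overline{p} - \overline{p}\theta$ and use that $p \in H^2$-ish behaviour makes $\bbP_+\overline{p} = \overline{p(\infty)}$ (the conjugate of a bounded analytic function projects to the conjugate of its value at infinity — more precisely, $\overline{p} - \overline{p(\infty)}$ is the conjugate of an $H^2$ function vanishing at infinity, hence anti-analytic, so $\bbP_+(\overline{p}) = \overline{p(\infty)}$ using $\bbP_+ 1 = 1$). For the term $\bbP_+(\overline{p}\theta)$: since $pK_\psi \subset K_\theta$, we have $\theta\overline{p}\,\overline{h} \in H^2$ for every $h\in K_\psi$ (because $\theta\overline{ph}\in H^2$ as $ph\in K_\theta$); specializing and using density one gets that $\theta\overline{p}$ is, modulo the right anti-analytic complement, controlled — concretely, I expect $\bbP_+(\overline{p}\theta)=0$ follows from $\theta\overline{p} = \overline{\psi}\cdot(\text{something in } H^2)$, i.e. $p = \psi\theta\,\overline{(\cdot)}$-type relation. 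This is exactly the content of the Crofoot/Sarason structure: $\theta = \psi\cdot(\text{isometric-multiplier data})$, so $\overline{p}\theta = \overline{p}\psi \cdot p/p = \dots$; cleaner is to observe $P_\psi \bbP_+(\overline{p}(1-\theta)) = \bbP_+(\overline p (1-\theta)) - \psi \bbP_+(\overline\psi\overline p(1-\theta))$ and note that $P_M(1-\theta) = p P_\psi \bbP_+(\overline p(1-\theta))$ by the proof of Lemma~\ref{lma.a2b}, so it suffices to identify $P_\psi\bbP_+(\overline p(1-\theta)) \in K_\psi$. Since $\Lambda_1$ and the reproducing kernel at infinity $1-\psi$ span the relevant information, I would compute $\bbP_+(\overline p(1-\theta))$ up to its $K_\psi$-component and match: the claim $P_M(1-\theta) = \overline{p(\infty)}\,p(1-\psi)$ amounts to $P_\psi\bbP_+(\overline p(1-\theta)) = \overline{p(\infty)}(1-\psi)$.

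The cleanest route, which I would actually write up, avoids the projection formula: use Lemma~\ref{lma.a1b}, which says $\Lambda_1(g) = -\frac{1}{2\pi i}\jap{g, 1-\theta}$ for all $g\in K_\theta$, hence for $g = pf$ with $f\in K_\psi$ (so $pf\in K_\theta$) we get $\jap{pf, 1-\theta} = -2\pi i\,\Lambda_1(pf) = -2\pi i\, p(\infty)\Lambda_1(f)$. On the other hand $\Lambda_1(f) = -\frac{1}{2\pi i}\jap{f, 1-\psi}_{K_\psi}$ by the same lemma applied in $K_\psi$. Therefore $\jap{pf, 1-\theta} = p(\infty)\jap{f,1-\psi} = \jap{pf, \overline{p(\infty)}\,p(1-\psi)}$, using that $p$ is isometric: $\jap{pf, pg} = \jap{f,g}$. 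Since this holds for all $f \in K_\psi$, i.e. all $pf \in M$, and $\overline{p(\infty)}p(1-\psi)\in M$, we conclude $P_M(1-\theta) = \overline{p(\infty)}\,p(1-\psi)$. Then \eqref{a11aaa} follows by taking norms and using $\norm{p(1-\psi)} = \norm{1-\psi}$ (isometry again). The main obstacle is purely bookkeeping: making sure $\Lambda_1(pf) = p(\infty)\Lambda_1(f)$ (immediate from the Laurent expansion since $p$ is bounded analytic, $p(x) = p(\infty) + O(1/x)$) and that the isometric-multiplier identity $\jap{pf,pg}=\jap{f,g}$ is legitimately applied — both are routine, so I expect no real difficulty.
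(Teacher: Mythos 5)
Your ``cleanest route'' is correct, and it is genuinely different from the paper's proof, which instead invokes the explicit projection formula \eqref{a9} from Lemma~\ref{lma.a2b} and computes $\bbP_+(\overline{p}(1-\theta))$ and $\bbP_+(\overline{p}\overline{\psi}(1-\theta))$ by a careful term-by-term decomposition using \eqref{b11a}. Your argument sidesteps that computation entirely: you apply Lemma~\ref{lma.a1b} twice (once in $K_\theta$ for $pf$, once in $K_\psi$ for $f$), together with the elementary identity $\Lambda_1(pf)=p(\infty)\Lambda_1(f)$ and the polarized isometry $\jap{pf,pg}=\jap{f,g}$, to show
$$
\jap{pf,\,1-\theta}=-2\pi i\,\Lambda_1(pf)=-2\pi i\,p(\infty)\Lambda_1(f)=p(\infty)\jap{f,1-\psi}=\jap{pf,\,\overline{p(\infty)}\,p(1-\psi)}
$$
for every $f\in K_\psi$, which characterizes $P_M(1-\theta)$ immediately since $\overline{p(\infty)}\,p(1-\psi)\in M$. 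The norm identity then follows from isometry. This is shorter and more conceptual than the paper's calculation, and it fits naturally with the surrounding toolkit (Lemma~\ref{lma.a3c} later in the paper similarly leans on $\Lambda_1$ and the reproducing-kernel-at-infinity viewpoint). The one small hypothesis you must cite, and do cite, is Lemma~\ref{lma.a3a}, which guarantees $p$ is rational with $p-p(\infty)\in K_\theta$, so that $p(\infty)$ is well defined and $p$ is bounded; this is what legitimizes $\Lambda_1(pf)=p(\infty)\Lambda_1(f)$.

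A minor remark on the route you abandoned: your guess ``$\bbP_+(\overline{p}\theta)=0$'' is not what the paper's computation yields, and in fact $\bbP_+$ cannot be applied separately to $\overline{p}$ or $\overline{p}\theta$ since neither is in $L^2(\bbR)$; one must work with the combination $\overline{p}(1-\theta)$. But since you correctly pivot to the cleaner argument, this is moot.
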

\begin{proof}
By formula \eqref{a9}, we have
$$
P_M(1-\theta)=p\bbP_+(\overline{p}(1-\theta))-p\psi\bbP_+(\overline{p}\overline{\psi}(1-\theta)).
$$
Let us compute the two terms in the right side. 
We have 
\begin{multline*}
\overline{p}(1-\theta)
=
(\overline{p}-\overline{p(\infty)})(1-\theta)+\overline{p(\infty)}(1-\theta)
\\
=
(\overline{p}-\overline{p(\infty)})-\theta(\overline{p}-\overline{p(\infty)})+\overline{p(\infty)}(1-\theta).
\end{multline*}
By \eqref{b11a}, the first term in the right hand side is 
anti-analytic and the second term is in $H^2$, so we get
\begin{equation}
\bbP_+(\overline{p}(1-\theta))
=
-\theta(\overline{p}-\overline{p(\infty)})+\overline{p(\infty)}(1-\theta)
=
\overline{p(\infty)}-\theta\overline{p}.
\label{a11a}
\end{equation}
Next, we have 
$$
\overline{p}\overline{\psi}(1-\theta)
=
\overline{p}(\overline{\psi}-1)-\theta\overline{p}(\overline{\psi}-1)+\overline{p}(1-\theta).
$$
Since $p(\psi-1)\in pK_\psi\subset K_\theta$, we have $\theta\overline{p}(\overline{\psi}-1)\in H^2$. 
Thus, using \eqref{a11a}, 
$$
\bbP_+(\overline{p}\overline{\psi}(1-\theta))
=
-\theta\overline{p}(\overline{\psi}-1)+\bbP_+(\overline{p}(1-\theta))
=
-\theta\overline{p}(\overline{\psi}-1)
+
\overline{p(\infty)}-\theta\overline{p}.
$$
Combining this, we obtain
\begin{multline*}
P_M(1-\theta)
=
p(\overline{p(\infty)}-\theta\overline{p})
-
p\psi(\overline{p(\infty)}-\theta\overline{p}-\theta\overline{p}(\overline{\psi}-1))
\\
=
(1-\psi)p(\overline{p(\infty)}-\theta\overline{p})
+
\theta\psi \abs{p}^2(\overline{\psi}-1)
=
\overline{p(\infty)}p(1-\psi),
\end{multline*}
as claimed. Computing the norms and using the isometricity of $p$, we obtain
\eqref{a11aaa}. 
\end{proof}

\begin{lemma}\label{lma.a3b}
Assume the hypothesis of the previous Lemma. 
Then 
$$
P_M(p-p(\infty))=\varkappa p(1-\psi)
$$
with some $\varkappa \in \bbC$. 
\end{lemma}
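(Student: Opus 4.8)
The plan is to compute $P_M(p - p(\infty))$ directly using the projection formula \eqref{a9}, exactly as in the proof of Lemma~\ref{lma.a3}, and then identify the resulting element of $M = pK_\psi$ with a multiple of $p(1-\psi)$. Set $g = p - p(\infty)$. By Lemma~\ref{lma.a3a} we know $g \in K_\theta$, and $g \in H^2$ with $\overline{p}g = \abs{p}^2 - \overline{p(\infty)}p$; since $p$ is a rational isometric multiplier on $K_\psi$, the function $\abs{p}^2$ is bounded on $\bbR$ and so $\overline{p}g \in L^2(\bbR)$, which is what is needed to apply Lemma~\ref{lma.a2b}. So
$$
P_M(p - p(\infty)) = p\,\bbP_+\bigl(\overline{p}(p - p(\infty))\bigr) - p\psi\,\bbP_+\bigl(\overline{p}\,\overline{\psi}(p - p(\infty))\bigr).
$$

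First I would compute $\bbP_+(\overline{p}(p - p(\infty))) = \bbP_+(\abs{p}^2) - \overline{p(\infty)}\bbP_+(p) = \bbP_+(\abs{p}^2) - \overline{p(\infty)}\,p$, where I use that $p$ is analytic. The quantity $\bbP_+(\abs{p}^2)$ is the key object: writing $\abs{p}^2 = p\overline{p}$ and using that $p - p(\infty)$ is anti-analytic after conjugation, one gets $\bbP_+(\abs{p}^2) = \bbP_+\bigl(p(\overline{p} - \overline{p(\infty)})\bigr) + \abs{p(\infty)}^2$; the first summand is $\bbP_+$ applied to a product of an $H^2$ function with an anti-analytic $H^2$ function, and this requires a short computation — essentially it equals $\abs{p(\infty)}^2$ plus a term involving the reproducing-kernel structure, or more simply one notes $\overline{p}p \in H^1 + \overline{H^1_0}$ and extracts the analytic part. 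The cleanest route is to avoid naming $\bbP_+(\abs{p}^2)$ explicitly: observe that $P_M$ maps into $M = pK_\psi$, so $P_M(p - p(\infty)) = p\,k$ for some $k \in K_\psi$, and then pin down $k$ by testing against reproducing kernels or by computing $\Lambda_1$.

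The slicker argument, which I expect to be the intended one, is this: since $P_M(p-p(\infty)) \in pK_\psi$, write it as $p k$ with $k \in K_\psi$. To show $k$ is a multiple of $1 - \psi$, it suffices (since $\dim(\text{span}(1-\psi)) = 1$ in the relevant sense, or rather since $1-\psi$ spans the "coinvariant part at infinity") to show $k$ is orthogonal in $K_\psi$ to every reproducing kernel $F_\zeta$ with $F_\zeta(\infty) = 0$ — equivalently, that $\jap{k, F_\zeta} = k(\zeta)$ depends on $\zeta$ only through $\psi(\zeta)$ in the affine way characteristic of $c(1-\psi)$. Concretely: $\jap{pk, ph} = \jap{k,h}$ for $h \in K_\psi$ by isometricity, and $\jap{p - p(\infty), ph} = \jap{p, ph} - p(\infty)\jap{1, ph}$. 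Using $\bbP_+(\overline{p}p) $ and $\overline{\jap{1,ph}} = \jap{ph,1} = (ph)$ evaluated appropriately via Lemma~\ref{lma.a1b} ($\Lambda_1$), one reduces everything to: $\jap{k,h} = -\,\overline{p(\infty)}\,\Lambda_1(ph)$ for all $h \in K_\psi$. Since $\Lambda_1(ph) = p(\infty)\Lambda_1(h)$ (as $p(\infty) \neq 0$; indeed $\abs{p(\infty)} = \norm{P_M(1-\theta)}/\norm{1-\psi} $ need not be nonzero a priori, but if $p(\infty) = 0$ then $p \in K_\theta$ and $k$ is determined by the same identity with the term dropping out), we get $\jap{k,h} = -\abs{p(\infty)}^2 \Lambda_1(h) = -\abs{p(\infty)}^2\cdot\bigl(-\tfrac1{2\pi i}\jap{h,1-\psi}\bigr) \cdot \overline{(\cdot)}$, i.e. $\jap{k,h}$ is a fixed constant times $\jap{1-\psi, h}$ for all $h$, hence $k = \varkappa(1-\psi)$ with $\varkappa = \tfrac{\abs{p(\infty)}^2}{2\pi i}$ (up to conjugation conventions).

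The main obstacle is purely bookkeeping: getting the constant $\varkappa$ right requires care with the antilinear-in-the-second-slot inner product convention and with the two appearances of $\Lambda_1$ (one via Lemma~\ref{lma.a1b} for $h \in K_\psi$, one for $ph \in K_\theta$), and one must handle the degenerate case $p(\infty) = 0$ separately (where the statement holds trivially with $\varkappa = 0$ and $P_M(p) = 0$ since then $p$ itself, being in $K_\theta$, satisfies $\bbP_+(\overline p p) = \bbP_+(\overline p p)$ with no constant term — actually one should check $\overline{p}p \perp pK_\psi$ in this case). Since the statement only asserts existence of some $\varkappa \in \bbC$, I would present the computation via \eqref{a9} as in Lemma~\ref{lma.a3}, carry the algebra through to the line $P_M(p - p(\infty)) = (1-\psi)\cdot(\text{something})p + (\text{term in }\theta\psi\abs p^2)$, observe the $\theta\psi$-term lies in the orthogonal complement or cancels as before, and read off that what remains is a scalar multiple of $p(1-\psi)$ — leaving $\varkappa$ unnamed, which is all the lemma requires.
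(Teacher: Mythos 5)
The paper itself flags that a direct argument is unavailable: immediately after stating the lemma, the authors write ``we don't have a direct argument for it, nor an expression for the constant $\varkappa$'', and the actual proof passes through two auxiliary resolvent identities (Lemmas~\ref{lma.a3c} and~\ref{lma.a3d}), which show that $P_M(A_\theta^*-\zeta)^{-1}(p(1-\psi))$ and $P_M(A_\theta^*-\zeta)^{-1}(p-p(\infty))$ are scalar multiples of $p(A_\psi^*-\zeta)^{-1}(1-\psi)$ for each fixed $\zeta\in\bbC_+$, and then multiplies by $-\zeta$ and uses $-\zeta(A_\theta^*-\zeta)^{-1}\to I$, $-\zeta(A_\psi^*-\zeta)^{-1}\to I$ to pass to the limit. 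Your proposal instead tries to mimic the direct projection computation of Lemma~\ref{lma.a3}, and this is where it breaks.

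The concrete gap is at your final identification $\jap{k,h}=-\overline{p(\infty)}\,\Lambda_1(ph)$ for $h\in K_\psi$. This equates a functional that is anti-linear in $h$ (the left side, since $\jap{\cdot,\cdot}$ is anti-linear in its second slot) with one that is linear in $h$ (the right side, since $\Lambda_1(ph)=p(\infty)\Lambda_1(h)$); such an identity forces both sides to vanish. The correct target for concluding $k\in\bbC(1-\psi)$ is $\jap{k,h}=c\,\overline{\Lambda_1(h)}$, i.e.\ $\jap{k,h}$ proportional to $\jap{1-\psi,h}$, and your computation produces no mechanism for the extra conjugation. Moreover, even setting that aside, the value $\varkappa=\pm|p(\infty)|^2/(2\pi i)$ you read off is purely imaginary, while Theorem~\ref{lma.a4} (with $\overline{c}=-\varkappa/(2\pi i)$) together with~\eqref{a11f} and~\eqref{a11aaa} forces $\Re\varkappa=(1-|p(\infty)|^2)/2$, which is generically nonzero. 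The real part of $\varkappa$ is determined by $|p(\infty)|$, but the imaginary part (equivalently $\Re c$) has no formula in terms of $p(\infty)$ alone --- this is precisely why the direct computation you attempt cannot yield the constant, and why the paper's proof must go through the resolvent lemmas instead.
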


Unlike in the previous lemma, we don't have a 
direct argument for it, nor an expression for the constant $\varkappa$. 
Our proof requires two intermediate steps. 

\begin{lemma}\label{lma.a3c}
Assume the hypothesis of Lemma~\ref{lma.a3} and let $\zeta\in\bbC_+$. 
Then 
$$
P_M(A_\theta^*-\zeta)^{-1}(p(1-\psi))=cp(A_\psi^*-\zeta)^{-1}(1-\psi)
$$
with some $c\in \bbC$. 
\end{lemma}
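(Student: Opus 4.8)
The plan is to make both sides explicit with the resolvent formula \eqref{a0d}, reduce to an identity for $P_M$ applied to a single vector, and then establish that identity via the projection formula of Lemma~\ref{lma.a2b} together with the isometric multiplier property of $p$.

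\emph{Reduction.} Since $p(1-\psi)\in M\subset K_\theta$, $p-p(\infty)\in K_\theta$ (by \eqref{b11a}) and $1-\psi\in K_\psi$, formula \eqref{a0d} applied in $K_\theta$ and in $K_\psi$ gives
\begin{align*}
(A_\theta^*-\zeta)^{-1}(p(1-\psi))(x)&=\frac{p(x)(1-\psi(x))-p(\zeta)(1-\psi(\zeta))}{x-\zeta},\\
p\,(A_\psi^*-\zeta)^{-1}(1-\psi)(x)&=p(x)\,\frac{\psi(\zeta)-\psi(x)}{x-\zeta},\\
(A_\theta^*-\zeta)^{-1}(p-p(\infty))(x)&=\frac{p(x)-p(\zeta)}{x-\zeta}.
\end{align*}
A one-line simplification of the numerators gives
$$
(A_\theta^*-\zeta)^{-1}(p(1-\psi))=p\,(A_\psi^*-\zeta)^{-1}(1-\psi)+(1-\psi(\zeta))(A_\theta^*-\zeta)^{-1}(p-p(\infty)).
$$
As $p\,(A_\psi^*-\zeta)^{-1}(1-\psi)\in M$, applying $P_M$ reduces the lemma to showing that $P_M(A_\theta^*-\zeta)^{-1}(p-p(\infty))$ is a scalar multiple of $p\,(A_\psi^*-\zeta)^{-1}(1-\psi)$.

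\emph{Translation into a statement about $\bbP_+$.} It suffices to test this against the vectors $ph$, $h\in K_\psi$, which span $M$. Moving $(A_\theta^*-\zeta)^{-1}$ onto the second factor via its adjoint $(A_\theta-\overline\zeta)^{-1}$ and using \eqref{a0c} and $p-p(\infty)\in K_\theta$, one computes $\jap{(A_\theta^*-\zeta)^{-1}(p-p(\infty)),ph}=2\pi i\,\bbP_+\bigl((p-p(\infty))\overline{ph}\bigr)(\zeta)$. On the other hand, the isometry of $p$ and Lemma~\ref{lma.a7} applied in $K_\psi$ give $\jap{p\,(A_\psi^*-\zeta)^{-1}(1-\psi),ph}=\jap{(A_\psi^*-\zeta)^{-1}(1-\psi),h}=-2\pi i\,(H_\psi h)(\zeta)$. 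So the reduced claim follows once we show that, for every $h\in K_\psi$, the function $\bbP_+\bigl((p-p(\infty))\overline{ph}\bigr)$ is a scalar multiple of $H_\psi h=\psi\overline h$, with the scalar independent of $h$. Since $ph\in K_\theta$ we may write $\overline{ph}=\overline\theta\,H_\theta(ph)$ with $H_\theta(ph)=\theta\overline{ph}\in K_\theta$, so $(p-p(\infty))\overline{ph}=\overline\theta\,W$ with $W:=(p-p(\infty))H_\theta(ph)$ a product of two elements of $K_\theta$, hence $W\in K_{\theta^2}=K_\theta\oplus\theta K_\theta$ ($\theta$ being a finite Blaschke product); and for such $W$ one checks directly that $\bbP_+(\overline\theta W)$ is precisely the $\theta K_\theta$-component of $W$. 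So the task becomes: identify that component with the expected multiple of $H_\psi h$.

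\emph{The identification, and the main obstacle.} Here the isometric multiplier property of $p$ is used essentially. Writing $\psi$ and $\theta$ as finite Blaschke products with polynomial denominators $q_\psi$, $q_\theta$, one deduces from \eqref{b11a} and $p(1-\psi)\in K_\theta$ that $p=q_\psi t/q_\theta$ for a polynomial $t$, and $\norm{ph}=\norm{h}$ on $K_\psi$ becomes the family of moment relations $\int_{\bbR}P(x)\,\frac{\abs{p(x)}^2-1}{\abs{q_\psi(x)}^2}\,dx=0$ for all polynomials $P$ with $\deg P\le 2\deg\psi-2$. A direct (if somewhat tedious) computation of the $\theta K_\theta$-component of $W$ in terms of $t$, $q_\psi$, $q_\theta$ and the polynomial representing $h$, combined with exactly these moment relations, shows that this component is a constant times $H_\psi h$; the constant depends on $\zeta$ but not on $h$, and is the $c$ of the statement. (Alternatively, Sarason's representation formula for isometric multipliers on $K_\psi$ can be invoked to streamline this computation.) The main obstacle is precisely this step: the manipulations with $\overline\theta$ and $K_{\theta^2}$ are routine, but the fact that the relevant component is a \emph{scalar} multiple of $H_\psi h$ — rather than a general element of $K_\psi$ — is a genuine consequence of the isometric multiplier condition and is the technical heart of the proof.
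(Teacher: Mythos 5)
Your reduction --- splitting $(A_\theta^*-\zeta)^{-1}(p(1-\psi))$ into $p(A_\psi^*-\zeta)^{-1}(1-\psi)$ plus a term involving $p-p(\infty)$, dualising against $ph$, and rewriting via $\bbP_+$ --- is correct, but it does not make progress: the telescoping identity you use is exactly how the paper derives the statement about $P_M(A_\theta^*-\zeta)^{-1}(p-p(\infty))$ \emph{from} the present lemma, so you have reversed the logical order and are now on the hook for the other statement. The ``main obstacle'' you identify at the end --- that the relevant component of $W=(p-p(\infty))H_\theta(ph)$ is a scalar (independent of $h$) multiple of $H_\psi h$ --- is the entire content of the lemma, and you do not prove it. Asserting that a ``direct (if somewhat tedious) computation'' with polynomial representations $p=q_\psi t/q_\theta$ and the moment relations coming from $\norm{ph}=\norm{h}$ ``shows'' this is not a proof: a priori the linear map $h\mapsto\bbP_+\bigl((p-p(\infty))\overline{ph}\bigr)$ has no reason to collapse onto the one-dimensional span of $H_\psi h$, and how the moment relations force that collapse is precisely what needs an argument. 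There is also a wording slip: you first assert that $\bbP_+\bigl((p-p(\infty))\overline{ph}\bigr)$ equals a scalar (independent of $h$, and hence of $\zeta$, since neither side involves $\zeta$) times $H_\psi h$, then conclude with ``the constant depends on $\zeta$''.

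The paper's proof takes a completely different, non-computational route that avoids any polynomial bookkeeping. It dualises the projection claim: to show $P_M(A_\theta^*-\zeta)^{-1}(p(1-\psi))$ is parallel to $v=p(A_\psi^*-\zeta)^{-1}(1-\psi)$, it suffices to show that every $f=ph\in M$ with $f\perp v$ satisfies $f\perp(A_\theta^*-\zeta)^{-1}(p(1-\psi))$. Passing to $(A_\theta-\overline\zeta)^{-1}$ the inner product becomes $-2\pi i\jap{phv_\zeta,p(1-\psi)}$; isometricity of $p$ would reduce this to $\jap{hv_\zeta,1-\psi}$ \emph{provided} $hv_\zeta\in K_\psi$, and the crucial observation is that the orthogonality hypothesis $h\perp(A_\psi^*-\zeta)^{-1}(1-\psi)$ is exactly what guarantees $hv_\zeta\in K_\psi$. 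Then $\jap{hv_\zeta,1-\psi}=-2\pi i\,\Lambda_1(hv_\zeta)=0$ because both $h$ and $v_\zeta$ are $O(1/x)$ at infinity. As written, your outline identifies the right object to analyse but does not analyse it; you would need to carry out the omitted identification in full to have a proof.
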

\begin{proof}
The statement of the lemma is equivalent to the following one. 
Let $f\in pK_\psi$, $f\perp p(A_\psi^*-\zeta)^{-1}(1-\psi)$; then
\begin{equation}
f\perp (A_\theta^*-\zeta)^{-1}(p(1-\psi)). 
\label{a11b}
\end{equation}
Write $f=ph$, $h\in K_\psi$; then condition 
$f\perp p(A_\psi^*-\zeta)^{-1}(1-\psi)$ is equivalent to 
$h\perp (A_\psi^*-\zeta)^{-1}(1-\psi)$. 
We have 
$$
\jap{f,(A_\theta^*-\zeta)^{-1}(p(1-\psi))}
=
\jap{(A_\theta-\overline{\zeta})^{-1}(ph),p(1-\psi)},
$$
and, by \eqref{a0c},
$$
(A_\theta-\overline{\zeta})^{-1}(ph)
=
-2\pi i\, P_\theta(phv_\zeta)
=
- 2\pi i\, phv_\zeta+w, \quad w\in \theta H^2.
$$
It follows that 
\begin{equation}
\jap{(A_\theta-\overline{\zeta})^{-1}(ph),p(1-\psi)}
=
-2\pi i
\jap{phv_\zeta,p(1-\psi)}.
\label{a11c}
\end{equation}

We would like to use the isometricity of $p$ in the right side of \eqref{a11c}. 
In order to be able to do so, we must check that 
$hv_\zeta$ is in $K_\psi$. Let $g\in H^2$; consider 
$$
\jap{hv_\zeta,\psi g}
=
\jap{h,\overline{v_\zeta}\psi g}
=
\jap{h,\bbP_+(\overline{v_\zeta}\psi g)}.
$$
Further, we have 
\begin{align*}
\bbP_+(\overline{v_\zeta}\psi g)(x)
&=
\frac1{2\pi i}\frac{\psi(x)g(x)-\psi(\zeta)g(\zeta)}{x-\zeta}
\\
&=
\frac1{2\pi i}\psi(x)\frac{g(x)-g(\zeta)}{x-\zeta}
+\frac1{2\pi i}g(\zeta)\frac{(\psi(x)-1)-(\psi(\zeta)-1)}{x-\zeta}
\\
&=
\psi\wt w+\frac1{2\pi i}g(\zeta)(A_\psi^*-\zeta)^{-1}(\psi-1), 
\end{align*}
where $\wt w\in H^2$. 
Now
$$
\jap{h,\bbP_+(\overline{v_\zeta}\psi g)}
=
\jap{h,\psi\wt w}
-
\frac1{2\pi i}\overline{g(\zeta)}
\jap{h,(A_\psi^*-\zeta)^{-1}(1-\psi)}
=0,
$$
where the first term in the right side vanishes because $h\in K_\psi$ 
and the second term vanishes because of our condition on $h$. 
Thus, we have checked that $hv_\zeta\in K_\psi$. 

We come back to \eqref{a11c}: 
$$
\jap{phv_\zeta,p(1-\psi)}
=
\jap{hv_\zeta,1-\psi}
=
-(2\pi i)\Lambda_1(hv_\zeta),
$$
where we have used Lemma~\ref{lma.a1b} at the last step. 
Finally, we have $h(x)=O(1/x)$ and $v_\zeta(x)=O(1/x)$ at
infinity, and therefore $\Lambda_1(hv_\zeta)=0$. 
We have checked \eqref{a11b}; the proof is complete. 
\end{proof}

\begin{lemma}\label{lma.a3d}
Assume the hypothesis of Lemma~\ref{lma.a3} and let $\zeta\in\bbC_+$. 
Then 
\begin{equation}
P_M(A_\theta^*-\zeta)^{-1}(p-p(\infty))=c'p(A_\psi^*-\zeta)^{-1}(1-\psi)
\label{a11d}
\end{equation}
with some $c'\in \bbC$. 
\end{lemma}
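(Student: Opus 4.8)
The plan is to deduce \eqref{a11d} from Lemma~\ref{lma.a3c} by means of a commutation identity linking $A_\theta^*$ and $A_\psi^*$ through the multiplier $p$. First I would record that, for every $h\in K_\psi$, both $ph$ and $p-p(\infty)$ belong to $K_\theta$ (the former by the hypothesis $pK_\psi\subset K_\theta$, the latter by \eqref{b11a}), and that applying \eqref{a4} to $\theta$ and to $\psi$ yields --- exactly as in the computation in the proof of Lemma~\ref{lma.a3a}, together with $\Lambda_1(ph)=p(\infty)\Lambda_1(h)$ ---
$$A_\theta^*(ph)=pA_\psi^*h+\Lambda_1(h)\bigl(p-p(\infty)\bigr),$$
equivalently $(A_\theta^*-\zeta)(ph)=p(A_\psi^*-\zeta)h+\Lambda_1(h)(p-p(\infty))$ for $\zeta\in\bbC_+$.

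The key step is to evaluate this at $h=h_\zeta:=(A_\psi^*-\zeta)^{-1}(1-\psi)\in K_\psi$, chosen so that $p(A_\psi^*-\zeta)h_\zeta=p(1-\psi)$, i.e.\ the input of Lemma~\ref{lma.a3c}. The identity becomes
$$(A_\theta^*-\zeta)(ph_\zeta)=p(1-\psi)+\Lambda_1(h_\zeta)\bigl(p-p(\infty)\bigr).$$
Applying $(A_\theta^*-\zeta)^{-1}$ and then $P_M$, and using $P_M(ph_\zeta)=ph_\zeta$ since $ph_\zeta\in pK_\psi=M$, I get
$$ph_\zeta=P_M(A_\theta^*-\zeta)^{-1}\bigl(p(1-\psi)\bigr)+\Lambda_1(h_\zeta)\,P_M(A_\theta^*-\zeta)^{-1}\bigl(p-p(\infty)\bigr).$$
By Lemma~\ref{lma.a3c}, $P_M(A_\theta^*-\zeta)^{-1}(p(1-\psi))=c\,p(A_\psi^*-\zeta)^{-1}(1-\psi)=c\,ph_\zeta$ for some $c\in\bbC$, so
$$\Lambda_1(h_\zeta)\,P_M(A_\theta^*-\zeta)^{-1}\bigl(p-p(\infty)\bigr)=(1-c)\,ph_\zeta=(1-c)\,p(A_\psi^*-\zeta)^{-1}(1-\psi).$$

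The only point needing a separate argument --- the potential obstacle, though a mild one --- is that one is entitled to divide by $\Lambda_1(h_\zeta)$, i.e.\ that $\Lambda_1(h_\zeta)\ne0$ for all $\zeta\in\bbC_+$. To see this I would use \eqref{a0d} to write $h_\zeta(x)=\dfrac{\psi(\zeta)-\psi(x)}{x-\zeta}$, whence $\Lambda_1(h_\zeta)=\psi(\zeta)-\psi(\infty)=\psi(\zeta)-1$; and $\psi(\zeta)\ne1$ because $\psi$, being a non-constant finite Blaschke product, satisfies $|\psi(\zeta)|<1$ throughout $\bbC_+$ (the degenerate case $\psi\equiv1$, where $M=\{0\}$, is trivial). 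Dividing then gives \eqref{a11d} with $c'=(1-c)/(\psi(\zeta)-1)$. Apart from this, the main thing to watch is that the commutation identity be fed into Lemma~\ref{lma.a3c} through precisely the vector $h_\zeta$, so that $p(A_\psi^*-\zeta)h_\zeta$ collapses exactly to $p(1-\psi)$.
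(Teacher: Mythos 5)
Your proof is correct and takes essentially the same approach as the paper: both reduce \eqref{a11d} to Lemma~\ref{lma.a3c} via the decomposition $(A_\theta^*-\zeta)^{-1}(p(1-\psi))=p(A_\psi^*-\zeta)^{-1}(1-\psi)+(1-\psi(\zeta))(A_\theta^*-\zeta)^{-1}(p-p(\infty))$, the paper deriving it directly from the resolvent formula \eqref{a0d}, you deriving it from the commutation identity in the proof of Lemma~\ref{lma.a3a} evaluated at $h_\zeta$. You also spell out the nonvanishing of $\Lambda_1(h_\zeta)=\psi(\zeta)-1$, which the paper leaves implicit when it writes $c'=(c-1)/(1-\psi(\zeta))$.
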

\begin{proof}
We have, from \eqref{a0d}, 
\begin{multline*}
(A_\theta^*-\zeta)^{-1}(p(1-\psi))
=
\frac{p(x)(1-\psi(x))-p(\zeta)(1-\psi(\zeta))}{x-\zeta}
\\
=
p(x)\frac{(1-\psi(x))-(1-\psi(\zeta))}{x-\zeta}
+
(1-\psi(\zeta))\frac{(p(x)-p(\infty))-(p(\zeta)-p(\infty))}{x-\zeta}
\\
=
p(A_\psi^*-\zeta)^{-1}(1-\psi)
+
(1-\psi(\zeta))(A_\theta^*-\zeta)^{-1}(p-p(\infty)). 
\end{multline*}
Let us apply $P_M$ to both sides of this identity. 
Observing that $p(A_\psi^*-\zeta)^{-1}(1-\psi)\in pK_\psi$ and using the 
previous lemma, we obtain
$$
cp(A_\psi^*-\zeta)^{-1}(1-\psi)
=
p(A_\psi^*-\zeta)^{-1}(1-\psi)
+
(1-\psi(\zeta))P_M(A_\theta^*-\zeta)^{-1}(p-p(\infty)). 
$$
Rearranging, we obtain the required identity with 
$c'=(c-1)/(1-\psi(\zeta))$. 
\end{proof}

\begin{proof}[Proof of Lemma~\ref{lma.a3b}]
We have the strong convergence  
\begin{equation}
-\zeta(A_\theta^*-\zeta)^{-1}\to I, \quad \abs{\zeta}\to\infty
\label{a14}
\end{equation}
and the same is true for the resolvent of $A_\psi^*$. 
Applying this to  \eqref{a11d}, we obtain the required result. 
\end{proof}

\subsection{Relation between $A_\theta$ and $A_\psi$}

\begin{theorem}\label{lma.a4}
Let $\psi$ and $\theta$ be finite Blaschke products normalised so that $\psi(\infty)=\theta(\infty)=1$. 
Let $p$ be an isometric multiplier on $K_\psi$; assume that
$M=pK_\psi\subset K_\theta$. 
Then 
there exists a constant $c\in\bbC$ such that 
for any $h_1,h_2\in K_\psi$, we have
\begin{equation}
\jap{A_\theta (ph_1),ph_2}
=
\jap{A_\psi h_1,h_2}+c\jap{h_1,1-\psi}\jap{1-\psi,h_2}. 
\label{a11e}
\end{equation}
Moreover, 
\begin{equation}
\frac{\jap{A_\theta P_M(1-\theta),P_M(1-\theta)}}{\norm{P_M(1-\theta)}^2}
=
\frac{\jap{A_\psi(1-\psi),1-\psi}}{\norm{1-\psi}^2}
+
c\norm{1-\psi}^2, 
\label{a11ee}
\end{equation}
and
\begin{equation}
\frac1{4\pi}
\norm{P_M(1-\theta)}^2
=
\frac1{4\pi}
\norm{1-\psi}^2
+
\Im c\norm{1-\psi}^2. 
\label{a11f}
\end{equation}
\end{theorem}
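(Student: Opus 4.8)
The plan is to establish \eqref{a11e} first and then deduce \eqref{a11ee} and \eqref{a11f} as direct consequences. For \eqref{a11e}, I would work with the sesquilinear form $B(h_1,h_2):=\jap{A_\theta(ph_1),ph_2}-\jap{A_\psi h_1,h_2}$ on $K_\psi$ and show it has rank one, of the claimed shape. The natural route is to use the explicit formulas \eqref{a3}--\eqref{a4} from Lemma~\ref{lma.a2}, but applied in $K_\theta$ to the element $ph_1$ and in $K_\psi$ to $h_1$. From \eqref{a3} in $K_\theta$ we have $A_\theta(ph_1)=x\,p(x)h_1(x)+\frac1{2\pi i}\jap{ph_1,1-\theta}\theta$, while $p\cdot A_\psi h_1 = x\,p(x)h_1(x)+\frac1{2\pi i}\jap{h_1,1-\psi}p\theta$ would be the naive guess --- but $A_\psi h_1$ multiplied by $p$ need not lie in $K_\theta$, so one must be more careful. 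The cleaner approach is to use \eqref{a4}: $A_\theta^*(ph_2)=x\,p(x)h_2(x)+\frac1{2\pi i}\jap{ph_2,1-\theta}$ and $p A_\psi^* h_2 = x p h_2 + \frac1{2\pi i}\jap{h_2,1-\psi}p$, so that
\begin{equation}
A_\theta^*(ph_2)-pA_\psi^*h_2=\frac1{2\pi i}\Bigl(\jap{ph_2,1-\theta}-\jap{h_2,1-\psi}p\Bigr).
\notag
\end{equation}
Using Lemma~\ref{lma.a3} (namely $P_M(1-\theta)=\overline{p(\infty)}p(1-\psi)$, hence $\jap{ph_2,1-\theta}=\jap{ph_2,P_M(1-\theta)}=p(\infty)\jap{h_2,1-\psi}$) the first term rewrites, and the right-hand side becomes $\frac1{2\pi i}\jap{h_2,1-\psi}\,(p(\infty)-p)$. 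By Lemma~\ref{lma.a3b}, $p-p(\infty)=P_M(p-p(\infty))/\text{(correction)}$ projects, modulo $(pK_\psi)^\perp$ within $K_\theta$, onto $\varkappa p(1-\psi)$; so when I pair the displayed identity against $ph_1\in M$ I pick up exactly $\jap{A_\theta(ph_1),ph_2}-\jap{A_\psi h_1,h_2}$ on the left (using that $p$ is isometric on $K_\psi$, and that $\jap{A_\theta(ph_1),ph_2}=\jap{ph_1,A_\theta^*(ph_2)}$) and $-\frac1{2\pi i}\overline{\varkappa}\,\jap{h_1,1-\psi}\jap{1-\psi,h_2}$ on the right. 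This gives \eqref{a11e} with $c=-\overline{\varkappa}/(2\pi i)$ (or some such explicit constant).

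Once \eqref{a11e} is in hand, \eqref{a11ee} is immediate: by Lemma~\ref{lma.a3} the vector $P_M(1-\theta)=\overline{p(\infty)}p(1-\psi)$, so choose $h_1=h_2=1-\psi$ in \eqref{a11e} and divide through by $\norm{1-\psi}^2=\norm{P_M(1-\theta)}^2/\abs{p(\infty)}^2$ (again Lemma~\ref{lma.a3}, eq.~\eqref{a11aaa}); the factor $\abs{p(\infty)}^2$ cancels in numerator and denominator on the left-hand side, and the term $c\jap{1-\psi,1-\psi}\jap{1-\psi,1-\psi}/\norm{1-\psi}^2=c\norm{1-\psi}^2$ emerges on the right. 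For \eqref{a11f}, take imaginary parts in \eqref{a11ee}, or more directly take the imaginary part of \eqref{a11e} with $h_1=h_2=1-\psi$ and invoke the formula \eqref{a5} for $\Im A_\theta$ and the analogous formula for $\Im A_\psi$: $\Im\jap{A_\theta(1-\theta)_M,(1-\theta)_M}$ contributes $\frac1{4\pi}\norm{P_M(1-\theta)}^2$ via \eqref{a5} after noting $\jap{P_M(1-\theta),1-\theta}=\norm{P_M(1-\theta)}^2$, while $\Im\jap{A_\psi(1-\psi),1-\psi}=\frac1{4\pi}\norm{1-\psi}^4$; rearranging and again cancelling the $\abs{p(\infty)}^2$ (or dividing by $\norm{1-\psi}^2$) yields \eqref{a11f} with $\Im c$.

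The main obstacle is the bookkeeping in the first step: one has to be scrupulous about which space each operator acts on, since $A_\psi^* h_2$ need not lie in $K_\theta$ and $p$ times an element of $K_\psi$ need not be in $K_\theta$ unless one already knows $pK_\psi\subset K_\theta$ --- which we do by hypothesis, but the intermediate expressions $xph_2$ are not in any model space. The trick is that everything is controlled once paired against a vector of $M$, because $P_M$ kills the ambiguity; so the identity should be derived in the weak (sesquilinear-form) sense throughout, using Lemmas~\ref{lma.a3} and~\ref{lma.a3b} precisely to identify the projections of $1-\theta$ and of $p-p(\infty)$ onto $M$. A secondary subtlety is verifying that the constant $c$ in \eqref{a11e}, \eqref{a11ee}, \eqref{a11f} is genuinely the same; this follows because all three are obtained by specialisation from the single rank-one form identity \eqref{a11e}.
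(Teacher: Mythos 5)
Your plan is correct and follows essentially the same route as the paper's proof: use \eqref{a4} to compute $A_\theta^*(ph_2)-pA_\psi^*h_2=(p-p(\infty))\Lambda_1(h_2)$, project onto $M$ via Lemma~\ref{lma.a3b}, pair against $ph_1$ using the isometry of $p$, and then deduce \eqref{a11ee} and \eqref{a11f} by specialising $h_1=h_2$ proportional to $1-\psi$ and invoking \eqref{a5}. The only differences are cosmetic (the paper works with the conjugate identity from the outset rather than conjugating at the end, and your stated constant $c=-\overline{\varkappa}/(2\pi i)$ carries a sign slip, the correct value being $\overline{\varkappa}/(2\pi i)$, which as you note is immaterial since the theorem asserts only the existence of some $c$).
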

\begin{proof}
By complex conjugation, \eqref{a11e} is equivalent to 
\begin{equation}
\jap{A_\theta^* (ph_2),ph_1}
=
\jap{A_\psi^* h_2,h_1}+\overline{c}\jap{h_2,1-\psi}\jap{1-\psi,h_1}. 
\label{a11g}
\end{equation}
Let us prove the last identity. By \eqref{a4}, we have
\begin{multline*}
A_\theta^*(ph_2)(x)-p(x)A_\psi^*h(x)
=
xp(x)h_2(x)-\Lambda_1(ph_2)-p(x)(xh_2(x)-\Lambda_1(h_2))
\\
=
-\Lambda_1(ph_2)+p(x)\Lambda_1(h_2)
=
(p(x)-p(\infty))\Lambda_1(h_2). 
\end{multline*}
Let us apply $P_M$ to both sides here and use Lemmas~\ref{lma.a3b} and \ref{lma.a1b}:
$$
P_MA_\theta^*(ph_2)
-
pA_\psi^*h_2
=
\varkappa p(1-\psi)\Lambda_1(h_2)
=
-\frac{\varkappa}{2\pi i}p(1-\psi)\jap{h_2,1-\psi}. 
$$
Denote 
$$
\overline{c}=-\frac{\varkappa}{2\pi i};
$$
taking the inner product with $ph_1$ and using the isometricity of $p$ on $K_\psi$, 
we arrive at \eqref{a11g}. 

In order to prove \eqref{a11ee}, it suffices to take $h_1=h_2=\overline{p(\infty)}(1-\psi)$ in \eqref{a11e}
and divide by $\norm{P_M(1-\theta)}^2$, using Lemma~\ref{lma.a3}.
Taking imaginary part and using \eqref{a5}, we arrive at 
\eqref{a11f}. 
\end{proof}

\section{Direct spectral problem: proof of Theorem~\ref{thm.z2}}\label{sec.aa}

Throughout this section, $u$ is a bounded rational symbol  with no poles in the closed upper half-plane, normalised so that $u(\infty)=0$ and $H_u$ is the Hankel operator \eqref{z2}. Furthermore, $\theta$ is the finite Blaschke product such that 
$$
\Ran H_u=K_\theta
$$
and normalised by $\theta(\infty)=1$. 

\subsection{Commutation relations for $H_u$ and $A_\theta$}

\begin{lemma}\label{lma.a5}
Let $u$, $\theta$ be as above. Then $H_u(1-\theta)=u$. Furthermore, 
\begin{align}
A_\theta^* H_u&=H_uA_\theta, 
\label{a12}
\\
A_\theta H_u^2-H_u^2A_\theta
&=\frac{i}{2\pi} \jap{\cdot, H_u u}(1-\theta)-\frac{i}{2\pi}\jap{\cdot,u}u. 
\label{a13}
\end{align}
\end{lemma}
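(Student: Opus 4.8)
The plan is to establish the three claims in order, starting from the elementary ones. First I would check $H_u(1-\theta)=u$: since $\theta\in H^2$ is inner, $H_u\theta=\bbP_+(u\overline\theta)$, and because $\overline{\theta}u\in\overline{H^2}$ minus... more carefully, I would argue that $H_u\theta$ lies in $\Ran H_u=K_\theta$, while also $\theta\overline{H_u\theta}=\theta\overline{\bbP_+(u\overline\theta)}$; the cleanest route is to use the defining relation $\Ker H_u=(\Ran H_u)^\perp=(K_\theta)^\perp$ together with the fact that $\theta H^2\subset\Ker H_u$ and $\theta=1-(1-\theta)$ with $1-\theta\in K_\theta$ by Lemma~\ref{lma.a1a}. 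Then $H_u(1-\theta)=H_u 1=\bbP_+(u)=u$, using $u\in H^2$ (which holds since $u$ is rational, analytic in the closed upper half-plane, and vanishes at infinity) and $H_u\theta$... actually $H_u\theta=0$ needs $\theta\in\Ker H_u$, i.e. $\bbP_+(u\overline\theta)=0$; but $u\overline\theta$ with $u\in H^2$ vanishing at infinity and $\overline\theta$ anti-analytic unimodular — this requires $u\overline\theta\in\overline{H^2}$, equivalently $\overline{u\overline\theta}=\overline u\theta\in H^2$, which is false in general. So instead I would simply note $H_u\theta h=\bbP_+(u\overline\theta\overline h)=\bbP_+(\overline{h}\,\overline\theta u)$; since $\theta h\in\theta H^2$ and $H_u$ annihilates $\theta H^2$ (because for $g\in H^2$, $H_u(\theta g)=\bbP_+(u\overline\theta\overline g)$ and $\langle H_u(\theta g),f\rangle=\langle u\overline f,\theta g\rangle$... ) — the correct standard fact is $H_u S(\tau)=S(\tau)^*H_u$ from \eqref{z3}, whence $\Ran H_u$ is $S(\tau)^*$-invariant and $\theta H^2=(\Ran H_u)^\perp$ restricted appropriately gives $\theta H^2\subseteq\Ker H_u$; granting this, $H_u\theta=0$ and $H_u(1-\theta)=H_u1=\bbP_+(u)=u$.

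Second, for the commutation relation \eqref{a12}, the idea is to differentiate \eqref{z3} at $\tau=0_+$. Writing $S_\theta(\tau)^*=S(\tau)^*|_{K_\theta}$ (as in the proof of Lemma~\ref{lma.nsa}) and noting $\Ran H_u=K_\theta$, from $S(\tau)^*H_u=H_u S(\tau)$ applied to $f\in H^2$ we get $S_\theta(\tau)^*(H_u f)=H_u(S(\tau)f)$. Dividing by $i\tau$ and letting $\tau\to 0_+$: the left side converges to $-A_\theta^*(H_u f)$ in the sense that $\tfrac1{i\tau}(S_\theta(\tau)^*-I)\to A_\theta^*$ strongly (here a sign/conjugation bookkeeping: $S_\theta(\tau)^*=e^{-i\tau A_\theta^*}$, so $\tfrac1{i\tau}(S_\theta(\tau)^*-I)\to -A_\theta^*$). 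For the right side I would use that $H_u$ is bounded and anti-linear, and $\tfrac1{i\tau}(S(\tau)-I)f\to$ the generator of $S(\tau)$ applied to $f$ — but this generator is unbounded on $H^2$, so I would instead restrict to a dense set, or better: apply $H_u$ first. Since $H_u f\in K_\theta$ and $H_u$ intertwines, $H_u(S(\tau)f)=S_\theta(\tau)^* H_u f$ already lives in $K_\theta$ for all $\tau$; differentiating this identity (all terms in the finite-dimensional space $K_\theta$, on which everything is smooth) gives $H_u(\text{(generator)}f)=-A_\theta^*H_uf$ — but again the left needs care. The clean approach: for $f$ in the (dense) domain of the $H^2$-generator, $\tfrac1{i\tau}(S(\tau)f-f)\to Gf$ with $Gf(x)=xf(x)-(\text{correction})$, apply $H_u$ (bounded, anti-linear, so $H_u\overline{(\cdot)}$-conjugation-linear) to pass the limit: $H_u(S(\tau)f)\to$... taking conjugates appropriately yields $H_u Gf=-A_\theta^*H_uf$ on a dense set, hence $H_uA_\theta=A_\theta^*H_u$ after identifying $G$ with $A_\theta$-type action; equivalently just cite that \eqref{a12} is the infinitesimal form of \eqref{z3}.

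Third, the commutator \eqref{a13} for $H_u^2$. The plan is: $H_u^2$ is linear, and from \eqref{a12}, $A_\theta H_u^2=A_\theta H_u H_u$; I want to move $A_\theta$ past the first $H_u$. Since $H_u$ is anti-linear, $A_\theta H_u=(H_uA_\theta^*)^{\sim}$ is not immediate; instead use \eqref{a12} in the form $H_uA_\theta=A_\theta^*H_u$, so $H_u^2A_\theta=H_u(H_uA_\theta)=H_u(A_\theta^*H_u)$. Now I need $H_uA_\theta^*$; taking the anti-linear adjoint of \eqref{a12} (note $(H_u)^*=H_u$ since $H_u$ is self-adjoint as an anti-linear operator, i.e. $\langle H_uf,g\rangle=\langle H_ug,f\rangle$), from $A_\theta^*H_u=H_uA_\theta$ I get $H_u A_\theta^*=A_\theta H_u$ only if... actually the anti-linear adjoint of $A_\theta^*H_u$ is $H_u^*(A_\theta^*)^*=H_uA_\theta$ — consistent — and of $H_uA_\theta$ is $A_\theta^*H_u^*=A_\theta^*H_u$, so \eqref{a12} is self-consistent under this adjoint but does not directly give $H_uA_\theta^*$. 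Here is the real point: $A_\theta$ is \emph{not} self-adjoint, and \eqref{a5} gives $A_\theta-A_\theta^*=2i\Im A_\theta=\tfrac{i}{2\pi}\langle\cdot,1-\theta\rangle(1-\theta)$. So $H_u^2A_\theta=H_uA_\theta^*H_u=H_u(A_\theta-2i\Im A_\theta)H_u=(H_uA_\theta)H_u-\tfrac{i}{\pi}H_u(\langle\cdot,1-\theta\rangle(1-\theta))H_u = A_\theta^*H_u^2-\tfrac{i}{\pi}\langle\cdot,1-\theta\rangle H_u(1-\theta)\langle\overline{H_u(\cdot)},\text{...}\rangle$ — I must be careful with anti-linearity when composing. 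Writing $R=\langle\cdot,1-\theta\rangle(1-\theta)$ (rank one, linear), $H_uRH_u f=H_u(\langle H_uf,1-\theta\rangle(1-\theta))=\overline{\langle H_uf,1-\theta\rangle}H_u(1-\theta)=\langle 1-\theta,H_uf\rangle\,u=\langle H_u(1-\theta),\ldots$; using self-adjointness of $H_u$, $\langle 1-\theta, H_uf\rangle=\langle H_u(1-\theta),f\rangle=\langle u,f\rangle$... wait that gives $\langle f,u\rangle$-type. So $H_uRH_uf=\langle u,f\rangle^{-}\cdot u$? Let me just say: a careful bookkeeping of this rank-one term, using $H_u(1-\theta)=u$ from the first part and the self-adjointness identity $\langle H_ua,b\rangle=\langle H_ub,a\rangle$, produces exactly the two rank-one terms $\tfrac{i}{2\pi}\langle\cdot,H_uu\rangle(1-\theta)-\tfrac{i}{2\pi}\langle\cdot,u\rangle u$ on the right-hand side, matching \eqref{a13}. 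Still need to account for $A_\theta^*H_u^2$ vs $A_\theta H_u^2$: another application of \eqref{a5} converts between them and contributes the symmetric rank-one piece.

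The main obstacle I anticipate is precisely this last bookkeeping: keeping track of conjugations when sandwiching the anti-linear $H_u$ around the linear rank-one operator $\Im A_\theta$, and correctly splitting $A_\theta$ into $A_\theta^*$ plus the rank-one imaginary part in the right place so that both rank-one terms in \eqref{a13} — the one with $(1-\theta)$ and the one with $u$ — emerge with the correct coefficient $\tfrac{i}{2\pi}$ and correct sign. Everything else (the density argument for \eqref{a12}, the identity $H_u(1-\theta)=u$) is routine once \eqref{z3}, \eqref{a4}, \eqref{a5} and Lemma~\ref{lma.a1a} are in hand.
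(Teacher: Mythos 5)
Your overall strategy is the same as the paper's (differentiate the intertwining relation $S(\tau)^*H_u=H_uS(\tau)$ to get \eqref{a12}, then decompose $A_\theta=A_\theta^*+2i\Im A_\theta$ and use \eqref{a12} twice to get \eqref{a13}), and the conjugation bookkeeping you worry about in part~3 does in fact close up cleanly if you track the sign flips from anti-linearity carefully: $H_u\circ(2i\Im A_\theta)\circ H_u = -2i\,H_u\circ(\Im A_\theta)\circ H_u$, and the self-adjointness $\jap{H_ua,b}=\jap{H_ub,a}$ turns $\overline{\jap{H_uf,1-\theta}}$ into $\jap{f,u}$. For \eqref{a12}, the clean version of your ``restrict to a dense set'' idea is the one the paper uses: $H_u(I-P_\theta)=0$, so $H_uS(\tau)f=H_uS_\theta(\tau)f$ for $f\in K_\theta$, after which the identity $S_\theta(\tau)^*H_u=H_uS_\theta(\tau)$ lives entirely in the finite-dimensional $K_\theta$ and both semigroups have bounded generators.

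The genuine gap is in your argument for $H_u(1-\theta)=u$. You try to conclude $H_u\theta=0$ from $\theta H^2\subseteq\Ker H_u$ by viewing $\theta=\theta\cdot 1$, but $1\notin H^2(\bbC_+)$, so $\theta\notin\theta H^2$, and this step does not go through. In the same breath you correctly observe that $H_u\theta=\bbP_+(u\overline\theta)=0$ would require $\theta\overline u\in H^2$, and then assert this is ``false in general.'' In fact it is \emph{true} here, because $u\in K_\theta$ --- but that is precisely the fact you are missing and the fact the paper must establish. The paper does so via the reproducing kernel: $H_uv_\zeta=\frac1{2\pi i}(A_\theta^*-\zeta)^{-1}u$, and letting $\abs{\zeta}\to\infty$ shows $u$ is a norm limit of elements of $\Ran H_u$, hence $u\in\overline{\Ran H_u}=K_\theta$. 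Once $u\in K_\theta$ is known, $\theta\overline u\in H^2$, so $\bbP_+(u\overline\theta)=0$ and $H_u(1-\theta)=\bbP_+(u-u\overline\theta)=\bbP_+u=u$. Without that membership, the chain $H_u(1-\theta)=H_u1-H_u\theta=u-0$ is not justified: the first equality is formal (neither $1$ nor $\theta$ is in $H^2$), and the second rests on an unproved claim.

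Two smaller remarks: in several places you treat $H_u$ as if it were linear when distributing over a sum of operators containing a factor of $i$; remember that $H_u(ig)=-iH_u g$, which is exactly why the two rank-one terms in \eqref{a13} come out with opposite signs. And the route you sketch for \eqref{a13} --- computing $H_u^2A_\theta$ by writing $H_u^2A_\theta=H_u(A_\theta^*H_u)$ and then splitting $A_\theta^*$ --- is a mirror image of the paper's, which starts from $A_\theta H_u^2$ instead; both work, but whichever one you use you must apply the decomposition $A_\theta-A_\theta^*=2i\Im A_\theta$ \emph{twice}, once on each side of one of the $H_u$'s, not just once as the middle portion of your sketch seems to suggest.
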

\begin{proof}
Let $v_\zeta$ be the reproducing kernel of $H^2$, then 
\begin{equation}
H_uv_\zeta(x)=\frac1{2\pi i}\frac{u(x)-u(\zeta)}{x-\zeta}. 
\label{a2a}
\end{equation}
Comparing \eqref{a2a} with \eqref{a0d}, we find that 
$$
H_u v_\zeta=\frac1{2\pi i}(A_\theta^*-\zeta)^{-1}u.
$$
By \eqref{a14}, it follows that 
$$
\norm{(2\pi i \zeta)H_uv_\zeta+u}\to 0, \quad \abs{\zeta}\to\infty, 
$$
and so $u\in \overline{\Ran H_u}=K_\theta$.
Using this, we obtain 
$$
H_u(1-\theta)=\bbP_+(u-u\overline{\theta})=\bbP_+u=u. 
$$
Next, the kernel of $H_u$ is $\theta H^2$ and therefore $H_u(I-P_\theta)=0$. It follows that for any $f\in K_\theta$, 
$$
H_u S(\tau)f=H_u P_\theta S(\tau)f=H_u S_\theta(\tau) f. 
$$
Thus, restricting the commutation relation \eqref{z3} onto $K_\theta$ we obtain
$$
S_\theta(\tau)^* H_u=H_uS_\theta(\tau)
$$
or equivalently 
$$
e^{-i\tau A_\theta^*}H_u=H_ue^{i\tau A_\theta}.
$$
Differentiating this with respect to $\tau$
at $\tau=0$ 
and taking into account the anti-linearity of $H_u$, we arrive at
\eqref{a12}.

The proof of \eqref{a13} is a twice repeated application of \eqref{a12} and \eqref{a5}. 
We have
\begin{align*}
A_\theta H_u^2
&=
(A_\theta -A_\theta^*+A_\theta^*)H_u^2
=
\frac{i}{2\pi}\jap{\cdot,H_u^2(1-\theta)}(1-\theta)+A_\theta^*H_u^2
\\
&=
\frac{i}{2\pi}\jap{\cdot,H_u u}(1-\theta)+H_uA_\theta H_u. 
\end{align*}
Further, using the anti-linearity of $H_u$, 
\begin{align*}
H_uA_\theta H_u
&=
H_u(A_\theta -A_\theta^*+A_\theta^*)H_u
=
-\frac{i}{2\pi}\jap{1-\theta,H_u\cdot}H_u(1-\theta)+H_uA_\theta^*H_u
\\
&=
-\frac{i}{2\pi}\jap{\cdot,u}u+H_u^2A_\theta . 
\end{align*}
Combining these identities, we obtain \eqref{a13}. 
\end{proof}

\subsection{The action of $H_u$ on the cyclic subspace generated by $1-\theta$}

As in the introduction, we denote by $\lambda_1>\dots>\lambda_N>0$ the singular values of $H_u$ and by $E_{H_u}(\lambda_j)$ the corresponding Schmidt subspaces. We set for brevity $E_j:=E_{H_u}(\lambda_j)$ and denote by $P_j$ the orthogonal projection onto $E_j$. Observe that $H_u$ commutes with $H_u^2$ and therefore it commutes with $P_j$. We also set $g=1-\theta$ and 
$$
g_j:=P_j g, \quad u_j:=P_j u. 
$$
By the previous lemma, we have $H_ug=u$ and therefore $H_u g_j=u_j$. 

\begin{lemma}\label{lma.a6}
For any $j$, none of the elements $g$, $u$, $H_uu$ are orthogonal to $E_j$. 
Furthermore, we have
\begin{equation}
H_u u_j=\lambda_j e^{i\varphi_j}u_j, 
\qquad
H_u g_j=\lambda_j e^{-i\varphi_j}g_j
\label{a15}
\end{equation}
for some unimodular constants $e^{i\varphi_j}$. 
\end{lemma}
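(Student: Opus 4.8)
The plan is to extract everything from the commutation relation \eqref{a13}, the identity $H_u(1-\theta)=u$ from Lemma~\ref{lma.a5}, and the observation that $g:=1-\theta$ is a \emph{cyclic vector} for $A_\theta$ on $K_\theta$, i.e. $\clos\Span\{A_\theta^kg:k\ge0\}=K_\theta$. The latter is immediate from Corollary~\ref{cr.a7}: since $K_\theta$ is finite-dimensional and $A_\theta$ is bounded, each resolvent $(A_\theta-\overline\zeta)^{-1}$ with $\zeta\in\bbC_+$ is a polynomial in $A_\theta$, so $\Span\{A_\theta^kg:k\ge0\}$ contains $\Span\{(A_\theta-\overline\zeta)^{-1}g\}_{\zeta\in\bbC_+}$, which is all of $K_\theta$. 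Throughout I would use freely that $H_u$, $H_u^2$ and $P_j$ commute with one another, that $P_j$ is a polynomial in $H_u^2$, and that $H_u$ is anti-linear.

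First I would dispose of the non-orthogonality claim by reducing it to $g_j\ne0$. Applying $P_j$ to $H_ug=u$ gives $u_j=H_ug_j$; applying it to $H_uu=H_u^2g$ gives $P_j(H_uu)=\lambda_j^2g_j$; and $H_uu_j=H_u^2g_j=\lambda_j^2g_j$. Hence the four statements ``$g\perp E_j$'', ``$g_j=0$'', ``$u_j=0$'', ``$H_uu\perp E_j$'' are equivalent, so it suffices to prove $g_j\ne0$ for every $j$.

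The heart of the argument, and the step I expect to be the main obstacle, is proving $g_j\ne0$. I would argue by contradiction: assume $g_j=0$, hence also $u_j=0$, so $P_j$ annihilates the finite-dimensional space $V:=\Span\{(H_u^2)^ag,\,(H_u^2)^au:a\ge0\}$. Writing \eqref{a13} as $A_\theta H_u^2-H_u^2A_\theta=R$ with $\Ran R\subset\Span\{g,u\}\subset V$, the Leibniz rule for commutators gives $[A_\theta,(H_u^2)^a]=\sum_{b=0}^{a-1}(H_u^2)^bR(H_u^2)^{a-1-b}$, whose range lies in $V$ for every $a$, and therefore so does the range of $[A_\theta,P_j]$. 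Then I would prove by induction on $k$ that $P_jA_\theta^kg=0$ for all $k\ge0$: the case $k=0$ is the assumption, and for $k\ge1$, since $P_jA_\theta=A_\theta P_j-[A_\theta,P_j]$ and $P_jA_\theta^{k-1}g=0$ by hypothesis, $P_jA_\theta^kg=-[A_\theta,P_j](A_\theta^{k-1}g)\in V$; but this vector also lies in $\Ran P_j$, and $V\cap\Ran P_j=\{0\}$ because $P_j$ is an orthogonal projection vanishing on $V$, so $P_jA_\theta^kg=0$. By cyclicity of $g$ this forces $P_j|_{K_\theta}=0$, contradicting $E_j\ne\{0\}$. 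The delicate point is precisely that $E_j$ may be multidimensional, so one cannot hope for $H_u^2$-cyclicity of $g$; the idea is to run the cyclicity through $A_\theta$ instead, keeping the rank-$\le2$ commutator error terms confined to the controlled space $V$.

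Finally I would establish the eigenvalue relations by compressing \eqref{a13} to $E_j$. Multiplying \eqref{a13} on both sides by $P_j$ and using $H_u^2P_j=P_jH_u^2=\lambda_j^2P_j$, the left side vanishes identically, while the right side collapses (using $P_j(H_uu)=\lambda_j^2g_j$) to the rank-one identity $\lambda_j^2\langle\cdot,g_j\rangle g_j=\langle\cdot,u_j\rangle u_j$ on $E_j$. Evaluating this at $g_j$ and invoking $g_j\ne0$ gives $u_j=\gamma g_j$ with $\gamma\ne0$; then $H_ug_j=u_j=\gamma g_j$, and since $H_u$ is anti-linear, $\lambda_j^2g_j=H_u^2g_j=|\gamma|^2g_j$ forces $|\gamma|=\lambda_j$, so $\gamma=\lambda_je^{-i\varphi_j}$ for a real $\varphi_j$. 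This yields $H_ug_j=\lambda_je^{-i\varphi_j}g_j$ and, applying $H_u$ once more, $H_uu_j=\lambda_je^{i\varphi_j}u_j$, which completes the proof.
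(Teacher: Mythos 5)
Your proof is correct, but it reaches both conclusions of the lemma by routes genuinely different from the paper's, even though both proofs are ultimately powered by the same commutation relation \eqref{a13} and the identity $H_ug=u$. For the non-orthogonality claim, the paper argues that if $g\perp E_j$ then the right-hand side of \eqref{a13} vanishes on $E_j$, so $E_j$ is $A_\theta$-invariant; combined with $\Im A_\theta f=\frac1{4\pi}\jap{f,g}g=0$ for $f\in E_j$, this makes $A_\theta|_{E_j}$ self-adjoint, contradicting the complete non-selfadjointness of $A_\theta$ established in Lemma~\ref{lma.nsa}. You instead exploit the cyclicity of $g$ for $A_\theta$ (correctly deduced from Corollary~\ref{cr.a7} via Cayley--Hamilton, since $K_\theta$ is finite-dimensional) together with a Leibniz-rule control of $\Ran[A_\theta,P_j]$ inside the subspace $V$ annihilated by $P_j$; the induction $P_jA_\theta^kg=0$ then kills $P_j$ outright. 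The paper's argument is shorter and more structural (it delegates the contradiction to the abstract dissipativity result of Lemma~\ref{lma.nsa}), whereas yours is self-contained modulo \eqref{a13} and the reproducing-kernel cyclicity, which is an attractive trade-off. For the eigenvalue relations, the paper establishes the inclusion $E_j\cap u_j^\perp\subset E_j\cap(H_uu_j)^\perp$ and concludes by a codimension-one count; your compression $P_j[A_\theta,H_u^2]P_j=0$ yields directly the rank-one operator identity $\lambda_j^2\jap{\cdot,g_j}g_j=\jap{\cdot,u_j}u_j$ on $E_j$, from which $u_j\parallel g_j$ follows immediately by evaluating at $g_j$. Your compression version is arguably cleaner: it automatically produces $\jap{g_j,u_j}\ne0$ as a byproduct, whereas the paper's codimension argument implicitly uses $\jap{g,u_j}\ne0$ to deduce $f\perp H_uu$ from the vanishing of $\jap{f,H_uu}\jap{g,u_j}$, a point the paper leaves tacit. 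The final step, extracting the unimodular constant from $|\gamma|=\lambda_j$ via $H_u^2g_j=\lambda_j^2g_j$ and the anti-linearity of $H_u$, matches the paper's.
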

\begin{proof}
Since $H_u E_j=E_j$ (see Theorem~\ref{thm.a1}),
the three conditions $g\perp E_j$, $u\perp E_j$ and $H_uu \perp E_j$ are 
equivalent to each other. 

Suppose, to get a contradiction, that for some $j$ we have $P_j g=0$, 
i.e. the three orthogonality conditions above hold. 
For $f\in E_j$, applying the commutation relation 
\eqref{a13}, we get
$$
A_\theta H_u^2f=H_u^2A_\theta f,
$$
which can be rewritten as  
$$
H_u^2A_\theta f=\lambda A_\theta f.
$$
Thus, $A_\theta f\in E_j$, and so we obtain that $E_j$ is an invariant subspace for $A_\theta$. 
Further, by \eqref{a5}, we get $A_\theta^*f=A_\theta f$ for any $f\in E_j$. 
This contradicts  the complete non-selfadjointness of $A_\theta$
(see Lemma~\ref{lma.nsa}).

Let us prove that $H_u u_j$ and $u_j$ are collinear. 
Let $f\in E_j\cap u_j^\perp$; then 
$$
\jap{A_\theta H_u^2f,u_j}-\jap{H_u^2A_\theta f,u_j}
=
\lambda_j^2\jap{A_\theta f,u_j}-\lambda_j^2\jap{A_\theta f,u_j}=0. 
$$
By the commutation relation \eqref{a13}, this yields $f\perp H_uu$, hence $f\perp H_uu_j$. 
Thus, we get 
$$
E_j\cap u_j^\perp\subset E_j\cap (H_u u_j)^\perp. 
$$
Both of these subspaces are non-trivial and have codimension one in $E_j$. 
We conclude that these two subspaces must coincide, 
which means that $u_j$ and $H_u u_j$ 
are collinear.

Since 
$$
\norm{H_u u_j}^2=\jap{H_u^2u_j,u_j}=\lambda_j^2\norm{u_j}^2,
$$
from the previous step 
we get the first one of the relations \eqref{a15} with some unimodular constant $e^{i\varphi_j}$. 
Substituting $u_j =H_u g_j$, we obtain the second relation in \eqref{a15}. 
\end{proof}

\subsection{Proof of Theorem~\ref{thm.z2}}
The first two statements of the Theorem are already proved in Lemmas~\ref{lma.a5} and \ref{lma.a6}. It remains to prove the third one. 
Using \eqref{a12}, we obtain
\begin{equation}
\omega_j=\jap{A_\theta H_u g_j, H_u g_j}=\jap{g_j, H_uA_\theta H_u g_j}
=\jap{g_j, A_\theta^*H_u^2 g_j}=\lambda_j^2\jap{A_\theta g_j,g_j}. 
\label{b2a}
\end{equation}
Further, 
using \eqref{a5}, 
\begin{equation}
\Im \omega_j=\lambda_j^2\, \Im \jap{A_\theta g_j, g_j}=\frac{{\lambda_j}^2}{4\pi} \abs{\jap{g_j,g}}^2=\frac{\lambda_j^2\norm{g_j}^4}{4\pi}>0.
\label{b2b}
\end{equation}
\qed

\subsection{The matrix structure of $A_\theta$}

In preparation for our discussion of the inverse problem in the next section, here we discuss the matrix structure of $A_\theta$ with respect to the orthogonal decomposition \eqref{z9}. It turns out that the off-diagonal entries of the matrix of $A_\theta$ are rank one. We compute these entries here. Recall that $P_j$ be the orthogonal projection in $K_\theta$ onto $E_j$. 

\begin{lemma}
For all $j\not=k$ we have
\begin{equation}
P_k A_\theta P_j
=
\frac{i}{2\pi}
\frac{\lambda_j^2-\lambda_j\lambda_k e^{i(\varphi_j-\varphi_k)}}{\lambda_j^2-\lambda_k^2}
\jap{\cdot,g_j}g_k.
\label{b6}
\end{equation}
\end{lemma}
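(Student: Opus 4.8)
The plan is to extract the off-diagonal block $P_k A_\theta P_j$ from the commutation relation \eqref{a13}, which is the only place in our toolkit where $A_\theta H_u^2 - H_u^2 A_\theta$ is controlled by something of finite rank. First I would sandwich \eqref{a13} between $P_k$ on the left and $P_j$ on the right. Since $H_u^2 = \lambda_j^2 I$ on $E_j$ and $H_u^2 = \lambda_k^2 I$ on $E_k$, and since $H_u^2$ commutes with every $P_i$, the left-hand side collapses: $P_k(A_\theta H_u^2 - H_u^2 A_\theta)P_j = (\lambda_j^2 - \lambda_k^2)\, P_k A_\theta P_j$. On the right-hand side, $P_k\bigl(\tfrac{i}{2\pi}\jap{\cdot,H_u u}(1-\theta) - \tfrac{i}{2\pi}\jap{\cdot,u}u\bigr)P_j$ becomes $\tfrac{i}{2\pi}\jap{P_j\,\cdot\,,H_u u}\,P_k(1-\theta) - \tfrac{i}{2\pi}\jap{P_j\,\cdot\,,u}\,P_k u$, i.e. $\tfrac{i}{2\pi}\jap{\cdot,P_j H_u u}\,g_k - \tfrac{i}{2\pi}\jap{\cdot,u_j}\,u_k$, using the notation $g_k = P_k(1-\theta)$, $u_k = P_k u$, $u_j = P_j u$, and that $P_j$ commutes with $H_u$ so $P_j H_u u = H_u u_j$.

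\textbf{Reducing everything to $g_j$, $g_k$.} The next step is to rewrite $u_j$, $u_k$, and $H_u u_j$ in terms of $g_j$ and $g_k$ using Lemma~\ref{lma.a6}. Recall $u_j = H_u g_j$, and by \eqref{a15} we have $H_u g_j = \lambda_j e^{-i\varphi_j} g_j$, so $u_j = \lambda_j e^{-i\varphi_j} g_j$; likewise $u_k = \lambda_k e^{-i\varphi_k} g_k$. Also $H_u u_j = \lambda_j e^{i\varphi_j} u_j = \lambda_j^2 g_j$ (the phases cancel), so $P_j H_u u = \lambda_j^2 g_j$. Substituting into the right-hand side gives
\begin{equation*}
(\lambda_j^2 - \lambda_k^2)\, P_k A_\theta P_j
=
\frac{i}{2\pi}\lambda_j^2 \jap{\cdot,g_j}\,g_k
-
\frac{i}{2\pi}\,\overline{\lambda_k e^{-i\varphi_k}}\,\lambda_j e^{-i\varphi_j}\jap{\cdot,g_j}\,g_k,
\end{equation*}
where in the second term I used $\jap{\cdot,u_j} = \overline{\lambda_j e^{-i\varphi_j}}\jap{\cdot,g_j}$ together with $u_k = \lambda_k e^{-i\varphi_k} g_k$. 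Since $\overline{\lambda_k e^{-i\varphi_k}}\,\lambda_j e^{-i\varphi_j} = \lambda_j\lambda_k e^{i(\varphi_k - \varphi_j)}$ — wait, one must be careful here: $\jap{\cdot,u_j}$ is antilinear in $u_j$, so $\jap{\cdot,u_j} u_k = \overline{\lambda_j e^{-i\varphi_j}}\,\lambda_k e^{-i\varphi_k}\jap{\cdot,g_j}g_k = \lambda_j e^{i\varphi_j}\lambda_k e^{-i\varphi_k}\jap{\cdot,g_j}g_k = \lambda_j\lambda_k e^{i(\varphi_j-\varphi_k)}\jap{\cdot,g_j}g_k$. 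Factoring out $\tfrac{i}{2\pi}\jap{\cdot,g_j}g_k$ yields the coefficient $\lambda_j^2 - \lambda_j\lambda_k e^{i(\varphi_j-\varphi_k)}$; dividing by $\lambda_j^2-\lambda_k^2$ (nonzero since $j\neq k$) gives \eqref{b6}.

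\textbf{Main obstacle.} The computation itself is routine once the pieces are assembled; the one genuine point requiring care is the bookkeeping of which slot each inner product is linear versus antilinear in, given that $H_u$ is antilinear and the rank-one terms in \eqref{a13} involve $\jap{\cdot, H_u u}$ and $\jap{\cdot, u}$. In particular, one must verify that $P_k$ and $P_j$ applied to the rank-one operators $\jap{\cdot,H_u u}(1-\theta)$ and $\jap{\cdot,u}u$ indeed produce $\jap{\cdot,P_j H_u u}\,P_k(1-\theta)$ and $\jap{\cdot,P_j u}\,P_k u$ with the correct (complex-conjugated) projections landing on the vector in the inner product — this follows from self-adjointness of the orthogonal projections $P_i$. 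I also need the identity $H_u u_j = \lambda_j^2 g_j$, which is a one-line consequence of applying $H_u$ to the first relation in \eqref{a15} and using $H_u g_j = u_j$. No step poses a real difficulty beyond this sign-and-conjugation care.
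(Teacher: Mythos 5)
Your proof is correct and follows essentially the same route as the paper: sandwich the commutation relation \eqref{a13} between the spectral projections (the paper equivalently takes the sesquilinear form against $f_j\in E_j$, $f_k\in E_k$), use $H_u^2|_{E_i}=\lambda_i^2 I$, and substitute $u_j=\lambda_j e^{-i\varphi_j}g_j$, $H_u u_j=\lambda_j^2 g_j$. The self-correction midway through resolves the only delicate point (the conjugation in the antilinear slot), and the final coefficient $\lambda_j\lambda_k e^{i(\varphi_j-\varphi_k)}$ matches the paper's.
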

\begin{proof}
Let $f_j\in E_j$ and $f_k\in E_k$; then, taking the bilinear form of the 
second commutation relation in Lemma~\ref{lma.a5}, we obtain
$$
(\lambda_j^2-\lambda_k^2)\jap{A_\theta f_j,f_k}
=
\frac{i}{2\pi}\jap{f_j,H_u u_j}\jap{g_k,f_k}
-
\frac{i}{2\pi}\jap{f_j,u_j}\jap{u_k,f_k}. 
$$
Recall that 
$$
u_j=H_u g_j= \lambda_j e^{-i\varphi_j}g_j, \quad H_u u_j=H_u^2 g_j=\lambda_j^2 g_j.
$$
Substituting this into the above formula and dividing by $\lambda_j^2-\lambda_k^2$, 
we obtain
$$
\jap{A_\theta f_j,f_k}
=
\frac{i}{2\pi}
\frac{\lambda_j^2-\lambda_j\lambda_k e^{i(\varphi_j-\varphi_k)}}{\lambda_j^2-\lambda_k^2}
\jap{f_j,g_j}\jap{g_k,f_k},
$$
as required.
\end{proof}

We note here that the diagonal entries $P_jA_\theta P_j$ have more complicated structure, to be discussed in the next section.

\section{Inverse spectral problem: the spectral data and uniqueness}\label{sec.b}

\subsection{Preliminaries and notation}
The aim of this section is to prove the uniqueness part of Theorem~\ref{thm.z3} and to give an explicit formula for the symbol $u$ (and other objects) in terms of the spectral data \eqref{z8}. In order to motivate these formulas, we make some preliminary remarks and calculations. 
We follow the notation of the previous subsection; in particular, $u$, $\theta$, $\lambda_j$, $E_j$, $e^{i\varphi_j}$, $\omega_j$, $g_j$, $u_j$ are as above. We also denote for brevity 
$$
\nu_j=\norm{g_j}. 
$$
Then formula \eqref{b2b} becomes 
\begin{equation}
\Im \omega_j=\frac{\lambda_j^2\nu_j^4}{4\pi};
\label{b0}
\end{equation}
in particular, $\nu_j$ is determined by the spectral data \eqref{z8}.

First observe that by \eqref{a17}, we have
\begin{equation}
w(x)=\frac1{2\pi i}\jap{(A_\theta^*-x)^{-1}w,g}, 
\quad \Im x>0,
\label{eq.w}
\end{equation}
for any $w\in K_\theta$. Applying this to $u$ and recalling that by Lemma~\ref{lma.a6},
$$
u=\sum_{k=1}^N u_k=\sum_{k=1}^N \lambda_ke^{-i\varphi_k}g_k,
$$
we find
\begin{align}
u(x)&=\frac1{2\pi i}\sum_{k=1}^N \lambda_ke^{-i\varphi_k}\jap{(A_\theta^*-x)^{-1}g_k,g} 
\notag
\\
&=\frac1{2\pi i}\sum_{k,j=1}^N \lambda_ke^{-i\varphi_k}\jap{(A_\theta^*-x)^{-1}g_k,g_j} .
\label{b3aa}
\end{align}
Thus, $u(x)$ will be determined if we  compute all matrix entries
\begin{equation}
\jap{(A_\theta^*-x)^{-1}g_k,g_j}.
\label{b3a}
\end{equation}
This  leads us to the consideration of the matrix structure of $A_\theta^*$ in the orthogonal decomposition \eqref{z9}. We have computed the off-diagonal entries of $A_\theta^*$ in this decomposition in the previous section. Here we start by discussing the diagonal entries.  

In this section, we will use some matrix notation which we explain here. 
Below $\jap{\cdot,\cdot}$ is the inner product in $\bbC^N$. We denote by 
$\1_1,\dots,\1_N$ the standard basis in $\bbC^N$, 
and $\1=(1,\dots,1)^\top\in \bbC^N$.  
If $(\alpha_1,\dots,\alpha_N)$ are complex numbers, we will denote
by $D(\alpha)$ the diagonal $N\times N$ matrix with $\alpha_1,\dots,\alpha_N$ on the diagonal. 

\subsection{Diagonal elements of $A_\theta^*$}
Here we consider the operator $P_kA_\theta^* P_k$ acting in $E_k$. 
Recall that by Theorem~\ref{lma.a4}, we have
\begin{equation}
\jap{A_\theta (p_jh),p_jw}
=\jap{A_{\psi_j} h,w}+c_j\jap{h,1-\psi_j}\jap{1-\psi_j,w}, 
\quad  h,w\in K_{\psi_j},
\label{b6a}
\end{equation}
with some constants $c_j$. 
Further,  by Lemma~\ref{lma.a3}, we have
\begin{equation}
g_k =\overline{p_{k,\infty}}p_k(1-\psi_k), 
\label{b3}
\end{equation}
where $p_{k,\infty}=p_k(\infty)$.
\begin{lemma}
For $\Im x>0$, we have
\begin{equation}
R_k(x):=
\jap{(P_kA_\theta^* P_k-xI)^{-1}g_k,g_k}
=
\frac{\nu_k^2}{\norm{1-\psi_k}^2}
\frac{1-\psi_k(x)}{\frac1{2\pi i}+\overline{c_k}(1-\psi_k(x))}.
\label{b3b}
\end{equation}
\end{lemma}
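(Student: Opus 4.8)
The plan is to compute the resolvent $(P_kA_\theta^*P_k - xI)^{-1}$ acting on the one-dimensional image of $g_k$ by first understanding the structure of $P_kA_\theta^*P_k$ on $E_k = p_kK_{\psi_k}$ via the Frostman-transported identity \eqref{b6a}. Taking the adjoint of \eqref{b6a}, for $h,w\in K_{\psi_k}$ we have
\begin{equation*}
\jap{A_\theta^*(p_kh),p_kw}=\jap{A_{\psi_k}^*h,w}+\overline{c_k}\jap{h,1-\psi_k}\jap{1-\psi_k,w},
\end{equation*}
so that, as operators on $K_{\psi_k}$ conjugated by the isometry $p_k$, the diagonal block $P_kA_\theta^*P_k$ is unitarily equivalent to the rank-one perturbation $A_{\psi_k}^* + \overline{c_k}\jap{\cdot,1-\psi_k}(1-\psi_k)$ of $A_{\psi_k}^*$ on $K_{\psi_k}$. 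Under this identification, using \eqref{b3}, the vector $g_k$ corresponds to $\overline{p_{k,\infty}}(1-\psi_k)$, so the quantity $R_k(x)$ becomes $\abs{p_{k,\infty}}^2$ times the resolvent of that rank-one perturbation, sandwiched between copies of $1-\psi_k$; and by \eqref{a11aaa} we have $\abs{p_{k,\infty}}^2\norm{1-\psi_k}^2 = \norm{P_M(1-\theta)}^2 = \nu_k^2$, which explains the prefactor $\nu_k^2/\norm{1-\psi_k}^2$ in \eqref{b3b}.

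The core computation is then a standard second-resolvent (Aronszajn--Krein-type) identity: if $B = A_{\psi_k}^* + \overline{c_k}\jap{\cdot,e}e$ with $e = 1-\psi_k$, then
\begin{equation*}
\jap{(B-x)^{-1}e,e} = \frac{\jap{(A_{\psi_k}^*-x)^{-1}e,e}}{1+\overline{c_k}\jap{(A_{\psi_k}^*-x)^{-1}e,e}}.
\end{equation*}
So everything reduces to evaluating the scalar function $m_k(x):=\jap{(A_{\psi_k}^*-x)^{-1}(1-\psi_k),1-\psi_k}$. Here I would invoke Lemma~\ref{lma.a7}: the second identity there, applied with $\theta$ replaced by $\psi_k$ and $f = 1-\psi_k$, gives $H_{\psi_k}(1-\psi_k)(\zeta) = -\tfrac{1}{2\pi i}\jap{(A_{\psi_k}^*-\zeta)^{-1}(1-\psi_k),1-\psi_k}$. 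Since $H_{\psi_k}(1-\psi_k) = \psi_k\overline{(1-\psi_k)} = \psi_k(\overline{1-\psi_k})$, and on the real line $\psi_k\overline{\psi_k}=1$ so $\psi_k(1-\overline{\psi_k}) = \psi_k - 1 = -(1-\psi_k)$; thus $H_{\psi_k}(1-\psi_k) = \psi_k - 1$ as an element of $H^2$, i.e. $H_{\psi_k}(1-\psi_k)(\zeta) = \psi_k(\zeta)-1$. Therefore $m_k(x) = -2\pi i(\psi_k(x)-1) = 2\pi i(1-\psi_k(x))$.

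Substituting $m_k(x) = 2\pi i(1-\psi_k(x))$ into the rank-one resolvent formula gives
\begin{equation*}
\jap{(B-x)^{-1}(1-\psi_k),1-\psi_k} = \frac{2\pi i(1-\psi_k(x))}{1+\overline{c_k}\cdot 2\pi i(1-\psi_k(x))} = \frac{1-\psi_k(x)}{\tfrac1{2\pi i}+\overline{c_k}(1-\psi_k(x))},
\end{equation*}
and multiplying by the prefactor $\nu_k^2/\norm{1-\psi_k}^2$ yields exactly \eqref{b3b}. I expect the main obstacle to be bookkeeping rather than conceptual: one must be careful that $p_k$ is an isometry from $K_{\psi_k}$ \emph{onto} $E_k$ and hence $P_kA_\theta^*P_k$ really is conjugate (not merely quasi-similar) to the stated rank-one perturbation of $A_{\psi_k}^*$ — this uses that $p_k^*p_k = I$ on $K_{\psi_k}$, which follows from isometricity, together with \eqref{b6a}/\eqref{b6a}'s adjoint holding for \emph{all} $h,w\in K_{\psi_k}$. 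A secondary point to verify carefully is the sign and factor of $2\pi i$ in the identification of $H_{\psi_k}(1-\psi_k)$ on the real line versus its holomorphic extension, since the conjugation in the definition of $H_{\psi_k}$ is taken on $\bbR$ and the normalization $\psi_k(\infty)=1$ is in force.
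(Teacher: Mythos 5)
Your proof is correct and follows essentially the same route as the paper: both reduce the diagonal block $P_kA_\theta^*P_k$, via the isometry $p_k$ and identity \eqref{b6a}, to a rank-one perturbation of $A_{\psi_k}^*$ on $K_{\psi_k}$, and then compute the resolvent matrix element; the paper derives the rank-one resolvent (Aronszajn--Krein) formula by solving the linear system explicitly, whereas you invoke it as a known identity, and you obtain $\jap{(A_{\psi_k}^*-x)^{-1}(1-\psi_k),1-\psi_k}=2\pi i(1-\psi_k(x))$ from the second identity of Lemma~\ref{lma.a7} rather than the first, but these are only presentational differences.
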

\begin{proof}
Let us compute the vector
$$
f=(P_kA_\theta^* P_k-xI)^{-1}g_k
$$
by solving the equation
$$
P_kA_\theta^*f-xf=g_k, \quad f\in E_k.
$$
Using \eqref{b3} and writing $f=p_k h$, our equation becomes
$$
P_kA_\theta^*(p_k h)-xp_k h=\overline{p_{k,\infty}} p_k(1-\psi_k). 
$$
Let us take an inner product of this with an arbitrary element $p_kw\in E_k$, $w\in K_{\psi_k}$: 
$$
\jap{A_\theta^*(p_k h),p_kw}
-x\jap{h,w}=\overline{p_{k,\infty}}\jap{1-\psi_k,w}.
$$
Using \eqref{b6a}, we obtain
$$
\jap{(A_{\psi_k}^*-xI)h,w}
+
\overline{c_k}\jap{h,1-\psi_k}\jap{1-\psi_k,w}
=
\overline{p_{k,\infty}}\jap{1-\psi_k,w}.
$$
Since $w\in K_{\psi_k}$ is arbitrary, this implies 
$$
(A_{\psi_k}^*-x)h+\overline{c_k}\jap{h,1-\psi_k}(1-\psi_k)=\overline{p_{k,\infty}}(1-\psi_k). 
$$
Let us apply $(A_{\psi_k}^*-x)^{-1}$ and take the inner product with $1-\psi_k$: 
\begin{multline*}
\jap{h,1-\psi_k}
+
\overline{c_k}\jap{h,1-\psi_k}\jap{(A_{\psi_k}^*-x)^{-1}(1-\psi_k),1-\psi_k}
\\
=
\overline{p_{k,\infty}}\jap{(A_{\psi_k}^*-x)^{-1}(1-\psi_k),1-\psi_k}.
\end{multline*}
By \eqref{a17}, we have 
$$
1-\psi_k(x)=\frac1{2\pi i}\jap{(A_{\psi_k}^*-x)^{-1}(1-\psi_k),1-\psi_k},
$$
and therefore our equation becomes
$$
\jap{h,1-\psi_k}\bigl(\frac1{2\pi i}+\overline{c_k}(1-\psi_k(x)))
=
\overline{p_{k,\infty}}(1-\psi_k(x)).
$$
Finally, 
$$
R_k(x)=\jap{p_k h,g_k}
=
p_{k,\infty}\jap{h,1-\psi_k}
=
\abs{p_{k,\infty}}^2\frac{1-\psi_k(x)}{\frac1{2\pi i}+\overline{c_k}(1-\psi_k(x))}.
$$
Computing the norm in \eqref{b3}, we obtain
$$
\nu_k=\norm{g_k}=\abs{p_{k,\infty}}\norm{p_k(1-\psi_k)}=\abs{p_{k,\infty}}\norm{1-\psi_k}.
$$
Putting this together, we arrive at the required formula.
\end{proof}

\subsection{Matrix elements of the resolvent of $A_\theta^*$}\label{A}
Here we compute the matrix entries \eqref{b3a}. 
First let us introduce notation for the (normalised) matrix elements of $A_\theta$ with respect to the vectors $g_j$: 
$$
\calA_{kj}:=\frac1{\nu_j^2\nu_k^2}\jap{A_\theta g_j,g_k}.
$$
The off-diagonal entries have already appeared in the right hand side of \eqref{b6}: 
\begin{equation}\label{Aoff}
\calA_{kj}=\frac{i}{2\pi}
\frac{\lambda_j^2-\lambda_j\lambda_k e^{i(\varphi_j-\varphi_k)}}{\lambda_j^2-\lambda_k^2}, \quad j\not=k.
\end{equation}
The diagonal entries are given by \eqref{b2a}, viz.
\begin{equation}\label{Adiag}
\calA_{jj}=\frac{\omega_j}{\lambda_j^2\nu_j^4}=\frac{\omega_j}{4\pi \Im \omega_j}.
\end{equation}
Next, for any $x$ in the open upper half-plane we define an $N\times N$ matrix $Q(x)$ as follows:
\begin{align}
Q_{kj}(x)&=(\calA^*)_{kj}, \quad k\not=j,
\label{b26}
\\
Q_{kk}(x)&=1/R_k(x),
\label{b27}
\end{align}
where $R_k(x)$ is defined in the previous lemma. 
The following lemma is nothing but some linear algebra.

\begin{lemma}
For any $n,m$ we have 
\begin{equation}
\jap{(A_\theta^*-x)^{-1}g_m,g_n}
=
\jap{Q(x)^{-1}\1_m,\1_n}. 
\label{b29}
\end{equation}
\end{lemma}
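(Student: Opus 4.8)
The statement to prove is the resolvent identity
\[
\jap{(A_\theta^*-x)^{-1}g_m,g_n}=\jap{Q(x)^{-1}\1_m,\1_n},
\]
and the natural strategy is to realise the left-hand side as the solution of a linear system on $\bbC^N$ whose matrix is exactly $Q(x)$. Fix $x$ in the upper half-plane, set $f=(A_\theta^*-x)^{-1}g_m\in K_\theta$, and decompose $f=\sum_{k=1}^N P_kf$ according to \eqref{z9}. Introduce the coordinate vector $\xi\in\bbC^N$ by $\xi_k=\frac1{\nu_k^2}\jap{f,g_k}$ (the normalisation chosen so that the off-diagonal blocks line up with $\calA^*$). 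The equation $A_\theta^*f-xf=g_m$ will be tested against each $g_n$; the goal is to show this is equivalent to $Q(x)\xi=\1_m$, after which $\xi_n=\jap{Q(x)^{-1}\1_m,\1_n}$ and, by definition of $\xi$, $\jap{f,g_n}=\nu_n^2\xi_n$ — but one must be slightly careful, since the claimed identity has $\jap{f,g_n}$ on the left with no $\nu_n^2$, so the normalisation of $\xi$ has to be arranged to match; I would instead define $\xi_n=\jap{f,g_n}$ directly and absorb the $\nu$-factors into the matrix, checking at the end that this reproduces \eqref{b26}--\eqref{b27}.

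\textbf{Key steps.} First I would split $A_\theta^*=\sum_{k}P_kA_\theta^*P_k+\sum_{k\ne j}P_kA_\theta^*P_j$ and use the rank-one formula \eqref{b6} (adjointed) for the off-diagonal blocks together with the fact, from Lemma~\ref{lma.a6} and \eqref{b3}, that each $g_j$ spans the relevant one-dimensional ``direction'' inside $E_j$ that $A_\theta^*$ couples to. Concretely, testing $A_\theta^*f-xf=g_m$ against $g_n$ gives
\[
\jap{P_nA_\theta^*P_nf,g_n}+\sum_{j\ne n}\jap{P_nA_\theta^*P_jf,g_n}-x\jap{f,g_n}=\jap{g_m,g_n}=\delta_{mn}\nu_n^2.
\]
For $j\ne n$, the off-diagonal term $\jap{P_nA_\theta^*P_jf,g_n}$ is, by \eqref{b6}, a multiple of $\jap{P_jf,g_j}\,\overline{\text{(known scalar)}}$, i.e. proportional to $\jap{f,g_j}$ with coefficient $(\calA^*)_{nj}$ times $\nu$-factors. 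For the diagonal term, the crucial observation is that $P_nf$ need not be proportional to $g_n$, \emph{but} the scalar $\jap{P_nA_\theta^*P_nf,g_n}-x\jap{f,g_n}$ depends on $P_nf$ only through $\jap{P_nf,g_n}$ — this is precisely the content packaged in the previous lemma computing $R_k(x)$, whose proof shows that the restricted resolvent $(P_kA_\theta^*P_k-x)^{-1}$ acts on $g_k$ producing a vector whose inner product with $g_k$ is $R_k(x)$, so that the diagonal contribution equals $\jap{f,g_n}/R_n(x)$ up to sign. Collecting everything yields a linear system of the form $Q(x)\,(\jap{f,g_1},\dots,\jap{f,g_N})^\top = (\delta_{m1}\nu_1^2,\dots)^\top$; dividing through by $\nu_j^2$ where appropriate and comparing with \eqref{b26}--\eqref{b27} identifies the matrix as $Q(x)$ and the right-hand side as $\1_m$ (again modulo the bookkeeping of normalisations, which I would fix once and for all at the start so that the $\nu_j^2$ cancel cleanly).

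\textbf{Main obstacle.} The genuinely delicate point is justifying that the diagonal block contributes \emph{only} through $\jap{f,g_n}$, i.e. that one may legitimately write $\jap{P_nA_\theta^*P_nf-xP_nf,g_n}=\jap{f,g_n}/R_n(x)$ without knowing $P_nf$ explicitly. This requires that $P_nf$ solve, inside $E_n$, an equation of the form $(P_nA_\theta^*P_n-x)(P_nf)=g_n\cdot(\text{scalar})+(\text{rank-one off-diagonal feedback})$, and that the off-diagonal feedback itself lies along $g_n$ — which it does, by \eqref{b6}, since each $P_nA_\theta^*P_j$ has range $\mathbb{C}g_n$. Thus $(P_nA_\theta^*P_n-x)(P_nf)\in\mathbb{C}g_n$, hence $P_nf\in\mathbb{C}\,(P_nA_\theta^*P_n-x)^{-1}g_n$, and the computation of $R_n(x)$ in the previous lemma gives exactly the scalar relation needed. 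Once this structural fact is in hand — which is really why the authors isolated the $R_k$ lemma — the rest is the linear-algebra reorganisation announced in the statement, and invertibility of $Q(x)$ for $\Im x>0$ follows because $(A_\theta^*-x)^{-1}$ exists there and the system is equivalent to it.
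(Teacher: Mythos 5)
Your proposal follows the same route as the paper: set $f=(A_\theta^*-x)^{-1}g_m$, decompose $f=\sum_k f_k$ along \eqref{z9}, use the rank-one off-diagonal block formula \eqref{b6} together with the diagonal resolvent function $R_k(x)$ to reduce the equation $(A_\theta^*-x)f=g_m$ to the linear system $Q(x)\xi=\1_m$ with $\xi_k=\jap{f_k,g_k}$, and read off $\jap{f,g_n}=\xi_n$. The one small inaccuracy is the formula $\jap{(P_nA_\theta^*P_n-x)f_n,g_n}=\jap{f,g_n}/R_n(x)$, which should carry a factor $\nu_n^2$ on the right; the paper sidesteps this by first applying $(P_kA_\theta^*P_k-x)^{-1}$ to the block equation and \emph{then} testing against $g_k$, so that $R_k(x)$ appears on both sides and the $\nu_k^2$ never enters — but you flag the bookkeeping issue yourself, and the structural observation you make (that the off-diagonal feedback forces $f_n$ into $\mathbb{C}(P_nA_\theta^*P_n-x)^{-1}g_n$) is exactly what the paper's inversion step exploits.
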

\begin{proof}
Fix $m$ and denote 
$$
f=(A_\theta^*-x)^{-1}g_m.
$$
Our aim is to compute $\jap{f,g_n}$; 
the element $f$ satisfies the equation
\begin{equation}
(A_\theta^*-x)f=g_m.
\label{b30}
\end{equation}
Let us write $f=\sum_{j=1}^N f_j$ with $f_j\in E_j$. For every $j$ we have
$$
A_\theta^* f_j
=
\sum_{k\not=j}(\calA^*)_{kj}\jap{f_j,g_j}g_k
+
P_jA_\theta^*P_jf_j.
$$
Thus, our equation \eqref{b30} becomes a system
$$
(P_kA_\theta^*P_k-x)f_k+\sum_{j\not=k}(\calA^*)_{kj}\jap{f_j,g_j}g_k
=
\delta_{km}g_k, \quad k=1,\dots,N.
$$
Inverting $P_kA_\theta^*P_k-x$, we get
$$
f_k+\sum_{j\not=k}(\calA^*)_{kj}\jap{f_j,g_j}(P_kA_\theta^*P_k-x)^{-1}g_k
=
\delta_{km}(P_kA_\theta^*P_k-x)^{-1}g_k, 
\quad k=1,\dots,N.
$$
Let us take the inner product with $g_k$ and use the notation $R_k(x)$: 
$$
\jap{f_k,g_k}
+
\sum_{j\not=k}(\calA^*)_{kj}\jap{f,g_j}R_k(x)=\delta_{km}R_k(x), 
\quad k=1,\dots,N.
$$
Denote $\xi=(\xi_1,\dots,\xi_N)^\top\in \bbC^N$, $\xi_k=\jap{f_k,g_k}$; 
then we obtain
$$
(1/R_k(x))\xi_k+\sum_{j\not=k}(\calA^*)_{kj}\xi_j=\delta_{km}, 
\quad k=1,\dots,N.
$$
By the definition of $Q(x)$, this rewrites as
$$
Q(x)\xi=\1_m.
$$
Thus, $\xi=Q(x)^{-1}\1_m$ and 
$$
\jap{(A_\theta^*-x)^{-1}g_m,g_n}
=
\jap{f,g_n}
=
\xi_n=\jap{\xi,\1_n}
=
\jap{Q(x)^{-1}\1_m,\1_n},
$$
as required. 
\end{proof}

\subsection{Expression for $Q(x)$ in terms of the spectral data}
We have defined the diagonal entries of $Q(x)$ in terms of $R_k(x)$, and the expression \eqref{b3b} for $R_k(x)$ involves the constants $c_k$. It is not obvious that $Q(x)$ can be expressed entirely in terms of the spectral data \eqref{z8}. Let us show that this can be done. 

First we need some notation. 
For every $j$ and $\Im x>0$, we define 
\begin{equation}
b_j(x)=\frac1{\nu_j^{2}}\left(
i\frac{\norm{1-\psi_j}^2}{4\pi}\frac{1+\psi_j(x)}{1-\psi_j(x)}+\frac{\Re \jap{A_{\psi_j}(1-\psi_j),1-\psi_j}}{\norm{1-\psi_j}^2}-x\right).
\label{defb}
\end{equation}
Clearly, $b_j$ is determined by the spectral data; we recall that $\nu_j^2=\sqrt{4\pi \Im \omega_j}/\lambda_j$. By \eqref{b20aa} and Lemma~\ref{lma.b17}, this is a rational Herglotz function with the representation 
\begin{equation}
b_j(x)=\sum_k \frac{c_{jk}}{\alpha_{jk}-x}
\label{b20c}
\end{equation}
with some $c_{jk}\geq0$ and $\alpha_{jk}\in\bbR$. 
Again by  Lemma~\ref{lma.b17}, this function is Frostman-invariant (i.e. invariant with respect to the action of Frostman shifts on $\psi_j$). 

Let $Q(x)$ be as defined by \eqref{b26}, \eqref{b27}. Recall that $D(\alpha):=\diag\{\alpha_1,\dots,\alpha_N\}$. 
\begin{lemma}
The matrix $Q(x)$ can be expressed in terms of the spectral data by 
\begin{equation}
Q(x)=\calA^*-xD(\nu^2)^{-1}-D(b(x)), \quad \Im x>0.
\label{b28}
\end{equation}
\end{lemma}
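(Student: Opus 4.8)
The plan is to show that the only quantity in the definition of $Q(x)$ that is not manifestly expressed in terms of the spectral data --- namely the diagonal entry $Q_{kk}(x)=1/R_k(x)$, which involves the constant $c_k$ coming from Theorem~\ref{lma.a4} --- can in fact be rewritten in a Frostman-invariant way using $b_k(x)$ and the data $\nu_k$, $\omega_k$. The off-diagonal entries of $Q(x)$ are $(\calA^*)_{kj}$ with $\calA_{kj}$ given by \eqref{Aoff}, which is already a function of $\{\lambda_n\}$ and $\{e^{i\varphi_n}\}$ alone, so \eqref{b28} is automatic off the diagonal. Thus the whole content is the identity
\begin{equation*}
\frac1{R_k(x)}=\calA_{kk}^*-\frac{x}{\nu_k^2}-b_k(x), \qquad \Im x>0.
\end{equation*}

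First I would invert the explicit formula \eqref{b3b} for $R_k(x)$: since
$$
R_k(x)=\frac{\nu_k^2}{\norm{1-\psi_k}^2}\cdot\frac{1-\psi_k(x)}{\frac1{2\pi i}+\overline{c_k}(1-\psi_k(x))},
$$
one gets
$$
\frac1{R_k(x)}=\frac{\norm{1-\psi_k}^2}{\nu_k^2}\left(\frac{1}{2\pi i}\cdot\frac{1}{1-\psi_k(x)}+\overline{c_k}\right).
$$
The key algebraic step is to recognise the combination $\frac1{2\pi i}\frac{1}{1-\psi_k(x)}$ in terms of the Herglotz function $i\frac{1+\psi_k(x)}{1-\psi_k(x)}$ that appears in $b_k$: using the identity $\frac{1}{1-\psi}=\frac12+\frac12\frac{1+\psi}{1-\psi}$ one writes
$$
\frac{\norm{1-\psi_k}^2}{2\pi i}\cdot\frac{1}{1-\psi_k(x)}
=\frac{\norm{1-\psi_k}^2}{4\pi i}+i\frac{\norm{1-\psi_k}^2}{4\pi}\frac{1+\psi_k(x)}{1-\psi_k(x)}.
$$
Substituting the definition \eqref{defb} of $b_k(x)$ to replace $i\frac{\norm{1-\psi_k}^2}{4\pi}\frac{1+\psi_k(x)}{1-\psi_k(x)}$ by $\nu_k^2 b_k(x)+x-\frac{\Re\jap{A_{\psi_k}(1-\psi_k),1-\psi_k}}{\norm{1-\psi_k}^2}$, I get
$$
\frac1{R_k(x)}=b_k(x)+\frac{x}{\nu_k^2}+\frac{1}{\nu_k^2}\left(\frac{\norm{1-\psi_k}^2}{4\pi i}-\frac{\Re\jap{A_{\psi_k}(1-\psi_k),1-\psi_k}}{\norm{1-\psi_k}^2}\right)+\frac{\norm{1-\psi_k}^2}{\nu_k^2}\,\overline{c_k}.
$$
So it remains to identify the $x$-independent constant here with $\calA_{kk}^*=\overline{\omega_k}/(4\pi\Im\omega_k)$, equivalently with $\overline{\calA_{kk}}$ where $\calA_{kk}=\jap{A_\theta g_k,g_k}/\nu_k^4$.

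The constant-term identity is where the real work lies, and it is a direct application of Theorem~\ref{lma.a4} (formulas \eqref{a11ee} and \eqref{a11f}) with $\psi=\psi_k$, $\theta=\theta$, $p=p_k$, together with $g_k=P_k(1-\theta)$, $\norm{g_k}=\nu_k$ and the relation $\overline{c}=-\varkappa/(2\pi i)$ from the proof of that theorem (so that the constant $c$ in \eqref{a11e}--\eqref{a11f} is the same $c_k$ as in \eqref{b6a}). Indeed \eqref{a11ee} reads $\calA_{kk}=\jap{A_\theta g_k,g_k}/\nu_k^4=\frac1{\nu_k^2}\bigl(\jap{A_{\psi_k}(1-\psi_k),1-\psi_k}/\norm{1-\psi_k}^2+c_k\norm{1-\psi_k}^2\bigr)$, and taking the conjugate and comparing real and imaginary parts against the displayed constant --- using \eqref{a11f} to handle $\Im c_k$ and \eqref{b0}, i.e. $\Im\omega_k=\lambda_k^2\nu_k^4/4\pi$, together with $\nu_k^2\norm{1-\psi_k}^{-2}=\abs{p_{k,\infty}}^{-2}$... actually $\norm{1-\psi_k}^2=\nu_k^2/\abs{p_{k,\infty}}^2$, which cancels cleanly --- pins it down. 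The main obstacle I anticipate is purely bookkeeping: keeping the complex conjugations straight (the $\overline{c_k}$ versus $c_k$, and $\calA_{kk}$ versus $\calA_{kk}^*$), and correctly matching the real part of $\jap{A_{\psi_k}(1-\psi_k),1-\psi_k}$ appearing in \eqref{defb} against the real/imaginary split of $\jap{A_{\psi_k}(1-\psi_k),1-\psi_k}$ that enters through \eqref{a11ee}; the imaginary part is controlled by $\Im A_{\psi_k}=\frac1{4\pi}\jap{\cdot,1-\psi_k}(1-\psi_k)$ from \eqref{a5}, so $\Im\jap{A_{\psi_k}(1-\psi_k),1-\psi_k}=\norm{1-\psi_k}^4/(4\pi)$, and this is exactly what makes the $\frac{\norm{1-\psi_k}^2}{4\pi i}$ term and the $\Im c_k$ contribution conspire to produce $\calA_{kk}^*$. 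Once these constants are matched, \eqref{b28} follows, and Frostman-invariance of the right-hand side (hence of $Q(x)$, as it must be since the left-hand side $\jap{(A_\theta^*-x)^{-1}g_m,g_n}$ via \eqref{b29} does not reference any choice of $\psi_k$) is guaranteed by the Frostman-invariance of $b_k$ noted after \eqref{b20c}.
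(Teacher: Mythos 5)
Your strategy is sound and is essentially the paper's: reduce to the diagonal entries, invert $R_k(x)$, substitute the Herglotz term via \eqref{defb}, and identify the $x$-independent constant with $\overline{\calA_{kk}}$ using \eqref{a11ee} and \eqref{a5}. However, there is a sign error at the key algebraic step that invalidates the derivation as written. Since $1/i=-i$, the identity $\frac{1}{1-\psi}=\frac12+\frac12\frac{1+\psi}{1-\psi}$ gives
$$
\frac{\norm{1-\psi_k}^2}{2\pi i}\cdot\frac{1}{1-\psi_k(x)}
=\frac{\norm{1-\psi_k}^2}{4\pi i}\;-\;i\frac{\norm{1-\psi_k}^2}{4\pi}\,\frac{1+\psi_k(x)}{1-\psi_k(x)},
$$
with a \emph{minus} on the second term, not a plus as you wrote. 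Propagating this through the substitution of \eqref{defb} gives
$$
\frac{1}{R_k(x)}=-b_k(x)-\frac{x}{\nu_k^2}
+\frac{1}{\nu_k^2}\left(\frac{\norm{1-\psi_k}^2}{4\pi i}+\frac{\Re\jap{A_{\psi_k}(1-\psi_k),1-\psi_k}}{\norm{1-\psi_k}^2}\right)
+\frac{\norm{1-\psi_k}^2}{\nu_k^2}\,\overline{c_k},
$$
so both the $b_k(x)$, $x$ terms and the $\Re\jap{A_{\psi_k}(1-\psi_k),1-\psi_k}$ term come out with the opposite sign to yours. As you wrote it, $1/R_k(x)=+b_k(x)+x/\nu_k^2+\text{const}$ can never equal the target $\overline{\calA_{kk}}-x/\nu_k^2-b_k(x)$ for any constant, and the mismatched sign on $\Re\jap{A_{\psi_k}(1-\psi_k),1-\psi_k}$ would also prevent the constant identification from closing. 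Once the sign is corrected, your plan does work: conjugating your form of \eqref{a11ee} and using $\Im\jap{A_{\psi_k}(1-\psi_k),1-\psi_k}=\norm{1-\psi_k}^4/(4\pi)$ from \eqref{a5} identifies the constant directly as $\overline{\calA_{kk}}$, making a separate invocation of \eqref{a11f} unnecessary in your route (the paper instead decomposes $\overline{c_k}$ via \eqref{a11f} and matches term by term). That difference is cosmetic; otherwise the two arguments coincide.
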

\begin{proof}
For the off-diagonal entries, \eqref{b28} evidently agrees with \eqref{b26}. The issue is only to check that the diagonal entries agree. 
First we compute, using \eqref{b2a}, \eqref{b3} and \eqref{b6a}:
\begin{align*}
\frac{\Re \omega_k}{\lambda_k^2 \nu_k^2}
&-
\frac{\Re \jap{A_{\psi_k}(1-\psi_k),1-\psi_k}}{\norm{1-\psi_k}^2}
=
\frac{\Re \jap{A_\theta g_k,g_k}}{\norm{g_k}^2}
-
\frac{\Re \jap{A_{\psi_k}(1-\psi_k),1-\psi_k}}{\norm{1-\psi_k}^2}
\\
&=\frac{\Re \jap{A_\theta (p_k(1-\psi_k)),p_k(1-\psi_k)}}{\norm{p_k(1-\psi_k)}^2}
-
\frac{\Re \jap{A_{\psi_k}(1-\psi_k),1-\psi_k}}{\norm{1-\psi_k}^2}
\\
&=\frac{\Re \jap{A_\theta (p_k(1-\psi_k)),p_k(1-\psi_k)}}{\norm{1-\psi_k}^2}
-
\frac{\Re \jap{A_{\psi_k}(1-\psi_k),1-\psi_k}}{\norm{1-\psi_k}^2}
\\
&=\frac{\Re c_k\norm{1-\psi_k}^4}{\norm{1-\psi_k}^2}
=
 \norm{1-\psi_k}^2 \Re c_k.
\end{align*}
Writing the diagonal entry of the right hand side of \eqref{b28} and using the last formula, we get
\begin{align*}
\overline{\calA_{kk}}&-\frac{x}{\nu_k^2}-b_k(x)
=
\Re \calA_{kk}-i\Im \calA_{kk}-\frac{x}{\nu_k^2}-b_k(x)
\\
&=
\frac{\Re \omega_k}{\lambda_k^2\nu_k^4}-\frac{i}{4\pi}+\frac{1}{4\pi i}\frac{\norm{1-\psi_k}^2}{\nu_k^2}\frac{1+\psi_k(x)}{1-\psi_k(x)}-\frac{\Re\jap{A_{\psi_k}(1-\psi_k),1-\psi_k}}{\nu_k^2\norm{1-\psi_k}^2}
\\
&=
 \frac{\norm{1-\psi_k}^2}{\nu_k^2} \Re c_k
-\frac{i}{4\pi}
+\frac{1}{4\pi i}\frac{\norm{1-\psi_k}^2}{\nu_k^2}\frac{1+\psi_k(x)}{1-\psi_k(x)}.
\end{align*}
On the other hand, let us compute $1/R_k(x)$ and use \eqref{a11f}: 
\begin{align*}
1/R_k(x)&=\frac{\norm{1-\psi_k}^2}{\nu_k^2}\biggl(\frac1{2\pi i}\frac1{1-\psi_k(x)}+\overline{c_k}\biggr)
\\
&=\frac{\norm{1-\psi_k}^2}{\nu_k^2}\biggl(\Re c_k+\frac{1}{4\pi i}\frac{\nu_k^2}{\norm{1-\psi_k}^2}+\frac{i}{4\pi}+\frac{1}{2\pi i}\frac1{1-\psi_k(x)}\biggr)
\\
&=-\frac{i}{4\pi} +\frac{\norm{1-\psi_k}^2}{\nu_k^2}\biggl(\Re c_k+\frac1{4\pi i}\frac{1+\psi_k(x)}{1-\psi_k(x)}\biggr).
\end{align*}
Putting this together, we obtain the required identity. 
\end{proof}

\begin{remark}\label{simplespec}
Note that in the simple spectrum case, by performing a Frostman shift in each subspace $p_j K_{\psi_j}$, 
we can choose 
$$
\psi_j(x)=\frac{x-i}{x+i}
$$
for all $j$. Then a calculation shows that $\norm{1-\psi_j}^2=4\pi$ and the formula for $Q(x)$ becomes
$$
Q(x)=\calA^*-xD(\nu^2)^{-1}.
$$
\end{remark}

\subsection{Uniqueness and formula for $u$}\label{sec.b3}

Now we can put it all together and give a formula for $u$ in terms of the spectral data. The theorem below is more precise form of Theorem~\ref{thm.z3}(i).

\begin{theorem}\label{thm.uniq}
Let $u$ be a bounded rational function without poles in the closed upper half-plane, normalised so that $u(\infty)=0$. 
Then  $u$ is uniquely determined by the spectral data \eqref{z8} according to the following formula:
\begin{equation}
u(x)=\frac1{2\pi i}\jap{Q(x)^{-1}D(\lambda)D(e^{-i\varphi})\1,\1}, \quad \Im x>0.
\label{b25}
\end{equation}
Here
$$
Q(x)=\calA^*-xD(\nu^2)^{-1}-D(b(x)),
$$
where the matrix $\calA$ is defined by 
\begin{align*}
\calA_{kj}&=\frac{i}{2\pi}
\frac{\lambda_j^2-\lambda_j\lambda_k e^{i(\varphi_j-\varphi_k)}}{\lambda_j^2-\lambda_k^2}, \quad j\not=k,
\\
\calA_{jj}&=\frac{\omega_j}{\lambda_j^2\nu_j^4}=\frac{\omega_j}{4\pi \Im \omega_j},
\end{align*}
and $b_j$ are the functions defined by \eqref{defb}. 
\end{theorem}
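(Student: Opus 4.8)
The plan is simply to assemble the results already established in this section. The starting point is formula \eqref{b3aa},
$$
u(x)=\frac1{2\pi i}\sum_{k,j=1}^N \lambda_ke^{-i\varphi_k}\jap{(A_\theta^*-x)^{-1}g_k,g_j}, \quad \Im x>0,
$$
which comes from applying \eqref{eq.w} to $w=u$ and expanding both $u=\sum_k\lambda_ke^{-i\varphi_k}g_k$ (Lemma~\ref{lma.a6}) and $g=\sum_j g_j$ in the orthogonal decomposition \eqref{z9}. Thus $u(x)$ is determined once the matrix entries $\jap{(A_\theta^*-x)^{-1}g_k,g_j}$ are known. By the linear-algebra lemma giving \eqref{b29}, each of these equals $\jap{Q(x)^{-1}\1_k,\1_j}$; note in passing that the proof of that lemma produces, for every $m$, a solution of $Q(x)\xi=\1_m$, so $Q(x)$ is automatically invertible for $\Im x>0$. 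And by the lemma giving \eqref{b28}, $Q(x)$ is expressed purely in terms of the spectral data \eqref{z8}: the off-diagonal entries of $\calA$ are given by \eqref{Aoff}, the diagonal ones by \eqref{Adiag}, we have $\nu_j^2=\sqrt{4\pi\Im\omega_j}/\lambda_j$ by \eqref{b0}, and each $b_j$ of \eqref{defb} is the rational Herglotz function determined by $\psi_j,\omega_j,\lambda_j$.

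With these ingredients \eqref{b25} is a short computation. Substituting $\jap{(A_\theta^*-x)^{-1}g_k,g_j}=\jap{Q(x)^{-1}\1_k,\1_j}$ into the display above, summing over $j$ using $\sum_j\1_j=\1$ and then over $k$ using $\sum_k\lambda_ke^{-i\varphi_k}\1_k=D(\lambda)D(e^{-i\varphi})\1$, one obtains
$$
u(x)=\frac1{2\pi i}\jap{Q(x)^{-1}D(\lambda)D(e^{-i\varphi})\1,\1}, \quad \Im x>0,
$$
which is \eqref{b25}, with $Q(x)=\calA^*-xD(\nu^2)^{-1}-D(b(x))$ as asserted.

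For the uniqueness claim it then suffices to observe that $\calA$, $D(\lambda)$, $D(e^{-i\varphi})$ and $D(\nu^2)$ involve only $\lambda_n$, $e^{i\varphi_n}$ and $\omega_n$, while each $b_j$, hence $D(b(x))$, is invariant under Frostman shifts of $\psi_j$ by Lemma~\ref{lma.b17}. Therefore the right-hand side of \eqref{b25} depends only on the data \eqref{z8}, and not on the chosen representatives of the Frostman orbits of the $\psi_n$. This pins down $u$ on the open upper half-plane, and since $u$ is rational with no poles in $\overline{\bbC_+}$ it is then determined everywhere by analytic continuation.

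The substantial work — the rank-one structure \eqref{b6} of the off-diagonal blocks of $A_\theta$, the computation \eqref{b3b} of $R_k(x)$, and the rewriting \eqref{b28} of the diagonal of $Q(x)$ via \eqref{a11f} — has already been done in the preceding lemmas, so I do not anticipate a serious obstacle in the proof of the theorem itself. The only points requiring a little attention are the bookkeeping in the double sum and the two remarks above, namely that $Q(x)$ is invertible on $\bbC_+$ and that the expression is manifestly Frostman-invariant; the latter is precisely what makes the phrase ``the spectral data'' well posed in the uniqueness statement.
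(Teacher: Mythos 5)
Your argument is exactly the paper's: combine \eqref{b3aa} with \eqref{b29}, then collapse the double sum into the matrix inner product, with \eqref{b28} supplying $Q(x)$ in terms of the spectral data. The extra remarks you add on invertibility of $Q(x)$ (implicit in the derivation of \eqref{b29}) and Frostman-invariance via Lemma~\ref{lma.b17} are correct and sensible but not part of the paper's two-line proof.
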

\begin{proof}
Combining \eqref{b3aa} with \eqref{b29} gives
$$
u(x)=\frac1{2\pi i}\sum_{k,j=1}^N\lambda_ke^{-i\varphi_k}\jap{Q(x)^{-1}\1_k,\1_j}=\frac1{2\pi i}\jap{Q(x)^{-1}D(\lambda)D(e^{-i\varphi})\1,\1},
$$
as required. 
\end{proof}

\subsection{Formula for $g_j$}
For our proof of surjectivity in the following sections, we will need a formula for $g_j$, which appeared in the proof of the above theorem. 
It will be convenient to have it in vector form. 
\begin{lemma}
The vector 
$$
\bg(x)=(g_1(x),\dots,g_N(x))^\top
$$
can be expressed in terms of the spectral data by 
\begin{equation}
\bg(x)=\frac1{2\pi i} (Q(x)^\top)^{-1}\1.
\label{c8}
\end{equation}
\end{lemma}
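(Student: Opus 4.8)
The plan is to derive \eqref{c8} in exactly the same way \eqref{b25} was obtained, reading off the individual components $g_j(x)$ rather than the full sum $u(x)$. The starting point is the reproducing-kernel identity \eqref{eq.w}, namely $w(x)=\frac1{2\pi i}\jap{(A_\theta^*-x)^{-1}w,g}$ valid for every $w\in K_\theta$. Applying this with $w=g_j\in E_j\subset K_\theta$ and inserting the orthogonal decomposition $g=\sum_{n=1}^N g_n$ gives
\begin{equation}
g_j(x)=\frac1{2\pi i}\sum_{n=1}^N\jap{(A_\theta^*-x)^{-1}g_j,g_n}.
\notag
\end{equation}

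Next I would invoke the resolvent formula \eqref{b29}, which identifies $\jap{(A_\theta^*-x)^{-1}g_m,g_n}$ with the matrix entry $\jap{Q(x)^{-1}\1_m,\1_n}$. Substituting $m=j$ yields
\begin{equation}
g_j(x)=\frac1{2\pi i}\sum_{n=1}^N\jap{Q(x)^{-1}\1_j,\1_n}
=\frac1{2\pi i}\Bigl\langle Q(x)^{-1}\1_j,\sum_{n=1}^N\1_n\Bigr\rangle
=\frac1{2\pi i}\jap{Q(x)^{-1}\1_j,\1}.
\notag
\end{equation}
It then remains to recognise this as the $j$-th component of a single vector. Using $\jap{Q(x)^{-1}\1_j,\1}=\jap{\1_j,(Q(x)^{-1})^*\1}=\overline{\jap{(Q(x)^*)^{-1}\1,\1_j}}$ is one route, but cleaner is to note that $\jap{Q(x)^{-1}\1_j,\1}$ equals the $j$-th entry of the row vector $\1^\top Q(x)^{-1}$, i.e. the $j$-th entry of $\bigl((Q(x)^\top)^{-1}\1\bigr)^\top$; stacking over $j$ gives precisely $\bg(x)=\frac1{2\pi i}(Q(x)^\top)^{-1}\1$, which is \eqref{c8}. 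One should take a moment to confirm that no conjugation sneaks in: the inner product $\jap{\cdot,\cdot}$ on $\bbC^N$ is conjugate-linear in the second slot, but $\1$ and the $\1_j$ have real entries, so $\jap{Q(x)^{-1}\1_j,\1}=\1^\top Q(x)^{-1}\1_j$ with no conjugate, and the identification with $(Q(x)^\top)^{-1}\1$ is exact.

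There is essentially no obstacle here: the lemma is a bookkeeping corollary of \eqref{eq.w}, \eqref{b29} and the identity $u=\sum_k\lambda_k e^{-i\varphi_k}g_k$ that was already assembled in the proof of Theorem~\ref{thm.uniq}. The only point requiring a little care is the transpose-versus-adjoint distinction in the last step, which is why I would phrase the computation in terms of the real basis vectors $\1_j,\1$ so that transpose and adjoint coincide. Invertibility of $Q(x)$ for $\Im x>0$ is already implicit in \eqref{b29} (the resolvent $(A_\theta^*-x)^{-1}$ exists there), so nothing extra is needed.
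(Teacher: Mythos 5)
Your proof is correct and follows essentially the same route as the paper: both take $w=g_j$ in \eqref{eq.w}, invoke \eqref{b29} to convert the resolvent matrix element into $\jap{Q(x)^{-1}\1_j,\1}$, and then identify the result as the $j$-th entry of $\frac1{2\pi i}(Q(x)^\top)^{-1}\1$. Your extra remarks on the transpose-versus-adjoint bookkeeping are a sound sanity check but not a genuine divergence from the paper's argument.
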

\begin{proof}
As in the proof of Theorem~\ref{thm.uniq}, taking $w=g_k$ in \eqref{eq.w} and using \eqref{b29}, we obtain
$$
g_k(x)=\frac1{2\pi i}\jap{(A_\theta^*-x)^{-1}g_k,g}
=\frac1{2\pi i}\jap{Q(x)^{-1}\1_k,\1}
=\frac1{2\pi i}\jap{(Q(x)^{\top})^{-1}\1,\1_k}.
$$
Writing this in the vector form, we obtain \eqref{c8}.
\end{proof}

\subsection{Formula for $p_j$}\label{sec.b4}
For our proof of surjectivity in the following sections, it will be convenient to have a formula for the isometric multiplier $p_j$ in the representation $E_j=p_j K_{\psi_j}$. First let us fix the unimodular multiplicative constant  in the definition of $p_j$.  By \eqref{a16}, this can be done so  that 
\begin{equation}
H_u(p_j h)=\lambda_j p_j \psi_j \overline{h}, \quad h\in K_{\psi_j}.
\label{b1}
\end{equation}
This does not fix $p_j$ uniquely but up to a factor of $\pm1$ (observe that \eqref{a16} is invariant under the change $p\mapsto -p$). 

Take $h=1-\psi_j$ in \eqref{b1}:
$$
H_u(p_j(1-\psi_j))=-\lambda_j p_j(1-\psi_j). 
$$
On the other hand, \eqref{a15} gives the equation $H_ug_j=\lambda_je^{-i\varphi_j}g_j$ and \eqref{b3} gives a relation between $g_j$ and $p_j(1-\psi_j)$. Putting this together, after a little algebra we obtain 
\begin{equation}
e^{i\varphi_j}=-\frac{\overline{p_{j,\infty}}}{p_{j,\infty}}.
\label{b4}
\end{equation}
Next, by \eqref{b3} again we find 
$$
p_j=\frac1{\overline{p_{j,\infty}}}\frac{g_j}{1-\psi_j}.
$$
Taking into account \eqref{b4}, this becomes
\begin{equation}
p_j
=
\pm ie^{-i\varphi_j/2}\frac1{\abs{p_{j,\infty}}}\frac{g_j}{1-\psi_j}
=
\pm ie^{-i\varphi_j/2}\frac{{\norm{1-\psi_j}}}{\nu_j}\frac{g_j}{1-\psi_j},
\label{b8b}
\end{equation}
where the sign $\pm$ remains undetermined.

\subsection{Formula for $\theta(x)$} 

Below we give a formula for $\theta(x)$, $\Im x>0$, in terms of the spectral data. We do not need this formula in the rest of the paper, and so we give it without proof. Along with $Q(x)$, consider  the matrix
$$
Q^\#(x)=\calA-xD(\nu^2)^{-1}-D(b(x)), \quad \Im x>0;
$$
the difference with $Q(x)$ is that here we take $\calA$ instead of $\calA^*$. 
The matrix $Q^\#(x)$ is no longer necessarily invertible in $\bbC_+$. 
The inner function $\theta$ can be recovered from the spectral data by the formula 
$$
\theta(x)=\frac{\det Q^\#(x)}{\det Q(x)}, \quad \Im x>0. 
$$
The idea of the proof is to start from formula 
$$
1-\theta=\sum_{j=1}^N g_j=\sum_{j=1}^N \overline{p_{j,\infty}}p_j(1-\psi_j), 
$$
express $p_j$ according to \eqref{b8b} and rearrange the result using some matrix algebra similar to the one of Section~\ref{sec.f6} below.

\subsection{Comparison with \cite{OP2}}\label{sec.5.9}
For the reader's convenience, we compare the spectral data of Pocovnicu's paper \cite{OP2} (where the case of singular values of multiplicity one was considered) with the one of this paper. In \cite{OP2}, notation $\gamma_j$ is used for $\Re\omega_j$ and $\phi_j$ is used for $\varphi_j$. Notation $\lambda_j$ and $\nu_j$ in \cite{OP2} have the same meaning as here. The spectral data in \cite{OP2} is
$$
\left(
\{2\lambda_j^2\nu_j^2\}_{j=1}^N, 
\{4\pi \lambda_j^2\}_{j=1}^N, 
\{2\varphi_j\}_{j=1}^N,
\{\Re \omega_j\}_{j=1}^N
\right);
$$
these are the generalised action-angle variables for the Szeg\H{o} equation. Taking into account \eqref{b0}, we see that this spectral data is in a one-to-one correspondence with our spectral data \eqref{z8}.

\section{Inverse spectral problem: the surjectivity of the spectral map}\label{sec.c}

\subsection{The set-up}

Suppose we are given $N\in\bbN$, and the spectral data \eqref{z8}, 
where
\begin{itemize}
\item
$0<\lambda_1<\lambda_2<\dots<\lambda_N$ are real numbers;
\item
$\{\psi_j\}_{j=1}^N$ are finite Blaschke products with the normalisation condition
$\psi_j(\infty)=1$; 
\item
$\{e^{i\varphi_j}\}_{j=1}^N$ are unimodular complex numbers;
\item
$\{\omega_j\}_{j=1}^N$ are complex numbers with positive imaginary parts.
\end{itemize}
We define the numbers $\nu_j>0$ so that \eqref{b2b} holds, i.e. 
$$
4\pi \Im \omega_j=\lambda_j^2\nu_j^4. 
$$
With these parameters, let us define the $N\times N$ matrices $\calA$, $Q(x)$ 
as in Section~\ref{sec.b3}. 
Since $Q(x)=\calA^*-xD(\nu^2)^{-1}-D(b(x))$ and, by Lemma~\ref{lma.b17}, 
$$
b(x)=O(1/x), \quad \abs{x}\to\infty,
$$
we find that 
\begin{equation}
(Q(x))^{-1}=-\frac1x D(\nu^2)+O(1/\abs{x}^2), \quad \abs{x}\to\infty.
\label{c4}
\end{equation}

\begin{theorem}\label{thm.c1}
For every $x\in\bbC_+$, the matrix $Q(x)$ is invertible. Furthermore, 
$$
\sup_{x\in\bbC_+}\norm{Q(x)^{-1}}<\infty.
$$
\end{theorem}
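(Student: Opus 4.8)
The plan is to reconstruct, from the given spectral data, a concrete operator on a concrete Hilbert space whose matrix in a suitable basis is exactly $Q(x)$ (up to the harmless factor $D(\nu^2)^{-1}$), and to read off invertibility and the uniform resolvent bound from the structural properties of that operator — namely, dissipativity of an infinitesimal generator of a contraction semigroup. Concretely, I would first build a model space. For each $j$ let $K_{\psi_j}$ be the model space attached to the given Blaschke product $\psi_j$, with generator $A_{\psi_j}$, and set $g_j^0 := 1-\psi_j \in K_{\psi_j}$ (Lemma~\ref{lma.a1a}), so $\norm{g_j^0}^2 = \norm{1-\psi_j}^2$. On the orthogonal sum $\mathcal{K} := \bigoplus_{j=1}^N K_{\psi_j}$ I would define a perturbation $\mathcal{B}$ of $\bigoplus_j A_{\psi_j}$ whose off-diagonal rank-one pieces and diagonal modification are dictated precisely by the matrix $\calA$: that is, choose $\mathcal{B}$ so that, in the (non-orthonormal) basis $\{g_j^0\}$, its normalised matrix entries reproduce $\calA_{kj}$ for $k\neq j$ and the diagonal part reproduces the $\calA_{jj}$-minus-$b_j$-shift appearing in \eqref{b28}. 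The point of Sections~\ref{sec.a00} and \ref{sec.aa} is that such a $\mathcal{B}$ arises as $A_\theta$ for a genuine Blaschke product $\theta$ acting on a model space $K_\theta \cong \mathcal{K}$, but for the present theorem I only need the weaker fact that $\mathcal{B}$ is dissipative.

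The key computation is therefore to verify $\Im\langle \mathcal{B} f, f\rangle \geq 0$ for all $f\in\mathcal{K}$. Here I would use that each $A_{\psi_j}$ is dissipative with $\Im A_{\psi_j} = \frac{1}{4\pi}\langle\cdot,1-\psi_j\rangle(1-\psi_j)$ (formula \eqref{a5}), and that the imaginary part of the off-diagonal rank-one corrections, assembled across all $j,k$, contributes a manifestly nonnegative quadratic form — this is exactly the content of the positivity $\Im\omega_j>0$ together with the algebraic identity \eqref{a11f}, which forces the constants $\Im c_j$ to be such that the total $\Im\mathcal{B}$ is a sum of squares. (Intuitively, $\Im\mathcal{B}$ should come out equal to $\frac{1}{4\pi}\langle\cdot,\mathbf{g}\rangle\mathbf{g}$ for the reconstructed vector $\mathbf{g}=1-\theta$, which is automatically $\geq 0$.) Once dissipativity of $\mathcal{B}$ is in hand, for $x\in\bbC_+$ the operator $\mathcal{B}-x$ is invertible with $\norm{(\mathcal{B}-x)^{-1}}\leq 1/\Im x$; translating back through the similarity that carries $\mathcal{B}-x$ to $D(\nu^2)^{1/2}Q(x)D(\nu^2)^{1/2}$ (or the appropriate one-sided version), invertibility of $Q(x)$ for every $x\in\bbC_+$ follows at once.

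For the uniform bound $\sup_{x\in\bbC_+}\norm{Q(x)^{-1}}<\infty$, the resolvent estimate $1/\Im x$ degenerates as $x$ approaches the real axis, so I would split into two regions. For $\abs{x}$ large the asymptotics \eqref{c4} give $\norm{Q(x)^{-1}} = O(1/\abs{x})\cdot\norm{D(\nu^2)} + o(1)$, hence boundedness outside a large disk. For $x$ in a bounded subset of $\bbC_+$, I would use that $\mathcal{B}$ is completely non-selfadjoint (Lemma~\ref{lma.nsa} applied to each block, plus the fact that the rank-one couplings cannot create a reducing selfadjoint subspace — this uses that the $g_j$ are cyclic), so the spectrum of $\mathcal{B}$ is contained in the open upper half-plane \emph{together with} finitely many real points coming from eigenvalues on $\bbR$; but dissipativity plus complete non-selfadjointness rules out real eigenvalues, so $\sigma(\mathcal{B})\subset\bbC_+$. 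Since $\mathcal{B}$ is a finite matrix, $\sigma(\mathcal{B})$ is a finite set strictly inside $\bbC_+$, the resolvent $(\mathcal{B}-x)^{-1}$ extends continuously to the closed lower half-plane and to $\bbR$, and is therefore bounded on the closure of any bounded subset of $\bbC_+$; combined with the large-$x$ estimate this gives the global bound. \textbf{The main obstacle} I anticipate is the dissipativity computation: pinning down $\mathcal{B}$ so that its imaginary part is exactly a nonnegative rank-one form requires carefully matching the $b_j$-shifts (via Lemma~\ref{lma.b17}) against the real parts of $\calA_{jj}$ and the cross terms, and checking that no spurious negative contribution survives — essentially re-deriving the identity \eqref{a11f} in the reverse direction, now as a constraint that the constructed data automatically satisfies by the definition $4\pi\Im\omega_j = \lambda_j^2\nu_j^4$.
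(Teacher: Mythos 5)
Your approach is genuinely different from the paper's, but it has a structural gap that is hard to fill without effectively redoing the whole surjectivity argument.

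The paper's proof is purely a statement about the $N\times N$ matrix $\calA$. It observes that $Q(x)^*=\calA+D(w)$ with $w_j=-\overline{x}\nu_j^{-2}-\overline{b_j(x)}\in\bbC_+$ (since $x\nu_j^{-2}+b_j(x)$ has positive imaginary part on $\bbC_+$), and reduces the theorem to: $\calA+D(\zeta)$ is invertible uniformly over $\zeta\in\bbC_+^N$. This is then proved by a compactness argument (Lemma~\ref{lma.b5}) that requires invertibility not just of $\calA+D(\beta)$ but of all its principal submatrices with $\beta\in\overline{\bbC_+}$; that in turn follows from the rank-one identity $\Im\calA=\frac1{4\pi}\jap{\cdot,\1}\1$, the intertwining $\calA^*\calH=\calH\calA$, and the cyclicity of $\1$ for $D(\lambda)^2$ (Lemmas~\ref{lma.c3} and \ref{lma.b4}). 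No operator $\mathcal{B}$, no model space, no semigroup is constructed.

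Your plan instead builds an operator $\mathcal{B}$ on $\mathcal{K}=\bigoplus_j K_{\psi_j}$ and tries to read off invertibility from dissipativity. Three problems. First, the dimensions do not match: $\dim\mathcal{K}=\sum_j\deg\psi_j\geq N$, whereas $Q(x)$ is $N\times N$, so $\mathcal{B}$ cannot literally have matrix $Q(x)$ in any basis. The actual relationship in the direct problem (the paper's \eqref{b29}) is that $Q(x)^{-1}$ is the $N\times N$ Gram matrix of the resolvent $(A_\theta^*-x)^{-1}$ against the vectors $g_j$ — a Krein/Schur-type reduction, not a similarity — and establishing that identity for a freshly-constructed $\mathcal{B}$ is itself a substantial piece of the surjectivity proof. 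Second, as you acknowledge, verifying dissipativity of $\mathcal{B}$ amounts to ``re-deriving \eqref{a11f} in reverse'', i.e.\ checking that a carefully tuned combination of the $b_j$-shifts, the real parts of $\calA_{jj}$, and the cross-terms collapses to a nonnegative rank-one form; but \eqref{a11f} was derived under the assumption $pK_\psi\subset K_\theta$, which you cannot invoke before $\theta$ is known. You have thus replaced a finite-dimensional linear-algebra lemma with a computation at least as delicate, and one that is circular in its present form. Third, even granting $\mathcal{B}$ dissipative, the bound $\norm{(\mathcal{B}-x)^{-1}}\le 1/\Im x$ degenerates on $\bbR$; the paper's uniform bound comes precisely from Lemma~\ref{lma.b5}, which exploits the structure of $\calA$'s principal submatrices and handles parameters $\zeta_j$ going to $\bbR$ and to $\infty$ independently — your two-region split is not adequate to this, because it does not account for the finitely many real poles of $b_j$ nor for several $\zeta_j$ approaching $\bbR$ at different rates.

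In short, your intuition that $Q(x)$ should be the effective Hamiltonian of a dissipative $A_\theta$ is the right picture of what is going on, but the theorem must be proved before that picture is available. The paper's argument is designed precisely to avoid this circularity by working with the $N\times N$ matrix $\calA$ alone, and the key technical tool you would need to replace (Lemma~\ref{lma.b5}) is not present in your proposal.
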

Observe that as a consequence, the radial limits
$$
\lim_{\eps\to0+}Q(x+i\eps)^{-1}
$$
exist for a.e. $x\in\bbR$. 
We now define $u(x)$ by formula \eqref{b25}. It is evident that $u$ is a rational function; by \eqref{c4}, we have $u(x)\to0$ as $\abs{x}\to\infty$. Furthermore, by Theorem~\ref{thm.c1}, $u(x)$ has no poles in the closed upper half-plane. 
The main result of this section is 
\begin{theorem}\label{thm.c2}
The Hankel operator $H_u$ corresponds to the  spectral data \eqref{z8}
according to Theorem~\ref{thm.uniq}.
\end{theorem}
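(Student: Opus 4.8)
The plan is to follow the ``algebraic'' approach of \cite{GP}: from the explicit rational symbol $u$ given by \eqref{b25} I would manufacture, directly out of the matrix $Q(x)$, candidate versions of all the objects attached to $H_u$ and then verify their defining properties one at a time. Concretely, let $\bg(x)=(g_1(x),\dots,g_N(x))^\top$ be the vector defined by \eqref{c8}, so that by the computation in the proof of Theorem~\ref{thm.uniq} one has $u=\sum_k\lambda_ke^{-i\varphi_k}g_k$; and let $p_j$ be the rational function on the right-hand side of \eqref{b8b}, namely $p_j=ie^{-i\varphi_j/2}(\norm{1-\psi_j}/\nu_j)\,g_j/(1-\psi_j)$ (a fixed choice of sign). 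The target is: $\Ran H_u=K_\theta$ with $\theta:=1-\sum_jg_j$ a finite Blaschke product normalised by $\theta(\infty)=1$; each $g_j$ equals $P_j(1-\theta)$, the orthogonal projection of $1-\theta$ onto the Schmidt subspace of $H_u$ at $\lambda_j$; each Schmidt subspace is $E_{H_u}(\lambda_j)=p_jK_{\psi_j}$; and finally the action \eqref{a16} of $H_u$ and the numbers $\omega_j$ come out exactly as prescribed.

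The first block of work is analytic. By Theorem~\ref{thm.c1} the matrix $Q(x)$ is invertible on $\bbC_+$ with uniformly bounded inverse, so $u$ is rational, vanishes at infinity by \eqref{c4}, and has no poles in the closed upper half-plane; hence $\Ran H_u=K_\theta$ for a finite Blaschke product, which we normalise by $\theta(\infty)=1$, and Lemma~\ref{lma.a5} gives $H_u(1-\theta)=u$ and $A_\theta^*H_u=H_uA_\theta$. The engine behind the rest is a resolvent-type identity for $Q(x)^{-1}$, extracted from the explicit shape $Q(x)=\calA^*-xD(\nu^2)^{-1}-D(b(x))$ and the Herglotz representations \eqref{b20c} of the $b_j$. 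Feeding this identity into \eqref{c8} and \eqref{b25} I would check that each $g_j$ lies in $K_\theta$, that $\sum_jg_j=1-\theta$ (so that $\theta$ is indeed inner — this is where the rational structure of $Q$ is used), that the $g_j$ span $K_\theta$, and that $H_ug_j=\lambda_je^{-i\varphi_j}g_j$; the last identity shows $g_j$ is a Schmidt vector of $H_u$ at $\lambda_j$, in particular $\lambda_j$ is a singular value and $g_j=P_j(1-\theta)$.

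The main obstacle, already flagged in the introduction, is to show that each $p_j$ is genuinely an \emph{isometric multiplier} on $K_{\psi_j}$, that is $\norm{p_jf}=\norm{f}$ for every $f\in K_{\psi_j}$, starting only from the matrix formula above. Here I would be guided by Sarason's representation of isometric multipliers on a model space, in the matrix form recorded in Lemma~\ref{lma.c11}: the point is to recognise $p_j$ as a scalar entry of a matrix Cayley-type transform built from $Q(x)$ and $\psi_j$, of precisely the type to which that lemma applies, thereby forcing isometry. With the isometry of the $p_j$ in hand, one checks $p_jK_{\psi_j}\subseteq E_{H_u}(\lambda_j)$ and then, comparing dimensions with the decomposition $K_\theta=\bigoplus_jp_jK_{\psi_j}$ (which follows from the $A_\theta$-invariance and orthogonality supplied by Section~\ref{sec.aa} and the resolvent identity), concludes $E_{H_u}(\lambda_j)=p_jK_{\psi_j}$; thus the $\lambda_j$ are exactly the singular values of $H_u$, and the inner functions attached to $H_u$ are the prescribed $\psi_j$ (up to Frostman shift).

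It remains to recover the phases and the numbers $\omega_j$. Applying $A_\theta^*H_u=H_uA_\theta$ to $p_j(1-\psi_j)$, using \eqref{b3} and comparing with \eqref{a15}, yields both the action $H_u(p_jh)=\lambda_jp_j\psi_j\overline h$ on $p_jK_{\psi_j}$ and the relation $e^{i\varphi_j}=-\overline{p_{j,\infty}}/p_{j,\infty}$, which matches the given $e^{i\varphi_j}$ by the construction of $p_j$. For $\omega_j$ I would use $\omega_j=\lambda_j^2\jap{A_\theta g_j,g_j}$ (formula \eqref{b2a}) together with Theorem~\ref{lma.a4}, which relates $\jap{A_\theta(p_jh),p_jh}$ to $\jap{A_{\psi_j}h,h}$ and the constant $c_j$; since the constant $c_j$ read off from $Q(x)$ via \eqref{b3b}--\eqref{b28} was calibrated through \eqref{defb} to encode exactly $\Re\omega_j$, while $\Im\omega_j$ is already pinned down by $4\pi\Im\omega_j=\lambda_j^2\nu_j^4$, one recovers $\omega_j$ precisely. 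By the uniqueness statement of Theorem~\ref{thm.uniq}, $H_u$ therefore corresponds to the spectral data \eqref{z8}, which completes the proof.
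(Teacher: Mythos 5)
Your proposal matches the paper's proof of Theorem~\ref{thm.c2} essentially step for step: invertibility of $Q(x)$ gives a rational $u$ with $\Ran H_u=K_\theta$; the eigenvalue equation $H_u g_j=\lambda_j e^{-i\varphi_j}g_j$ together with orthogonality of the $g_j$ is the engine; the isometry of the $p_j$ is extracted from the Cayley-transform matrix identity $BB^*+\jap{\cdot,\beta}\beta=I$ exactly as in Lemma~\ref{lma.c11}; and $\omega_j$ is read off via $\omega_j=\lambda_j^2\jap{A_\theta g_j,g_j}$ and Theorem~\ref{lma.a4}. The only minor deviation is in the mechanism you suggest for Lemma~\ref{lma.c12} (commutation relation rather than the paper's pole-counting argument for $F=u\overline{p_j}\overline{h}-\lambda_j p_j\psi_j\overline{h}$), which is a detail rather than a different route.
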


The main step of the proof is as follows. Let us define the functions $g_j(x)$ by \eqref{c8}. By definition, these are rational functions going to zero at infinity and by Theorem~\ref{thm.c1} they don't have poles in the closed upper half-plane. Thus, $g_j\in H^2$ for all $j$. 

\begin{theorem}\label{thm.c1a}
The eigenvalue equations 
$$
H_ug_j=\lambda_je^{-i\varphi_j}g_j, \quad j=1,\dots,N,
$$
or in vector form,
\begin{equation}
\bbP_+(u\overline{\bg})=D(\lambda)D(e^{-i\varphi})\bg,
\label{c8b}
\end{equation}
hold true.
\end{theorem}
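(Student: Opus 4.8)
The strategy is the ``algebraic'' approach of \cite{GP}: we have an explicit candidate symbol $u$ given by \eqref{b25} and explicit candidate eigenvectors $g_j$ given by \eqref{c8}, and we want to verify the eigenvalue equations \eqref{c8b} directly, purely by manipulating the matrix formulas. The key computational facts at our disposal are: $u(x)=\frac1{2\pi i}\jap{Q(x)^{-1}D(\lambda)D(e^{-i\varphi})\1,\1}$, $\bg(x)=\frac1{2\pi i}(Q(x)^\top)^{-1}\1$, and the structure $Q(x)=\calA^*-xD(\nu^2)^{-1}-D(b(x))$ with $b$ a rational Herglotz function (whence $Q(x)$ has boundary values a.e. and, by Theorem~\ref{thm.c1}, $Q(x)^{-1}$ is bounded on $\bbC_+$). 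The conjugate $\overline{\bg}$ must, as always in Hankel theory, be interpreted via boundary values on $\bbR$.

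\textbf{Step 1: compute $H_u g_j$ via an integral/residue calculation.} Since $H_u g_j = \bbP_+(u\overline{g_j})$, I would write $u\overline{g_j}$ as a rational function on $\bbR$ and decompose it into its part analytic in $\bbC_+$ and its part analytic in $\bbC_-$. Concretely, using the formula $(\bbP_+ f)(x) = \jap{f,v_x}$ together with the explicit $v_\zeta$, one reduces $H_u g_j(x)$ to an expression involving $u(\zeta)$, $g_j(\zeta)$ and the Cauchy kernel, which by a residue computation (closing the contour in the lower half-plane, where $\overline{g_j}$ is meromorphic with poles at the complex-conjugates of the poles of $g_j$) becomes an algebraic expression in $Q$. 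The cleanest route is probably to first establish a functional identity for $u$ and $\bg$ on the real line --- something like $\overline{u(x)} = \frac1{2\pi i}\jap{\overline{Q(x)}^{-1}D(\lambda)D(e^{i\varphi})\1,\1}$ and a companion identity for $\overline{\bg(x)}$ in terms of $\overline{Q(x)}^{-1}$ --- using that $D(\nu^2)$, $D(\lambda)$ are real and that $\overline{b(x)} = b(\overline x)$ for the Herglotz function. Then $\overline{Q(x)}$ relates to $Q(\overline x)$, i.e. to the resolvent at the reflected point.

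\textbf{Step 2: reduce \eqref{c8b} to a matrix identity.} The key algebraic engine should be the resolvent identity for $Q$: writing $Q(x)-Q(y) = -(x-y)D(\nu^2)^{-1} - (D(b(x))-D(b(y)))$, and noting $D(b(x))-D(b(y))$ is diagonal, one gets after multiplying by $Q(x)^{-1}$ on one side and $Q(y)^{-1}$ on the other a usable second-resolvent-type relation. Combined with the special structure of $\calA$ --- in particular the off-diagonal entries $\calA_{kj}=\frac{i}{2\pi}\frac{\lambda_j^2-\lambda_j\lambda_k e^{i(\varphi_j-\varphi_k)}}{\lambda_j^2-\lambda_k^2}$, which satisfy the crucial identity $(\lambda_j^2-\lambda_k^2)\calA_{kj} = \frac{i}{2\pi}(\lambda_j^2 - \lambda_j\lambda_k e^{i(\varphi_j-\varphi_k)})$ relating $\calA$ to the rank-one objects $D(\lambda)D(e^{-i\varphi})\1$ and $\1$ --- the goal is to show that the vector $\bbP_+(u\overline{\bg})$ and the vector $D(\lambda)D(e^{-i\varphi})\bg$ both equal the same explicit rational vector. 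This is essentially the line-case analogue of the computation that makes \cite{GP} work; the commutation relation \eqref{a13} (reinterpreted: $\calA$ should satisfy $\calA D(\lambda)^2 - D(\lambda)^2\calA = $ rank two) is the finite-dimensional shadow one is reconstructing.

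\textbf{Main obstacle.} The hard part will be the bookkeeping in Step 1 --- correctly interpreting $\overline{\bg}$ through boundary values and carrying out the residue/Cauchy computation so that it collapses into the matrix identity of Step 2 --- together with verifying that the off-diagonal structure of $\calA$ is \emph{exactly} what is needed for the algebra to close, i.e. that no information beyond the spectral data \eqref{z8} enters. I expect the Herglotz functions $b_j$ to cause the most friction: one must track how $b_j(x)$, which encodes $\psi_j$, interacts with the Cauchy projection, and confirm that the combination $-xD(\nu^2)^{-1}-D(b(x))$ behaves, under $\bbP_+$, like the ``diagonal resolvent'' $R_k(x)^{-1}$ it was built from. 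Once the functional identities relating $Q(x)$, $\overline{Q(x)}=Q(\overline x)^{\,\overline{\phantom{x}}}$, $u$ and $\bg$ are in place, the eigenvalue equation should follow by a finite matrix computation; establishing those identities cleanly is where the real work lies, and it is plausibly deferred to the next section (``proof6.2'').
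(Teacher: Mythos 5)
Your high-level strategy---verify \eqref{c8b} directly by matrix algebra starting from the explicit formulas \eqref{b25} and \eqref{c8}, using the defining relation $Q^\top(x)\bg(x)=\frac1{2\pi i}\1$ and its complex conjugate on $\bbR$---is indeed the paper's strategy, and your instinct that the Herglotz pieces $b_j$ would be the main source of friction is also correct. But the proposal stalls at the sketch level and two of your concrete suggestions point in the wrong direction, so let me name the actual mechanism.

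First, the paper does \emph{not} use residue calculus or a second-resolvent-type identity $Q(x)-Q(y)$. Instead, the single trick that makes everything close is this: conjugate $Q^\top(x)\bg(x)=\frac1{2\pi i}\1$ to get $Q^*(x)\overline{\bg(x)}=-\frac{1}{2\pi i}\1$ on $\bbR$, \emph{multiply by} $u(x)$, and \emph{apply} $\bbP_+$ term-by-term to the expansion $Q(x)=\calA^*-xD(\nu^2)^{-1}-D(b(x))$. The constant matrix $\calA$ passes through $\bbP_+$ for free; the $x$-term is handled by the elementary Lemma~\ref{lma.c7} $\bbP_+(xf)=x\bbP_+(f)+\frac1{2\pi i}\int f$ (the integral is evaluated via the orthogonality relation, Lemma~\ref{lma.c6}); and the $b$-term is left alone for the moment. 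This produces Lemma~\ref{lma.c8}: $(\calA-xD(\nu^2)^{-1})\bbP_+(u\overline{\bg})-\bbP_+(uD(b)\overline{\bg})=\frac1{2\pi i}D(\lambda)D(e^{-i\varphi})\1-\frac{u(x)}{2\pi i}\1$. Second, one computes $Q^*(x)D(\lambda)D(e^{-i\varphi})\bg(x)$ directly using the two finite-dimensional structure relations of Lemma~\ref{lma.c3}, namely $\Im\calA=\frac1{4\pi}\jap{\cdot,\1}\1$ and $\calA^*\calH=\calH\calA$; this gives the \emph{same} right-hand side (Lemma~\ref{lma.c9}). Equating the two and multiplying by $Q^*(x)^{-1}$ almost gives the result.

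The remaining obstruction is exactly what you flagged: the $D(b)$-terms do not trivially pass through $\bbP_+$, because the $b_j$ have poles on $\bbR$. The paper resolves this with Lemma~\ref{lma.c10}, which shows $\bbP_+\bigl(Q^*(x)^{-1}\bbP_+(uD(b)\overline{\bg})\bigr)=\bbP_+\bigl(Q^*(x)^{-1}D(b)\bbP_+(u\overline{\bg})\bigr)$; the proof is a duality argument exploiting that $D(b)Q(x)^{-1}\in H^\infty(\bbC_+)$ (the singularities of $b$ cancel against those of $Q$, cf.\ \eqref{c10}), and that $b_j$ is real on $\bbR$. This is the piece your plan neither supplies nor anticipates. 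Without it you cannot move $D(b)$ past $\bbP_+$, and the algebra does not close. So the gap is concrete: (i) the multiply-by-$u$-then-project step replacing your residue/resolvent ideas, and (ii) the $H^\infty$-cancellation lemma for $D(b)Q^{-1}$ that lets one commute $D(b)$ with the Cauchy projection.
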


In this section, we prove Theorems~\ref{thm.c1} and \ref{thm.c1a}. 
In the following section, we prove Theorem~\ref{thm.c2}.

\subsection{Algebraic properties of the matrix $\calA$}
For convenience of notation and also to make the connection with Hankel operators more transparent, 
let us define the anti-linear operator $\calH$ in $\bbC^N$ by 
\begin{equation}
\calH f=D(\lambda)D(e^{-i\varphi})\overline{f}, \quad \overline{f}=(\overline{f_1},\dots, \overline{f_N})^\top.
\label{c1a}
\end{equation}
\begin{lemma}\label{lma.c3}
The matrix  $\calA$ satisfies 
\begin{gather}
\Im \calA=\frac1{4\pi}\jap{\cdot, \1}\1,
\label{c2}
\\
\calA^*\calH=\calH \calA. 
\label{c3}
\end{gather}
\end{lemma}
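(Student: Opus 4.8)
The plan is to verify the two identities \eqref{c2} and \eqref{c3} by direct computation from the explicit formula for the entries of $\calA$, namely $\calA_{kj}=\frac{i}{2\pi}\frac{\lambda_j^2-\lambda_j\lambda_k e^{i(\varphi_j-\varphi_k)}}{\lambda_j^2-\lambda_k^2}$ for $j\neq k$ and $\calA_{jj}=\frac{\omega_j}{4\pi\Im\omega_j}$. For \eqref{c2} I would compute $(\Im\calA)_{kj}=\frac{1}{2i}(\calA_{kj}-\overline{\calA_{jk}})$. On the diagonal this is immediate: $\Im\calA_{jj}=\Im\bigl(\frac{\omega_j}{4\pi\Im\omega_j}\bigr)=\frac{1}{4\pi}$, matching the $(j,j)$ entry of $\frac1{4\pi}\jap{\cdot,\1}\1$. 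Off the diagonal, I would substitute the formula and the symmetric partner $\overline{\calA_{jk}}=-\frac{i}{2\pi}\frac{\lambda_k^2-\lambda_k\lambda_j e^{-i(\varphi_k-\varphi_j)}}{\lambda_k^2-\lambda_j^2}$, add, and simplify: the two terms have a common denominator $\lambda_j^2-\lambda_k^2$, and the numerator collapses to $\frac{i}{2\pi}\bigl[(\lambda_j^2-\lambda_j\lambda_k e^{i(\varphi_j-\varphi_k)})+(\lambda_k^2-\lambda_j\lambda_k e^{i(\varphi_j-\varphi_k)})\bigr]$, wait — more carefully one gets a cancellation leaving $\frac{1}{2i}\cdot\frac{i}{2\pi}\cdot\frac{(\lambda_j-\lambda_k e^{i(\varphi_j-\varphi_k)})(\lambda_j+\lambda_k)}{\lambda_j^2-\lambda_k^2}$ type expression after grouping, which reduces to $\frac{1}{4\pi}$ independently of $k,j$. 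So every entry of $\Im\calA$ equals $\frac{1}{4\pi}$, which is exactly $\frac1{4\pi}\jap{\cdot,\1}\1$.

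For \eqref{c3}, since $\calH f=D(\lambda)D(e^{-i\varphi})\overline f$, the identity $\calA^*\calH=\calH\calA$ is equivalent, upon writing out the anti-linear action, to the matrix identity
\begin{equation}
(\calA^*)\,D(\lambda)D(e^{-i\varphi})=D(\lambda)D(e^{-i\varphi})\,\overline{\calA},
\notag
\end{equation}
i.e. entrywise $\overline{\calA_{jk}}\,\lambda_k e^{-i\varphi_k}=\lambda_j e^{-i\varphi_j}\,\overline{\calA_{jk}}$ — no, one must be careful with which index is summed. Precisely, $(\calA^*\calH f)_k=\sum_j\overline{\calA_{jk}}\,\lambda_j e^{-i\varphi_j}\overline{f_j}$ and $(\calH\calA f)_k=\lambda_k e^{-i\varphi_k}\overline{\sum_j\calA_{kj}f_j}=\lambda_k e^{-i\varphi_k}\sum_j\overline{\calA_{kj}}\,\overline{f_j}$, so the required identity is
\begin{equation}
\overline{\calA_{jk}}\,\lambda_j e^{-i\varphi_j}=\lambda_k e^{-i\varphi_k}\,\overline{\calA_{kj}},\qquad \text{for all }j,k.
\notag
\end{equation}
For $j=k$ this is trivial. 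For $j\neq k$ I would plug in $\overline{\calA_{jk}}=-\frac{i}{2\pi}\frac{\lambda_k^2-\lambda_k\lambda_j e^{i(\varphi_j-\varphi_k)}}{\lambda_k^2-\lambda_j^2}$ and $\overline{\calA_{kj}}=-\frac{i}{2\pi}\frac{\lambda_j^2-\lambda_j\lambda_k e^{i(\varphi_k-\varphi_j)}}{\lambda_j^2-\lambda_k^2}$; multiplying the left side by $\lambda_j e^{-i\varphi_j}$ gives $-\frac{i}{2\pi}\frac{\lambda_j\lambda_k e^{-i\varphi_j}(\lambda_k-\lambda_j e^{i(\varphi_j-\varphi_k)})}{\lambda_k^2-\lambda_j^2}=-\frac{i}{2\pi}\frac{\lambda_j\lambda_k(\lambda_k e^{-i\varphi_j}-\lambda_j e^{-i\varphi_k})}{\lambda_k^2-\lambda_j^2}$, and multiplying the right side by $\lambda_k e^{-i\varphi_k}$ gives $-\frac{i}{2\pi}\frac{\lambda_j\lambda_k e^{-i\varphi_k}(\lambda_j e^{-i\varphi_j}\cdot e^{i\varphi_j}\cdots)}{\cdots}$ — after the same reduction it equals $-\frac{i}{2\pi}\frac{\lambda_j\lambda_k(\lambda_j e^{-i\varphi_k}-\lambda_k e^{-i\varphi_j})}{\lambda_j^2-\lambda_k^2}$, which is the same as the left-hand expression since flipping $j\leftrightarrow k$ in the denominator flips the sign and so does flipping it in the numerator. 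This confirms \eqref{c3}.

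The computations are all elementary; there is no real obstacle, only the bookkeeping of conjugates and the $j\leftrightarrow k$ antisymmetry of the denominator $\lambda_j^2-\lambda_k^2$. The one point to present cleanly is the anti-linearity of $\calH$: the identity \eqref{c3} is between anti-linear operators, so one should either test it against an arbitrary $f\in\bbC^N$ (as above) or, equivalently, note that $\calA^*\calH=\calH\calA$ holds iff $\calA^*D(\lambda)D(e^{-i\varphi})=D(\lambda)D(e^{-i\varphi})\overline{\calA}$ as ordinary matrices, and then verify this entrywise. I would also remark that \eqref{c2} and \eqref{c3} are the finite-dimensional mirror images of \eqref{a5} and \eqref{a12} respectively, with $\1$ playing the role of $1-\theta$ and $\calH$ the role of $H_u$; this both motivates the statement and serves as a sanity check on signs.
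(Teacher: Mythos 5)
Your approach is the same as the paper's: for \eqref{c2}, compute $\frac{1}{2i}(\calA_{kj}-\overline{\calA_{jk}})$ entrywise and observe it equals $\frac{1}{4\pi}$ for all $j,k$; for \eqref{c3}, test against $f\in\bbC^N$ to reduce to the entrywise identity $\overline{\calA_{jk}}\,\lambda_j e^{-i\varphi_j}=\overline{\calA_{kj}}\,\lambda_k e^{-i\varphi_k}$ and check it by substitution. The intermediate algebra in your sketch is a bit garbled (the factored numerator $(\lambda_j-\lambda_k e^{i(\varphi_j-\varphi_k)})(\lambda_j+\lambda_k)$ you write does \emph{not} equal $\lambda_j^2-\lambda_k^2$; the cross terms with $e^{i(\varphi_j-\varphi_k)}$ simply cancel upon subtraction, leaving $\lambda_j^2-\lambda_k^2$ directly), but the conclusions and the final identities are correct, matching the paper's proof.
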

\begin{proof}
First let us check \eqref{c2}.  For $j\not=k$ we have
$$
\frac1{2i}
(\calA_{kj}-\overline{\calA_{jk}})
=
\frac1{2i}\frac{i}{2\pi}\frac{1}{\lambda_j^2-\lambda_k^2}
\biggl(
\lambda_j^2-\lambda_j\lambda_ke^{i\varphi_j}e^{-i\varphi_k}-\lambda_k^2+\lambda_j\lambda_ke^{-i\varphi_k}e^{i\varphi_j}
\biggr)
=
\frac{1}{4\pi},
$$
which agrees with the right hand side of \eqref{c2}. 
For $j=k$ we have
$$
\Im \calA_{jj}=\frac{1}{4\pi}, 
$$
which again agrees with  \eqref{c2}. 
The second identity \eqref{c3} can be written as
$$
\overline{\calA}_{jk}\lambda_j e^{-i\varphi_j}=\overline{\calA}_{kj}\lambda_ke^{-i\varphi_k}
$$
in terms of the matrix entries. 
For $j\not=k$, the matrix $\calA$ satisfies this relation by an inspection of the definition of $\calA_{jk}$; 
for $j=k$ this relation is trivially true. 
\end{proof}

\begin{lemma}\label{lma.b4}
The eigenvalues of $\calA$ lie in the open upper half-plane. 
\end{lemma}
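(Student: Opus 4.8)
The plan is to show that $\calA$ is a ``dissipative-type'' matrix: its imaginary part is positive semi-definite (indeed rank one) by \eqref{c2}, so a priori the eigenvalues lie in the closed upper half-plane, and the task is to rule out real eigenvalues. The natural route is to exhibit $\calA$ as a compression of the dissipative operator $A_\theta$ — but since we are in the surjectivity direction we cannot yet refer to $A_\theta$; instead I would argue purely in $\bbC^N$ using the intertwining relation \eqref{c3} together with \eqref{c2}, mimicking the proof of Lemma~\ref{lma.nsa} / the argument in Lemma~\ref{lma.a6}.

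First I would suppose, for contradiction, that $\calA f = \mu f$ for some real $\mu$ and nonzero $f\in\bbC^N$. Taking the imaginary part of $\jap{\calA f,f}=\mu\norm f^2$ and using \eqref{c2} gives $\tfrac1{4\pi}\abs{\jap{f,\1}}^2 = \Im(\mu\norm f^2) = 0$, hence $f\perp\1$. Next, \eqref{c2} also reads $\calA-\calA^*=\tfrac{i}{2\pi}\jap{\cdot,\1}\1$, so on the subspace $\1^\perp$ the operators $\calA$ and $\calA^*$ agree when paired against vectors of $\1^\perp$; more precisely, for any $g$ one has $\calA^* f = \calA f - \tfrac{i}{2\pi}\jap{f,\1}\1 = \mu f$, so $f$ is also an eigenvector of $\calA^*$ with the same (real) eigenvalue $\mu$. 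Thus $\Span\{f\}$ would be a one-dimensional subspace on which $\calA$ acts self-adjointly, and I want to upgrade this to a contradiction.

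The contradiction comes from the Hankel intertwining \eqref{c3}: $\calA^*\calH=\calH\calA$. Applying this to $f$ gives $\calH\calA f = \mu\,\calH f$ (using $\mu$ real and $\calH$ anti-linear), i.e. $\calA^*(\calH f)=\mu\,\calH f$, and then also $\calA(\calH f)=\calA^*(\calH f)+\tfrac{i}{2\pi}\jap{\calH f,\1}\1 = \mu\,\calH f$ provided $\calH f\perp\1$, which follows by the same first step applied to $\calH f$ (note $\calH f$ is again a real eigenvector of $\calA$, since $\calA(\calH f)=\mu\calH f$ would then hold). So the real-eigenvalue eigenspace $N_\mu := \Ker(\calA-\mu I)$ is invariant under $\calH$, under $\calA^*$, and lies in $\1^\perp$. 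Now I would invoke a minimality/cyclicity argument: the defining data should force $\1$ (equivalently $g=1-\theta$ in the operator picture) to be cyclic. Concretely, from the explicit form of $\calA_{kj}$, the off-diagonal part couples all coordinates, and I would show that the smallest $\calA$- and $\calA^*$-invariant subspace containing no component of $\1$ is $\{0\}$; equivalently, that an $\calA$-invariant subspace contained in $\1^\perp$ must be trivial. This is the analogue of complete non-selfadjointness of $A_\theta$ (Lemma~\ref{lma.nsa}), transported to the matrix model.

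The main obstacle is exactly this last cyclicity step: establishing that no nonzero $\calA$-invariant subspace sits inside $\1^\perp$. In the operator setting this is Lemma~\ref{lma.nsa}, but here $\calA$ is not literally $A_\theta$ (that identification is the content of the surjectivity theorem we are building toward), so I must argue directly from the matrix entries \eqref{Aoff}, \eqref{Adiag}. I expect one can do this by a Vandermonde-type nondegeneracy computation: if $W\subset\1^\perp$ is $\calA$-invariant, decompose using that $\calA = D(\text{something}) + \text{rank-one corrections tied to }\1$, and derive that $W$ is then reducing for a diagonalizable-with-distinct-entries piece whose only invariant subspaces are coordinate subspaces, none of which lies in $\1^\perp$ unless trivial. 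Alternatively — and this is probably cleaner — I would prove a standalone linear-algebra lemma: if $M$ is an $N\times N$ matrix with $\Im M = \tfrac1{4\pi}\jap{\cdot,\1}\1$ and $M^*\calH=\calH M$ with $\calH$ as in \eqref{c1a} and all $\lambda_j$ distinct, then $\Span\{M^k\1, (M^*)^k\1 : k\ge 0\} = \bbC^N$; granting this, any real eigenvector $f$ would be orthogonal to all $M^k\1$ and $(M^*)^k\1$, forcing $f=0$. I would carry out the steps in this order: (1) real eigenvector $\Rightarrow$ $f\perp\1$ and $f$ is a joint eigenvector of $\calA,\calA^*$; (2) the joint eigenspace $N_\mu$ is $\calH$-invariant and $\calA^*$-invariant; (3) cyclicity of $\1$, proven as the standalone lemma; (4) conclude $N_\mu=\{0\}$, contradiction, so all eigenvalues have $\Im>0$.
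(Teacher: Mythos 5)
Your skeleton up through step (2) matches the paper exactly: from $\Im\calA\geq0$ one gets that a real eigenvector $f$ satisfies $f\perp\1$ and $\calA f=\calA^*f$, and then \eqref{c3} shows $\Ker(\calA-\mu I)$ is $\calH$-invariant. You correctly identify the remaining issue as ``no nonzero $\calA$-invariant subspace sits in $\1^\perp$'' and flag it as the main obstacle, but neither of your two proposed routes (a Vandermonde computation on the entries of $\calA$, or a standalone cyclicity lemma for $\Span\{\calA^k\1,(\calA^*)^k\1\}$) is actually carried out, and both are harder than necessary.

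The endgame you are missing is a one-liner. Once you know $N_\mu:=\Ker(\calA-\mu I)$ is invariant under the \emph{anti-linear} operator $\calH$, it is automatically invariant under the \emph{linear} Hermitian operator $\calH^2=D(\lambda)^2$. This is a diagonal matrix with distinct diagonal entries $\lambda_1^2>\dots>\lambda_N^2$, so its invariant subspaces are exactly the coordinate subspaces, i.e. spans of standard basis vectors $\1_j$. But $N_\mu\perp\1=(1,\dots,1)^\top$, and no nonzero coordinate subspace is orthogonal to a vector with all entries nonzero. Hence $N_\mu=\{0\}$, a contradiction. In other words, the ``diagonalizable-with-distinct-entries piece'' you were hunting for is not some hidden summand of $\calA$ — it is $\calH^2$, which you already had in hand. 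You do not need cyclicity of $\1$ for $\calA$ or $\calA^*$; cyclicity for $D(\lambda)^2$ is immediate and suffices. Incorporating this observation turns your sketch into a complete proof that coincides with the paper's.
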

\begin{proof}
Since by \eqref{c2} we have $\Im \calA\geq0$, the question reduces to proving that $\calA$ has no real eigenvalues. 
Assume, to get a contradiction, that $\Ker (\calA-\lambda I)\not=\{0\}$ for some $\lambda\in \bbR$. 
Since $\Im \calA\geq0$, from here we easily check that 
$$
\Ker (\calA-\lambda I)=\Ker (\calA^*-\lambda I).
$$
Now let $f\in \Ker (\calA-\lambda I)$; by \eqref{c3} we have 
$$
\calA^*\calH f=\lambda\calH f, 
$$
i.e. $(\calA^*-\lambda I)\calH f=0$, and therefore $(\calA-\lambda I)\calH f=0$.
Thus, we see that $\Ker (\calA-\lambda I)$ is an invariant subspace of $\calH$. 
It follows that it is also an invariant subspace of the linear Hermitian operator $\calH^2=D(\lambda)^2$. 
It is also clear from \eqref{c2} that $\Ker (\calA-\lambda I)$ is orthogonal to the vector $\1$. 
But this vector is clearly cyclic for $D(\lambda)^2$. This contradiction completes the proof. 
\end{proof}

\subsection{Proof of Theorem~\ref{thm.c1}}
Let $\sigma\subset\{1,\dots,N\}$; we shall denote by $P_\sigma$ 
the orthogonal projection from $\bbC^N$ onto the $\abs{\sigma}$-dimensional subspace
$$
\{x\in\bbC^N: x_s=0\quad \text{ for } s\notin\sigma\}.
$$
We shall denote by $\overline{\bbC_+}$ the closed upper half-plane $\Im z\geq0$. 
\begin{lemma}\label{lma.b5}
Let $\calA$ be an $N\times N$ matrix such that
for any subset $L\subset\{1,\dots,N\}$, and for all $\beta_\ell\in \overline{\bbC_+}$, 
$\ell\in L$, 
the $\abs{L}\times\abs{L}$ matrix 
$$
D(\beta)+P_L\calA P_L^*
$$
is invertible.
Then 
$$
\sup_{\alpha_1,\dots,\alpha_N\in\overline{\bbC_+}}\norm{(D(\alpha)+\calA)^{-1}}<\infty.
$$
\end{lemma}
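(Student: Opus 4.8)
The plan is to argue by contradiction, combining a compactness argument with the block (Schur complement) inversion formula. Applying the hypothesis with $L=\{1,\dots,N\}$ already shows that $D(\alpha)+\calA$ is invertible whenever $\alpha=(\alpha_1,\dots,\alpha_N)$ with each $\alpha_j\in\overline{\bbC_+}$, so $\norm{(D(\alpha)+\calA)^{-1}}$ is finite for each such $\alpha$ and the only issue is uniformity. Suppose the supremum is $+\infty$; then there is a sequence $\alpha^{(k)}=(\alpha_1^{(k)},\dots,\alpha_N^{(k)})$ with all $\alpha_j^{(k)}\in\overline{\bbC_+}$ and $\norm{(D(\alpha^{(k)})+\calA)^{-1}}\to\infty$. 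Passing to a subsequence, I may assume that for each index $j$ either $\alpha_j^{(k)}\to\beta_j$ for some $\beta_j\in\overline{\bbC_+}$ (this set is closed), or $\abs{\alpha_j^{(k)}}\to\infty$. Let $L$ be the set of indices of the first kind and $L^c$ its complement. If $L=\{1,\dots,N\}$, then $D(\alpha^{(k)})+\calA\to D(\beta)+\calA$, which is invertible, so the inverses converge and their norms stay bounded --- a contradiction; hence $L^c\neq\emptyset$.

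Next I would write $M_k:=D(\alpha^{(k)})+\calA$ in block form with respect to the splitting $\bbC^N=P_L\bbC^N\oplus P_{L^c}\bbC^N$,
\[
M_k=\begin{pmatrix} A_k & B \\ C & D_k \end{pmatrix},\qquad B=P_L\calA P_{L^c}^*,\quad C=P_{L^c}\calA P_L^*,
\]
where $A_k$ is the $L\times L$ block (the diagonal matrix with entries $\alpha_j^{(k)}$, $j\in L$, plus $P_L\calA P_L^*$), $D_k$ is the $L^c\times L^c$ block, and $B,C$ are $k$-independent. Here $A_k\to A_\infty:=D(\beta_L)+P_L\calA P_L^*$, which is invertible by the hypothesis applied to the subset $L$ with the limiting data $\beta_\ell\in\overline{\bbC_+}$. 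For $D_k$, write $D_k=E_k(I+E_k^{-1}P_{L^c}\calA P_{L^c}^*)$, where $E_k$ is the diagonal matrix with entries $\alpha_j^{(k)}$, $j\in L^c$; since $\abs{\alpha_j^{(k)}}\to\infty$ for $j\in L^c$, the matrix $E_k$ is invertible for large $k$ with $\norm{E_k^{-1}}\to0$, hence $D_k$ is invertible for large $k$ and $\norm{D_k^{-1}}\to0$.

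Finally I would invoke the Schur complement formula: for large $k$ both $M_k$ and $D_k$ are invertible, so the Schur complement $S_k:=A_k-BD_k^{-1}C$ is invertible, and the blocks of $M_k^{-1}$ are $S_k^{-1}$, $-S_k^{-1}BD_k^{-1}$, $-D_k^{-1}CS_k^{-1}$ and $D_k^{-1}+D_k^{-1}CS_k^{-1}BD_k^{-1}$. Since $\norm{D_k^{-1}}\to0$ and $A_k\to A_\infty$ (with $B,C$ bounded), we get $S_k\to A_\infty$, hence $S_k^{-1}\to A_\infty^{-1}$; thus all four blocks converge (the $(L,L)$-block to $A_\infty^{-1}$, the other three to $0$), so $\norm{M_k^{-1}}$ stays bounded, contradicting $\norm{M_k^{-1}}\to\infty$.

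The step requiring the most care is the passage to the limit in the unbounded coordinates: one must be certain that, in the limit, the rows and columns indexed by $L^c$ genuinely decouple, which is exactly what the factorization of $D_k$ and the Schur complement identity encode, and one must check that the hypothesis is applied with admissible data, i.e.\ that the limiting diagonal values $\beta_\ell$, $\ell\in L$, lie in $\overline{\bbC_+}$. No delicate estimate is involved beyond $\norm{D_k^{-1}}\to0$; the content of the argument is the correct bookkeeping of the block structure.
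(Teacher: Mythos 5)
Your proof is correct, and it reaches the conclusion by a genuinely different route from the paper. Both arguments open with the same compactness step (extract a subsequence, split the indices into a set $L$ where $\alpha_j^{(k)}$ converges in $\overline{\bbC_+}$ and a set $L^c$ where $|\alpha_j^{(k)}|\to\infty$), but they diverge afterwards. The paper does not touch the inverse matrix at all: it assumes there are unit vectors $X^{(k)}$ with $\norm{(D(\alpha^{(k)})+\calA)X^{(k)}}\to 0$, shows that the $L^c$-components of the limit vector $X$ vanish, and then applies the hypothesis on the $|L|\times|L|$ block to force $P_L X=0$, contradicting $\norm{X}=1$. You instead work directly with $M_k^{-1}$ via the $2\times 2$ block decomposition and the Schur complement: you show $\norm{D_k^{-1}}\to 0$ and $S_k\to A_\infty$ invertible, hence all four blocks of $M_k^{-1}$ converge, contradicting $\norm{M_k^{-1}}\to\infty$. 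Your version is a bit longer and invokes the Schur inversion formula, but it buys more explicit information — in particular it shows that the blocks of the inverse indexed by $L^c$ tend to zero at rate $\norm{D_k^{-1}}$, which is exactly the refinement the authors later need and prove separately as Lemma~\ref{linearalgebra} by returning to the component equations. The only small point worth flagging is that you must confirm $M_k$ is invertible before invoking Schur (you do: it follows from the hypothesis with $L=\{1,\dots,N\}$), and that the limiting $\beta_\ell$ lie in $\overline{\bbC_+}$ (they do, since that set is closed), both of which you address.
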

\begin{proof}
Assume, to get a contradiction, that there exist sequences
$\alpha^{(n)}\in (\overline{\bbC_+})^N$ and $X^{(n)}\in \bbC^N$ such that 
$\norm{X^{(n)}}=1$ for all $n$ and 
$$
\norm{(D(\alpha^{(n)})+\calA)X^{(n)}}\to0, \quad n\to\infty.
$$
After extracting a subsequence, we can achieve
$$
X^{(n)}\to X, \quad \norm{X}=1.
$$
Furthermore, again extracting subsequences, we can split the index set $\{1,\dots,N\}$
into disjoint subsets $J\cup L$ as follows: 
\begin{align*}
J&=\{j\in\{1,\dots,N\}: \abs{\alpha_j^{(n)}}\to\infty, \quad n\to\infty\}; 
\\
L&=\{\ell\in\{1,\dots,N\}: \alpha_\ell^{(n)}\to\alpha_\ell\in\overline{\bbC_+}\}.
\end{align*}
Now for every $j\in J$ we have
$$
\alpha_j^{(n)}X_j^{(n)}+(\calA X^{(n)})_j\to0,
$$
as $n\to\infty$, and therefore
$$
\alpha_j^{(n)}X_j^{(n)}\to -(\calA X)_j. 
$$
It follows that $X_j^{(n)}\to0$ as $n\to\infty$, and so $X_j=0$. 

Next, for every $\ell\in L$ we have
$$
\alpha_\ell^{(n)}X_\ell^{(n)}+(\calA X^{(n)})_\ell\to0,
$$
which yields
$$
\alpha_\ell X_\ell +(\calA X)_\ell=0. 
$$
Denoting $\beta_\ell=\alpha_\ell$, $\ell\in L$, 
this can be written as
$$
D(\beta)P_L X+P_L \calA X=0. 
$$
But by the previous step, $X=P_L^*P_LX$ and so we obtain
$$
D(\beta)P_L X+P_L \calA P_L^*P_LX=0. 
$$
By the assumption of the invertibility, we conclude $P_LX=0$, and so $X=0$ --- contradiction!
\end{proof}

\begin{proof}[Proof of Theorem~\ref{thm.c1}]
It suffices to prove the corresponding statement for $Q(x)^*$ in place of $Q(x)$. 
Further, since
$$
Q(x)^*=\calA-\overline{x}D(\nu^2)^{-1}-D(b(x))^*,
$$
and $\Im (x\nu_j^{-2}+b_j(x))>0$ for all $x\in\bbC_+$,
it suffices to prove that for any $\zeta_1,\dots,\zeta_N\in \bbC_+$ the matrix
$\calA+D(\zeta)$ is invertible and 
$$
\sup_{\zeta_1,\dots,\zeta_N\in \bbC_+}
\norm{(\calA+D(\zeta))^{-1}}<\infty.
$$
Let us show that this follows from the previous lemma. For $\zeta_j\in \bbC_+$, write 
$\zeta_j=\alpha_j+i\beta_j$ with $\alpha_j\in\bbR$, $\beta_j>0$. 
First, notice that Lemma~\ref{lma.b4} remains valid if we replace $\calA$ by 
$\calA+D(\alpha)$. Next, since 
$$
\Im (\calA+D(\zeta))=\Im \calA+D(\beta)\geq0,
$$
it is clear that all eigenvalues of $\calA+D(\zeta)$ lie in $\overline{\bbC_+}$. 
If $(\calA+D(\zeta))f=\lambda f$ for some $\lambda\in \bbR$, then taking the imaginary 
part of the quadratic form, we obtain $\Im \jap{\calA f,f}=0$ and so $\lambda$ is an eigenvalue
of $\calA+D(\alpha)$, which is impossible. Thus, 
all eigenvalues of $\calA+D(\zeta)$ lie in $\bbC_+$.

Finally, considering any square submatrix of $\calA$, we observe that it has the same
structure as $\calA$ itself, and so the above argument applies to this submatrix. 
It follows that the hypothesis of Lemma~\ref{lma.b5} is satisfied, and we arrive 
at the required result. 
\end{proof}

\subsection{Orthogonality of $g_j$}
Our aim here is to prove that $g_j$ form an orthogonal set in $H^2$, normalised by 
$$
\norm{g_j}=\nu_j. 
$$ 
In view of \eqref{c8}, this is a consequence of the following 
\begin{lemma}\label{lma.c6}
For any $X,Y\in \bbC^N$, we have 
\begin{equation}
\frac1{4\pi^2}\int_{-\infty}^\infty \jap{Q(x)^{-1}X,\1}\overline{\jap{Q(x)^{-1}Y,\1}}dx=\jap{D(\nu^2)X,Y}. 
\label{c5}
\end{equation}
\end{lemma}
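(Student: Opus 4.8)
The plan is to reduce \eqref{c5} to a single matrix identity and then evaluate that identity by contour integration. Since both sides of \eqref{c5} are linear in $X$ and antilinear in $Y$, and since for $x\in\bbR$
\[
\jap{Q(x)^{-1}X,\1}=\jap{X,(Q(x)^{-1})^{*}\1}=\jap{X,b(x)},\qquad
\overline{\jap{Q(x)^{-1}Y,\1}}=\jap{b(x),Y}
\]
with $b(x):=(Q(x)^{*})^{-1}\1\in\bbC^{N}$, the integrand in \eqref{c5} equals $\jap{b(x)b(x)^{*}X,Y}$, and the assertion is equivalent to $\int_{-\infty}^{\infty}b(x)b(x)^{*}\,dx=4\pi^{2}D(\nu^{2})$.

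First I would record the boundary behaviour of $Q$ on the real line. For real $x$ the matrices $xD(\nu^{2})^{-1}$ and $D(b(x))$ are self-adjoint (the $b_{j}$ are real rational functions by the representation \eqref{b20c}), so $\Im Q(x)=\Im\calA^{*}=-\Im\calA=-\tfrac1{4\pi}\jap{\cdot,\1}\1$ by \eqref{c2}; equivalently $Q(x)-Q(x)^{*}=-\tfrac{i}{2\pi}\1\1^{*}$ for $x\in\bbR$. Feeding this rank-one jump into the resolvent identity $(Q^{*})^{-1}-Q^{-1}=(Q^{*})^{-1}(Q-Q^{*})Q^{-1}$ and using $(Q^{*})^{-1}\1=b$ together with $\1^{*}Q^{-1}=((Q^{*})^{-1}\1)^{*}=b^{*}$, I obtain the pointwise identity
\[
b(x)b(x)^{*}=2\pi i\bigl((Q(x)^{*})^{-1}-Q(x)^{-1}\bigr),\qquad x\in\bbR.
\]
It then remains to compute $\int_{\bbR}Q(x)^{-1}\,dx$ and $\int_{\bbR}(Q(x)^{*})^{-1}\,dx$, understood as symmetric limits $\lim_{R\to\infty}\int_{-R}^{R}$; this is legitimate because by \eqref{c4} the difference of the two integrands is $O(1/x^{2})$.

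These two integrals I would evaluate by closing contours with large semicircles. By Theorem~\ref{thm.c1}, $Q(x)^{-1}$ is a rational $M_{N}(\bbC)$-valued function, bounded on $\bbC_{+}$ and hence with no poles in $\overline{\bbC_{+}}$, and by \eqref{c4} it equals $-\tfrac1z D(\nu^{2})+O(1/z^{2})$ at infinity; closing in the upper half-plane then gives $\int_{\bbR}Q(x)^{-1}\,dx=i\pi D(\nu^{2})$. For the second integral I would note that $Q(\bar z)^{*}=\calA-zD(\nu^{2})^{-1}-D\bigl(\overline{b(\bar z)}\bigr)=\calA-zD(\nu^{2})^{-1}-D(b(z))=:Q^{\#}(z)$ (using that the $b_{j}$ have real coefficients), so $z\mapsto Q^{\#}(z)^{-1}$ is a rational continuation of $x\mapsto(Q(x)^{*})^{-1}$ that is bounded and analytic in $\bbC_{-}$ with the same $-\tfrac1z D(\nu^{2})+O(1/z^{2})$ behaviour at infinity; closing now in the lower half-plane gives $\int_{\bbR}(Q(x)^{*})^{-1}\,dx=-i\pi D(\nu^{2})$. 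Hence $\int_{\bbR}b(x)b(x)^{*}\,dx=2\pi i\bigl(-i\pi D(\nu^{2})-i\pi D(\nu^{2})\bigr)=4\pi^{2}D(\nu^{2})$, and the left-hand side of \eqref{c5} equals $\tfrac1{4\pi^{2}}\jap{4\pi^{2}D(\nu^{2})X,Y}=\jap{D(\nu^{2})X,Y}$, as required.

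I expect the only delicate points to be: (i) the fact that $Q^{-1}$ has no poles on the real axis, which is exactly where the already-established Theorem~\ref{thm.c1} enters; (ii) correctly identifying the analytic continuation of $(Q^{*})^{-1}$ into $\bbC_{-}$, which rests on $D(b(x))$ being self-adjoint on $\bbR$ and on the real-coefficient form \eqref{b20c} of $b_{j}$; and (iii) the bookkeeping of the conditionally convergent improper integrals and of the numerical constants. None of these looks like a serious obstacle: once the rank-one jump $Q-Q^{*}=-\tfrac{i}{2\pi}\1\1^{*}$ on $\bbR$ is observed, the computation is essentially forced.
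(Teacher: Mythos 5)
Your proposal is correct and follows essentially the same route as the paper's proof: both exploit the rank-one jump $\Im Q(x)=-\tfrac1{4\pi}\jap{\cdot,\1}\1$ on $\bbR$ to express the integrand in terms of $Q(x)^{-1}-(Q(x)^*)^{-1}$, and then evaluate the integral of $Q^{-1}$ by closing a large semicircle in $\bbC_+$ using the $-\tfrac1x D(\nu^2)$ asymptotics from \eqref{c4} and Theorem~\ref{thm.c1}. The one cosmetic difference is how the second integral is handled: the paper simply takes the complex conjugate of $\int\jap{Q^{-1}Y,X}\,dx$, whereas you identify the analytic continuation $Q^\#(z)=Q(\bar z)^*$ into $\bbC_-$ and close the contour below; both are correct and give the same value.
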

\begin{proof}
As a first step, let us prove the identity 
\begin{equation}
\Im Q(x)^{-1}=\frac1{4\pi}Q^*(x)^{-1}\bigl(\jap{\cdot,\1}\1\bigr)Q(x)^{-1}, \text{ a.e. $x\in\bbR$.}
\label{c6}
\end{equation}
Recall that 
$$
Q(x)=\calA^*-xD(\nu^2)^{-1}-D(b(x)), 
$$
and $\Im b_j(x)=0$ for a.e. $x\in\bbR$. 
It follows that 
$$
\Im Q(x)=\Im \calA^*=-\Im \calA=-\frac1{4\pi}\jap{\cdot, \1}\1, \quad \text{ a.e. $x\in\bbR$.}
$$
From here we get the required identity \eqref{c6}. 

Next, observe that by \eqref{c6}, the integrand in the l.h.s. of \eqref{c5} rewrites as
\begin{align*}
\frac1{4\pi^2}
\jap{Q(x)^{-1}X,\1}\overline{\jap{Q(x)^{-1}Y,\1}}
&=
\frac1{\pi}
\jap{\Im Q(x)^{-1} X,Y}
\\
&=
\frac1{2\pi i}
\biggl(\jap{Q(x)^{-1}X,Y}-\overline{\jap{Q(x)^{-1}Y,X}}\biggr)
\end{align*}
for a.e. $x\in \bbR$. 
Therefore, the l.h.s. in \eqref{c5} rewrites as
$$
\frac1{2\pi i}\lim_{R\to\infty}\int_{-R}^R \jap{Q(x)^{-1}X,Y} dx
-
\frac1{2\pi i}\lim_{R\to\infty}\int_{-R}^R \overline{\jap{Q(x)^{-1}Y,X}}dx.
$$
Deforming the integration contour from $[-R,R]$ 
to the upper semi-circle of radius $R$ centered at the origin and using \eqref{c4}, we obtain  
$$
\frac1{2\pi i}\lim_{R\to\infty}\int_{-R}^R \jap{Q(x)^{-1}X,Y} dx=\frac12 \jap{D(\nu^2)X,Y}, 
$$
and, by complex conjugation
$$
\frac1{2\pi i}\lim_{R\to\infty}\int_{-R}^R \overline{\jap{Q(x)^{-1}Y,X}}dx=-\frac12 \jap{D(\nu^2)X,Y}. 
$$
Putting this together, we obtain \eqref{c5}. 
\end{proof}

\subsection{The action of $H_u$ on $g_j$}
Our aim here is to prove Theorem~\ref{thm.c1a}.
We recall that by our definitions, 
\begin{equation}
u(x)=\sum_{j=1}^N \lambda_j e^{-i\varphi_j}g_j(x)=\jap{D(\lambda)D(\nu)\bg(x),\1}. 
\label{c9}
\end{equation}
\begin{lemma}\label{lma.c7}
Let $f\in L^2(\bbR)$ be such that $xf\in L^2(\bbR)$. 
Then 
$$
\bbP_+(xf)=x\bbP_+(f)+\frac1{2\pi i}\int_{-\infty}^\infty f(x)dx. 
$$
\end{lemma}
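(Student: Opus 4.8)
The plan is to prove the identity first in the open upper half-plane, where every integral below converges absolutely, and then pass to the boundary. Note first that the hypotheses $f\in L^2(\bbR)$ and $xf\in L^2(\bbR)$ force $f\in L^1(\bbR)$: by Cauchy--Schwarz,
\[
\int_{\bbR}\abs{f(x)}\,dx=\int_{\bbR}\abs{(1+\abs{x})f(x)}\,\frac{dx}{1+\abs{x}}\le\Norm{(1+\abs{x})f}_{L^2}\Bigl(\int_{\bbR}\frac{dx}{(1+\abs{x})^2}\Bigr)^{1/2}<\infty,
\]
so the constant $\frac1{2\pi i}\int_{\bbR}f(x)\,dx$ on the right-hand side is well defined.

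Next I would use the Cauchy-integral representation of $\bbP_+$. Since $v_\zeta\in H^2$ is the reproducing kernel, for any $h\in L^2(\bbR)$ and any $\zeta\in\bbC_+$ we have $(\bbP_+h)(\zeta)=\jap{\bbP_+h,v_\zeta}=\jap{h,v_\zeta}=\frac1{2\pi i}\int_{\bbR}\frac{h(t)}{t-\zeta}\,dt$. Apply this with $h=xf$, which is in $L^2(\bbR)$ by hypothesis. For fixed $\zeta\in\bbC_+$ the function $t\mapsto t/(t-\zeta)$ is bounded on $\bbR$, so $tf(t)/(t-\zeta)\in L^1(\bbR)$, and the splitting $\frac{tf(t)}{t-\zeta}=f(t)+\zeta\frac{f(t)}{t-\zeta}$ has both terms in $L^1(\bbR)$ (using $f\in L^1$). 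Integrating termwise and using the displayed representation once more with $h=f$ gives
\[
(\bbP_+(xf))(\zeta)=\frac1{2\pi i}\int_{\bbR}f(t)\,dt+\zeta\,(\bbP_+f)(\zeta),\qquad\Im\zeta>0.
\]
Finally, both $\bbP_+(xf)$ and $\bbP_+f$ lie in $H^2$, hence have nontangential boundary values a.e.\ on $\bbR$ equal to the corresponding $L^2$ functions; letting $\zeta=x+i\e\to x$ as $\e\to0_+$ in the displayed identity yields the claimed formula for a.e.\ $x\in\bbR$.

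The only point to flag is conceptual rather than technical: this is a pointwise (a.e.) identity, not an identity in $L^2(\bbR)$ — its right-hand side contains the constant function $\frac1{2\pi i}\int f$, which is not square-integrable, and indeed $x\,\bbP_+f$ need not lie in $L^2$ — so one must argue through the holomorphic extension to $\bbC_+$ and take boundary values rather than manipulate $\bbP_+$ directly on $L^2(\bbR)$. (An equivalent route is the Fourier picture: $\bbP_+$ is multiplication by $\1_{[0,\infty)}$ on the Fourier side, $xf$ corresponds to differentiation, and the hypotheses give $\hat f\in W^{1,2}(\bbR)$, hence continuous, so the jump of $\1_{[0,\infty)}\hat f$ at the origin produces a Dirac mass proportional to $\hat f(0)=\int f$, whose inverse transform is the stated constant. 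I would present only the Cauchy-integral proof.)
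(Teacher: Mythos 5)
Your proof is correct and follows essentially the same route as the paper: both rely on the Cauchy-integral representation $(\bbP_+h)(\zeta)=\frac1{2\pi i}\int_{\bbR}\frac{h(t)}{t-\zeta}\,dt$ for $\zeta\in\bbC_+$ together with the algebraic splitting $\frac{t}{t-\zeta}=1+\frac{\zeta}{t-\zeta}$, and then pass to boundary values. You have simply made explicit the integrability observation ($f\in L^1$) and the fact that the identity is a pointwise a.e.\ one rather than an equality in $L^2$, points the paper's two-line proof leaves implicit.
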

\begin{proof} 
We have 
$$
f(z)=\frac1{2\pi i}\int_{-\infty}^\infty \frac{f(x)}{x-z}dx, 
$$
and therefore
$$
\bbP_+(xf)(z)-z\bbP_+(f)(z)
=
\frac1{2\pi i}\int_{-\infty}^\infty \frac{xf(x)}{x-z}dx
-
\frac1{2\pi i}\int_{-\infty}^\infty \frac{zf(x)}{x-z}dx
=
\frac1{2\pi i}\int_{-\infty}^\infty f(x)dx,
$$
as required.
\end{proof}

\begin{lemma}\label{lma.c8}
We have the identity
\begin{equation}
(\calA-xD(\nu^2)^{-1})\bbP_+(u\overline{\bg})
-
\bbP_+(uD(b)\overline{\bg})
=
\frac1{2\pi i}D(\lambda)D(e^{-i\varphi})\1
-
\frac{u(x)}{2\pi i}\1. 
\label{c11}
\end{equation}
\end{lemma}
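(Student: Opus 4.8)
The plan is to obtain \eqref{c11} directly from the defining formula \eqref{c8} for $\bg$ by conjugation, multiplication by the scalar $u$, and application of $\bbP_+$, with Lemma~\ref{lma.c7} handling the only delicate term. First I would rewrite \eqref{c8} as $Q(x)^\top\bg(x)=\tfrac1{2\pi i}\1$ and note that, since $D(\nu^2)^{-1}$ and $D(b(x))$ are diagonal (hence symmetric) while $(\calA^*)^\top=\overline{\calA}$, one has $Q(x)^\top=\overline{\calA}-xD(\nu^2)^{-1}-D(b(x))$; thus
$$
\overline{\calA}\,\bg(x)-xD(\nu^2)^{-1}\bg(x)-D(b(x))\bg(x)=\tfrac1{2\pi i}\1 .
$$
Restricting $x$ to the real axis and recalling from \eqref{b20c} that each $b_j$ is a rational Herglotz function with real parameters, so that $b_j(x)\in\bbR$ for a.e.\ $x\in\bbR$, I would take complex conjugates to obtain
$$
\calA\,\overline{\bg(x)}-xD(\nu^2)^{-1}\overline{\bg(x)}-D(b(x))\overline{\bg(x)}=-\tfrac1{2\pi i}\1,\qquad\text{a.e. }x\in\bbR .
$$

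Next I would multiply this identity through by $u(x)$ and apply $\bbP_+$ componentwise. The constant matrix $\calA$ commutes with $\bbP_+$; since $u\in H^2$ the right-hand side becomes $-\tfrac{u}{2\pi i}\1$; and $\bbP_+\bigl(uD(b)\overline{\bg}\bigr)$ is retained as it stands. The one non-trivial term is $D(\nu^2)^{-1}\bbP_+\bigl(x\,u\overline{\bg}\bigr)$: by Theorem~\ref{thm.c1} and \eqref{c4} each $g_j$ is rational with no poles in $\overline{\bbC_+}$ and $g_j(x)=O(1/x)$, and likewise $u(x)=O(1/x)$, so $u\overline{g_j}$ and $x\,u\overline{g_j}$ lie in $L^2(\bbR)$ and Lemma~\ref{lma.c7} applies componentwise to give
$$
\bbP_+(x\,u\overline{g_j})=x\,\bbP_+(u\overline{g_j})+\frac1{2\pi i}\int_{-\infty}^\infty u\,\overline{g_j}\,dx
=x\,\bbP_+(u\overline{g_j})+\frac{\jap{u,g_j}}{2\pi i}.
$$
Using $u=\sum_k\lambda_k e^{-i\varphi_k}g_k$ from \eqref{c9} together with the orthogonality relations $\jap{g_k,g_j}=\delta_{kj}\nu_j^2$ (which follow from Lemma~\ref{lma.c6}), this equals $x\,\bbP_+(u\overline{g_j})+\tfrac1{2\pi i}\lambda_j e^{-i\varphi_j}\nu_j^2$, i.e.\ in vector form $D(\nu^2)^{-1}\bbP_+(x\,u\overline{\bg})=xD(\nu^2)^{-1}\bbP_+(u\overline{\bg})+\tfrac1{2\pi i}D(\lambda)D(e^{-i\varphi})\1$. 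Substituting this and rearranging gives \eqref{c11} for a.e.\ real $x$, hence everywhere by analytic continuation.

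No single step is hard; the proof is essentially bookkeeping. The points that need care are: that $Q^\top$ carries $\overline{\calA}$ rather than $\calA^*$, so that it is the conjugation of the whole identity that recovers $\calA$; that $b_j$ is real on $\bbR$, which is precisely what lets $D(b)$ pass through the conjugation unchanged; and the evaluation of the boundary integral $\int_{-\infty}^\infty u\overline{g_j}\,dx$, which is where the normalisation $\norm{g_j}=\nu_j$ and mutual orthogonality of the $g_j$ (Lemma~\ref{lma.c6}) enter and produce exactly the extra term $\tfrac1{2\pi i}D(\lambda)D(e^{-i\varphi})\1$. One should also invoke Theorem~\ref{thm.c1} to guarantee the integrability hypotheses of Lemma~\ref{lma.c7} and the absence of real poles of $u$ and the $g_j$.
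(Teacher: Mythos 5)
Your proof is correct and follows essentially the same route as the paper: conjugate the defining identity $Q^\top\bg=\tfrac1{2\pi i}\1$, multiply by $u$, apply $\bbP_+$, and use Lemma~\ref{lma.c7} to handle the $x$-multiplication, with the boundary integral $\int u\overline{\bg}\,dx$ yielding the $\tfrac1{2\pi i}D(\lambda)D(e^{-i\varphi})\1$ term through the orthogonality of the $g_j$. The only cosmetic difference is that you expand $Q^\top$ into $\overline{\calA}-xD(\nu^2)^{-1}-D(b)$ before conjugating, whereas the paper conjugates first and writes $Q^*$ directly; the content is identical.
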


\begin{proof}
By the definition of $\bg$ (cf. \eqref{c8}), we have
\begin{equation}
Q^\top(x)\bg(x)=\frac1{2\pi i}\1.
\label{c12}
\end{equation}
Let us take the complex conjugate of this equation, multiply by $u(x)$ and apply
$\bbP_+$: 
$$
\bbP_+(uQ^*\overline{\bg})=-\frac{1}{2\pi i}u\1.
$$
By the definition of $Q(x)$ and by using Lemma~\ref{lma.c7}, we rewrite the l.h.s. as
\begin{multline*}
\bbP_+(uQ^*\overline{\bg})
=
\calA\bbP_+(u\overline{\bg})
-
\bbP_+(xuD(\nu^2)^{-1}\overline{\bg})
-
\bbP_+(D(b)u\overline{\bg})
\\
=
(\calA-xD(\nu^2)^{-1})\bbP_+(u\overline{\bg})
-\frac1{2\pi i}D(\nu^2)^{-1}\int_{-\infty}^\infty u(x)\overline{\bg(x)}dx
-\bbP_+(D(b)u\overline{\bg}). 
\end{multline*}
Observe that by \eqref{c12}, we have $Q^\top(x)\bg(x)\in H^\infty$ and also 
by the definition of $\bg(x)$ we have $x\bg(x)\in H^\infty$; thus, all the expressions
above are well defined. 

By \eqref{c9} and the orthogonality of $g_j$, we get 
$$
\int_{-\infty}^\infty u(x)\overline{\bg(x)}dx
=
D(\nu^2)D(\lambda)D(e^{-i\varphi})\1.
$$
Putting this together, we obtain the required identity. 
\end{proof}

\begin{lemma}\label{lma.c9}
We have the idenity
\begin{equation}
Q^*(x)D(\lambda)D(e^{-i\varphi})\bg(x)
=
\frac1{2\pi i}D(\lambda)D(e^{-i\varphi})\1
-
\frac{u(x)}{2\pi i}\1, 
\label{c13}
\end{equation}
for a.e. $x\in\bbR$. 
\end{lemma}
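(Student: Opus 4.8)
The plan is to deduce \eqref{c13} from the defining property \eqref{c12} of $\bg$ together with the two algebraic identities \eqref{c2} and \eqref{c3} for the matrix $\calA$, by a short formal computation valid off the real poles of the $b_j$.

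First I would note that each $b_j$ is a rational Herglotz function, hence real-valued a.e. on $\bbR$, so that for a.e. $x\in\bbR$
\[
Q^*(x)=\calA-xD(\nu^2)^{-1}-D(b(x)),\qquad Q(x)^\top=\overline{\calA}-xD(\nu^2)^{-1}-D(b(x)).
\]
Thus $Q^*(x)$ differs from $Q(x)^\top$ only by the replacement of $\overline{\calA}$ with $\calA$, and the diagonal matrices $D(\nu^2)^{-1}$, $D(b(x))$, $D(\lambda)$, $D(e^{-i\varphi})$ pairwise commute. Using \eqref{c2} in the form $\calA=\calA^*+\tfrac{i}{2\pi}\jap{\cdot,\1}\1$ and \eqref{c3} in its equivalent matrix form $\calA^*D(\lambda)D(e^{-i\varphi})=D(\lambda)D(e^{-i\varphi})\overline{\calA}$, I would then compute
\[
Q^*(x)D(\lambda)D(e^{-i\varphi})\bg(x)=D(\lambda)D(e^{-i\varphi})\,Q(x)^\top\bg(x)+\tfrac{i}{2\pi}\jap{D(\lambda)D(e^{-i\varphi})\bg(x),\1}\,\1 .
\]
At this point \eqref{c12} turns the first term into $\tfrac1{2\pi i}D(\lambda)D(e^{-i\varphi})\1$, and the scalar coefficient in the second term is $\jap{D(\lambda)D(e^{-i\varphi})\bg(x),\1}=\sum_{j=1}^N\lambda_je^{-i\varphi_j}g_j(x)=u(x)$ by \eqref{c9}; since $\tfrac{i}{2\pi}=-\tfrac1{2\pi i}$, this is precisely \eqref{c13}.

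I do not anticipate any genuine obstacle: the whole argument is bookkeeping with diagonal matrices once the two structural identities \eqref{c2} and \eqref{c3} are in hand. The only point that needs a line of care is the ``a.e. $x\in\bbR$'' qualification. On $\bbC_+$ the matrix $Q(x)$ is invertible by Theorem~\ref{thm.c1} and \eqref{c12} holds there; both sides of \eqref{c13} are therefore rational functions agreeing on $\bbC_+$, so they agree wherever they are defined, in particular off the finitely many real poles of the $b_j$, where $Q^*(x)$ and $Q(x)^\top$ are also given by the expressions displayed above.
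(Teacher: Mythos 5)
Your proof is correct and follows essentially the same route as the paper's: write $Q^*(x) = \calA^* - xD(\nu^2)^{-1} - D(b(x)) + 2i\Im\calA$ (valid for a.e.\ real $x$ since $b_j$ is real a.e.), use the rank-one formula \eqref{c2} for $\Im\calA$ to produce the $-\frac{u(x)}{2\pi i}\1$ term via \eqref{c9}, and use \eqref{c3} in the form $Q(x)D(\lambda)D(e^{-i\varphi}) = D(\lambda)D(e^{-i\varphi})Q(x)^\top$ together with \eqref{c12} to produce the $\frac{1}{2\pi i}D(\lambda)D(e^{-i\varphi})\1$ term. The one thing to clean up is your concluding remark: $Q^*(x)$ is anti-analytic in $x$, so ``both sides of \eqref{c13} are rational functions agreeing on $\bbC_+$'' is not quite right as phrased; the correct justification is that \eqref{c12} is an identity between rational functions established on $\bbC_+$, hence by rational continuation it holds a.e.\ on $\bbR$, which is where your whole computation takes place (as you already note at the start).
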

\begin{proof}
Since $\Im \calA=\frac1{4\pi}\jap{\cdot,\1}\1$ and $D(b)$ is real on $\bbR$, we have
\begin{multline*}
Q^*(x)D(\lambda)D(e^{-i\varphi})\bg(x)
=
(\calA-\calA^*+\calA^*-xD(\nu^2)^{-1}-D(b))D(\lambda)D(e^{-i\varphi})\bg(x)
\\
=
\frac{2i}{4\pi}\jap{D(\lambda)D(e^{-i\varphi})\bg(x),\1}\1 
+
Q(x)D(\lambda)D(e^{-i\varphi})\bg(x)
\end{multline*}
for a.e. $x\in\bbR$. 
By  \eqref{c9}, 
$$
\frac{2i}{4\pi}\jap{D(\lambda)D(e^{-i\varphi})\bg(x),\1}\1
=
-\frac{u(x)}{2\pi i}\1. 
$$
Further, by the commutation relation \eqref{c3} we see that $Q(x)$ satisfies
$$
D(\lambda)D(e^{-i\varphi})Q(x)^\top=Q(x)D(\lambda)D(e^{-i\varphi}).
$$
Using this formula, we obtain
$$
Q(x)D(\lambda)D(e^{-i\varphi})\bg(x)
=
D(\lambda)D(e^{-i\varphi})Q^\top(x)\bg(x)
=
\frac1{2\pi i} D(\lambda)D(e^{-i\varphi})\1,
$$
where we have used \eqref{c12} at the last step. 
Putting this together, we obtain the required identity.
\end{proof}
The next lemma involves multiplication by $b_j(x)$ on the real axis. Here we need to proceed with caution because $b_j(x)$ has poles on the real axis, see \eqref{b20c}. 
We claim that 
\begin{equation}
D(b(x))Q(x)^{-1}\in H^\infty(\bbC_+) \quad \text{and}\quad
D(b(x))u(x)\in H^\infty(\bbC_+).
\label{c10}
\end{equation}
Indeed, from 
$$
Q(x)=\calA^*-xD(\nu^2)^{-1}-D(b(x))
$$
it is clear that $Q(x)$ has singularities at the same points as $D(b(x))$ and so these singularities cancel out. As an alternative argument, one can write 
$$
I=(\calA^*-xD(\nu^2)^{-1})Q(x)^{-1}-D(b(x))Q(x)^{-1}, \quad x\in\bbC_+;
$$
by Theorem~\ref{thm.c1}, one finds that the product $D(b(x))Q(x)^{-1}$ is 
bounded outside a neighbourhood  of infinity. 
On the other hand, this product is bounded in the neighbourhood of infinity
because both factors are bounded there. This gives the first inclusion in \eqref{c10}; the second one follows by recalling that the definition \eqref{b25} of $u(x)$ involves $Q(x)^{-1}$. 

\begin{lemma}\label{lma.c10}
We have the identity
$$
\bbP_+(Q^*(x)^{-1}\bbP_+(uD(b)\overline{\bg}))
=
\bbP_+(Q^*(x)^{-1}D(b)\bbP_+(u\overline{\bg})). 
$$
\end{lemma}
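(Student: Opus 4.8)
The plan is to show that both sides equal the common expression $\bbP_+(Q^*(x)^{-1}D(b(x))\,u(x)\,\overline{\bg(x)})$, all functions being taken on the real line. The single tool required is the elementary identity $\bbP_+(\phi\,\bbP_+h)=\bbP_+(\phi h)$, valid for any matrix-valued $\phi$ with entries in $\overline{H^\infty}(\bbC_+)$ and any vector-valued $h\in L^2(\bbR)$: indeed, $\bbP_+(\phi h)-\bbP_+(\phi\,\bbP_+h)=\bbP_+(\phi(h-\bbP_+h))$, and since $h-\bbP_+h\in\overline{H^2}$ while $\overline{H^\infty}\cdot\overline{H^2}\subseteq\overline{H^2}=(H^2)^{\perp}$, the argument of the last projection is killed by $\bbP_+$.

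Next I would collect the membership facts, all of which are already available. By Theorem~\ref{thm.c1}, $Q(x)^{-1}\in H^\infty(\bbC_+)$, so on $\bbR$ we have $Q^*(x)^{-1}=(Q(x)^{-1})^*\in\overline{H^\infty}$. Taking adjoints in the first inclusion of \eqref{c10} and using that each $b_j$ is real a.e.\ on $\bbR$, we also get $Q^*(x)^{-1}D(b(x))=(D(b(x))Q(x)^{-1})^*\in\overline{H^\infty}$. The second inclusion of \eqref{c10} gives $u(x)D(b(x))=D(b(x))u(x)\in H^\infty(\bbC_+)$. Finally $u\in H^2$ and $\bg\in H^2$ (recorded just after Theorem~\ref{thm.c1a}), hence $u\overline{\bg}\in L^2(\bbR)$ and $uD(b)\overline{\bg}=(uD(b))\overline{\bg}\in L^2(\bbR)$.

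Now I would finish. On the right-hand side, group $Q^*(x)^{-1}D(b)\,\bbP_+(u\overline{\bg})=(Q^*(x)^{-1}D(b))\,\bbP_+(u\overline{\bg})$ and apply the identity with $\phi=Q^*(x)^{-1}D(b)\in\overline{H^\infty}$ and $h=u\overline{\bg}\in L^2$ to obtain $\bbP_+(Q^*(x)^{-1}D(b)\,u\,\overline{\bg})$. On the left-hand side, apply the identity with $\phi=Q^*(x)^{-1}\in\overline{H^\infty}$ and $h=uD(b)\overline{\bg}\in L^2$ to obtain $\bbP_+(Q^*(x)^{-1}\,uD(b)\,\overline{\bg})$. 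Since $u$ is scalar and $D(b)$ is diagonal, these two integrands coincide pointwise, which is the claimed identity.

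The step I would watch most carefully is the bookkeeping around the poles of the $b_j$ on the real axis: because $D(b)$ does not map $L^2(\bbR)$ into itself, one cannot freely rearrange the products. What makes the argument go through is exactly \eqref{c10} --- pairing $D(b)$ with $Q^{-1}$ or with $u$ removes these poles --- and the proof must be organised so that such a pairing is in force before any $\bbP_+$ is applied. Once the groupings $(Q^*(x)^{-1}D(b))$ and $(uD(b))$ are in place, what remains are only the standard half-plane facts $L^2(\bbR)=H^2(\bbC_+)\oplus\overline{H^2(\bbC_+)}$ and $H^\infty(\bbC_+)\cdot H^2(\bbC_+)\subseteq H^2(\bbC_+)$, so I do not anticipate any further difficulty.
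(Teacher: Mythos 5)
Your argument is correct and relies on exactly the same ingredients as the paper's proof of Lemma~\ref{lma.c10}: the two inclusions in \eqref{c10}, the reality of the $b_j$ on $\bbR$, and the standard fact that $\overline{H^\infty}\cdot\overline{H^2}\subset\overline{H^2}$. The paper phrases this as a duality computation, pairing both sides against a test vector $\bbf\in H^2$ and moving adjoints across the inner product, while you identify the common value $\bbP_+(Q^*(x)^{-1}D(b)\,u\,\overline{\bg})$ directly via the absorbing identity $\bbP_+(\phi\,\bbP_+ h)=\bbP_+(\phi h)$ for $\phi\in\overline{H^\infty}$; this is simply the adjoint form of the same step, so the two proofs are essentially identical.
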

Note that by \eqref{c10},  both sides here are 
well defined. 
\begin{proof}
Let us take the inner product of the left side of the required identity with an 
arbitrary element $\bbf\in H^2$: 
\begin{multline*}
\jap{\bbP_+Q^*(x)^{-1}\bbP_+(uD(b)\overline{\bg}), \bbf}
=
\jap{\bbP_+(uD(b)\overline{\bg}), Q(x)^{-1}\bbf}
\\
=
\jap{uD(b)\overline{\bg}, Q(x)^{-1}\bbf}
=
\jap{u\overline{\bg}, D(b)Q(x)^{-1}\bbf}. 
\end{multline*}
Since $D(b)Q(x)^{-1}\in H^\infty$, we have
\begin{multline*}
\jap{u\overline{\bg}, D(b)Q(x)^{-1}\bbf}
=
\jap{\bbP_+(u\overline{\bg}), D(b)Q(x)^{-1}\bbf}
\\
=
\jap{Q^*(x)^{-1}D(b)\bbP_+(u\overline{\bg}), \bbf}
=
\jap{\bbP_+(Q^*(x)^{-1}D(b)\bbP_+(u\overline{\bg})), \bbf}.
\end{multline*}
This proves the required identity.
\end{proof}

\begin{proof}[Proof of Theorem~\ref{thm.c1a}]
Let us apply $Q^*(x)^{-1}$ to both sides of \eqref{c13} and then use \eqref{c11}: 
\begin{multline*}
D(\lambda)D(e^{-i\varphi})\bg(x)
=
Q^*(x)^{-1}\biggl(
\frac1{2\pi i}D(\lambda)D(e^{-i\varphi})\1
-
\frac{u(x)}{2\pi i}\1\biggr)
\\
=Q^*(x)^{-1}
(\calA-xD(\nu^2)^{-1})\bbP_+(u\overline{\bg})
-
Q^*(x)^{-1}\bbP_+(uD(b)\overline{\bg}).
\end{multline*}
Next, let us apply $\bbP_+$ to both sides and use Lemma~\ref{lma.c10}:
\begin{multline*}
D(\lambda)D(e^{-i\varphi})\bg(x)
=
\bbP_+Q^*(x)^{-1}
(\calA-xD(\nu^2)^{-1})\bbP_+(u\overline{\bg})
-
\bbP_+Q^*(x)^{-1}\bbP_+(uD(b)\overline{\bg})
\\
=
\bbP_+Q^*(x)^{-1}
(\calA-xD(\nu^2)^{-1})\bbP_+(u\overline{\bg})
-
\bbP_+Q^*(x)^{-1}D(b)\bbP_+(u\overline{\bg})
\\
=
\bbP_+Q^*(x)^{-1}Q(x)\bbP_+(u\overline{\bg})
=
\bbP_+(u\overline{\bg})
=
H_u \bg.
\end{multline*}
This is exactly the required eigenvalue equation in the vector form \eqref{c8b}.
\end{proof}

\section{Proof of Theorem~\ref{thm.c2}}\label{proof6.2}

In this section, we complete the proof of  the surjectivity of the spectral map. An important step consists in establishing that the rational function $p_j$ is indeed an isometric multiplier on the model space $K_{\psi_j}$. An ingredient of this proof is a representation of the functions $p_j$ in terms of a completely non-unitary contraction on $\bbC^N$, which is inspired by Sarason's work \cite{Sarason}.

\subsection{$p_j$ is an isometric multiplier on $K_{\psi_j}$}\label{sec.f6}
Let us define $p_j$ as in \eqref{b8b}, selecting for definiteness the sign ``$+$'': 
$$
p_j
=
ie^{-i\varphi_j/2}\frac{\norm{1-\psi_j}}{\nu_j} \frac{g_j}{1-\psi_j}.
$$
In vector form, denoting 
$$
\bp=(p_1,\dots,p_N)^\top, 
$$
recalling formula \eqref{c8} for $\bg$ and setting $\gamma_j=\frac1{2\sqrt{\pi}}\norm{1-\psi_j}$, we obtain 
$$
\bp=\frac1{\sqrt{\pi}}D(e^{-i\varphi/2})D(\gamma)D(\nu)^{-1}D(1-\psi)^{-1}(Q(x)^\top)^{-1}\1. 
$$
We need to rearrange this expression as follows.
\begin{lemma}
We have 
\begin{equation}
\bp=D(e^{-i\varphi/2})(I-BD(\psi))^{-1}\beta,
\label{b17}
\end{equation}
where $\beta\in\bbC^N$ and $B$ is a completely non-unitary contraction in $\bbC^N$ (with respect to the usual Euclidean norm), satisfying 
\begin{equation}
BB^*+\jap{\cdot,\beta}\beta=I.
\label{b17aa}
\end{equation}
\end{lemma}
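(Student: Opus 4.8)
The plan is to concentrate all the $x$-dependence of $Q(x)^\top$ in a single diagonal factor and then read off $B$ and $\beta$ by diagonal algebra. First, from $Q(x)=\calA^*-xD(\nu^2)^{-1}-D(b(x))$ and the formula \eqref{defb} for $b_j$, using $\frac{1+\psi_j}{1-\psi_j}=\frac2{1-\psi_j}-1$ and $\gamma_j^2=\norm{1-\psi_j}^2/(4\pi)$, I would show
$$
Q(x)^\top=\mathcal{B}_0-2iK^2D(1-\psi(x))^{-1},\qquad K:=D(\gamma_j/\nu_j),
$$
where $\mathcal{B}_0$ is a constant matrix differing from $\calA^*$ only on the diagonal; Lemma~\ref{lma.c3} then yields $\Im\mathcal{B}_0=K^2-\tfrac1{4\pi}\jap{\cdot,\1}\1$. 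Substituting this and $D(\psi)D(1-\psi)^{-1}=D(1-\psi)^{-1}-I$ into $\bp=\tfrac1{\sqrt\pi}D(e^{-i\varphi/2})KD(1-\psi)^{-1}(Q^\top)^{-1}\1$ and matching the $x$-dependent part against the constant part forces the choice
$$
M:=K^2+\tfrac i2\mathcal{B}_0,\qquad B:=I-KM^{-1}K,\qquad \beta:=\tfrac i{2\sqrt\pi}KM^{-1}\1 .
$$
Here $\Re M=\tfrac12K^2+\tfrac1{8\pi}\jap{\cdot,\1}\1>0$, so $M$ is invertible, and reversing the computation (noting $I-BD(\psi)$ is invertible on $\bbC_+$ since $\norm B\le1$ and $\abs{\psi_j}<1$ there) establishes \eqref{b17}.

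Proving \eqref{b17aa} then rests on the single identity $M+M^*=2\Re M=K^2+\tfrac1{4\pi}\jap{\cdot,\1}\1$. Expanding $BB^*+\jap{\cdot,\beta}\beta$ and substituting this identity, the term $KM^{-1}(M+M^*)(M^*)^{-1}K$ splits as $K(M^*)^{-1}K+KM^{-1}K$, which cancels the remaining cross terms and leaves exactly $I$. Contractivity of $B$ is then immediate from $BB^*=I-\jap{\cdot,\beta}\beta\le I$.

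The substantive point is complete non-unitarity, for which I would use that in finite dimensions a contraction is completely non-unitary iff its spectrum lies in the open unit disk. As $1\notin\sigma(B)$ (because $B-I=-KM^{-1}K$ is invertible), an eigenvalue $\mu$ of $B$ with $\abs\mu=1$ corresponds to an eigenvalue $\zeta$ of $K^{-1}MK^{-1}$ with $\Re\zeta=\tfrac12$; taking the real part of the eigenvalue equation and using $K^{-1}(\Re M)K^{-1}=\tfrac12I+\tfrac1{8\pi}\jap{\cdot,\eta}\eta$ with $\eta:=K^{-1}\1$ forces the eigenvector to be orthogonal to $\eta$, hence to be an eigenvector of the Hermitian matrix $K^{-1}SK^{-1}-D(\rho_j/\gamma_j^2)$, where $S:=\tfrac12(\calA+\calA^*)$ and $\rho_j:=\Re\jap{A_{\psi_j}(1-\psi_j),1-\psi_j}/\norm{1-\psi_j}^2$. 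Unwinding the congruences produces a vector $w\ne0$ with $\jap{w,\1}=0$ and $\calA w=Dw$ for some real diagonal matrix $D$. This I would rule out exactly as in Lemma~\ref{lma.b4}: since $\Im\calA\ge0$ has rank one, $\Ker(\calA-D)\subseteq\1^\perp$, and on $\1^\perp$ the operators $\calA-D$ and $\calA^*-D$ coincide, so $\Ker(\calA-D)=\Ker(\calA^*-D)$; the intertwining $\calA^*\calH=\calH\calA$ then makes this kernel $\calH$-invariant, hence invariant under $\calH^2=D(\lambda)^2$, while it is orthogonal to the cyclic vector $\1$ of $D(\lambda)^2$; therefore $\Ker(\calA-D)=\{0\}$, a contradiction. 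I expect this last reduction, from the c.n.u.\ condition back to the cyclicity of $\1$ for $\calA$, to be the main obstacle; the identification of $B,\beta$ and the verification of \eqref{b17aa} are routine diagonal-matrix manipulations once $Q(x)^\top$ has been put in the above form.
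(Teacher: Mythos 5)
Your argument is correct and follows essentially the same route as the paper: your $B=I-KM^{-1}K$ and $\beta=\tfrac{i}{2\sqrt\pi}KM^{-1}\1$ coincide exactly with the paper's Cayley-transform parametrisation $B=(\overline{\calA_2}-i)^{-1}(\overline{\calA_2}+i)$, $\beta=\tfrac1{\sqrt\pi}(\overline{\calA_2}-i)^{-1}K^{-1}\1$, via $\overline{\calA_2}-i=-2iK^{-1}MK^{-1}$, and both the verification of \eqref{b17aa} and the complete non-unitarity reduce, as in the paper, to the rank-one structure of $\Im\calA$ and to the cyclicity argument underlying Lemma~\ref{lma.b4} (extended to $\calA$ minus a real diagonal matrix). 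One small slip: since $\calA$ is not symmetric, off the diagonal $\mathcal{B}_0$ agrees with $\overline{\calA}$ rather than with $\calA^*$, so it does not "differ from $\calA^*$ only on the diagonal"; the only fact you actually use, $\Im\mathcal{B}_0=K^2-\tfrac1{4\pi}\jap{\cdot,\1}\1$, is nevertheless correct because $\Im\overline{\calA}=-\overline{\Im\calA}=-\tfrac1{4\pi}\jap{\cdot,\1}\1$.
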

\begin{proof}
First we rewrite formula \eqref{b28} for $Q(x)$ as 
$$
Q(x)=\calA_1^*-iD(\gamma^2)D(\nu^2)^{-1}D\bigl(\tfrac{1+\psi}{1-\psi}\bigr),
$$
where
$$
\calA_1:=\calA-\diag\biggl\{\frac{\Re\jap{A_{\psi_j}(1-\psi_j),1-\psi_j}}{\nu_j^2\norm{1-\psi_j}^2}\biggr\}. 
$$
Then we have
\begin{align*}
D(\gamma)&D(\nu)^{-1}D(1-\psi)^{-1}(Q(x)^\top)^{-1} 
\\
&=
D(\gamma)D(\nu)^{-1}D(1-\psi)^{-1}
\bigl(\overline\calA_1-iD(\gamma^2)D(\nu^2)^{-1}D(1+\psi)D(1-\psi)^{-1}\bigr)^{-1}
\\
&=\bigl(\overline\calA_1 D(\gamma)^{-1}D(\nu)D(1-\psi)-iD(\gamma)D(\nu)^{-1}D(1+\psi)\bigr)^{-1}
\\
&=\bigl(D(\nu)D(\gamma)^{-1}\overline\calA_1 D(\gamma)^{-1}D(\nu)D(1-\psi)-iD(1+\psi)\bigr)^{-1}D(\gamma)^{-1}D(\nu).
\end{align*}
Denote for brevity
$$
\calA_2=D(\nu)D(\gamma)^{-1}\calA_1 D(\gamma)^{-1}D(\nu),
$$
then 
\begin{align*}
\bigl(\overline{\calA_2} &D(1-\psi)-iD(1+\psi)\bigr)^{-1}D(\gamma)^{-1}D(\nu)
\\
&=
\bigl(\overline{\calA_2}-i-(\overline{\calA_2}+i)D(\psi)\bigr)^{-1}D(\gamma)^{-1}D(\nu)
\\
&=
(I-BD(\psi))^{-1}(\overline{\calA_2}-i)^{-1}D(\gamma)^{-1}D(\nu), 
\end{align*}
where
$$
B=(\overline{\calA_2}-i)^{-1}(\overline{\calA_2}+i).
$$
This yields \eqref{b17} with 
$$
\beta=\frac1{\sqrt{\pi}}(\overline{\calA_2}-i)^{-1}D(\gamma)^{-1}D(\nu)\1.
$$
Next, let us prove \eqref{b17aa}. We will see that this is a consequence of the rank one relation \eqref{c2} for $\calA$.
Since
$$
\Im \calA=\Im \calA_1=\frac1{4\pi}
\jap{\cdot,\1}\1, 
$$
we have
$$
\Im \calA_2=\frac1{4\pi}
\jap{\cdot,D(\gamma)^{-1}D(\nu)\1}D(\gamma)^{-1}D(\nu)\1, 
$$
and therefore
\begin{align*}
BB^*
&=
(\overline{\calA_2}-i)^{-1}(\overline{\calA_2}+i)(\overline{\calA_2}^*-i)(\overline{\calA_2}^*+i)^{-1}
\notag
\\
&=
(\overline{\calA_2}-i)^{-1}(\overline{\calA_2}\overline{\calA_2}^*+I+2\Im \overline{\calA_2})(\overline{\calA_2}^*+i)^{-1}
\notag
\\
&=
I-(\overline{\calA_2}-i)^{-1}(4\Im \overline{\calA_2})(\overline{\calA_2}^*+i)^{-1}
\notag
\\
&=
I-\jap{\cdot,\beta}\beta.
\end{align*}
Finally, let us check that $B$ is a completely non-unitary contraction in  $\bbC^N$.
The fact that $B$ is contraction is clear from \eqref{b17a}.
To check that $B$ is completely non-unitary, let us first consider the matrix $\calA_2$
and check that  it is completely non-self-adjoint. 
Indeed, suppose $\calA_2$ has a real eigenvalue $\lambda$ with an eigenvector $f$; then 
$$
(\calA_1-\lambda D(\gamma)^2D(\nu^2)^{-1})D(\gamma)^{-1}D(\nu)f=0.
$$
But this is impossible for $f\not=0$, because $\calA_1-\lambda D(\gamma)^2D(\nu^2)^{-1}$ is completely 
non-self-adjoint (see Lemma~\ref{lma.b4}). 
Since $\calA_2$ is completely non-self-adjoint, so is $\overline{\calA_2}$; 
it follows that $B$ is completely non-unitary. 
\end{proof}

The isometricity of $p_j$ is a consequence of  the following lemma. 
\begin{lemma}\label{lma.c11}
Let $B$ be a completely non-unitary contraction 
in $\bbC^N$ and 
$$
BB^*+\jap{\cdot,\beta}\beta=I
$$
with some vector $\beta\in \bbC^N$. 
Let $\psi_1,\dots,\psi_N$ be
inner functions in $\bbC_+$, and let the vector $\bp$
be defined by 
$$
\bp(z)=(I-BD(\psi(z)))^{-1}\beta, \quad z\in\bbC_+.
$$
Then each $p_j$ is 
an isometric multiplier on $K_{\psi_j}$.
\end{lemma}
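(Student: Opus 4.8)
The plan is to verify the two clauses in the definition of an isometric multiplier on $K_{\psi_j}$ separately: that $p_jK_{\psi_j}\subseteq H^2$, and that $\norm{p_jh}=\norm{h}$ for every $h\in K_{\psi_j}$. Since $p_jh\in H^2$ gives $\norm{p_jh}^2=\int_{\bbR}\abs{p_j(x)}^2\abs{h(x)}^2\,dx$, the second clause is equivalent to $\int_{\bbR}(1-\abs{p_j(x)}^2)\abs{h(x)}^2\,dx=0$ for all $h\in K_{\psi_j}$. Throughout I write $M(z)=(I-BD(\psi(z)))^{-1}$, so that $\bp(z)=M(z)\beta$.

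First I would dispose of the regularity. For $z\in\bbC_+$ the diagonal matrix $D(\psi(z))$ is a strict contraction, hence so is $BD(\psi(z))$, and $M(z)$ is holomorphic there; at $z=\infty$ the normalisation $\psi_k(\infty)=1$ gives $D(\psi(\infty))=I$, and $I-B$ is invertible because a completely non-unitary contraction has no unimodular eigenvalue. The only remaining candidates for singularities of $M$ are real points $x$ at which $I-BD(\psi(x))$ is not invertible; there $D(\psi(x))$ is unitary, so $\bigl(BD(\psi(x))\bigr)\bigl(BD(\psi(x))\bigr)^{*}=BB^{*}=I-\jap{\cdot,\beta}\beta$, and one checks that the left and right null spaces of $I-BD(\psi(x))$ coincide and are contained in $\Ker(I-BB^{*})=\beta^{\perp}$. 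Being orthogonal to $\beta$, these null vectors kill the residue of $M(z)\beta$, so $\bp\in H^{\infty}(\bbC_+;\bbC^{N})$; in particular $p_j\in H^{\infty}(\bbC_+)$ and $p_jh\in H^{2}$ whenever $h\in H^{2}$, which settles the first clause.

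The heart of the argument is a pointwise identity on $\bbR$. From $M(I-BD(\psi))=I$ one gets $M-I=MBD(\psi)$, hence $MB=(M-I)D(\psi)^{-1}$; on $\bbR$, where $D(\psi(x))$ is unitary, combining this with $BB^{*}+\jap{\cdot,\beta}\beta=I$ yields, after a short computation,
\[
M(x)\,\beta\beta^{*}\,M(x)^{*}=M(x)+M(x)^{*}-I,\qquad x\in\bbR .
\]
Reading off the $(j,j)$ entry and putting $\sigma_j(z)=\jap{M(z)\1_j,\1_j}$ and $m_j(z)=\jap{M(z)B\1_j,\1_j}$ (so that $\sigma_j=1+\psi_jm_j$, again by $M-I=MBD(\psi)$), I obtain
\[
1-\abs{p_j(x)}^{2}=2\,\Re\bigl(1-\sigma_j(x)\bigr)=-2\,\Re\bigl(\psi_j(x)m_j(x)\bigr),\qquad x\in\bbR .
\]
Consequently, for $h\in K_{\psi_j}$,
\[
\int_{\bbR}\bigl(1-\abs{p_j(x)}^{2}\bigr)\abs{h(x)}^{2}\,dx=-2\,\Re\int_{\bbR}m_j(x)\,\psi_j(x)\abs{h(x)}^{2}\,dx .
\]
Now $h\in K_{\psi_j}$ means $\psi_j\overline h$ is the boundary trace of an $H^{2}$ function, so $\psi_j\abs h^{2}=h\cdot(\psi_j\overline h)$ is the trace of a function $H_0\in H^{1}(\bbC_+)$. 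Granting that $m_j\in H^{\infty}(\bbC_+)$ as well, $m_jH_0\in H^{1}(\bbC_+)$, whence $\int_{\bbR}m_j(x)H_0(x)\,dx=0$; this gives the required identity $\norm{p_jh}=\norm{h}$, and the computation is visibly insensitive to the sign $\pm$ in the definition of $p_j$.

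The step I expect to be the main obstacle is exactly the regularity input just used, $m_j\in H^{\infty}(\bbC_+)$ (equivalently, that $\sigma_j$ has no poles on $\bbR$). This is where the completely non-unitary hypothesis on $B$ enters essentially: one must show that at a real point where $I-BD(\psi(x))$ is singular, the associated null vector is orthogonal not only to $\beta$ — immediate from $\Ker(I-BB^{*})=\beta^{\perp}$ — but also to $\1_j$, so that the residue of $m_j$ there vanishes. This is the matrix counterpart of the transfer-function realisation behind Sarason's representation of scalar isometric multipliers (in the scalar case $B$ is a number $b$ with $\abs b<1$, $\beta=\sqrt{1-\abs b^{2}}$, and $p_j=\sqrt{1-\abs b^{2}}/(1-b\psi_j)$ is Crofoot's normal form, with $1-b\psi_j$ bounded away from $0$); for $N>1$ it rests on the fine structure of rank-one-defect completely non-unitary contractions, and it is the only non-routine point — the algebraic identity $M\beta\beta^{*}M^{*}=M+M^{*}-I$ and the Hardy-space bookkeeping are straightforward.
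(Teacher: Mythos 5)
The broad structure of your proof coincides with the paper's: first establish $p_jK_{\psi_j}\subset H^2$, then derive the norm identity from a pointwise relation on $\bbR$ plus Hardy-space bookkeeping. Your algebraic identity $M\beta\beta^{*}M^{*}=M+M^{*}-I$ on $\bbR$ is precisely the paper's Step 2 identity $(I-A)^{-1}(I-AA^{*})(I-A^{*})^{-1}=I+(I-A)^{-1}A+A^{*}(I-A^{*})^{-1}$ with $A=BD(\psi(x))$, and the resulting reduction to showing $\int_{\bbR}m_j(x)\,(\psi_j\overline{h}h)(x)\,dx=0$ because the integrand is an $H^1$ boundary value is the same as in the paper.

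The genuine gap is the regularity input, exactly as you suspected --- and your residue argument, while a correct and pretty observation, aims at the wrong object. You show that a real null vector $v$ of $I-BD(\psi(x_0))$ lies in $\Ker(\beta\beta^{*})=\beta^{\perp}$ (correct: $Av=v$ for a contraction $A$ with $\norm{Av}=\norm{v}$ forces $A^{*}v=v$, hence $v\in\Ker(I-AA^{*})$, and $AA^{*}=BB^{*}$ on $\bbR$), and this controls the residue of $M(z)\beta$. But the final step requires $m_j=[(I-BD(\psi))^{-1}B]_{jj}\in H^{\infty}$, a matrix entry of $M$ itself, and orthogonality of $v$ to $\beta$ says nothing about this; as you note, you would also need $v\perp\1_j$, which does not follow from the stated hypotheses. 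The paper instead closes the gap at a stroke in Step 1: it asserts that $BD(\zeta)$ is a completely non-unitary contraction for every $\zeta$ in the closed polydisk $\overline{\bbD}^{N}$, so that $I-BD(\zeta)$ is invertible there, and then compactness of $\overline{\bbD}^{N}$ gives $\sup_{\zeta\in\overline{\bbD}^{N}}\norm{(I-BD(\zeta))^{-1}}<\infty$. Since $\psi(x)\in\overline{\bbD}^{N}$ for all $x\in\bbC_+$, this bounds the whole matrix $M(z)$ on $\bbC_+$, giving $m_j\in H^{\infty}$ directly and also dispatching potential higher-order real singularities, which a pure first-order residue analysis does not cleanly exclude. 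So the missing step is a uniform resolvent bound over the closed polydisk, not a finer analysis of where the null vector points. You should, however, satisfy yourself that $BD(\zeta)$ really is completely non-unitary for boundary $\zeta$: this is the one place where the rank-one defect relation $BB^{*}+\beta\beta^{*}=I$ has to interact with the complete non-unitarity of $B$, and the paper's one-line assertion of it deserves a proof.
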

\begin{proof}
\emph{Step 1:}
Let $\zeta_1,\dots,\zeta_N$ be complex numbers in the closed unit disk, 
$\abs{\zeta_j}\leq1$. As $B$ is a completely non-unitary contraction, 
so is $BD(\zeta)$, and therefore, by a compactness argument, 
the norms
$$
\norm{(I-BD(\zeta))^{-1}}
$$
are bounded uniformly for $\abs{\zeta_j}\leq1$, $j=1,\dots,N$. 
It follows that the inverse 
$$
(I-BD(\psi(x)))^{-1}, \quad x\in \bbC_+,
$$
is analytic and bounded in $\bbC_+$.

\emph{Step 2:}
Let $x\in\bbR$; denote for brevity $A=BD(\psi(x))$. 
Observe that we have $BB^*=AA^*$. Furthermore, 
\begin{align*}
\Abs{((I-A)^{-1}\beta)_j}^2
&=
[(1-A)^{-1}(\jap{\cdot,\beta}\beta)(I-A^*)^{-1}]_{jj}
\\
&=
[(I-A)^{-1}(I-AA^*)(I-A^*)^{-1}]_{jj},
\end{align*}
and 
$$
(I-A)^{-1}(I-AA^*)(I-A^*)^{-1}
=
I+(I-A)^{-1}A+A^*(I-A^*)^{-1}. 
$$
It follows that 
$$
\abs{p_j(x)}^2
=
1+
[(I-BD(\psi(x)))^{-1}B]_{jj}\psi_j(x)
+
[B^*(I-D(\psi(x))^*B^*)^{-1}]_{jj}\overline{\psi_j(x)}.
$$
Let us multiply this by $\abs{h(x)}^2$, where $h\in K_{\psi_j}$. 
We obtain, for $x\in \bbR$, 
\begin{align*}
\abs{p_j(x)}^2\abs{h(x)}^2=&\abs{h(x)}^2
\\
&+
[(I-BD(\psi(x)))^{-1}B]_{jj}(\psi_j(x)\overline{h(x)})h(x)
\\
&\qquad +
\overline{[B^\top(I-D(\psi(x))B^\top)^{-1}]_{jj}(\psi_j(x)\overline{h(x)})h(x)}.
\end{align*}
Observe that the second term in the r.h.s. is the boundary value of a function 
in $H^1(\bbC_+)$, while the third term is the complex conjugate of such boundary value. 
It follows that the integrals over $\bbR$ of both these terms vanish, and so integrating yields
the required isometricity of $p_j$. 
\end{proof}

\subsection{The action of $H_u$ on $p_jK_{\psi_j}$} 

In Theorem~\ref{thm.c1a} above, we have checked the eigenvalue equation
\begin{equation}
H_ug_j=\lambda_je^{-i\varphi_j}g_j.
\label{c19}
\end{equation}
Here our aim is to compute the action of $H_u$ on the whole subspace 
$p_jK_{\psi_j}$. 

\begin{lemma}\label{lma.c12}
For every $j=1,\dots,N$ and for every $h\in K_{\psi_j}$, we have
\begin{equation}
H_u(p_j h)=\lambda_j p_j \psi_j \overline{h}. 
\label{c20}
\end{equation}
\end{lemma}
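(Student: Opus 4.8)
The plan is to reduce \eqref{c20} to one $h$-independent identity and then prove that identity.

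\textbf{Step 1: reduction.} Fix $j$ and $h\in K_{\psi_j}$. Since $\psi_j\overline h=H_{\psi_j}h\in K_{\psi_j}$ and $p_j$ is an isometric multiplier on $K_{\psi_j}$ (established in Section~\ref{sec.f6}), the function $p_j\psi_j\overline h$ lies in $H^2$, so $\bbP_+(p_j\psi_j\overline h)=p_j\psi_j\overline h$. Moreover, an isometric multiplier on a model space for a finite Blaschke product is a bounded rational function with no poles in $\overline{\bbC_+}$, so $p_j-p_{j,\infty}$ and $p_j\psi_j-p_{j,\infty}$ lie in $H^2$; hence $\rho:=u\overline{p_j}-\lambda_j(p_j\psi_j-p_{j,\infty})\in L^2(\bbR)$, and using $\bbP_+\overline h=0$ (because $\overline h\in\overline{H^2}\perp H^2$) one finds
\[
H_u(p_jh)-\lambda_jp_j\psi_j\overline h=\bbP_+\bigl(u\,\overline{p_j}\,\overline h\bigr)-\bbP_+\bigl(\lambda_jp_j\psi_j\overline h\bigr)=\bbP_+\bigl(\rho\,\overline h\bigr).
\]
Since $K_{\psi_j}$ contains the outer function $1/q$ (with $q$ the denominator in the rational description of $K_{\psi_j}$), the span of $\{hf:h\in K_{\psi_j},\ f\in H^2\}$ is dense in $H^2$, so $\bbP_+(\rho\,\overline h)=0$ for all $h\in K_{\psi_j}$ if and only if $\langle\rho,hf\rangle=0$ for all such $h,f$, i.e. $\rho\perp H^2$, i.e. $\bbP_+\rho=0$. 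Thus \eqref{c20} is \emph{equivalent} to the single identity $\bbP_+(u\,\overline{p_j})=\lambda_j(p_j\psi_j-p_{j,\infty})$.

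\textbf{Step 2: proof of the identity.} I would begin from the one instance of \eqref{c20} that is already available, the case $h=1-\psi_j$: by Theorem~\ref{thm.c1a} and the definition \eqref{b8b} of $p_j$ (with sign $+$), writing $p_j(1-\psi_j)=\mu_jg_j$ with $\mu_j=ie^{-i\varphi_j/2}\norm{1-\psi_j}/\nu_j$ and using the anti-linearity of $H_u$ together with $\overline{\mu_j}/\mu_j=-e^{i\varphi_j}$, one gets $H_u(p_j(1-\psi_j))=-\lambda_jp_j(1-\psi_j)$, which coincides with $\lambda_jp_j\psi_j\overline{1-\psi_j}$ since $\psi_j\overline{1-\psi_j}=\psi_j-1$ on $\bbR$. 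Rewriting this as $H_u\bigl((p_j-p_{j,\infty})-(p_j\psi_j-p_{j,\infty})\bigr)=-\lambda_j\bigl((p_j-p_{j,\infty})-(p_j\psi_j-p_{j,\infty})\bigr)$ and combining with the decomposition $\bbP_+(u\overline{p_j})=H_u(p_j-p_{j,\infty})+\overline{p_{j,\infty}}\,u$ (valid as $u\in H^2$) turns the target identity into a statement about the already-proven eigenvalue relations plus the single quantity $H_u(p_j\psi_j-p_{j,\infty})$. To evaluate the latter I would use $u=\sum_k\lambda_ke^{-i\varphi_k}g_k$, the relations $g_k=\overline{p_{k,\infty}}p_k(1-\psi_k)$ and the explicit formulas \eqref{c8} and \eqref{b17} for $\bg$ and $\bp$, reducing everything to the matrix algebra of Sections~\ref{sec.b}--\ref{sec.c} (in the spirit of Lemmas~\ref{lma.c8}--\ref{lma.c9}) together with the orthogonality relations \eqref{c5}. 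Alternatively, since both sides of the target identity are rational functions in $H^2$ vanishing at infinity, it suffices to match their principal parts at the poles of $u$, which are controlled by \eqref{c8}; that is a direct residue computation.

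\textbf{Main obstacle.} The reduction in Step 1 is soft, but Step 2 is the technical heart: making $\bbP_+(u\overline{p_j})$ explicit forces one to unwind the matrix formulas for $u$ and for $p_j$, and to keep careful track of the constants $p_{j,\infty}$ (which, unlike in the scalar model case, do not disappear). I expect this to be the most delicate part of the argument. Finally, once \eqref{c20} is in hand, applying $H_u$ once more and using \eqref{c20} for $\psi_j\overline h\in K_{\psi_j}$ gives $H_u^2(p_jh)=\lambda_j^2\,p_jh$, so $p_jK_{\psi_j}\subseteq E_{H_u}(\lambda_j)$; this, together with the orthogonality and a dimension count for the $g_k$, is what is needed to conclude the proof of Theorem~\ref{thm.c2}.
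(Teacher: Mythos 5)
There is a genuine gap. Your Step~1 reduction is sound (modulo the unproven but plausible claim that $p_j$ is bounded on $\bbR$, which is needed to place $\rho$ in $L^2$), and your use of the density of $\Span\{hf:h\in K_{\psi_j},\, f\in H^2\}$ is correct. But Step~2, which you yourself call ``the technical heart,'' is not carried out: you list several machines one might turn on (the matrix formulas \eqref{c8} and \eqref{b17}, the orthogonality \eqref{c5}, a residue computation), without executing any of them, and you flag this as the main obstacle. As written, the proposal proves nothing beyond the reduction.

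The paper's proof avoids the matrix machinery entirely and is much shorter. It keeps the $h$ in place and shows that
\[
F:=u\,\overline{p_j}\,\overline{h}-\lambda_j p_j\psi_j\overline{h}
\]
belongs to $H^2(\bbC_-)$. Substituting the formula \eqref{b8b} for $p_j$ and using the identity $\psi_j/(1-\psi_j)=-1/(1-\overline{\psi_j})$ on $\bbR$ (a consequence of $\abs{\psi_j}=1$ there), one collapses $F$ into
\[
F=-i\sqrt{4\pi}\,\frac{\gamma_j}{\nu_j}\,e^{i\varphi_j/2}\,\frac{G\,\overline{h}}{1-\overline{\psi_j}},\qquad G:=u\,\overline{g_j}-\lambda_j e^{-i\varphi_j}g_j,
\]
where $G\in H^2(\bbC_-)$ is exactly the already-proven eigenvalue relation \eqref{c19} (your $h=1-\psi_j$ case) rewritten on the line. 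Since $G$, $\overline{h}$, and $1/(1-\overline{\psi_j})$ are all analytic in the open lower half-plane, $F$ has no poles there, and the decay at infinity and regularity on $\bbR$ close the argument. This is the observation your Step~2 is missing: the $h$-dependence is harmless because $\overline h/(1-\overline{\psi_j})$ is anti-analytic, so once you have the scalar relation for $g_j$ the general case follows by elementary manipulation of boundary values, with no need to unwind $Q(x)^{-1}$, $\bp$, or residues. I would suggest abandoning the Step~1 reduction (which makes the constants $p_{j,\infty}$ intrusive for no benefit) and instead writing out $F$ directly as the paper does.
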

\begin{proof}
Recall that $p_j$ is defined by the formula
$$
p_j
=
i\sqrt{4\pi} e^{-i\varphi_j/2}\frac{\gamma_j}{\nu_j} \frac{g_j}{1-\psi_j}
$$
and we have already checked that $p_j$ is an isometric multiplier on $K_{\psi_j}$. 
The desired equation \eqref{c20} can be written on the real line as 
$$
\bbP_+(u\overline{p_j}\overline{h})=\lambda_j p_j \psi_j \overline{h},
$$
i.e. we need to check that 
$$
F:=u\overline{p_j}\overline{h}-\lambda_j p_j \psi_j \overline{h}\in H^2(\bbC_-). 
$$
Let us check this inclusion. 
Observe that by construction, $F$ is a rational function without poles on $\bbR$ and $F(x)=O(1/x^2)$ as $\abs{x}\to\infty$. Thus, we only need to check that $F$ has no poles in the open lower half-plane.

First note that by the same logic the eigenvalue equation \eqref{c19} can be 
transformed into the condition 
$$
G:=u\overline{g_j}-\lambda_j e^{-i\varphi_j}g_j\in H^2(\bbC_-). 
$$
Observe that $G$ is a rational function without poles in the closed lower half-plane.

Next, recalling the definition of $p_j$ and using that $\abs{\psi_j}=1$ on the real line, we find 
$$
F=-i\sqrt{4\pi}\frac{\gamma_j}{\nu_j} e^{i\varphi_j/2}
\biggl(\frac{u\overline{g_j}\overline{h}}{1-\overline{\psi_j}}
+
\lambda_j e^{-i\varphi_j}\frac{g_j \psi_j \overline{h}}{1-\psi_j}\biggr)
=
-i\sqrt{4\pi}\frac{\gamma_j}{\nu_j} e^{i\varphi_j/2}\frac{G\overline{h}}{1-\overline{\psi_j}}. 
$$
From this representation we see that $F$ has no poles in the open lower half-plane. The proof is complete. 
\end{proof}

\subsection{Identification of $\theta$}

\begin{lemma}\label{lma.c13}
Let $\theta$ be the finite Blaschke product such that $\Ran H_u=K_\theta$ and 
$\theta(\infty)=1$. Let $g$ be defined by
$$
g=\sum_{j=1}^N g_j,
$$
where $g_j$ is given by \eqref{c8}; 
then  $g=1-\theta$. 
\end{lemma}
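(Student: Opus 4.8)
The plan is to identify $g:=\sum_{j=1}^N g_j$ with $1-\theta$ by exhibiting both as elements of $K_\theta=\Ran H_u$ having the same image $u$ under $H_u$, and then invoking the injectivity of $H_u$ on its range.

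First I would check that $g\in\Ran H_u$. This is immediate from Theorem~\ref{thm.c1a}: since $H_ug_j=\lambda_je^{-i\varphi_j}g_j$ with $\lambda_j>0$, each $g_j=\lambda_j^{-1}e^{i\varphi_j}H_ug_j$ lies in $\Ran H_u$, hence so does their sum $g$. (Equivalently, by \eqref{b8b} the function $g_j$ is a scalar multiple of $p_j(1-\psi_j)$, and Lemma~\ref{lma.c12} applied to $h=1-\psi_j\in K_{\psi_j}$ gives $H_u(p_j(1-\psi_j))=-\lambda_jp_j(1-\psi_j)$, again displaying $g_j$ in the range.) Since $\Ran H_u=K_\theta$, this yields $g\in K_\theta$; on the other hand $1-\theta\in K_\theta$ by Lemma~\ref{lma.a1a}, using the normalisation $\theta(\infty)=1$.

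Next I would compute the two images. Combining Theorem~\ref{thm.c1a} with the definition \eqref{c9} of $u$,
\[
H_u g=\sum_{j=1}^N H_u g_j=\sum_{j=1}^N\lambda_je^{-i\varphi_j}g_j=u.
\]
At the same time, Lemma~\ref{lma.a5} gives $H_u(1-\theta)=u$; this lemma applies to the $u$ built in Section~\ref{sec.c} because, as recorded right after Theorem~\ref{thm.c1}, that $u$ is a bounded rational function with no poles in $\overline{\bbC_+}$ and with $u(\infty)=0$, so that $\Ran H_u=K_\theta$ for a finite Blaschke product $\theta$ normalised by $\theta(\infty)=1$. Therefore $H_u\bigl(g-(1-\theta)\bigr)=0$, i.e. $g-(1-\theta)\in\Ker H_u$. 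Finally I would invoke the Beurling--Lax description recalled in Sections~\ref{sec.a} and \ref{sec.aa}: the kernel of $H_u$ equals $\theta H^2$, which is precisely the orthogonal complement $K_\theta^\perp$ of $K_\theta$ inside $H^2$ (equivalently $\Ker H_u=(\Ran H_u)^\perp$, by the symmetry $\jap{H_u f,h}=\jap{H_u h,f}$ of the Hankel bilinear form). Hence $H_u$ restricted to $K_\theta$ is injective; since $g$ and $1-\theta$ both lie in $K_\theta$ with $H_u g=H_u(1-\theta)$, we conclude $g=1-\theta$.

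There is no genuine obstacle in this argument; the only point requiring a moment's care is checking that the general facts from Sections~\ref{sec.a}--\ref{sec.aa} (namely $1-\theta\in K_\theta$, $H_u(1-\theta)=u$, and $\Ker H_u=K_\theta^\perp$) really do apply to the symbol $u$ produced by the surjectivity construction, which they do once one knows --- as established earlier in Section~\ref{sec.c} --- that this $u$ is a bounded rational symbol analytic in the closed upper half-plane with $u(\infty)=0$, so that $\Ran H_u$ is a model space $K_\theta$.
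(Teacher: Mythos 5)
Your proof is correct and follows essentially the same route as the paper's: show $g$ and $1-\theta$ both lie in $K_\theta=\Ran H_u$, compute $H_u g = u = H_u(1-\theta)$ via Theorem~\ref{thm.c1a} and Lemma~\ref{lma.a5}, and conclude from $g-(1-\theta)\in \Ker H_u\cap\Ran H_u=\{0\}$. You are slightly more explicit than the paper in spelling out why $\Ker H_u\cap\Ran H_u=\{0\}$ and in verifying that the hypotheses of the cited lemmas apply to the constructed $u$, but the argument is the same.
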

\begin{proof}
By Theorem~\ref{thm.c1a}, we have 
$$
H_u g=\sum_{j=1}^N H_u g_j =\sum_{j=1}^N  \lambda_j e^{-i\varphi_j} g_j =u.
$$
On the other hand, we know from Lemma~\ref{lma.a5} that $H_u(1-\theta)=u$. 
It follows that $g-(1-\theta)\in \Ker H_u$. 

Further, by the definition of $g$ and by Theorem~\ref{thm.c1a}, we see that 
$g\in \Ran H_u$. Also, $1-\theta\in K_\theta=\Ran H_u$. 
It follows that $g-(1-\theta)\in \Ran H_u$; we conclude that $g-(1-\theta)=0$.
\end{proof}

\subsection{Identification of the range of $H_u$}

\begin{lemma}\label{lma.c14}
The range of $H_u$ is given by
$$
\Ran H_u=\bigoplus_{k=1}^N p_k K_{\psi_k}. 
$$
\end{lemma}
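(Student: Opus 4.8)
The plan is to prove the two inclusions
$$
\bigoplus_{k=1}^N p_kK_{\psi_k}\subseteq\Ran H_u
\qquad\text{and}\qquad
\dim\Bigl(\bigoplus_{k=1}^N p_kK_{\psi_k}\Bigr)\ge\dim\Ran H_u ,
$$
which together force equality. Throughout, write $M:=\bigoplus_{k=1}^N p_kK_{\psi_k}$ and $m:=\sum_{k=1}^N\deg\psi_k$.

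First I would check that $M\subseteq\Ran H_u$, that the sum is orthogonal, and that $\dim M=m$. Since $p_k$ is an isometric multiplier on $K_{\psi_k}$ (see Section~\ref{sec.f6}), we have $p_kK_{\psi_k}\subseteq H^2$ and $\dim p_kK_{\psi_k}=\dim K_{\psi_k}=\deg\psi_k$. By Lemma~\ref{lma.c12}, $H_u(p_kh)=\lambda_k p_k\psi_k\overline h=\lambda_k p_k H_{\psi_k}h$ for $h\in K_{\psi_k}$, where $H_{\psi_k}h=\psi_k\overline h$ is an anti-linear involution of $K_{\psi_k}$; hence $H_u$ maps $p_kK_{\psi_k}$ onto itself, so $p_kK_{\psi_k}\subseteq\Ran H_u$, and iterating the formula gives $H_u^2=\lambda_k^2 I$ on $p_kK_{\psi_k}$, i.e. $p_kK_{\psi_k}\subseteq E_{H_u}(\lambda_k)$. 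As $H_u^2$ is self-adjoint and the $\lambda_k$ are pairwise distinct, the subspaces $p_kK_{\psi_k}$ are mutually orthogonal, so $M$ is a genuine orthogonal direct sum contained in $\Ran H_u$, of dimension $m$.

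The reverse estimate $\dim\Ran H_u\le m$ is where I expect the difficulty to lie: a ``soft'' argument — e.g. trying to show that $M$ is invariant under $A_\theta^*$ and hence contains the cyclic subspace generated by $1-\theta$ — turns out to be essentially circular, because $A_\theta^*$-invariance of $M$ is equivalent to the statement being proved. Instead I would argue by a degree count. Recall $\Ran H_u=K_\theta$ with $\theta$ a finite Blaschke product, $\theta(\infty)=1$, and $\dim K_\theta=\deg\theta$. By Lemma~\ref{lma.c13} and \eqref{c8},
$$
1-\theta(x)=\sum_{k=1}^N g_k(x)=\frac1{2\pi i}\jap{(Q(x)^\top)^{-1}\1,\1},\qquad\Im x>0 .
$$
By \eqref{b20c}, each $b_j$ appearing in $Q(x)=\calA^*-xD(\nu^2)^{-1}-D(b(x))$ is a rational Herglotz function whose finite poles are the solutions of $\psi_j(x)=1$; since $\psi_j(\infty)=1$ there are at most $\deg\psi_j-1$ of them, so $b_j=R_j/D_j$ with $D_j$ a polynomial of degree $\le\deg\psi_j-1$ and $\deg R_j<\deg D_j$. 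Multiplying the $j$-th row of $Q(x)$ by $D_j(x)$ turns $Q(x)$ into a polynomial matrix whose $j$-th row has entries of degree $\le\deg\psi_j$; hence, by the Leibniz expansion, $\det Q(x)\prod_jD_j(x)$ is a polynomial of degree $\le m$ and every entry of $\operatorname{adj}(Q(x)^\top)\prod_jD_j(x)$ is a polynomial of degree $\le m-1$. Since $(Q^\top)^{-1}=\operatorname{adj}(Q^\top)/\det Q$, this exhibits $1-\theta$, and therefore $\theta$, as a ratio of two polynomials of degree $\le m$; as $\theta$ is a finite Blaschke product, $\deg\theta\le m$, that is $\dim\Ran H_u=\dim K_\theta\le m$.

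Finally, combining the two steps gives $m=\dim M\le\dim\Ran H_u\le m$ together with $M\subseteq\Ran H_u$, whence $M=\Ran H_u$, which is the assertion. (An alternative route to the bound in the third step is to observe directly from \eqref{b25} and Theorem~\ref{thm.c1} that, after clearing the denominators $D_j$, the function $u(x)$ has the form $P(x)/\mathcal D(x)$ with $\deg\mathcal D\le m$, so that $\rank H_u\le m$ by Kronecker's theorem.)
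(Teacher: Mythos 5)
Your proof is correct, but it takes a genuinely different route from the paper's. You establish the forward inclusion and mutual orthogonality the same way (via Lemma~\ref{lma.c12}), but for the reverse inclusion you argue by a dimension count: using $1-\theta=\frac{1}{2\pi i}\jap{(Q^\top)^{-1}\1,\1}$ from \eqref{c8} together with the Herglotz representation \eqref{b20c} of $b_j$ (at most $\deg\psi_j-1$ real poles since $\psi_j(\infty)=1$), you clear denominators and bound the degree of $\theta$ by $m=\sum_k\deg\psi_k$ via a Leibniz-expansion estimate on $\det Q$ and $\operatorname{adj}(Q^\top)$, hence $\dim\Ran H_u=\deg\theta\le m=\dim M$, forcing equality. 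The paper instead proves the reverse inclusion by showing that the dense (in $K_\theta$) family $\{(A_\theta^*-\zeta)^{-1}g\}_{\zeta\in\bbC_+}$ lies in $M$: it expands $(A_\theta^*-\zeta)^{-1}g_j=(g_j(x)-g_j(\zeta))/(x-\zeta)$ via the resolvent identity for $Q$, manipulates the difference $Q(x)-Q(\zeta)$ using the explicit $\frac{1+\psi_k}{1-\psi_k}$ structure, and recognizes the result as a linear combination of $p_k(A_{\psi_k}^*-\zeta)^{-1}(1-\psi_k)\in p_kK_{\psi_k}$; finite-dimensionality of $M$ then gives $K_\theta\subset M$. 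Your approach is more elementary in that it reduces to polynomial degree bookkeeping and avoids the resolvent manipulation entirely (your alternative via Kronecker applied to \eqref{b25} is cleaner still), while the paper's computation has the added benefit of producing an explicit expression of $(A_\theta^*-\zeta)^{-1}g_j$ in terms of the model-space resolvents $(A_{\psi_k}^*-\zeta)^{-1}$, which fits naturally with the machinery already developed. Both are valid; the circularity you warn against afflicts only the naive invariance argument, not the paper's density argument, since the latter lands the resolvent of $g_j$ in $M$ by direct computation rather than by assuming $A_\theta^*M\subset M$.
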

\begin{proof}
By Lemma~\ref{lma.c12}, the subspaces $p_k K_{\psi_k}$ are mutually orthogonal and 
$$
\bigoplus_{k=1}^N p_k K_{\psi_k}
\subset 
\Ran H_u. 
$$
It suffices to check that for some dense set $D$ in $\Ran H_u$, we have
$$
D\subset \bigoplus_{k=1}^N p_k K_{\psi_k}.
$$
We use Corollary~\ref{cr.a7} and Lemma~\ref{lma.c13}; let us prove that for any $\zeta\in\bbC_+$, 
$$
(A_\theta^*-\zeta)^{-1}g\in \bigoplus_{k=1}^N p_k K_{\psi_k}. 
$$
By the definition of $g$, it suffices to check that 
$$
(A_\theta^*-\zeta)^{-1}g_j\in \bigoplus_{k=1}^N p_k K_{\psi_k}
$$
for all $j=1,\dots,N$ and all $\zeta\in\bbC_+$.
From formula \eqref{a0d} for the resolvent of 
$A_\theta^*$ and from formula \eqref{c8} for $g_j$  we find
\begin{align*}
(A_\theta^*-\zeta)^{-1}g_j(x)
&=
\frac{g_j(x)-g_j (\zeta)}{x-\zeta}
=
\frac1{2\pi i}
\jap{Q(x)^{-1}(Q(\zeta )-Q(x))Q(\zeta)^{-1}\1_j, \1 }
\\
&=\frac1{2\pi i}
\jap{(Q(\zeta )-Q(x))Q(\zeta)^{-1}\1_j, \overline{(Q^\top(x))^{-1}\1} }\ .
\end{align*}
Recalling that 
$$
\bg(x)=\frac1{2\pi i} (Q^\top(x))^{-1}\1
$$
and on the other hand
\begin{align*}
Q(x)-Q(\zeta )
&=
-iD(\gamma^2)D(\nu^2)^{-1}
D\left ( \frac{1+\psi (\zeta)}{1-\psi (\zeta)}-\frac{1+\psi (x)}{1-\psi(x)} \right )
\\
&=
-iD(\gamma^2)D(\nu^2)^{-1}
D\left (\frac{-2(\psi (x)-\psi (\zeta))}{(1-\psi(\zeta))(1-\psi (x))}\right ),
\end{align*}
we obtain, for some constants $c_{jk}(\zeta )$, 
$$
\frac{g_j(x)-g_j (\zeta)}{x-\zeta}=\sum_{k=1}^N c_{jk}(\zeta )\frac{g_k(x)(\psi_k(x)-\psi_k(\zeta ))}{(1-\psi_k(x))(x-\zeta)}\ .
$$
Since $g_k(x)=\overline{p_{k,\infty}}p_k(x)(1-\psi_k(x))$,
we are left with a linear combination of terms of the form
$$
p_k(x)\frac{\psi_k(x)-\psi_k(\zeta )}{x-\zeta }
=
-p_k(x)(A_{\psi_k}^*-\zeta)^{-1}(1-\psi_k)\ ,
$$
which belong to $p_k K_{\psi_k}$. This completes the proof.
\end{proof}

\subsection{Identification of $\omega_j$} 

\begin{lemma}\label{lma.c15}
For any $X,Y\in\bbC^N$, we have
\begin{equation}
\int_{-\infty}^\infty \jap{Q(x)^{-1}D(b(x))X,\1}\overline{\jap{Q(x)^{-1}Y,\1}} dx=0.
\label{c21}
\end{equation}
\end{lemma}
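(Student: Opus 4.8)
The plan is to follow the contour-integration scheme used for Lemma~\ref{lma.c6}, the key new feature being a faster decay at infinity. Using the rank-one identity \eqref{c6} for $\Im Q(x)^{-1}$ on the real axis, one rewrites the integrand as the difference of the boundary value of a function analytic in $\bbC_+$ and the complex conjugate of such a boundary value, and then closes the contour in the upper half-plane. Because of the extra diagonal factor $D(b(x))$, which is $O(1/\abs{x})$ at infinity by Lemma~\ref{lma.b17}, the integrand here decays like $O(1/\abs{x}^3)$, so the ``residue at infinity'' that produced the right-hand side of \eqref{c5} in Lemma~\ref{lma.c6} is now absent and the integral vanishes.

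\emph{Preliminaries.} By Theorem~\ref{thm.c1} the rational matrix function $Q(x)^{-1}$ is bounded on $\bbC_+$, hence has no poles in $\overline{\bbC_+}$; from the identity $Q(x)^{-1}D(b(x))=Q(x)^{-1}(\calA^*-xD(\nu^2)^{-1})-I$ (for $D(b(x))Q(x)^{-1}$ one may instead invoke \eqref{c10}) the same holds for the two products $Q(x)^{-1}D(b(x))$ and $D(b(x))Q(x)^{-1}$: they are rational, analytic and bounded on a neighbourhood of $\overline{\bbC_+}$, the real poles of the $b_j$ being cancelled. Moreover, since $Q(x)^{-1}=O(1/\abs{x})$ by \eqref{c4} and $D(b(x))=O(1/\abs{x})$ by Lemma~\ref{lma.b17}, both products are $O(1/\abs{x}^2)$ as $\abs{x}\to\infty$; in particular the integrand in \eqref{c21} is $O(1/\abs{x}^3)$ and the integral converges absolutely.

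\emph{Rewriting the integrand.} For any vectors $v,w\in\bbC^N$ one has $\jap{v,\1}\overline{\jap{w,\1}}=\jap{(\jap{\cdot,\1}\1)v,w}$. Applying this with $v=Q(x)^{-1}D(b(x))X$ and $w=Q(x)^{-1}Y$, moving one copy of $Q(x)^{-1}$ across the inner product, and using \eqref{c6}, one gets, for a.e.\ $x\in\bbR$,
\[
\jap{Q(x)^{-1}D(b(x))X,\1}\,\overline{\jap{Q(x)^{-1}Y,\1}}
=4\pi\,\jap{(\Im Q(x)^{-1})\,D(b(x))X,\,Y}.
\]
Writing $\Im Q(x)^{-1}=\tfrac1{2i}\bigl(Q(x)^{-1}-(Q(x)^{-1})^*\bigr)$ and using $\jap{(Q(x)^{-1})^*D(b(x))X,Y}=\jap{D(b(x))X,Q(x)^{-1}Y}$, this becomes
\[
\jap{Q(x)^{-1}D(b(x))X,\1}\,\overline{\jap{Q(x)^{-1}Y,\1}}
=\frac{2\pi}{i}\Bigl(\jap{Q(x)^{-1}D(b(x))X,\,Y}-\jap{D(b(x))X,\,Q(x)^{-1}Y}\Bigr).
\]
Since $b_j$ is real on $\bbR$, the second term on the right equals $\overline{\jap{D(b(x))Q(x)^{-1}Y,\,X}}$.

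\emph{Conclusion and main difficulty.} It then remains to show $\int_\bbR\jap{Q(x)^{-1}D(b(x))X,Y}\,dx=0$ and $\int_\bbR\jap{D(b(x))Q(x)^{-1}Y,X}\,dx=0$. For each of these the integrand is the restriction to $\bbR$ of a function analytic on a neighbourhood of $\overline{\bbC_+}$ and $O(1/\abs{z}^2)$ at infinity, by the preliminaries; closing the contour in $\bbC_+$ (the arc integral over the semicircle of radius $R$ is $O(1/R)\to0$, and there are no poles inside) shows both integrals vanish. Combined with the last display, this gives \eqref{c21}. The only genuine work is the preliminary step — verifying that $Q^{-1}D(b)$ and $D(b)Q^{-1}$ have no poles on $\bbR$ and decay like $1/\abs{x}^2$ — which is precisely what Theorem~\ref{thm.c1}, \eqref{c10} and \eqref{c4} are set up to provide; the rest is the algebra around \eqref{c6} and a routine contour deformation.
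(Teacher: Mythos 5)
Your proof is correct and follows essentially the same route as the paper: rewrite the integrand via the rank-one identity \eqref{c6} for $\Im Q(x)^{-1}$ on the real line, use the realness of $b_j$ on $\bbR$ to pass $D(b)$ across the inner product, and then close the contour in $\bbC_+$ noting that the extra $O(1/\abs{x})$ decay from $D(b(x))$ kills the contribution that in Lemma~\ref{lma.c6} produced the nonzero right-hand side. The only difference is that you spell out the analyticity and decay of $Q^{-1}D(b)$ and $D(b)Q^{-1}$ a bit more explicitly, which the paper leaves implicit by referring back to \eqref{c10} and Lemma~\ref{lma.c6}.
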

\begin{proof}
Following the proof of Lemma~\ref{lma.c6}, we obtain 
\begin{multline*}
\frac1{4\pi^2}
\jap{Q(x)^{-1}D(b(x))X,\1}\overline{\jap{Q(x)^{-1}Y,\1}} 
\\
=\frac1{2\pi i}\bigl(
\jap{Q(x)^{-1}D(b(x))X,Y}-\overline{\jap{Q(x)^{-1}Y,D(b(x))X}}\bigr).
\end{multline*}
Since $b_j$ are real-valued on $\bbR$, we have 
$$
\overline{\jap{Q(x)^{-1}Y,D(b(x))X}}=\overline{\jap{D(b(x))Q(x)^{-1}Y,X}}
$$
for a.e. $x\in\bbR$. Therefore, the l.h.s. of \eqref{c21} rewrites as
$$
\frac1{2\pi i}
\lim_{R\to\infty}\int_{-R}^R \jap{Q(x)^{-1}D(b(x))X,Y}dx
-
\frac1{2\pi i}
\lim_{R\to\infty}\int_{-R}^R \overline{\jap{D(b(x))Q(x)^{-1}Y,X}} dx.
$$
Deforming the integration contour as in the proof of Lemma~\ref{lma.c6}
and using that $b(x)=O(1/x)$ at infinity, 
we find that both limits are equal to zero. 
\end{proof}

\begin{lemma}\label{lma.c16}
For the operator $A_\theta$ corresponding to $H_u$, we have 
$$
\jap{A_\theta g_j,g_k}=\nu_j^2\nu_k^2\calA_{kj}
$$
and, in particular, 
$$
\jap{A_\theta g_j, g_j}=\frac{\omega_j}{\lambda_j^2}. 
$$
\end{lemma}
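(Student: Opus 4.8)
The plan is to produce an explicit expression for $A_\theta^*g_k$ on the real line, pair it against $g_j$, and then reduce the outcome to the orthogonality of the $g_j$ (Lemma~\ref{lma.c6}) together with the vanishing‑integral identity of Lemma~\ref{lma.c15}.

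First I would derive the formula $A_\theta^*g_k=\nu_k^2(\overline{\calA}\,\bg)_k-\nu_k^2 b_kg_k$. To this end, recall that $g_k\in\Ran H_u=K_\theta$ by Theorem~\ref{thm.c1a}, so \eqref{a4} gives $A_\theta^*g_k(x)=xg_k(x)-\Lambda_1(g_k)$; moreover $\Lambda_1(g_k)=-\tfrac1{2\pi i}\jap{g_k,1-\theta}=-\nu_k^2/(2\pi i)$ by Lemma~\ref{lma.a1b}, Lemma~\ref{lma.c13} (which gives $1-\theta=\sum_m g_m$) and the relation $\jap{g_j,g_m}=\nu_j^2\delta_{jm}$ coming from \eqref{c8} and Lemma~\ref{lma.c6}. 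On the other hand, transposing $Q(x)=\calA^*-xD(\nu^2)^{-1}-D(b(x))$ (note $Q(x)^\top=\overline{\calA}-xD(\nu^2)^{-1}-D(b(x))$) and reading off the $k$-th component of the defining relation $Q(x)^\top\bg(x)=\tfrac1{2\pi i}\1$ from \eqref{c12} expresses $xg_k(x)$ as $\nu_k^2(\overline{\calA}\,\bg(x))_k-\nu_k^2 b_k(x)g_k(x)-\nu_k^2/(2\pi i)$; subtracting $\Lambda_1(g_k)$ yields the claimed formula. The product $b_kg_k$ is regular on $\bbR$ because $g_k=\overline{p_{k,\infty}}\,p_k(1-\psi_k)$ vanishes at the simple real poles of the Herglotz function $b_k$, and it is $O(1/x^2)$ at infinity, so $b_kg_k\in H^2$.

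Next I would take the $L^2(\bbR)$ inner product with $g_j$. Expanding $(\overline{\calA}\,\bg)_k=\sum_m\overline{\calA_{km}}g_m$ and using $\jap{g_j,g_m}=\nu_j^2\delta_{jm}$ turns the first term into $\nu_j^2\nu_k^2\calA_{kj}$, leaving $\jap{A_\theta g_j,g_k}=\nu_j^2\nu_k^2\calA_{kj}-\nu_k^2\int_{\bbR}b_k(x)g_j(x)\overline{g_k(x)}\,dx$. The last step is to show this integral vanishes: from $\jap{Q(x)^{-1}\1_m,\1}=2\pi i\,g_m(x)$ one gets $\jap{Q(x)^{-1}D(b(x))\1_k,\1}=2\pi i\,b_k(x)g_k(x)$, so Lemma~\ref{lma.c15} with $X=\1_k$, $Y=\1_j$ gives $\int_{\bbR}b_kg_k\overline{g_j}\,dx=0$, and conjugating (using that $b_k$ is real on $\bbR$) gives $\int_{\bbR}b_kg_j\overline{g_k}\,dx=0$. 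This yields $\jap{A_\theta g_j,g_k}=\nu_j^2\nu_k^2\calA_{kj}$, and the ``in particular'' statement follows by setting $j=k$ and inserting $\calA_{jj}=\omega_j/(\lambda_j^2\nu_j^4)$, so that $\jap{A_\theta g_j,g_j}=\nu_j^4\calA_{jj}=\omega_j/\lambda_j^2$.

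The part requiring most care — rather than any genuine difficulty — is the handling of the real poles of the functions $b_k$ (cf.\ \eqref{b20c}): one must justify that $b_kg_k\in H^2$, that $b_kg_j\overline{g_k}$ is integrable over $\bbR$, and that Lemma~\ref{lma.c15} is applicable. All of this follows from the observation that $1-\psi_k$, hence $g_k$, vanishes exactly at the poles of $b_k$, so every product occurring in the computation is regular on $\bbR$ and decays at infinity; and the proof of Lemma~\ref{lma.c15} already accommodates these poles by a contour deformation into $\bbC_+$.
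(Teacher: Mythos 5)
Your proof is correct and follows essentially the same route as the paper's: reduce $\jap{A_\theta g_j,g_k}$ to $\jap{g_j,A_\theta^*g_k}$, express $A_\theta^*g_k$ via \eqref{a4} and the defining relation \eqref{c12} for $\bg$, then kill the diagonal $b_k$ contribution with Lemma~\ref{lma.c15} and identify the remaining term via the orthogonality $\jap{g_j,g_m}=\nu_j^2\delta_{jm}$ coming from Lemma~\ref{lma.c6}. The only cosmetic difference is that you read off the $k$-th component of $Q^\top\bg=\tfrac1{2\pi i}\1$ and do the linear algebra before integrating, whereas the paper keeps the expression in the form $\tfrac1{2\pi i}\jap{Q(x)^{-1}(\cdots)\1_j,\1}$ and invokes Lemma~\ref{lma.c6} with a general first argument after integrating; both amount to the same calculation.
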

\begin{proof}
We shall prove the equivalent identity 
$$
\jap{A_\theta^* g_j,g_k}=\nu_j^2\nu_k^2(\calA^*)_{kj}.
$$
By \eqref{a4}, we have
$$
A_\theta^* g_j(x)=xg_j(x)-\Lambda_1(g_j). 
$$
Recall that 
$$
g_j(x)=\frac1{2\pi i}\jap{Q(x)^{-1}\1_j,\1}. 
$$
By the asymptotic formula \eqref{c4}, we find
$$
\Lambda_1(g_j)=-\frac1{2\pi i}\nu_j^2=
-\frac1{2\pi i}\jap{D(\nu^2)\1_j,\1},
$$
and therefore
\begin{align*}
A_\theta^* g_j(x)
&=
\frac1{2\pi i}
\jap{(xQ(x)^{-1}+D(\nu^2))\1_j,\1}
=
\frac1{2\pi i}
\jap{Q(x)^{-1}(xI+Q(x)D(\nu^2))\1_j,\1}
\\
&=
\frac1{2\pi i}
\jap{Q(x)^{-1}(xD(\nu^2)^{-1}+Q(x))D(\nu^2)\1_j,\1}
\\
&=
\frac1{2\pi i}
\jap{Q(x)^{-1}\calA^*D(\nu^2)\1_j,\1}
-
\frac1{2\pi i}
\jap{Q(x)^{-1}D(b(x))D(\nu^2)\1_j,\1}.
\end{align*}
It follows that 
\begin{multline*}
\jap{A_\theta^*g_j,g_k}
=
\frac1{4\pi^2} \int_{-\infty}^\infty \jap{Q(x)^{-1}\calA^*D(\nu^2)\1_j,\1}\overline{\jap{Q(x)^{-1}\1_k,\1}}dx
\\
-
\frac1{4\pi^2} \int_{-\infty}^\infty \jap{Q(x)^{-1}D(b(x))D(\nu^2)\1_j,\1}\overline{\jap{Q(x)^{-1}\1_k,\1}}dx.
\end{multline*}
Here the first term in the r.h.s equals 
$$
\jap{D(\nu^2)\calA^*D(\nu^2)\1_j,\1_k}=\nu_j^2\nu_k^2(\calA^*)_{kj}
$$
by Lemma~\ref{lma.c6} and the second one equals zero by Lemma~\ref{lma.c15}. 
\end{proof}

\subsection{Proof of Theorem~\ref{thm.c2}}
The theorem follows by putting together Theorem~\ref{thm.c1a} and the lemmas of this section. Indeed, by Theorem~\ref{thm.c1a} and by Lemma~\ref{lma.c14}, the set of singular values of $H_u$ is exactly $\{\lambda_n\}_{n=1}^N$. Again by Theorem~\ref{thm.c1a} and by Lemma~\ref{lma.c12}, the inner function $\psi_j$ and the unimodular constant $e^{i\varphi_j}$ corresponds to the eigenvalue $\lambda_j$. Finally, by Lemma~\ref{lma.c16}, the parameters $\omega_j$ correspond to $A_\theta$ and $g_j$. 
\qed

\section{The  Szeg\H{o} dynamics}\label{evol}

\subsection{Formulas for the Szeg\H{o} dynamics}

In this section we express the Szeg\H{o} dynamics for rational solutions in terms of the spectral data. 
The main result here is

\begin{theorem}\label{thm1}
Let $u$ be a solution of the cubic Szeg\H{o} equation
\begin{equation}
i\frac{d}{dt} u=\bbP_+(\abs{u}^2 u)
\label{sz}
\end{equation}
with $u|_{t=0}$ rational.
Then the solution is rational for all $t>0$, and the spectral data of $u$  satisfy 
the following law:
\begin{align}
\frac{d}{dt}\lambda_j&=\frac{d}{dt}\psi_j=0;
\label{s1}
\\
\frac{d}{dt}\varphi_j&=\lambda_j^2, 
\quad 
\label{s2}
\\
\frac{d}{dt}\omega_j&=\frac1{2\pi}\lambda_j^4\nu_j^4.
\label{s3}
\end{align}
\end{theorem}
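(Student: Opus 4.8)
The plan is to use the Lax pair of the cubic Szeg\H{o} equation found by Pocovnicu in \cite{OP1,OP2}. Writing $T_b f=\bbP_+(bf)$ for the Toeplitz operator with symbol $b$ and $B_u:=\tfrac{i}{2}H_u^2-iT_{\abs{u}^2}$, one has $\tfrac{d}{dt}H_u=B_uH_u-H_uB_u$. For rational $u$ the operator $B_u$ is bounded and anti-self-adjoint, so the propagator $U(t)$ solving $\dot U=B_uU$, $U(0)=I$, is unitary and $H_{u(t)}=U(t)H_{u(0)}U(t)^*$. This already gives $\tfrac{d}{dt}\lambda_j=0$, $E_j(t)=U(t)E_j(0)$ and $\Ran H_{u(t)}=U(t)K_{\theta(0)}=K_{\theta(t)}$, and shows that $u(t)$ stays rational and vanishes at infinity. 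Differentiating $u_j=P_ju$ with $P_j(t)=U(t)P_j(0)U(t)^*$, so that $\dot P_j=B_uP_j-P_jB_u$, and using the elementary identity $\dot u-B_uu=-\tfrac{i}{2}H_u^2u$ together with $[P_j,H_u^2]=0$, I get $\dot u_j=B_uu_j-\tfrac{i}{2}\lambda_j^2u_j$. Since $B_u$ is anti-self-adjoint, $\tfrac{d}{dt}\norm{u_j}^2=2\Re\jap{\dot u_j,u_j}=0$; hence $\norm{u_j}=\lambda_j\nu_j$, and consequently $\nu_j$, and with it $\Im\omega_j=\lambda_j^2\nu_j^4/4\pi$, are conserved.

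For the angle $\varphi_j$ I would compute $\tfrac{d}{dt}\jap{H_uu_j,u_j}$ using the Lax pair, the formula for $\dot u_j$, the anti-self-adjointness of $B_u$ and the anti-linear symmetry $\jap{H_ua,b}=\jap{H_ub,a}$; all terms involving $B_u$ cancel and one is left with $\tfrac{d}{dt}\jap{H_uu_j,u_j}=i\lambda_j^2\jap{H_uu_j,u_j}$. As $\jap{H_uu_j,u_j}=\lambda_je^{i\varphi_j}\norm{u_j}^2$ with $\lambda_j,\norm{u_j}$ constant (Theorem~\ref{thm.z2}), this gives $\dot\varphi_j=\lambda_j^2$, and then, via $g_j=\lambda_j^{-1}e^{i\varphi_j}u_j$, also $\dot g_j=B_ug_j+\tfrac{i}{2}\lambda_j^2g_j$.

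For $\omega_j$ I would start from $\omega_j=\lambda_j^2\jap{A_\theta g_j,g_j}$ (formula \eqref{b2a}) and $A_\theta g_j=xg_j+\tfrac{\nu_j^2}{2\pi i}\theta$ from Lemma~\ref{lma.a2} (using $\jap{g_j,1-\theta}=\nu_j^2$). A contour-integral computation — closing in the upper half-plane and keeping the arc, using $g_j\in H^2$ and $g_j(x)=\Lambda_1(g_j)/x+O(1/x^2)$ — gives $\int g_j\,dx=\nu_j^2/2$, whence $\jap{\theta,g_j}=\jap{1,g_j}-\nu_j^2=-\nu_j^2/2$ is conserved, so $\dot\omega_j=\lambda_j^2\tfrac{d}{dt}\jap{xg_j,g_j}=2\lambda_j^2\Re\jap{x\dot g_j,g_j}$. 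Inserting $x\dot g_j=i\lambda_j^2xg_j-ixT_{\abs{u}^2}g_j$, applying Lemma~\ref{lma.c7} to $xT_{\abs{u}^2}g_j=x\bbP_+(\abs{u}^2g_j)$, and using that $\jap{xg_j,g_j}=\int x\abs{g_j}^2$ is real, everything reduces to $\mu_j:=\int\abs{u}^2g_j\,dx$: one finds $\tfrac{d}{dt}\jap{xg_j,g_j}=\tfrac{\nu_j^2}{2\pi}\Re\mu_j$. Finally $\mu_j=\lambda_j^2\nu_j^2$: on $\bbR$ one has $\overline{u}\,g_j=T_{\overline{u}}g_j+\overline{u_j}$, since $\bbP_-(\overline{u}\,g_j)=\overline{\bbP_+(u\overline{g_j})}=\overline{u_j}$, so $\abs{u}^2g_j=u\,T_{\overline{u}}g_j+u\,\overline{u_j}$; the first term lies in $H^1(\bbC_+)$ with no poles in $\overline{\bbC_+}$ and decay $O(x^{-2})$, hence has zero integral, while $\int u\,\overline{u_j}\,dx=\lambda_je^{i\varphi_j}\int u\overline{g_j}\,dx=\lambda_je^{i\varphi_j}\cdot\lambda_je^{-i\varphi_j}\nu_j^2$, the last integral being computed again by the Jordan-arc argument together with $\jap{g_j,u}=\jap{g_j,u_j}=\lambda_je^{i\varphi_j}\nu_j^2$ and $\Lambda_1(T_{\overline{u}}g_j)=-\tfrac{1}{2\pi i}\jap{g_j,u}$. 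Thus $\tfrac{d}{dt}\omega_j=\tfrac{1}{2\pi}\lambda_j^4\nu_j^4$.

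It remains to prove $\tfrac{d}{dt}\psi_j=0$, i.e. that the Frostman orbit of $\psi_j$ is constant; equivalently, by Lemma~\ref{lma.b17}, that the Frostman-invariant function \eqref{b20b} attached to $\psi_j$ — which, up to the conserved factor $\nu_j^2$ and the affine term $x$, is the function $b_j$ of \eqref{defb} — is conserved. This, together with the contour-integral bookkeeping above, is the main obstacle, since the abstract pair $(E_j,H_u|_{E_j})$ alone does not determine $\psi_j$. I would settle it either (i) by noting that $\psi_j(t)$ is determined by the subspace $E_j(t)=U(t)E_j(0)$ and analysing the evolution of the compression $P_jA_\theta P_j$, whose conjugate under the isometric multiplier $p_j$ is $A_{\psi_j}$ plus a rank-one term, or (ii) a posteriori: let $\wt u(t)$ be given by \eqref{b25} with $\lambda_j,\psi_j$ frozen and $\varphi_j,\omega_j$ evolving by \eqref{s2}--\eqref{s3}; by Theorem~\ref{thm.c2} it has exactly this spectral data and $\wt u(0)=u(0)$, and the matrix identities of Sections~\ref{sec.c}--\ref{proof6.2} show that $\wt u$ solves \eqref{sz}; global wellposedness \cite{OP1,OP2} then gives $u=\wt u$, and Theorem~\ref{thm.uniq} yields $\psi_j(t)=\psi_j(0)$, completing the proof of \eqref{s1}.
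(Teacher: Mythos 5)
Your main line --- deriving \eqref{s2} and \eqref{s3} directly from the Lax pair $\dot{H}_u=[B_u,H_u]$, $B_u=\tfrac{i}{2}H_u^2-iT_{\abs{u}^2}$ --- is a genuinely different route from the paper's, which goes the other way around: it freezes $\lambda_j,\psi_j$, lets $\varphi_j,\omega_j$ evolve by \eqref{s2}--\eqref{s3}, plugs the resulting data into the explicit formula \eqref{b25}, differentiates in $t$, and verifies directly, using the matrix identities \eqref{c3} and \eqref{HQ} together with the computed $\tfrac{d}{dt}Q(x)^{-1}$, that the resulting function solves \eqref{sz}; wellposedness and the uniqueness Theorem~\ref{thm.uniq} then close the argument and deliver \eqref{s1} with no extra work. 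Your forward computations of $\dot u_j=B_uu_j-\tfrac{i}{2}\lambda_j^2u_j$, of the cancellation of the $B_u$ terms in $\tfrac{d}{dt}\jap{H_uu_j,u_j}$, and of the contour-integral identification $\mu_j=\int\abs{u}^2g_j\,dx=\lambda_j^2\nu_j^2$ all check out, and they give a more mechanistic explanation of \eqref{s2}--\eqref{s3} than the paper's verification.

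The gap is exactly where you place it: the invariance of $\psi_j$, which for multiple $\lambda_j$ carries genuine content. Your sketch~(i) is not developed and faces a real obstacle: $U(t)$ does not commute with $S(\tau)$, so $A_{\theta(t)}\neq U(t)A_{\theta(0)}U(t)^*$, and it is not apparent from the Lax data alone that the compression $P_jA_\theta P_j$, conjugated by the (time-dependent) isometry $p_j$, stays in the Frostman orbit of $A_{\psi_j(0)}$. Your fallback~(ii) is in substance the paper's proof, but the decisive middle step --- checking that $\wt u$ given by \eqref{b25} with frozen $\psi_j$ solves \eqref{sz} --- is precisely what the paper carries out in Section~\ref{evol}, not in Sections~\ref{sec.c}--\ref{proof6.2}, which you cite; the surjectivity machinery there gives you a rational $\wt u(t)$ with the prescribed spectral data at each fixed $t$, but says nothing about the Szeg\H{o} equation in $t$. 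As written, the $\psi_j$-conservation is asserted rather than proved, so the argument is incomplete at the one point that the direct Lax-pair computation cannot reach.
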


Before coming to the proof of Theorem \ref{thm1}, observe the following. Since we know from \cite{OP2} that the initial value problem for the cubic Szeg\H{o} equation is wellposed in every Sobolev space $W^{s,2}(\bbC_+)$ for every $s\geq 1/2$, 
the uniqueness implies that it is enough to prove that the rational function corresponding to the spectral data defined by the evolution laws \eqref{s1}, \eqref{s2}, \eqref{s3} is indeed a solution of the cubic Szeg\H{o} equation.

We consider the spectral data evolving according to the evolution laws \eqref{s1}, \eqref{s2}, \eqref{s3}. For each $t>0$, we define the matrices $\calA$ and $Q(x)$ as in Section~\ref{sec.b}, suppressing the dependance on $t$ in our notation. 
We define the anti-linear operator $\calH$ in $\bbC^N$ as in \eqref{c1a}.
According to Section~\ref{sec.b}, the functions $u$ and $u_j$ can be recovered from the spectral data by the formulas 
\begin{align}
u_j(x)&=\frac1{2\pi i}\jap{Q(x)^{-1}\calH\1_j,\1}, 
\notag
\\
u(x)&=\frac1{2\pi i}\jap{Q(x)^{-1}\calH\1,\1}.
\label{eq.u}
\end{align}
Our aim is to differentiate \eqref{eq.u} with respect to $t$ and check that $u$ satisfies \eqref{sz}.

\subsection{The time derivative of $Q$}

We first observe that the equation \eqref{s3} 
means, in particular, that $\Im \omega_j$ are fixed by the dynamics. By \eqref{b0}, it follows that $\nu_j$ are also fixed. 

The time derivative of $\calH\1$ is straightforward to compute:
\begin{equation}
\frac{d}{dt}\calH\1=
D(\lambda)\frac{d}{dt}D(e^{-i\varphi})\1=
-i\calH^2 \calH\1=-i\calH^3\1.
\label{b9}
\end{equation}
Let us compute the time derivative of the matrix $\calA$. 
For $j\not=k$, the only time-dependant quantities of $\calA_{kj}$ are $\varphi_j$ and $\varphi_k$, and so we get
$$
\frac{d}{dt}\calA_{kj}
=
\frac{i}{2\pi}\frac{-\lambda_j\lambda_k(i\lambda_j^2-i\lambda_k^2)e^{i\varphi_j}e^{-i\varphi_k}}{\lambda_j^2-\lambda_k^2}
=
\frac{1}{2\pi}\lambda_j\lambda_ke^{i\varphi_j}e^{-i\varphi_k}.
$$
For the diagonal entries $\calA_{jj}$ we have, by our definitions, 
$$
\frac{d}{dt}\calA_{jj}=\frac1{\lambda_j^2\nu_j^4}\frac{d}{dt}\omega_j=\frac1{2\pi} \frac{\lambda_j^4\nu_j^4}{\lambda_j^2\nu_j^4}=\frac1{2\pi}\lambda_j^2. 
$$
Putting this together, we find
$$
\frac{d}{dt}\calA
=
\frac1{2\pi} \jap{\cdot,\calH\1}\calH\1.
$$
Since $b_j$ is independent of $t$, we obtain 
$$
\frac{d}{dt} Q(x)=\frac{1}{2\pi}\jap{\cdot,\calH\1}\calH\1, \quad \Im x>0,
$$
and finally, taking inverses, 
\begin{equation}
\frac{d}{dt} Q(x)^{-1}=-\frac{1}{2\pi}\jap{\cdot,(Q^*(x))^{-1}\calH\1}Q(x)^{-1}\calH\1, 
\label{b8}
\end{equation}
for $\Im x>0$ and, by taking limits, everywhere on the real axis apart from finitely many points.

\subsection{Formulas for $H_uu$ and $H_u^2 u$. }

One easily verifies the identity 
$$
uH_uu+H_u^2 u=\bbP_+(\abs{u}^2u). 
$$
Using this, one can rewrite the Szeg\H{o} equation in the following equivalent form:
\begin{equation}
i\frac{d}{dt} u=uH_uu+H_u^2 u.
\label{b00}
\end{equation}
Now, in preparation for what comes next, let us express $H_uu$ and $H_u^2 u$ 
in terms of the spectral data. We have:
\begin{align*}
H_uu&=\sum_j \lambda_j e^{i\varphi_j}u_j
=\frac1{2\pi i}\sum_j \lambda_j e^{i\varphi_j}\jap{Q(x)^{-1}\calH\1_j,\1}
=\frac1{2\pi i}\jap{Q(x)^{-1}\calH^2\1,\1},
\\
H_u^2u&=\sum_j \lambda_j^2 u_j
=\frac1{2\pi i}\sum_j \lambda_j^2 \jap{Q(x)^{-1}\calH\1_j,\1}
=\frac1{2\pi i}\jap{Q(x)^{-1}\calH^3\1,\1}.
\end{align*}

\subsection{Concluding the proof of Theorem~\ref{thm1}}

First we need an identity relating $Q$ and $\calH$. 
From the matrix identity \eqref{c3} we get
$$
Q(x)\calH =\calH Q^*(x), \quad \Im x>0.
$$
Passing to the inverses,
\begin{equation}
\calH(Q^*(x))^{-1}=Q(x)^{-1}\calH, 
\label{HQ}
\end{equation}
for $\Im x>0$ and, by taking limits, also everywhere on the real axis apart from finitely many points (the poles of $b_j$).

Using \eqref{b9} and \eqref{b8}, we find (suppressing the dependance of $x$)
\begin{align*}
i\frac{d}{dt}u&=\frac{i}{2\pi i}\frac{d}{dt}\jap{Q^{-1}\calH\1,\1}
\\
&=\frac{1}{2\pi}\jap{\bigl(\tfrac{d}{dt}Q^{-1}\bigr)\calH\1,\1}
+
\frac{1}{2\pi}\jap{Q^{-1}\bigl(\tfrac{d}{dt}\calH\1\bigr),\1}
\\
&=
\left(\frac1{2\pi i}\right)^2
\jap{\calH\1,(Q^*)^{-1}\calH\1}\jap{Q^{-1}\calH\1,\1}
+
\frac1{2\pi i} \jap{Q^{-1}\calH^3\1,\1}
\\
&=u(x)\frac1{2\pi i}
\jap{Q^{-1}\calH\1,\calH\1}
+H_u^2u.
\end{align*}
Using \eqref{HQ}, we transform the inner product in the right hand side as
$$
\jap{Q^{-1}\calH\1,\calH\1}
=
\jap{\calH (Q^*)^{-1}\1,\calH\1}
=
\jap{\calH^2\1,(Q^*)^{-1}\1}
=
\jap{Q^{-1}\calH^2\1,\1}.
$$
Putting this together, we obtain the Szeg\H{o} equation in the form \eqref{b00}.

\section{The genericity of turbulent solutions: proof of Theorem~\ref{genericgrowth}}\label{growth}
In this last section, we prove Theorem~\ref{genericgrowth}.
The key argument  is to establish that, if $u$ is a rational solution of the cubic Szeg\H{o} equation \eqref{z1} such that one of the singular values of the Hankel operator $H_u$ is multiple while the other singular values are simple, then the $L^2$ norm of $\partial_x u$ tends to infinity as $t$ tends to infinity. This can be achieved thanks to the representation of rational solutions obtained in previous sections.
\subsection{Upper bound for general rational solutions}
We start with a general a priori bound for rational solutions.
\begin{proposition}\label{upperbound}
If $u$ is a rational solution of the cubic Szeg\H{o} equation, then
$$\limsup_{t\to +\infty} \frac{\norm {\partial_xu(\cdot,t)}_{L^2}}{t}<+\infty  .$$
\end{proposition}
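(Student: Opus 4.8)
The strategy is to read off the bound from the explicit representation of $u$ in terms of the spectral data (Theorem~\ref{thm.uniq}) together with the Szeg\H o dynamics (Theorem~\ref{thm1}).

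The key structural fact is that, among the spectral data of a rational solution, only the real parts of the numbers $\calA_{jj}$ grow under the flow, and they grow linearly: by \eqref{s1}--\eqref{s3} the $\lambda_j,\psi_j,\nu_j$ are constant (hence so are the rational Herglotz functions $b_j$ of \eqref{defb}), the factors $e^{i\varphi_j(t)}$ and, by \eqref{Aoff}, the off-diagonal entries of $\calA(t)$ stay bounded, while $\tfrac{d}{dt}\calA_{jj}=\tfrac{\lambda_j^2}{2\pi}$. Consequently the matrix $Q(x,t)=\calA^*(t)-xD(\nu^2)^{-1}-D(b(x))$ of Theorem~\ref{thm.uniq} splits as $D(\mu(t))+R(x,t)$ with $\mu_j(t)=\tfrac{\lambda_j^2}{2\pi}t+O(1)$ real, while $R(\cdot,t)$ is bounded uniformly in $t$ on every compact subset of $\bbR$ away from the finitely many real poles $\alpha$ of the $b_j$; moreover $\Im Q(x,t)=-\Im\calA=-\tfrac1{4\pi}\jap{\cdot,\1}\1$ on $\bbR$ (away from those poles), by \eqref{c2}. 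I would first record that, tracking the dependence on $\calA(t)$ in the proof of Theorem~\ref{thm.c1} (Lemmas~\ref{lma.b4}--\ref{lma.b5}, where the rank-one relation \eqref{c2} keeps the eigenvalues of $\calA(t)$ and of its square submatrices off $\bbR$), one obtains a polynomial-in-$t$ bound $\sup_{x\in\overline{\bbC_+}}\norm{Q(x,t)^{-1}}\le C(1+t)^K$; and that for $|x|\gtrsim t$ the asymptotics \eqref{c4} hold uniformly in $t$.

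Differentiating \eqref{eq.u} gives $\partial_x u(x,t)=-\tfrac1{2\pi i}\jap{Q(x,t)^{-1}(\partial_x Q(x))Q(x,t)^{-1}\calH\1,\1}$ with the diagonal matrix $\partial_x Q(x)=-D(\nu^2)^{-1}-D(b'(x))$, and I would estimate $\norm{\partial_x u(\cdot,t)}_{L^2}^2$ by splitting the line into three regions. On $\{|x|\gtrsim t\}$ the far-field expansion gives $\partial_x u(x,t)=O(|x|^{-2})$ uniformly in $t$, a bounded contribution. On the complement of fixed neighbourhoods of the poles $\alpha$ of the $b_j$, both $\partial_x Q$ and $Q(x,t)^{-1}$ are bounded uniformly in $t$ (the latter because for large $t$ every $\mu_j(t)\pm$ bounded has size $\gtrsim t$, which makes $\norm{Q(x,t)^{-1}}=O(1)$ there), and again $\partial_x u(x,t)=O(1)$. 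Hence all the growth comes from fixed neighbourhoods of the $\alpha$'s, where $(\partial_x Q(x))_{jj}=-\nu_j^{-2}-b_j'(x)$ has a double pole at $\alpha$ and reaches size $\sim t^2$ at distance $\sim t^{-1}$ from $\alpha$; the singularity at $\alpha$ itself is cancelled by the vanishing of the corresponding row and column of $Q(x,t)^{-1}$, exactly as in the argument around \eqref{c10}. The task there is to show $\int_{|x-\alpha|\le\eta}|\partial_x u(x,t)|^2\,dx\le C(1+t)^2$, which I would do by isolating the resonant mode $j$: since the other $Q(x,t)_{cc}$ have size $\gtrsim t$ near $\alpha$, a Schur-complement expansion reduces $Q(x,t)^{-1}$ to $Q(x,t)_{jj}^{-1}\jap{\cdot,\1_j}\1_j$ modulo an $O(1/t)$ correction, so that the contribution of this mode to $\partial_x u$ is, up to lower order, $\tfrac{(\calH\1)_j}{2\pi i}\,\partial_x\bigl(Q(x,t)_{jj}^{-1}\bigr)$; using $|Q(x,t)_{jj}|\ge\tfrac1{4\pi}$ on $\bbR$ and the explicit form of $b_j$ one checks that this is a single bump whose pole lies at distance $\sim t^{-2}$ from $\bbR$ with residue $\sim t^{-2}$, so that the $L^2$ norm of its derivative is $\sim t$.

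The main obstacle is making this last local analysis uniform and, in particular, controlling the error terms near $\alpha$ when the Schur-complement expansion degenerates, which can happen if two modes "resonate" simultaneously there (possible when $\lambda_j^2\nu_j^2=\lambda_k^2\nu_k^2$). In that case the crude bound on the norm of the inverse of the relevant $2\times2$ block overestimates, and one must instead use that on $\bbR$ the numerator $\jap{\mathrm{adj}\,Q(x)\,\calH\1,\1}$ vanishes wherever $\det Q(x)$ does — because $\ker Q(x)\perp\1$ there, again by \eqref{c2} — so that no singularity of $u$ itself arises and only a bounded multiple of $t^2$ survives in $\int|\partial_x u|^2$. Summing the finitely many contributions gives $\norm{\partial_x u(\cdot,t)}_{L^2}\le C(1+t)$ for all $t\ge0$, which is stronger than the stated $\limsup$ bound.
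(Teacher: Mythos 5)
Your starting point matches the paper's: you work from the explicit representation $u(x,t)=\tfrac1{2\pi i}\jap{R_t(\xi(x,t))^{-1}X(t),\1}$ with $\xi_j(x,t)=\tfrac{\lambda_j^2}{2\pi}t-b_j(x)-x/\nu_j^2$, and you correctly identify that the linear-in-$t$ drift of the real parts of the diagonal is the sole source of growth. But from there you diverge, and the divergence opens a gap that you yourself flag but do not close.

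The paper's proof hinges on Lemma~\ref{linearalgebra}: for \emph{any} $\xi\in\bbR^N$, uniformly in $\varphi$,
$$
\bigl\|\bbP_j\bigl(\calA(\lambda,\psi,\varphi,\omega)^*+D(\xi)\bigr)^{-1}\bigr\|\leq\frac{C}{1+|\xi_j|}\,,
$$
and the same for $\calA$ in place of $\calA^*$. Writing
$\jap{R_t^{-1}\bbP_jR_t^{-1}X,\1}=\jap{\bbP_j(\calA^*+D(\xi))^{-1}X,\bbP_j(\calA+D(\xi))^{-1}\1}$
and applying the lemma to each factor gives the clean pointwise bound
$$
|\partial_x u(x,t)|\lesssim\sum_{j=1}^N\frac{|\partial_x\xi_j(x,t)|}{1+\xi_j(x,t)^2}\,.
$$
This bound is uniform across \emph{all} real configurations $\xi$, so no case distinction between ``resonant'' and ``non-resonant'' points, no Schur-complement expansion, and no polynomial-in-$t$ intermediate estimate are ever needed. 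After that it is a one-variable calculation: on each interval between adjacent poles of $b_j$ the map $x\mapsto\xi_j(x,t)$ is monotone, one substitutes $|\partial_x\xi_j|\,dx=d\xi_j$, and uses the elementary comparison $|b_j'(x)|\lesssim(b_j(x)+x/\nu_j^2)^2+1$ (both sides have double poles at the same real points and the same order at infinity), whence $|\partial_x\xi_j(x,t)|\lesssim\xi_j(x,t)^2+t^2$. The integral becomes $\lesssim\int(\xi^2+t^2)/(1+\xi^2)^2\,d\xi\lesssim t^2$, done.

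Your route instead builds a three-region decomposition and a Schur-complement analysis near each real pole $\alpha$ of the $b_j$. You acknowledge as ``the main obstacle'' the degeneracy of that expansion when two modes resonate near $\alpha$, and propose to repair it by arguing that $\jap{\mathrm{adj}\,Q(x)\,\calH\1,\1}$ vanishes wherever $\det Q(x)$ does because $\ker Q(x)\perp\1$. But Theorem~\ref{thm.c1} (adapted to the time-dependent $\calA(t)$, which is what Lemma~\ref{linearalgebra} provides) already guarantees $Q(x,t)$ is invertible for $\Im x\geq0$; its determinant does not vanish on $\bbR$ away from the poles of the $b_j$, so the mechanism you invoke does not apply. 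More to the point, even if the local bump analysis could be pushed through, you would still need the intermediate claim that $\sup_{x\in\overline{\bbC_+}}\|Q(x,t)^{-1}\|\leq C(1+t)^K$ is uniform in $t$, which you only sketch by reference to the proof of Theorem~\ref{thm.c1}; that uniformity in $\varphi$ is exactly the content of Lemma~\ref{linearalgebra}, and once one has that lemma the whole Schur-complement machinery is superfluous. In short: you have correctly identified where growth comes from, but the central estimate that makes the argument go through --- the $\varphi$-uniform, component-wise resolvent bound of Lemma~\ref{linearalgebra} --- is missing from your proposal, and your substitute for it is both more complicated and incomplete.
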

Before proceeding with the proof, we need a simple lemma. 
For every spectral data 
$$
(\lambda, \psi ,e^{i\varphi} , \omega ):=\biggl(\{\lambda_j\}_{j=1}^N, \{\psi_j\}_{j=1}^N ,\{e^{i\varphi_j}\}_{j=1}^N, \{\omega _j\}_{j=1}^N\biggr) \ , 
$$ 
denote by $\mathcal A (\lambda, \psi ,\varphi , \omega)$ the matrix defined in Subsection~\ref{A}.
We also denote by $\bbP_j$ the projector matrix onto the $j$'th direction in $\bbC^N$. 
\begin{lemma} \label{linearalgebra}
Fix $(\lambda,\psi, \omega )$; there exists $C>0$ such that, for every $j=1,\dots ,N$, we have
$$\forall \xi\in \bbR ^N\ ,\ \sup_{\varphi \in \bbT^N} \norm {\bbP_{j}(\calA(\lambda, \psi ,\varphi , \omega)^*+D(\xi))^{-1}}\leq \frac{C}{1+|\xi_j|}\ .$$
A similar result holds for $\calA (\lambda, \psi ,\varphi  , \omega)$ in place of $\calA(\lambda, \psi  ,\varphi  , \omega )^*$.
\end{lemma}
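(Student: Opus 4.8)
Fix the data $(\lambda,\psi,\omega)$ throughout and abbreviate $\calA(\varphi):=\calA(\lambda,\psi,\varphi,\omega)$; recall that the diagonal of $\calA(\varphi)$ does not depend on $\varphi$ and its off-diagonal entries depend on $\varphi$ only through the unimodular factors $e^{i(\varphi_j-\varphi_k)}$, so $\varphi\mapsto\calA(\varphi)$ is continuous on the compact torus $\bbT^N$ and every column of $\calA(\varphi)$, as well as of $\calA(\varphi)^*$, has norm bounded uniformly in $\varphi$. The plan is to reduce both estimates to a single uniform resolvent bound and then read off the decay in $\xi_j$ by an elementary identity. Since $\bbP_j$ is the rank-one orthogonal projection onto $\bbC\1_j$, one has $\norm{\bbP_j M}=\norm{M^*\1_j}$ for any $N\times N$ matrix $M$; because $\xi\in\bbR^N$ we have $(\calA(\varphi)^*+D(\xi))^*=\calA(\varphi)+D(\xi)$, whence
$$
\norm{\bbP_j(\calA(\varphi)^*+D(\xi))^{-1}}=\norm{(\calA(\varphi)+D(\xi))^{-1}\1_j},\quad
\norm{\bbP_j(\calA(\varphi)+D(\xi))^{-1}}=\norm{(\calA(\varphi)^*+D(\xi))^{-1}\1_j},
$$
and also $\norm{(\calA(\varphi)^*+D(\xi))^{-1}}=\norm{(\calA(\varphi)+D(\xi))^{-1}}$. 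Thus everything rests on the bound
$$
C_0:=\sup_{\varphi\in\bbT^N}\ \sup_{\xi\in\bbR^N}\ \norm{(\calA(\varphi)+D(\xi))^{-1}}<\infty .
$$

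To prove $C_0<\infty$ I would revisit the proof of Theorem~\ref{thm.c1}. That proof shows, for the matrix attached to any admissible spectral data, that $\calA+D(\xi)$ has all its eigenvalues in the open upper half-plane for $\xi\in\bbR^N$, in particular is invertible, and that $\calA$ satisfies the hypothesis of Lemma~\ref{lma.b5}; the only data-dependent ingredients there are the explicit shape of the entries of $\calA$ and the relations \eqref{c2}, \eqref{c3} and Lemma~\ref{lma.b4}, all of which hold for every $\varphi\in\bbT^N$. It then remains to make Lemma~\ref{lma.b5} uniform over the compact family $\{\calA(\varphi):\varphi\in\bbT^N\}$: if $C_0=+\infty$ we pick $\varphi^{(n)}\in\bbT^N$, $\xi^{(n)}\in\bbR^N$ and unit vectors $X^{(n)}$ with $\norm{(\calA(\varphi^{(n)})+D(\xi^{(n)}))X^{(n)}}\to0$, pass to a subsequence along which $\varphi^{(n)}\to\varphi^\infty$ (so $\calA(\varphi^{(n)})\to\calA(\varphi^\infty)$) and $X^{(n)}\to X$ with $\norm{X}=1$, and run the compactness argument of Lemma~\ref{lma.b5} verbatim: splitting $\{1,\dots,N\}=J\cup L$ according to whether $|\xi^{(n)}_i|\to\infty$ or $\xi^{(n)}_i$ converges to a real number $\xi^\infty_i$, one obtains $X=0$ from the invertibility of the matrix $D(\beta)+P_L\calA(\varphi^\infty)P_L^*$ with $\beta_i=\xi^\infty_i$, $i\in L$ (which holds since $\calA(\varphi^\infty)$ satisfies the hypothesis of Lemma~\ref{lma.b5}) — a contradiction.

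Finally, with $T(\xi):=(\calA(\varphi)+D(\xi))^{-1}$, applying $T(\xi)\bigl(\calA(\varphi)+D(\xi)\bigr)=I$ to $\1_j$ and using $D(\xi)\1_j=\xi_j\1_j$ gives
$$
\xi_j\,T(\xi)\1_j=\1_j-T(\xi)\,\calA(\varphi)\1_j ,
$$
so $|\xi_j|\,\norm{T(\xi)\1_j}\le 1+C_0\sup_{\varphi\in\bbT^N}\norm{\calA(\varphi)\1_j}$. Combining this with the trivial bound $\norm{T(\xi)\1_j}\le C_0$ yields $\norm{(\calA(\varphi)+D(\xi))^{-1}\1_j}\le C/(1+|\xi_j|)$ with $C$ depending only on $(\lambda,\psi,\omega)$ (and, after a maximum over $j$, independent of $j$), which by the first paragraph is the asserted bound for $\norm{\bbP_j(\calA(\varphi)^*+D(\xi))^{-1}}$. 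Running the same computation with $\calA(\varphi)^*$ in place of $\calA(\varphi)$ — legitimate since $\norm{(\calA(\varphi)^*+D(\xi))^{-1}}=\norm{(\calA(\varphi)+D(\xi))^{-1}}\le C_0$ and the columns of $\calA(\varphi)^*$ are also $\varphi$-uniformly bounded — gives the analogous bound for $\norm{\bbP_j(\calA(\varphi)+D(\xi))^{-1}}$, i.e. the statement with $\calA$ in place of $\calA^*$. The only genuinely non-routine step is the passage to the uniform version of Lemma~\ref{lma.b5}, and even that is immediate once one observes that the limiting matrix $\calA(\varphi^\infty)$ again satisfies the hypothesis of that lemma.
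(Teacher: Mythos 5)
Your proof is correct and follows essentially the same route as the paper: it reduces to the $\varphi$-uniform version of the resolvent bound from Lemma~\ref{lma.b5}/Theorem~\ref{thm.c1} and then reads off the decay in $\xi_j$ from the $j$th component of the resolvent equation. The only cosmetic difference is that you pass through the adjoint identity $\norm{\bbP_j M}=\norm{M^*\1_j}$ so as to apply the resolvent to the single vector $\1_j$, whereas the paper bounds $|Z_j|$ directly for $Z=(\calA^*+D(\xi))^{-1}X$; these are the same computation.
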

\begin{proof}
Let $X\in \bbC^N$ of norm $1$, and let 
$$Z:=(\calA(\lambda, \psi ,\varphi , \omega)^*+D(\xi))^{-1}X\ .$$
We want to prove that the components of $Z$ satisfy
$$|Z_j|\leq \frac{C}{1+|\xi_j|}\ .$$
We already know, from Lemma \ref{lma.b5} and from the proof of Theorem \ref{thm.c1}, that $\norm{Z}$ is bounded. Furthermore, one can easily check from the proof of Lemma \ref{lma.b5} that this estimate is uniform in $\varphi \in \bbT^N$. Then we come back to the equations in $Z$, which read
$$\xi_jZ_j+(\calA(\lambda, \psi ,\varphi , \omega)^*Z)_j=X_j$$
and this immediately leads to the required estimate. The proof for $\calA$ is similar.
\end{proof}
\begin{proof}[Proof of Proposition~\ref{upperbound}]
By Theorem \ref{thm.uniq} and Theorem \ref{thm1}, the rational solution of the cubic Szeg\H{o} equation reads
$$u(x,t)=\frac{1}{2\pi i}\jap{R_t(\xi(x,t))^{-1}X(t),\1 }\ $$
with, for every $\xi =(\xi_1,\dots,\xi_N)\in \overline{\bbC_-}^N$, 
$$R_t(\xi):=\mathcal A(\lambda, \psi ,\varphi (t), \omega (0))^*+D(\xi)\ , \ X(t):=D(\lambda )D(e^{-i\varphi (t)} )\1\ ,$$
and 
\begin{equation}\label{xij}
\xi_j(x,t):=\frac{\lambda_j^2}{2\pi}t-b_j(x)-\frac{x}{\nu_j^2}.
\end{equation}
Notice that the time dependence of $\xi_j(x,t)$ is due to the time dependence of $\Re \omega_j(t)$ coming from \eqref{s3}, and that 
$$
\partial_x\xi_j(x,t)=-b'_j(x)-\tfrac1{\nu_j^2}
$$
is independent of $t$. Also recall that $b_j$ is a rational Herglotz function, representable as in \eqref{b20c}. 
Consequently, denoting by $\bbP_j$ the projector matrix onto the $j$'th direction in $\bbC^N$, we have
\begin{equation} \partial_xu(x,t)=\sum_{j=1}^N\frac{i\partial_x\xi_j(x,t)}{2\pi }\jap{R_t(\xi (x,t))^{-1}\bbP_j R_t(\xi (x,t))^{-1}X(t) ,\1}\ .\label{du}\end{equation}
Write 
$\calA (t):=\calA(\lambda, \psi ,\varphi (t), \omega (0))$ for brevity and observe that 
\begin{align*}
\jap{R_t(\xi (x,t))^{-1}&\bbP_j R_t(\xi (x,t))^{-1}X(t),\1}
\\
&=\jap{\bbP_{j}(\calA(t)^*+D(\xi(x,t)))^{-1}X(t),\bbP_{j}(\calA(t)+D(\xi(x,t)))^{-1}\1}
\end{align*}
so that Lemma ~\ref{linearalgebra} and identity \eqref{du} lead to
$$\vert \partial_xu(x,t)\vert \lesssim \sum_{j=1}^N\frac{|\partial_x\xi_j(x,t)|}{1+\xi_j(x,t)^2}\ ,\ x\in \bbR\ ,$$
where $\lesssim$ denotes inequality up to a multiplicative constant. Let us fix $j$; 
observe that $b_j(x)+\frac{x}{\nu_j^2}$ is strictly increasing between the poles of $b_j$. We decompose the integral
$$\int_{\bbR}\frac{|\partial_x\xi_j(x,t)|^2}{(1+\xi_j(x,t)^2)^2}\, dx$$
into a finite sum of integrals over the open intervals between the adjacent poles of $b_j$ (plus two semi-infinite intervals).
Then on each interval the map $x\mapsto \xi_j(x,t)$ is strictly decreasing.  We write in each of these integrals
$$|\partial_x\xi_j(t,x)|\, dx=d\xi_j\ .$$
We have 
$$
\abs{b'_j(x)}\lesssim \bigl(b_j(x)+\tfrac{x}{\nu_j^2}\bigr)^2+1; 
$$
indeed, this follows by observing that both sides are rational functions with poles of second order located at the same points and by inspecting the behaviour at infinity. Next, we have, as $t\to+\infty$, 
$$
\abs{\partial_x\xi_j(t,x)}=\abs{b_j'(x)+\tfrac1{\nu_j^2}}
\lesssim
\bigl(b_j(x)+\tfrac{x}{\nu_j^2}\bigr)^2+1
=
\bigl(\xi_j(t,x)-\tfrac{\lambda_j^2}{2\pi}t\bigr)^2+1
\lesssim
\xi_j(t,x)^2+t^2.
$$
Plugging this estimate into each of our integrals and summing over $j$, we get the required bound
$$\norm {\partial_xu(\cdot,t)}_{L^2}^2\lesssim t^2\ .$$
\end{proof}

\subsection{Lower bound in the case of one multiple eigenvalue}
\begin{proposition}\label{lowerbound}
Let $u$ be a rational solution of the cubic Szeg\H{o} equation on the line such that $H_u$ has singular values $\lambda_1,\cdots, \lambda_N$, with $\lambda_1$ being multiple and $\lambda_j$ being simple for every $j\ge 2$.
Then
$$\liminf_{t\to +\infty}\frac{\norm{\partial_xu(t)}_{L^2}}{t}>0\ .$$
\end{proposition}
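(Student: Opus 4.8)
The plan is to single out, in formula \eqref{du} for $\partial_x u$, the term $j=1$ attached to the multiple eigenvalue, to show that this term alone forces the $L^2$ norm to grow linearly in $t$, and to show that the sum of the remaining terms stays bounded. Write $T_1(x,t)=\frac{i\partial_x\xi_1(x,t)}{2\pi}\jap{R_t(\xi(x,t))^{-1}\bbP_1 R_t(\xi(x,t))^{-1}X(t),\1}$ for the first term. For $k\ge2$ the eigenvalue $\lambda_k$ is simple, so $\psi_k$ may be normalised with $b_k\equiv0$ (the computation behind Remark~\ref{simplespec}); then $\xi_k(x,t)=\frac{\lambda_k^2}{2\pi}t-\frac{x}{\nu_k^2}$ and $\partial_x\xi_k\equiv-\frac1{\nu_k^2}$. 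Writing $\jap{R_t^{-1}\bbP_k R_t^{-1}X,\1}=\jap{\bbP_k R_t^{-1}X,\bbP_k(R_t^*)^{-1}\1}$ and applying Lemma~\ref{linearalgebra} to $\calA(t)^*$ and to $\calA(t)$ (note $R_t^*=\calA(t)+D(\xi)$ on the real axis), one obtains $\abs{\jap{R_t^{-1}\bbP_k R_t^{-1}X,\1}}\lesssim(1+\abs{\xi_k})^{-2}$, so the $L^2$ norm in $x$ of the $k$-th term of \eqref{du} is $\lesssim\bigl(\int_\bbR(1+\abs{\tfrac{\lambda_k^2 t}{2\pi}-\tfrac x{\nu_k^2}})^{-2}dx\bigr)^{1/2}$, a constant independent of $t$. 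Hence $\norm{\partial_xu(\cdot,t)-T_1(\cdot,t)}_{L^2}\le C$ for all $t\ge1$.

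Next I would localise near a pole of $b_1$. Since $\lambda_1$ is multiple, $\psi_1$ has degree $\ge2$, so the Herglotz function $i\tfrac{1+\psi_1}{1-\psi_1}$ is not affine and $b_1$ is a nonconstant rational Herglotz function; fix one of its poles $\alpha\in\bbR$, so that $b_1(x)=\tfrac{c}{\alpha-x}+O(1)$ near $\alpha$ with $c>0$. For a small fixed $\delta>0$ and $t$ large, put $E_t=\{x:\abs{x-\alpha}<\delta,\ \abs{\xi_1(x,t)}\le1\}$. The map $x\mapsto\xi_1(x,t)=\tfrac{\lambda_1^2}{2\pi}t-b_1(x)-\tfrac{x}{\nu_1^2}$ is strictly decreasing on $(\alpha-\delta,\alpha)$ and runs from $\approx\tfrac{\lambda_1^2}{2\pi}t$ down to $-\infty$, so $E_t$ is a nonempty subinterval along which $\alpha-x\approx\tfrac{2\pi c}{\lambda_1^2 t}$; consequently $\abs{E_t}\gtrsim t^{-2}$, $\abs{\partial_x\xi_1(x,t)}\ge b_1'(x)\gtrsim t^2$, and $\xi_k(x,t)\gtrsim t$ for every $k\ge2$ and every $x\in E_t$.

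The core step is a lower bound for $\jap{R_t(\xi)^{-1}\bbP_1 R_t(\xi)^{-1}X(t),\1}$ on $E_t$, obtained by treating the first channel as a slow channel in resonance and the channels $k\ge2$ as fast channels far off resonance. Splitting $\bbC^N=\bbC\1_1\oplus\Span\{\1_k\}_{k\ge2}$, the matrix $R_t(\xi(x,t))=\calA(t)^*+D(\xi(x,t))$ has $(1,1)$-entry $a=\Re\calA_{11}(0)-\tfrac i{4\pi}+\xi_1$ (so $\abs a\in[\tfrac1{4\pi},C]$ on $E_t$), off-diagonal blocks bounded uniformly in $t$, and lower-right block $M=\calA(t)^*|_{\{2,\dots,N\}}+D(\xi|_{\{2,\dots,N\}})$ with $\norm{M^{-1}}\lesssim t^{-1}$ because all $\xi_k\gtrsim t$. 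The Schur complement formula then gives $(R_t^{-1})_{11}=(a+O(t^{-1}))^{-1}$ and all remaining entries of the first row and first column of $R_t^{-1}$ of size $O(t^{-1})$, whence $\jap{R_t^{-1}\bbP_1 R_t^{-1}X,\1}=\lambda_1 e^{-i\varphi_1}(R_t^{-1})_{11}^2+O(t^{-1})$, which is $\gtrsim1$ in modulus on $E_t$ for $t$ large. Combining this with $\abs{\partial_x\xi_1}\gtrsim t^2$ yields $\abs{T_1(x,t)}\gtrsim t^2$ on $E_t$, while the remaining terms of \eqref{du} are $\lesssim\sum_{k\ge2}(1+\abs{\xi_k})^{-2}\lesssim t^{-2}$ there, so $\abs{\partial_xu(x,t)}\gtrsim t^2$ on $E_t$ and
$$
\norm{\partial_xu(\cdot,t)}_{L^2}^2\ \ge\ \int_{E_t}\abs{\partial_xu(x,t)}^2\,dx\ \gtrsim\ t^4\,\abs{E_t}\ \gtrsim\ t^2,
$$
which is the claim. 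I expect the Schur complement estimate to be the main obstacle: one must verify, uniformly over $x\in E_t$ and over large $t$, that at the resonance $\abs{\xi_1}\le1$ the first diagonal entry of the resolvent behaves like its scalar counterpart, precisely because the fast channels $\xi_k\gtrsim t$ ($k\ge2$) decouple.
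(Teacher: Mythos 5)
Your proof is correct and follows essentially the same strategy as the paper: the contributions from the simple singular values $j\geq 2$ are bounded uniformly in $t$ using Lemma~\ref{linearalgebra}, and one then localises near a real zero $x_c$ of $1-\psi_1$ (equivalently a pole of $b_1$) to a set $E_t$ where $\xi_1=O(1)$ while $\xi_j\gtrsim t$ for $j\geq 2$, so that $\abs{\partial_x\xi_1}\gtrsim t^2$ and $\abs{E_t}\gtrsim t^{-2}$ yield $\norm{\partial_x u(t)}_{L^2}^2\gtrsim t^2$. The only difference is in the final resolvent estimate: the paper parametrises $J_t$ explicitly by $\varkappa$, solves the linear system for $Z_1,\tilde Z_1$ and computes the exact limit of the integral, whereas you package the same decoupling of the fast channels $j\geq 2$ as a Schur-complement bound $\abs{(R_t^{-1})_{11}}\asymp 1$ on $E_t$, which gives the required lower bound without computing the limit.
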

\begin{proof}
We decompose $\partial_xu(x,t)$ as in the proof of Proposition~\ref{upperbound}, starting from \eqref{du}. Since $\lambda_j$ is simple for $j\geq 2$,
it is easy to check (see e.g. Remark ~\ref{simplespec}) that $\xi_j(x,t)$ is linear in $x$ and therefore that $\partial_x\xi_j(x,t)$ is uniformly bounded. Consequently, using again Lemma ~\ref{linearalgebra}, the quantity
$$\sum_{j\ge 2}\int_{\bbR}|\partial_x\xi_j(x,t)|^2|\jap{R_t(\xi(x,t))^{-1}\bbP_{j}R_t(\xi(x,t))^{-1}X(t),\1}|^2\, dx$$
is bounded as $t\to \infty $. It remains to study the integral
$$\int_{\bbR}|\partial_x\xi_1(x,t)|^2|\jap{R_t(\xi (x,t))^{-1}\bbP_{1}R_t(\xi (x,t))^{-1}X(t) ,\1}|^2\, dx $$
which we minorize by  the integral $I(t)$ of the same function on an interval $J_t$ 
constructed as follows. Since $\lambda_1$ is a multiple eigenvalue, $1-\psi_1$ has at least one zero on the real line. Denote by $x_c$ such a zero. Since $\psi_1$ is a Blaschke product, we know that $i\psi_1'(x_c)$ is a non-zero real number. Consider the interval 
$$J_t=\left [x_c+\frac{\mu}{t}+\frac {\varkappa_1}{t^2}, x_c+\frac{\mu}{t}+\frac {\varkappa_2}{t^2}\right ]\ ,$$
where $\mu\not=0$ and $\varkappa_1<\varkappa_2$ are real numbers which we are going to choose. For $x\in J_t$, we can expand
\begin{align*}
\psi_1(x)&=1+\psi_1'(x_c)\left(\frac{\mu}{t}+\frac{\varkappa}{t^2}\right)+\frac12\psi_1''(x_c)\frac{\mu^2}{t^2}+O(t^{-3})
\\
&=1+\psi_1'(x_c)\frac{\mu}{t}+
\left(\psi_1'(x_c)\varkappa+\frac12\psi_1''(x_c)\mu^2\right)\frac1{t^2}+O(t^{-3})
\end{align*}
with $\varkappa_1\leq \varkappa\leq \varkappa_2$. It follows that 
\begin{align*}
\frac{1+\psi_1(x)}{1-\psi_1(x)}
&=
\frac{2+\psi_1'(x_c)\frac{\mu}{t}+O(t^{-2})}{-\psi_1'(x_c)\frac{\mu}{t}-(\psi_1'(x_c)\varkappa+\frac12\psi_1''(x_c)\mu^2)\frac1{t^2}+O(t^{-3})}
\\
&=-\frac{2t}{\psi_1'(x_c)\mu}
\frac{1+\frac12\psi_1'(x_c)\frac{\mu}{t}+O(t^{-2})}{1+(\frac{\varkappa}{\mu}+\frac{\mu}2\frac{\psi_1''(x_c)}{\psi_1'(x_c)})\frac1t+O(t^{-2})}
\\
&=-\frac{2t}{\psi_1'(x_c)\mu}
\left(1+\biggl(\frac{\mu}2\psi_1'(x_c)-\frac{\varkappa}{\mu}-\frac{\mu}{2}\frac{\psi_1''(x_c)}{\psi_1'(x_c)}\biggr)\frac1t+O(t^{-2})\right)
\\
&=-\frac{2}{\psi_1'(x_c)\mu}t
+\left(-1+\frac{2\varkappa}{\mu^2\psi_1'(x_c)}+\frac{\psi_1''(x_c)}{(\psi_1'(x_c))^2}  \right)+O(t^{-1})
\end{align*}
so that, in view of the expression \eqref{defb} of $b_1$, we obtain
\begin{align*}
\xi_1(x,t)=&\frac{\lambda_1^2}{2\pi}t-b_1(x)-\frac{x}{\nu_1^2}
\\
=&\frac{\lambda_1^2}{2\pi}t-\frac{i}{4\pi}\frac{\norm{1-\psi_1}^2}{\nu_1^2}\frac{1+\psi_1(x)}{1-\psi_1(x)}+\frac{\Re\jap{A_{\psi_1}(1-\psi_1),(1-\psi_1)}}{\nu_1^2\norm{1-\psi_1}^2}
\\
=&\left(\frac{\lambda_1^2}{2\pi}+\frac{i}{2\pi}\frac{\norm{1-\psi_1}^2}{\mu\nu_1^2\psi_1'(x_c)}\right)t
\\
&-\frac{i}{4\pi}\frac{\norm{1-\psi_1}^2}{\nu_1^2}
\left(-1+\frac{2\varkappa}{\mu^2\psi_1'(x_c)}+\frac{\psi_1''(x_c)}{(\psi_1'(x_c))^2}  \right)
\\
&-\frac{\Re\jap{A_{\psi_1}(1-\psi_1),(1-\psi_1)}}{\nu_1^2\norm{1-\psi_1}^2}+O(t^{-1})
\end{align*}
or equivalently
\begin{align*}
\xi_1(x,t)=& \zeta_1\, t +\eta_1(\varkappa)+O(t^{-1})\ ,\ x=x_c+\frac{\mu}{t}+\frac{\varkappa}{t^2}\ ,\ \varkappa \in [\varkappa_1,\varkappa_2]\ ,\\
\zeta_1:=&\frac{\lambda_1^2}{2\pi}+\frac{i}{2\pi}\frac{\norm{1-\psi_1}^2}{\mu\nu_1^2\psi_1'(x_c)}\ ,\\
\eta_1(\varkappa):=& 
-\frac{i}{4\pi}\frac{\norm{1-\psi_1}^2}{\nu_1^2}
\left(-1+\frac{2\varkappa}{\mu^2\psi_1'(x_c)}+\frac{\psi_1''(x_c)}{(\psi_1'(x_c))^2}  \right)
\\
&-\frac{\Re\jap{A_{\psi_1}(1-\psi_1),(1-\psi_1)}}{\nu_1^2\norm{1-\psi_1}^2}\ .
\end{align*}

We define $\mu $ such that $\zeta_1=0$. Consequently,  for $x$ in the interval $J_t$, 
$$\xi_1(x,t)=\eta _1(\varkappa)+ O\left (\frac 1t \right )\ .$$
 Let us write
$Z(x,t):=R_t(\xi (x,t))^{-1}X(t)\ ,\ \tilde Z(x,t):=R_t^*(\xi (x,t))^{-1}\1\ ,$ so that
$$\jap{R_t(\xi (x,t))^{-1}\bbP_{1}R_t(\xi (x,t))^{-1}X(t) ,\1}=Z_1(x,t)\overline {\tilde Z_1(x,t)}\ .$$
In order to estimate $Z_1,\tilde Z_1$ for $x\in J_t$, we write, by the definition of $Z$ and $\tilde Z$, 
$$\xi_1Z_1+[\mathcal A (t)^* Z]_1=\lambda_1e^{-i\varphi _1(t)}\ ,\ \xi_1\tilde Z_1+[\mathcal A (t) \tilde Z]_1=1\ .$$
Observe that \eqref{xij} implies  that, for $x\in J_t$ and for $j\ge 2$, $\xi_j(x,t)\sim c_jt$ for some $c_j>0$. Therefore, from  Lemma \ref{linearalgebra}, we infer
$$|Z_j(x,t)|+|\tilde Z_j(x,t)|\leq O(1/t)\ ,\ j\geq 2$$
and consequently,
$$\ Z_1(x,t)=z_1(\varkappa)e^{-i\varphi_1(t)}+O(1/t)\ ,\ \tilde Z_1(x,t)=\tilde z_1(\varkappa)+O(1/t)$$
where 
$$z_1(\varkappa):=\frac{\lambda_1}{\eta_1(\varkappa)+\frac{\overline \omega_1}{\lambda_1^2\nu_1^4}}\ ,\ 
\tilde z_1(\varkappa):=\frac{1}{\eta_1(\varkappa)+\frac{\omega_1}{\lambda_1^2\nu_1^4}}\ .$$
Here the parameters $\varkappa_1<\varkappa_2$ are chosen so that the denominators of $z_1(\varkappa)$ and $\tilde z_1(\varkappa)$ do not cancel for $\varkappa \in [\varkappa_1,\varkappa_2]$.
Consequently, for $x\in J_t$, $$|\jap{R_t(\xi (x,t))^{-1}\bbP_{1}R_t(\xi (x,t))^{-1}X(t),\1} |^2= |z_1(\varkappa)\tilde z_1(\varkappa)|^2 +O(1/t)\ $$
where $\varkappa :=t^2(x-x_c-\mu/t)\in [\varkappa_1,\varkappa_2]\ .$
On the other hand,
 $$|\partial_x\xi_1(x,t)|=\frac{\norm {1-\psi_1}^2}{2\pi \nu_1^2}\frac{|\psi'_1(x)|}{|1-\psi_1(x)|^2}\sim \frac{\norm {1-\psi_1}^2}{2\pi \nu_1^2|\psi_1'(x_c)|\mu ^2}t^2$$ for $x\in J_t$. Making the change of variable $\varkappa :=t^2(x-x_c-\mu/t)$ in the integral, we 
 we conclude that
\begin{eqnarray*}
I(t)&=&\int_{J_t} |\partial_x\xi_1(x,t)|^2|\jap{R_t(\xi (x,t))^{-1}\bbP_{1}R_t(\xi (x,t))^{-1}X(t) ,\1}|^2\, dx \\
&\sim &\frac{\norm {1-\psi_1}^4}{4\pi ^2\nu_1^4|\psi_1'(x_c)|^2\mu ^4}  \, t^2\int_{\varkappa_1}^{\varkappa_2}|z_1(\varkappa)\tilde z_2(\varkappa)|^2\, d\varkappa\ ,
\end{eqnarray*}
which completes the proof.
\end{proof}
\begin{remark*}
-- Proposition \ref{lowerbound} includes the case studied in \cite{OP2}, which corresponds to $N=1$ and was revisited in  Appendix B of \cite{GLPR} by solving explicitly the corresponding ODE system. Here our approach is more flexible so that we can deal with more general data, providing enough turbulent solutions to establish genericity in the next subsection.\\
-- In \cite{OP2}, it is proved that, if $u$ is a rational solution and if $H_u$ has only simple singular values, then all the Sobolev norms of $u$ stays bounded. Proposition \ref{lowerbound} shows that the situation may be dramatically different if there exists a multiple singular value for $H_u$. In fact, we expect that the existence of such a multiple singular value always implies that the norms of the solution in $W^{s,2}$ with large $s$ are unbounded.
\end{remark*}
\subsection{Lower bound for generic data in $W^{1,2}(\bbC_+)$}
 Denote by $\Phi(t)$  the flow map of the cubic Szeg\H{o} equation on $W^{1,2}(\bbC_+)$. 
 The main step in the proof of Theorem \ref{genericgrowth} is the following approximation result.
\begin{lemma}\label{approximation}
For every $u_0\in W^{1,2}(\bbC_+)$, there exists a family $(u_0^\e )$ in $W^{1,2}(\bbC_+)$ such that, as $\e \to 0$,
$$u_0^\e \to u_0\ {\rm in}\ W^{1,2}(\bbC_+)\ $$
and, for every $\e $,
$$\int_1^{+\infty}\frac{\norm{\partial_x\Phi (t)u_0^\e}_{L^2}}{t^2}\, dt=+\infty \ .$$
\end{lemma}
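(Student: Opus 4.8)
\emph{Strategy.} The plan is to use that rational symbols are dense in $W^{1,2}(\bbC_+)$, together with the surjectivity of the spectral map (Theorem~\ref{thm.c2}), to push a given datum to one whose Hankel operator has exactly one multiple singular value and all other singular values simple --- precisely the situation treated by Proposition~\ref{lowerbound}. First I would record the (standard) density of rational symbols in $W^{1,2}(\bbC_+)$ and, moreover, the density among them of those with \emph{simple} spectrum (all Schmidt subspaces one-dimensional); the latter follows by splitting every multiple singular value into nearby distinct simple ones and invoking the continuity of the symbol in the spectral data given by formula \eqref{b25} (this is also essentially the parametrisation of \cite{OP2}). It therefore suffices to construct the family $(u_0^\e)$ when $u_0=\tilde v$ is a rational symbol with simple spectrum, with spectral data $(\{\lambda_n\}_{n=1}^N,\{\psi_n\}_{n=1}^N,\{e^{i\varphi_n}\}_{n=1}^N,\{\omega_n\}_{n=1}^N)$, so that $b_n\equiv0$ for all $n$ and $Q(x)=\calA^*-xD(\nu^2)^{-1}$ as in Remark~\ref{simplespec}.

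\emph{The degree-two perturbation.} Next I would perturb only the first Schmidt subspace. Fixing $\alpha_0\in\bbR$ and, for $\delta>0$, setting $h_\delta(x)=x/\nu_1^2+\delta/(\alpha_0-x)$ --- a rational Herglotz function of degree two --- the function $\psi_1^\delta:=(h_\delta-i)/(h_\delta+i)$ is a finite Blaschke product of degree $2$ with $\psi_1^\delta(\infty)=1$. A short computation using $i\,(1+\psi_1^\delta)/(1-\psi_1^\delta)=h_\delta$, the identity $\int_\bbR \frac{dx}{h_\delta(x)^2+1}=\pi\nu_1^2$ (from the Herglotz representation of $-1/(h_\delta+i)$), and Lemma~\ref{lma.b17}, gives $\norm{1-\psi_1^\delta}^2=4\pi\nu_1^2$ and $b_1^\delta(x)=\delta/(\alpha_0-x)$, so that the number $\nu_1$ is unchanged. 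Let $\tilde v^\delta$ be the rational symbol produced by Theorem~\ref{thm.c2} from the spectral data of $\tilde v$ with $\psi_1$ replaced by $\psi_1^\delta$ (all $\lambda_n$, $e^{i\varphi_n}$, $\omega_n$, hence $\calA$ and $D(\nu^2)$, unchanged). By construction $H_{\tilde v^\delta}$ has singular values $\lambda_1>\dots>\lambda_N$ with $\lambda_1$ of multiplicity $\dim K_{\psi_1^\delta}=2$ and $\lambda_n$ simple for $n\ge2$, so Proposition~\ref{lowerbound} applies to $\tilde v^\delta$. Moreover, by \eqref{b25} we have $\tilde v^\delta(x)=\tfrac1{2\pi i}\jap{Q_\delta(x)^{-1}D(\lambda)D(e^{-i\varphi})\1,\1}$ with $Q_\delta(x)=Q(x)-D(b^\delta(x))$, where $D(b^\delta(x))$ is the diagonal matrix $\diag\{\delta/(\alpha_0-x),0,\dots,0\}$; by Theorem~\ref{thm.c1} the inverses $Q_\delta(x)^{-1}$ are bounded uniformly on $\overline{\bbC_+}$ and extend analytically across $\alpha_0$, so $\tilde v^\delta$ is a rational function, analytic on $\overline{\bbC_+}$, of degree bounded uniformly in small $\delta$, with poles staying in a fixed compact subset of the lower half-plane. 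Since $D(b^\delta(x))\to0$ as $\delta\to0$ locally uniformly on $\bbR\setminus\{\alpha_0\}$, I would then conclude that $\tilde v^\delta\to\tilde v$ and $\partial_x\tilde v^\delta\to\partial_x\tilde v$ pointwise on $\bbR$ and, by dominated convergence (using the uniform $O(|x|^{-2})$ decay), in $L^2(\bbR)$; that is, $\tilde v^\delta\to\tilde v$ in $W^{1,2}(\bbC_+)$.

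\emph{Conclusion and main difficulty.} By Proposition~\ref{lowerbound}, $c_\delta:=\liminf_{t\to\infty}\norm{\partial_x\Phi(t)\tilde v^\delta}_{L^2}/t>0$, hence $\norm{\partial_x\Phi(t)\tilde v^\delta}_{L^2}\ge\tfrac{c_\delta}{2}t$ for $t\ge T_\delta$ and
$$
\int_1^{+\infty}\frac{\norm{\partial_x\Phi(t)\tilde v^\delta}_{L^2}}{t^2}\,dt\ \ge\ \frac{c_\delta}{2}\int_{T_\delta}^{+\infty}\frac{dt}{t}\ =\ +\infty.
$$
Chaining the three approximations --- $u_0$ by a rational symbol, that by a simple-spectrum rational symbol, and that by $\tilde v^\delta$ --- and making a diagonal choice of the three parameters produces, for each $\e>0$, a datum $u_0^\e$ with $\norm{u_0^\e-u_0}_{W^{1,2}}<\e$ and the divergence above, which is the lemma. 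The hard part will be the first reduction together with the $W^{1,2}$-continuity of the symbol in the spectral data: one must show that splitting a multiple singular value into simple ones perturbs the symbol by an arbitrarily small amount in $W^{1,2}$, and that in the degree-two bump the first \emph{derivative} $\partial_x\tilde v^\delta$ is controlled near the pole $\alpha_0$ of $b_1^\delta$; both hinge on the uniform resolvent bound of Theorem~\ref{thm.c1} and on the explicit rational form \eqref{b25}.
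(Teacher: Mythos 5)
Your Steps 1--2 (density of rational symbols, then of rational symbols with simple spectrum) coincide with the paper's Steps 1--2, and your overall strategy --- create one multiple singular value and quote Proposition~\ref{lowerbound} --- is the right one. The gap is in the perturbation itself: replacing the degree-one $\psi_1$ by a nearby degree-two Blaschke product $\psi_1^\delta$ does \emph{not} give $W^{1,2}$-convergence, and this is not a technicality you can fix by invoking Theorem~\ref{thm.c1}.

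Here is the concrete obstruction, already visible for $N=1$. With your choice, $Q_\delta(x)=\overline{\calA_{11}}-x/\nu_1^2-\delta/(\alpha_0-x)$ and $\tilde v^\delta=\frac{\lambda_1 e^{-i\varphi_1}}{2\pi i\,Q_\delta}$. Clearing the denominator, $\tilde v^\delta$ is a rational function of degree $2$ with a pole $x_\delta$ near $\alpha_0$; solving $(\alpha_0-x)(\overline{\calA_{11}}-x/\nu_1^2)=\delta$ to first order gives $x_\delta=\alpha_0-\delta\big(\overline{\calA_{11}}-\alpha_0/\nu_1^2\big)^{-1}+O(\delta^2)$, so $\Im x_\delta\sim -c\,\delta$ with a fixed $c>0$. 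The residue of $\tilde v^\delta$ at $x_\delta$ is $O(\delta)$. A partial fraction $\frac{O(\delta)}{x-x_\delta}$ contributes $O(\delta)\cdot(\Im x_\delta)^{-1/2}=O(\delta^{1/2})$ to $\|\tilde v^\delta\|_{L^2}$, which is fine, but $\frac{O(\delta)}{(x-x_\delta)^2}$ contributes $O(\delta)\cdot(\Im x_\delta)^{-3/2}\sim\delta^{-1/2}$ to $\|\partial_x\tilde v^\delta\|_{L^2}$, which blows up. One can see the same thing from your formula for $\partial_x\tilde v^\delta$: the term $\frac{\delta}{(\alpha_0-x)^2}\jap{Q_\delta^{-1}\bbP_1 Q_\delta^{-1}\cdots,\1}$ is, by the Lemma~\ref{linearalgebra}-type bounds, of size $\frac{\delta}{(\alpha_0-x)^2}\cdot\frac{1}{(1+\delta/|\alpha_0-x|)^2}=\frac{\delta}{(|\alpha_0-x|+\delta)^2}$, whose $L^2(\bbR)$ norm is exactly of order $\delta^{-1/2}$. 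Nothing in Theorem~\ref{thm.c1} or the rational form \eqref{b25} can rescue this: the singular value $\lambda_1$ is order $1$, so there is no compensating small factor.

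The paper sidesteps this by adding a \emph{new} singular value $\lambda_{N+1}=\eps$ with a fixed degree-$2$ Blaschke product $\psi_{N+1}=((x-i)/(x+i))^2$. The smallness of $\lambda_{N+1}$ then enters in two places that you cannot reproduce by perturbing an existing $\psi_1$: (i) the off-diagonal coupling $\calA^\eps_{k,N+1}=O(\eps)$, whence $\|\bbP_{N+1}Q_\eps^{-1}\bbP_{N+1}^\perp\|=O(\eps)$ via the block-triangular $\calA_0$, and (ii) $D(\lambda)D(e^{-i\varphi})\bbP_{N+1}=\eps\,\bbP_{N+1}$. These give the extra factor of $\eps$ in the paper's Step~5 bound $\frac{\eps^2}{x^2}\bigl|(-\tfrac{i}{4\pi}-\tfrac{\eps}{\sqrt{4\pi}}(x-\tfrac1x))^{-2}\bigr|\lesssim\min\{1,\eps^2/x^4\}$, which is what makes the derivative converge in $L^2$. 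In short: your construction produces a pole of $\tilde v^\delta$ at distance $\sim\delta$ from $\bbR$ with no compensating smallness of the residue after differentiation, while the paper's pole is always coupled to the small amplitude $\lambda_{N+1}=\eps$. You would need to change the perturbation (e.g.\ follow the paper and append a small new singular value with multiplicity two) rather than deforming $\psi_1$ of the largest singular value.
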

\begin{proof}
\emph{Step 1: reduction to rational $u_0$.}
Recall that  $W^{1,2}(\bbC_+)$ is a Hilbert space with the inner product
$$\jap{u,v}+\jap{\partial_xu,\partial_xv}\ .$$ 
Let $u_0\in W^{1,2}(\bbC_+)$. We first claim that $u_0$ can be approximated in $W^{1,2}(\bbC_+)$  by a sequence of rational functions. Indeed, the Fourier transform of a rational function in $W^{1,2}(\bbC_+)$ is a linear combination of 
$$\xi ^k{\rm e}^{-\alpha \xi }, \quad \xi>0,$$
where $k$ is a nonnegative integer and $\alpha $ is a complex number of positive real part.
By the Plancherel theorem, if $u\in W^{1,2}(\bbC_+)$ is orthogonal to all rational functions, then
$$\int_0^\infty (1+\xi^2)\hat u(\xi )\xi^k{\rm e}^{-\alpha \xi}\, d\xi =0$$
for  every complex number $\alpha $ with $\Re\alpha >0$. Consequently, by making $k=0$ and $\alpha $ tend to the imaginary axis, we infer that  $u=0$.
Therefore it is enough to prove the Lemma if $u_0$ is a rational function in $W^{1,2}(\bbC_+)$, which means
$$u_0(x)=\frac{A(x)}{B(x)}$$
where $B$ is a polynomial of degree $N\ge 1$ with zeros in the open lower half plane only, and 
$A$ is a polynomial of degree at most $N-1$, with no common factors with $B$. 

\emph{Step 2: reduction to $u_0$ with simple eigenvalues.}
For a given $N$, the set of rational functions $u_0$ as above is a complex manifold of dimension $2N$, on which 
the condition that $H_{u_0}^2$ has $N$ simple positive eigenvalues defines a dense open subset, characterised by 
$$\det (\langle H_{u_0}^{2(n+m)}u_0, u_0\rangle )_{0\leq n,m\leq N-1}\ne 0$$ 
(see  \cite{OP2} for more detail). So we are reduced to proving the statement for $u_0$ belonging to this dense open subset.

\emph{Step 3: defining $u_0^\eps$.}
Denote by $(\lambda_n, \psi_n, {\rm e}^{i\varphi_n}, \omega_n)_{1\leq n\leq N}$ the spectral data of $u_0$. 
Let $\varphi_{N+1}=0$, $\omega_{N+1}=i$ and $\lambda_{N+1}=\eps>0$; we also set 
$$
\psi_{N+1}(x)=\left(\frac{x-i}{x+i}\right)^2
$$
(although any Blaschke product of degree $\geq2$ will do). 
We define $u_0^\e $ to be the rational function with spectral data $(\lambda_n, \psi_n, {\rm e}^{i\varphi_n}, \omega_n)_{1\leq n\leq N+1}$. 
Our aim is to check that $u_0^\e \to u_0$ in $W^{1,2}(\bbC_+)$ as $\e \to 0$ by  applying the inverse spectral  formula \eqref{b25} of Theorem~\ref{thm.uniq}. But first we need to go through some preliminaries. 

Denote by $\mathcal A^\e $ the $(N+1)\times(N+1)$ matrix associated to $u_0^\e $, and by $\mathcal A$ the $N\times N$ matrix associated to $u_0$. 
In view of formulae \eqref{Aoff} and \eqref{Adiag}, we have
$$
\mathcal A_{N+1,N+1}^\varepsilon
=\frac{\omega_{N+1}}{\lambda_{N+1}^2\nu_{N+1}^4}
=\frac{\omega_{N+1}}{4\pi \Im \omega_{N+1}}=\frac{i}{4\pi},
$$
and 
$
\mathcal A_{N+1,k}^{\varepsilon}=\frac{i}{2\pi}+O(\varepsilon), 
\quad
\mathcal A_{k,N+1}^{\varepsilon}=O(\varepsilon), 
$
as $\eps\to0$ for any $k\leq N$. It follows that
$$
\mathcal A^\varepsilon=\mathcal A_0+X^\varepsilon, 
\quad
\mathcal A_0
=
\begin{pmatrix}
\calA & 0
\\
\frac{i}{2\pi}\jap{\cdot,\1_N} & \frac{i}{2\pi}
\end{pmatrix}, 
\quad
\norm{X^\eps}=O(\eps)
$$
as $\eps\to0$, where $\1_N=(1,\dots,1)\in\bbC^N$. 
Denote $\zeta\in\bbC_+^N$, $\zeta_{N+1}\in\bbC_+$ and $\wt\zeta=(\zeta,\zeta_{N+1})\in\bbC_+^{N+1}$.
By Theorem~\ref{thm.c1}, the inverse of 
$\calA^*-D(\zeta)$ exists and 
$$
\sup_{\zeta_1,\dots,\zeta_N\in\bbC_+}\norm{(\calA^*-D(\zeta))^{-1}}<\infty.
$$
We express the inverse of $\calA_0^*-D(\wt\zeta)$ as
\begin{equation}
(\calA_0^*-D(\wt\zeta))^{-1}
=
\begin{pmatrix}
(\calA^*-D(\zeta))^{-1} & \frac{i}{2\pi}(-\frac{i}{4\pi}-\zeta_{N+1})^{-1}(\calA^*-D(\zeta))^{-1}\1_N
\\
0 & (-\frac{i}{4\pi}-\zeta_{N+1})^{-1}
\end{pmatrix}\ .
\label{x0}
\end{equation}
From here we find that this inverse is uniformly bounded, 
$$
\sup_{\wt\zeta\in\bbC_+^{N+1}}\norm{(\calA_0^*-D(\wt\zeta))^{-1}}<\infty.
$$
From the resolvent identity
$$
((\calA^\eps)^*-D(\wt\zeta))^{-1}-(\calA_0^*-D(\wt\zeta))^{-1}
=
-((\calA^\eps)^*-D(\wt\zeta))^{-1}X^\eps(\calA_0^*-D(\wt\zeta))^{-1}
$$
and the estimate $\norm{X^\eps}=O(\eps)$ we find that the inverse of $(\calA^\eps)^*-D(\wt\zeta)$ is similarly uniformly bounded and moreover
\begin{equation}
\sup_{\wt\zeta\in\bbC_+^{N+1}}\norm{(\calA_0^*-D(\wt\zeta))^{-1}-((\calA^\eps)^*-D(\wt\zeta))^{-1}}=O(\eps)
\label{x00}
\end{equation}
as $\eps\to0$. 
Now let $Q(x)$ be the $N\times N$ matrix associated with $u_0$, and 
\begin{align*}
Q_\eps(x)&=(\calA^\eps)^*-xD(\nu^2)^{-1}-D(b(x)),
\\
Q_{\eps,0}(x)&=\calA_0^*-xD(\nu^2)^{-1}-D(b(x)).
\end{align*}
Since the eigenvalues $\lambda_1,\dots,\lambda_N$ are simple, we have $b_k(x)=0$ for $k=1\dots,N$ (see Remark~\ref{simplespec}), and from the explicit form of $\psi_{N+1}$ one obtains
$$
b_{N+1}(x)=-\frac1{\nu_{N+1}^2}\frac1{x}=-\frac{\eps}{\sqrt{4\pi}}\frac1x.
$$
Let $\bbP_{N+1}$ be the projection in $\bbC^{N+1}$ onto the subspace spanned by the last vector $(0,\dots,0,1)$ of the the canonical basis, and let $\bbP_{N+1}^\perp=I-\bbP_{N+1}$ be the projection onto the orthogonal subspace in $\bbC^{N+1}$. Denoting by $D(\lambda)$, $D(e^{i\varphi})$ the diagonal operators in $\bbC^{N+1}$, we have
\begin{align*}
u_0^{\eps}(x)&=\frac1{2\pi i}\jap{Q_\eps(x)^{-1}D(\lambda)D(e^{-i\varphi})\1,\1}_{\bbC^{N+1}},
\\
u_0(x)&=\frac1{2\pi i}\jap{Q(x)^{-1}\bbP_{N+1}^\perp D(\lambda)D(e^{-i\varphi})\1,\1}_{\bbC^{N}}.
\end{align*}

\emph{Step 4: Proof that $u_0^\eps\to u_0$ in $L^2(\bbR)$.}
We will check two facts:
\begin{align}
u_0^\eps(x)\to u_0(x) \text{ uniformly in $x\in\bbR$;}
\label{x1}
\\
\abs{u_0^\eps(x)}+\abs{u_0(x)}\leq \frac{C}{\abs{x}}, \quad \abs{x}\geq2,
\label{x2}
\end{align}
where the constant $C$ is independent of $\eps$.

Let us check \eqref{x1}. 
Since $\lambda_{N+1}=\eps$, we have 
\begin{equation}
u_0^\eps(x)
=\frac1{2\pi i}\jap{Q_\eps(x)^{-1}\bbP_{N+1}^\perp D(\lambda)D(e^{-i\varphi})\1,\1}_{\bbC^{N+1}}
+\frac{\eps}{2\pi i}\jap{Q_\eps(x)^{-1}\bbP_{N+1}\1,\1}_{\bbC^{N+1}},
\label{x3}
\end{equation}
where the second term in the r.h.s. is $O(\eps)$ uniformly in $x\in\bbR$. 
Using  \eqref{x00}, we replace $Q_\eps(x)^{-1}$ by $Q_{\eps,0}(x)^{-1}$ in the first term in the r.h.s., accruing a uniform $O(\eps)$ error. 
By the matrix structure in \eqref{x0}, we find
\begin{multline*}
\frac1{2\pi i}
\jap{Q_{\eps,0}(x)^{-1}\bbP_{N+1}^\perp D(\lambda)D(e^{-i\varphi})\1,\1}_{\bbC^{N+1}}
\\
=
\frac1{2\pi i}
\jap{Q(x)^{-1}\bbP_{N+1}^\perp D(\lambda)D(e^{-i\varphi})\1,\1}_{\bbC^{N}}=u_0(x).
\end{multline*}
This proves \eqref{x1}. 

Let us check \eqref{x2}. For $u_0(x)$ the estimate is obvious. For $u_0^\eps(x)$, we use the decomposition \eqref{x3} again. By the resolvent identity and the matrix structure in \eqref{x0}, 
$$
\norm{Q_\eps(x)^{-1}\bbP_{N+1}^\perp}
\leq
C\norm{Q_{\eps,0}(x)^{-1}\bbP_{N+1}^\perp}
\leq
C\norm{Q(x)^{-1}}\leq C/\abs{x},
$$
which gives the required estimate for the first term in the r.h.s. of \eqref{x3}. 
For the second term, for the same reasons, 
\begin{equation}
\eps\norm{Q_\eps(x)^{-1}\bbP_{N+1}}
\leq 
C\eps\norm{Q_{\eps,0}(x)^{-1}\bbP_{N+1}}
\leq 
C\eps\abs{(-\tfrac{i}{4\pi}-\tfrac{\eps}{\sqrt{4\pi}}(x-\tfrac1x))^{-1}}\leq C/\abs{x}
\label{x4}
\end{equation}
for $\abs{x}\geq2$. This concludes the proof of \eqref{x2}; we have checked that $u_0^\eps\to u_0$ in $L^2(\bbR)$. 

\emph{Step 5: proof that $\partial_x u_0^\eps\to \partial_x u_0$ in $L^2(\bbR)$.}
We have 
$$
\partial_x Q_\eps(x)=-D(\nu^2)^{-1}-D(b'(x)), 
$$
and so 
$$
\partial_x Q_\eps(x)^{-1}=Q_\eps(x)^{-1}D(\nu^2)^{-1}Q_\eps(x)^{-1}+\frac{\eps}{\sqrt{4\pi}}\frac1{x^2}Q_\eps(x)^{-1}\bbP_{N+1}Q_{\eps}(x)^{-1}, 
$$
because $b'_1(x)=\dots=b'_N(x)=0$ and $b'_{N+1}(x)=\frac{\eps}{\sqrt{4\pi}}\frac1{x^2}$. Thus, 
\begin{align*}
\partial_x u_0^{\eps}(x)
=&
\frac1{2\pi i}\jap{Q_\eps(x)^{-1}D(\nu^2)^{-1}Q_\eps(x)^{-1}D(\lambda)D(e^{-i\varphi})\1,\1}
\\
&+
\frac1{2\pi i}\frac{\eps}{\sqrt{4\pi}}\frac1{x^2}
\jap{Q_\eps(x)^{-1}\bbP_{N+1}Q_\eps(x)^{-1}D(\lambda)D(e^{-i\varphi})\1,\1}.
\end{align*}
In the same way as on the previous step of the proof, one proves that the first term in the r.h.s. here converges in $L^2(\bbR)$ to $\partial_x u_0(x)$. It remains to check that the second term converges to zero in $L^2(\bbR)$. 

Along with \eqref{x4}, and for the same reasons, we have the estimate
$$
\norm{\bbP_{N+1}Q_\eps(x)^{-1}}\leq C\abs{(-\tfrac{i}{4\pi}-\tfrac{\eps}{\sqrt{4\pi}}(x-\tfrac1x))^{-1}}.
$$
Using this, we find 
\begin{align*}
\frac{\eps}{x^2}&\abs{\jap{Q_\eps(x)^{-1}\bbP_{N+1}Q_\eps(x)^{-1}D(\lambda)D(e^{-i\varphi})\1,\1}}
\\
&\lesssim
\frac{\eps}{x^2}\norm{Q_\eps(x)^{-1}\bbP_{N+1}Q_\eps(x)^{-1}D(\lambda)D(e^{-i\varphi})}
\\
&\lesssim
\frac{\eps}{x^2}
\abs{(-\tfrac{i}{4\pi}-\tfrac{\eps}{\sqrt{4\pi}}(x-\tfrac1x))^{-1}}
\norm{\bbP_{N+1}Q_\eps(x)^{-1}D(\lambda)D(e^{-i\varphi})}
\\
&\lesssim
\frac{\eps}{x^2}
\abs{(-\tfrac{i}{4\pi}-\tfrac{\eps}{\sqrt{4\pi}}(x-\tfrac1x))^{-1}}
\bigl(
\norm{\bbP_{N+1}Q_\eps(x)^{-1}\bbP_{N+1}^\perp}
+
\eps\norm{\bbP_{N+1}Q_\eps(x)^{-1}\bbP_{N+1}}
\bigr).
\end{align*}
Further, by \eqref{x0} we have 
$$
\bbP_{N+1}Q_{\eps,0}(x)^{-1} \bbP_{N+1}^\perp=0
$$
and so, by the resolvent identity,
\begin{align*}
\norm{\bbP_{N+1}Q_{\eps}(x)^{-1} \bbP_{N+1}^\perp}
&=
\norm{\bbP_{N+1}Q_{\eps,0}(x)^{-1}(X^\eps)^*Q_{\eps}(x)^{-1}\bbP_{N+1}^\perp}
\\
&\lesssim
\norm{\bbP_{N+1}Q_{\eps,0}(x)^{-1}}\norm{X^\eps}
\lesssim
\eps \abs{(-\tfrac{i}{4\pi}-\tfrac{\eps}{\sqrt{4\pi}}(x-\tfrac1x))^{-1}}.
\end{align*}
Putting this together, we find
$$
\frac{\eps}{x^2}\abs{\jap{Q_\eps(x)^{-1}\bbP_{N+1}Q_\eps(x)^{-1}D(\lambda)D(e^{-i\varphi})\1,\1}}
\lesssim
\frac{\eps^2}{x^2}
\abs{(-\tfrac{i}{4\pi}-\tfrac{\eps}{\sqrt{4\pi}}(x-\tfrac1x))^{-2}}.
$$
Now it's a matter of elementary calculation to check that the function in the r.h.s. converges to zero in $L^2(\bbR)$. For example, it is easy to see that this function is bounded by $C\min\{1,\eps^2/x^4\}$, which yields the required convergence. We have checked that $\partial_x u_0^\eps\to \partial_x u_0$ in $L^2(\bbR)$, and so $u_0^\e \to u_0$ in $W^{1,2}(\bbC_+)$. 

\emph{Step 6: concluding the proof.}
From Proposition \ref{lowerbound}, we know that
$$\liminf _{t\to +\infty}\frac{\norm{\partial_x\Phi (t)u_0^\e}_{L^2}}{t}>0$$
for every $\e >0$, 
and the lemma is proved.
\end{proof} 
Let us complete the proof of Theorem \ref{genericgrowth}. For every positive integer $n$, we consider
$$\Omega _n:=\left \{ u_0\in W^{1,2}(\bbC_+): \exists t_n, \int_1^{t_n}\frac{\norm{\partial_x\Phi (t)u_0^\e}_{L^2}}{t^2}\, dt>n\ \right \}\ .$$
By the wellposedness of the cubic Szeg\H{o} equation on $W^{1,2}(\bbC_+)$ (see \cite{OP2}), the map  $$u_0\in W^{1,2}(\bbC_+)\mapsto \Phi (\cdot)u_0\in C([0,T], W^{1,2}(\bbC_+))$$ is continuous for every $T>0$, and therefore $\Omega_n$ is an open subset of $W^{1,2}(\bbC_+)$. Furthermore, by Lemma \ref{approximation}, $\Omega_n$ is dense in $W^{1,2}(\bbC_+)$. Hence Baire's theorem implies that 
$$G:=\bigcap_{n\ge 1}\Omega_n$$
is a dense $G_\delta $ subset of $W^{1,2}(\bbC_+)$. This completes the proof of Theorem~\ref{genericgrowth}.

\section*{Acknowledgements}
The authors are grateful to V.~Kapustin and O.~Pocovnicu for useful discussions. 
A.P. is grateful to the Department of Mathematics, Universit\'e Paris-Saclay, for hospitality. 
A.P. was supported by the Ministry of Science and Higher Education of the Russian Federation, contract No. 075-15-2019-1619.

\end{document}